\documentclass[11pt]{elsarticle}
\usepackage[margin=.7in]{geometry}
\usepackage{graphicx} 
\usepackage{amsfonts}
\usepackage{amsmath}
\usepackage{amssymb}
\usepackage{amsthm}
\usepackage{mathrsfs}
\usepackage{dsfont}
 \newcommand{\R}{\mathbb {R}}
 \newcommand{\E}{\mathbb {E}}
 
 \renewcommand{\P}{\mathbb{P}}
 
\usepackage{autobreak}
\usepackage[hidelinks]{hyperref}
\usepackage[noabbrev,capitalize]{cleveref}
\crefname{equation}{equation}{equations}
\numberwithin{equation}{section}

\newtheorem{theorem}{Theorem}[section]
\newtheorem{lemma}[theorem]{Lemma}

\newtheorem{remark}[theorem]{Remark}
\newtheorem{definition}[theorem]{Definition}
\newtheorem{proposition}[theorem]{Proposition}

\newcommand{\dd}{\mathrm{d}}
\begin{document}
\begin{frontmatter}

\title{Stochastic Tamed Navier--Stokes Equations with Wiener and Jump Noise on $\mathbb R^3$. \\ II.
Global $L^p$ Well-Posedness}

\author{Bikram Podder} \ead{bikrampoddar2@gmail.com} \author{Surendra Kumar} \ead{surendraiitr8@gmail.com}

\affiliation{organization={Department of Mathematics}, addressline={University of Delhi}, city={ Delhi}, postcode={110007}, country={India}}
\begin{abstract}
We establish intrinsic continuation criteria and finite-energy global solvability for the stochastic tamed Navier--Stokes equations on $\mathbb R^3$ driven simultaneously by multiplicative cylindrical Wiener and compensated Poisson noise.
Under local coefficient hypotheses, the maximal local $L^p$ solution, $p>3$, satisfies a blow-up alternative independent of auxiliary cutoffs and a Serrin criterion with time exponent $2p/(p-3)$.
Additional coercivity of the taming term and compatible $L^2$ and gradient noise bounds yield the global well-posedness theory for divergence-free initial data $u_0\in L^p(\Omega;L^p)\cap L^2(\Omega;L^2)$.
No smallness or initial $H^1$ regularity is required.
The solution has c\`adl\`ag $L^p\cap L^2$ paths and gains $H^1$ regularity at positive times, with time-weighted $H^1$ and $H^2$ estimates.
The proof combines finite-energy persistence with endpoint completion that retains terminal Poisson jumps and permits restart in the original uniqueness class.
\end{abstract}
\begin{keyword}

Stochastic partial differential equation\sep Stochastic tamed Navier--Stokes Equation \sep Global well-posedness

\MSC[2020] 60H15 \sep 35R60 \sep 35Q30 \sep 76D05

\end{keyword}
\end{frontmatter}
\tableofcontents
\section{Introduction}
\label{sec:introduction}

We study continuation and global solvability for the stochastic tamed Navier--Stokes equations on $\mathbb R^3$, driven simultaneously by multiplicative cylindrical Wiener noise and a compensated Poisson random measure.
For a viscosity $\nu>0$, the system is
\begin{equation}
\label{eq:intro-main-system}
\begin{cases}
\begin{aligned}
\dd u(t)=& \Bigl[\nu\Delta u(t) -\mathcal P\bigl((u(t)\cdot\nabla)u(t)\bigr) -\mathcal P\bigl(g_N(|u(t)|^2)u(t)\bigr)\Bigr]\,\dd t \\
&+\mathcal P\sigma(t,u(t-))\,\dd W_t +\displaystyle\int_Z \mathcal PG(t,u(t-),z)\,\widetilde N(\dd t,\dd z),
\end{aligned}
\\
\nabla\cdot u(t)=0,\qquad (t,x)\in(0,\infty)\times\R^3, \\
u(0)=u_0.
\end{cases}
\end{equation}
Here $\mathcal P$ denotes the Helmholtz--Leray projection, $W$ is a cylindrical Wiener process on a separable Hilbert space $\mathcal U$, and $N$ is a Poisson random measure on a $\sigma$-finite measure space $(Z,\mathcal Z,\mu)$, with compensator $\dd t\,\mu(\dd z)$.
We write $\widetilde N=N-\dd t\,\mu$ for its compensated version.
The taming parameter $N$ is fixed, and the map $v\mapsto g_N(|v|^2)v$ has at most cubic growth.
The stochastic basis, coefficient measurability, and joint compatibility of the driving noises are specified in \cref{Section 2}.
Dependence of the coefficients on $\omega$ is suppressed, and the stochastic integrands are represented through predictable left limits.

For $p>3$, let $E=L^p_\sigma(\R^3;\R^3)$, where the subscript $\sigma$ denotes the solenoidal subspace.
The companion Paper I \cite{PodderKumar2026} provides a pathwise unique maximal local strong solution $\bigl(u,(\tau_n)_{n\ge1},\tau_{\max}\bigr)$ for $u_0\in L^p(\Omega,\mathcal F_0;E)$, with compatible stopped c\`adl\`ag representatives in the local $L^p$ energy class.
The question addressed here is how to continue this same solution through conditions expressed in its original $L^p$ topology, and whether an additional finite-energy assumption on the initial datum ensures globality under suitable taming and noise hypotheses.
To extend the solution, we need a continuation criterion based purely on the solution's intrinsic properties, rather than the auxiliary cutoffs used in its construction.
Crucially, this approach must preserve both the prescribed stochastic basis and the original uniqueness class.

This passage from local $L^p$ solvability to global energy control has distinct analytic and probabilistic difficulties.
On $\R^3$, membership in $L^p$ implies neither finite $L^2$ energy nor $H^1$ regularity, while $H^1$ controls the $L^p$ norm only for $2\le p\le6$.
A global Hilbert-space estimate must therefore be connected to the continuation requirements of the existing $L^p$ solution, including the range $p>6$.
Discontinuous forcing also changes the endpoint argument: an exit can occur by a jump whose post-jump value exceeds the localizing level, although the noise coefficient is evaluated at the controlled pre-jump state.
Continuation requires a closed solution branch with an admissible terminal datum, so estimates before the endpoint must be reconciled with the stopped equation through that endpoint.

We address this problem by combining intrinsic $L^p$ continuation criteria with finite-energy persistence and positive-time regularization.
The resulting global theorem applies to $L^p\cap L^2$ initial velocities for every $p>3$, under the stated gradient bounds on the noise, and continues the canonical solution of Paper I \cite{PodderKumar2026}.
The point of comparison with existing Hilbert-space and local $L^p$ theories is thus the preservation of this solution and its native continuation class throughout the global argument.

\subsection{Related literature}

The global analysis of the tamed Navier--Stokes equations rests on the additional dissipation supplied by the taming term.
R\"ockner and Xicheng Zhang \cite{Rockner2009} developed the deterministic theory on $\R^3$, including existence, uniqueness and regularity, and related the tamed approximations to suitable weak solutions of the classical equations.
Their stochastic theory \cite{MR2520127} established global strong solvability from $H^1$ initial data under multiplicative Wiener forcing.
Further developments include the treatment of Dirichlet boundary conditions by R\"ockner and Tusheng Zhang \cite{MR2852224} and the whole-space construction of Brze\'zniak and Dhariwal \cite{MR4085355}.
For discontinuous forcing, Dong and Zhang \cite{MR4127295} proved existence and uniqueness for the three-dimensional tamed system with multiplicative L\'evy noise and periodic boundary conditions, with c\`adl\`ag paths in the solenoidal $H^1$ space.
More recently, Lu, Yang and Li \cite{LuYangLi2025} studied well-posedness and averaging for Wiener-driven tamed equations under locally weak monotonicity assumptions.
These results establish the role of taming in global stochastic solvability and provide the principal $H^1$ precedents for the present work.

A related global theory concerns Navier--Stokes equations with nonlinear damping of the form $|u|^{\beta-1}u$.
The deterministic results of Cai and Jiu \cite{MR2401535} and Zhang, Wu and Lu \cite{MR2754840} describe how such damping affects existence and uniqueness in three dimensions.
For stochastic convective Brinkman--Forchheimer equations, Mohan \cite{MR4439993} obtained global pathwise unique strong solutions from finite-energy data under pure jump forcing on bounded and periodic domains, together with higher regularity in the periodic setting under additional assumptions.
Kinra and Mohan \cite{KinraMohanUnbounded2025} developed a global theory with multiplicative Gaussian noise on general unbounded domains under the corresponding damping and coefficient conditions.
In a complementary whole-space $L^p$ setting, Mohan \cite{MR4314117} constructed local mild solutions for deterministic and L\'evy-driven convective Brinkman--Forchheimer equations.
Thus global finite-energy solvability for stochastic fluid equations with damping, and local $L^p$ solvability with jump forcing, both have substantial precedents.

Abstract variational theories offer another approach to these equations.
Agresti and Veraar \cite{MR4716343} developed a critical variational framework based on local Lipschitz estimates and coercivity.
Their applications include global well-posedness of the tamed Navier--Stokes equations on $\R^3$ for $H^1$ initial data with gradient-dependent Wiener forcing.
Bechtel, Germ and Veraar \cite{MR5120871} extended this framework to L\'evy-driven equations, establishing maximal local solutions, blow-up criteria and global well-posedness under suitable coercivity hypotheses.
Their applications include tamed Navier--Stokes equations, and the framework does not require a compact embedding of the energy space into the pivot space.
It therefore also applies on unbounded domains.
These results are particularly relevant when comparing the present $L^p$ continuation theory with existing global theories in Hilbert spaces.

Continuation and regularization have also been developed through stochastic maximal regularity.
Agresti and Veraar \cite{MR4437443} proved blow-up criteria, including criteria of Serrin type, and instantaneous regularization for nonlinear parabolic equations with Gaussian noise.
Their approach uses temporal weights to accommodate rough initial data.
For stochastic Navier--Stokes equations, Du and Zhang \cite{MR4002154} established local well-posedness in critical homogeneous Besov spaces and probabilistic global results for sufficiently small data.
Agresti and Veraar \cite{MR4703457} treated transport-type Wiener noise in critical Besov spaces and obtained stochastic Serrin criteria.
Accordingly, continuation through integrability conditions and regularization from rough initial states are established mechanisms in stochastic fluid theory.
Their implementation depends on the solution spaces and the structure of the noise.

The $L^p$ approach complements the Hilbert-space theories of stochastic Navier--Stokes equations; see \cite{MR348841,MR1922695,MR2459085,MR3049076} for foundational developments.
In the whole-space Sobolev setting, Kim \cite{MR2865433} obtained local strong solutions for subcritical initial data and, under the stated non-degeneracy and smallness assumptions, global solutions with large probability.
For initial velocities in Lebesgue spaces, the work of Kukavica and collaborators \cite{MR4385406,MR4552356} developed local strong theories on $\mathbb T^3$, reaching the range $p>3$.
Kukavica, Wang and Xu \cite{MR4908978} subsequently established a whole-space local theory in this range with multiplicative cylindrical Wiener noise, using smooth convolution Fourier multipliers.
At the critical exponent, Ayd{\i}n, Kukavica and Xu \cite{MR5107357} constructed local probabilistically strong solutions for general $L^3$ data on $\mathbb T^3$ with multiplicative Wiener forcing, with uniqueness in their specified solution class.
This critical result concerns a different domain and noise setting from the present mixed-noise theory on $\R^3$.

Several works address jump forcing directly in Lebesgue or related spaces.
Fernando, R\"udiger and Sritharan \cite{MR3411979} studied local mild solutions of stochastic Navier--Stokes equations driven by compensated Poisson noise.
Mohan and Sritharan \cite{MR3669657} treated additive Wiener and compensated Poisson forcing with initial data in $L^m(\R^m)$, while Zhu, Brze\'zniak and Liu \cite{MR3991628} developed an $L^p$ theory for the two-dimensional equations with jump noise and negative-order Sobolev initial data.
At the energy level, Motyl \cite{MR3034603} constructed martingale solutions on possibly unbounded domains with Wiener and Poisson forcing, and Chen and collaborators \cite{MR3912800} treated L\'evy-driven nonhomogeneous incompressible flow on bounded domains.
Related strong-solution constructions for discontinuous stochastic fluid equations include the Sobolev theory for Euler equations of Kukavica and Xu \cite{MR3950967} and the hereditary-viscosity model studied by Mohan and Sritharan \cite{MR3942498}.

The stochastic estimates used in these theories are supported by the Banach-space inequalities of Hausenblas \cite{MR2832576}, the $L^p$ valued Poisson integral estimates of Dirksen \cite{MR3265175}, and the maximal regularity and convolution estimates of Brze\'zniak and Hausenblas \cite{MR2529441} and Zhu, Brze\'zniak and Hausenblas \cite{MR3634281}.
The $L^p$ It\^o formulas with jumps of Gy\"ongy and Wu \cite{MR4165650} provide a corresponding basis for energy identities.
In particular, compensated Poisson estimates require both quadratic and $p$th-order integrability.
The treatment of terminal jumps must also respect the predictable pre-jump state.

The companion Paper I \cite{PodderKumar2026} constructs the maximal local $L^p$ solution of the present tamed system with simultaneous Wiener and compensated Poisson forcing for $p>3$.
It establishes pathwise uniqueness, localized dependence on the initial datum, restart at bounded stopping times, and stochastic pasting.
The present paper takes this maximal solution as its starting point.
Its purpose is to identify continuation conditions in the original $L^p$ class and to prove that additional finite-energy assumptions yield global continuation and positive-time Sobolev regularity.

The preceding literature establishes global solvability in Hilbert energy spaces and local solvability in $L^p$.
To our knowledge, the connection between these theories has not been established in the following form: intrinsic continuation of the maximal local $L^p$ solution of the tamed equations on $\R^3$ with simultaneous multiplicative Wiener and compensated Poisson noise, together with global persistence in the same uniqueness class for $L^p\cap L^2$ initial data throughout the range $p>3$.
The present paper establishes this connection under the stated coefficient and coercivity assumptions.
We prove continuation criteria independent of the auxiliary cutoffs and combine them with finite-energy estimates and positive-time Sobolev regularization to continue the canonical solution globally, without an initial $H^1$ assumption or a smallness condition.
The continuation argument includes the terminal Poisson jump in the stopped equation, thereby reconciling the estimates before the endpoint with the admissible state required for restart.
\subsection{Main results and methods}

Throughout the paper we retain the local coefficient assumptions of \cite{PodderKumar2026}.
The continuation criteria use only this local framework; the additional coercivity and gradient-noise hypotheses enter the finite-energy global theory.
All statements concern the maximal solution of \eqref{eq:intro-main-system} on the prescribed stochastic basis.

Our first result characterizes finite-time breakdown through the intrinsic quantity
\begin{equation*}
\mathcal Q_u(t) :=\sup_{0\le s\le t}\|u(s)\|_p^p +\int_0^t\|u(s)\|_{3p}^p\,\dd s, \qquad t<\tau_{\max}.
\end{equation*}
Theorem \ref{thm:3.6-intrinsic-blowup} establishes
\begin{equation*}
\P\left( \tau_{\max}<\infty,\quad \lim_{t\uparrow\tau_{\max}}\mathcal Q_u(t)<\infty \right)=0.
\end{equation*}
Theorem \ref{thm:native-serrin-continuation} strengthens this alternative to a Serrin criterion involving the original spatial norm alone:
\begin{equation*}
\P\left( \tau_{\max}<\infty,\quad \int_0^{\tau_{\max}}\|u(s)\|_p^{r_p}\,\dd s<\infty \right)=0, \qquad r_p:=\frac{2p}{p-3},\qquad \frac{2}{r_p}+\frac{3}{p}=1.
\end{equation*}
In particular,
\begin{equation*}
\limsup_{t\uparrow\tau_{\max}}\|u(t)\|_p=\infty \quad\text{almost surely on }\{\tau_{\max}<\infty\}.
\end{equation*}
Neither criterion requires an additional $L^2$ or $H^1$ assumption on the initial datum.
Both are formulated directly in terms of the maximal solution and are independent of its auxiliary localizing sequence and approximation parameters.

The intrinsic criterion rests on recovery of the local-energy dissipation from a bound on $\mathcal Q_u$, followed by control of the deterministic and stochastic tails at a localized finite endpoint.
This yields a terminal state and a closed stopped formulation in the original uniqueness class.
The endpoint Poisson contribution is retained, with its coefficient evaluated at the predictable left limit.
Restart begins from the terminal value and uses the subsequent stochastic interval, so a jump at the switching time is counted exactly once.
The restart and pasting results of \cite{PodderKumar2026} then give an admissible extension.
The Serrin refinement follows from interpolation between $L^p$ and $L^{3p}$, combined with estimates for the full projected drift and both noise terms.

Our second main result supplies a global continuation mechanism for finite-energy initial data.
In addition to the local assumptions, we impose the taming conditions
\begin{equation}
\label{eq:intro-taming-conditions}
\begin{aligned}
0\le g_N(r)&\le C_\nu r,\qquad g_N'(r)\ge0,\qquad r\ge0, \\
c_{N,\nu}&:=\sup_{r\ge0} \left(\frac r\nu-2g_N(r)\right)<\infty,
\end{aligned}
\end{equation}
and the $L^2$ coefficient and gradient-noise bounds \eqref{eq:2-raw-L2-growth} and \eqref{eq:2-H1}.
The coercivity in \eqref{eq:intro-taming-conditions} controls the convective contribution to the $H^1$ energy balance through the taming dissipation and a lower-order energy term.
Under these hypotheses, Theorem \ref{thm:6.2-finite-energy-global} proves that
\begin{equation*}
u_0\in L^p\bigl(\Omega,\mathcal F_0;L^p_\sigma(\R^3)\bigr) \cap L^2\bigl(\Omega,\mathcal F_0;L^2_\sigma(\R^3)\bigr), \qquad p>3,
\end{equation*}
implies $\P(\tau_{\max}=\infty)=1$.
No smallness condition on $u_0$ is required.
The resulting global strong solution is pathwise unique in the local-energy class of Paper I.
For every finite $T>0$, almost surely,
\begin{equation*}
u\in \mathbb D\bigl([0,T];L^p_\sigma\cap L^2_\sigma\bigr) \cap L^2(0,T;H^1_\sigma), \qquad \mathcal Q_u(T)<\infty.
\end{equation*}
Here and below, the spatial domain is $\R^3$, and intersections are equipped with their sum norms.

The transition from finite energy to the Sobolev regularity needed for global continuation is quantified by a time-weighted estimate.
Proposition \ref{prop:4.4-time-weighted-H1} and Theorem \ref{thm:6.2-finite-energy-global} give
\begin{equation*}
\E\left[ \sup_{0<t\le T}t\|u(t)\|_{H^1}^2 +\nu\int_0^T t\|u(t)\|_{H^2}^2\,\dd t \right] \le C_T\left(1+\E\|u_0\|_2^2\right).
\end{equation*}
Thus, for every $0<\delta<T$, almost surely,
\begin{equation*}
u\in \mathbb D\bigl([\delta,T];L^p_\sigma\cap H^1_\sigma\bigr) \cap L^2(\delta,T;H^2_\sigma).
\end{equation*}
The time weight accommodates the possible singularity of the gradient at the initial time.
It also supplies admissible $H^1$ restart data at positive stopping times, after localization of the corresponding moments.
If $u_0$ already belongs to $L^2(\Omega;H^1_\sigma)$, the $H^1$ persistence and global energy estimate hold from time zero; see Theorems \ref{thm:4.3-H1-persistence} and \ref{thm:6.1-global-continuation}.

Transferring these Hilbert-space bounds to the $L^p$ continuation theory requires estimates for the projected nonlinearities.
Although convection cancels in the $L^2$ balance, the vector $J_p(u)=|u|^{p-2}u$ is generally not solenoidal.
The $L^p$ energy argument therefore retains both contributions to the drift pressure:
\begin{equation*}
-\Delta\pi =\sum_{i,j=1}^3\partial_i\partial_j(u_i u_j) +\nabla\cdot\bigl(g_N(|u|^2)u\bigr).
\end{equation*}
Lemma \ref{lem:5.1-pressure} and Proposition \ref{prop:5.3-stopped-Lp} control these terms and yield a native $L^p$ estimate weighted by an integrating factor associated with
\begin{equation*}
\Gamma(t) :=\int_0^t \left(1+\|u(s)\|_{H^1}^{2p}+\|u(s)\|_\infty\right)\,\dd s
\end{equation*}
in the $H^1$ regime.
Localizing this integral gives estimates independent of the intrinsic exit level.
The Serrin criterion also exhibits the connection directly: Sobolev embedding controls its spatial norm for $3<p\le6$, and interpolation between $H^1$ and $H^2$ supplies the required time integrability for $p>6$.

The energy estimates are obtained through symmetric spatial regularization, which preserves the $L^2$ cancellation and the coercive $H^1$ balance.
Convergence in the native path norm and weak lower semicontinuity transfer these bounds to the maximal solution.
The jump estimates use the quadratic and $p$th moments stipulated in the local theory.
The Hilbert-space estimates require only the corresponding second moments.
Controlling the nonnegative jump remainders through the Poisson integral and its compensator avoids an additional fourth-moment assumption.

Global pathwise regularity is distinguished from unweighted moment estimates.
Under the basic global hypotheses, $\mathcal Q_u(T)<\infty$ almost surely for every finite $T$.
For $3<p\le6$, if $u_0\in L^p(\Omega;H^1_\sigma)$ and the projected jump coefficient satisfies the additional $H^1$ $p$th-moment growth assumption, Proposition \ref{prop:6.3-unweighted-intrinsic-moments} gives
\begin{equation*}
\E\left[ \mathcal Q_u(T) +\int_0^T\int_{\R^3} |u(s,x)|^{p-2}|\nabla u(s,x)|^2\,\dd x\,\dd s \right] \le C_T\left(1+\E\|u_0\|_{H^1}^p\right).
\end{equation*}
No exponential moment of $\Gamma$ is required for this conclusion.

Finally, Theorem~\ref{thm:6.4-global-continuous-dependence} establishes continuous dependence of the global solution on the initial datum in the native $L^p$ topology.
More precisely, for initial data in the global existence class $L^p(\Omega;L^p_\sigma)\cap L^2(\Omega;L^2_\sigma)$, convergence of the initial data in probability in $L^p_\sigma$ implies convergence of the corresponding solutions in probability, uniformly in $L^p_\sigma$ on every finite time interval and in $L^p(0,T;L^{3p}_\sigma)$.
No convergence of the initial data in $L^2_\sigma$ is required.
\subsection{Organization of the paper}
The paper is organized as follows.
\cref{Section 2} specifies the stochastic framework and coefficient assumptions and records the local results used from Paper I \cite{PodderKumar2026}.
\cref{Sec:3} develops endpoint completion and proves the intrinsic and Serrin continuation criteria.
\cref{Section 4} establishes $L^2$ and $H^1$ persistence and time-weighted regularization.
\cref{Section 5} treats the pressure and noise contributions to the stopped $L^p$ estimate.
\cref{Section 6} proves global continuation for $H^1$ and then $L^p\cap L^2$ initial data, derives the additional unweighted intrinsic moment bounds under stronger moment assumptions, and establishes global continuous dependence on the initial datum in the native $L^p$ topology.

\section{Preliminaries, assumptions, and the local theory}
\label{Section 2}

Throughout the paper, $p\in(3,\infty)$, $\nu>0$, and $N>0$ are fixed.
All spatial function spaces are defined on $\R^3$.
We first specify the analytic and stochastic framework, then record the local theory of Paper~I \cite{PodderKumar2026} in the form required for continuation.
The assumptions used for the Hilbert-space estimates are distinguished from those inherited from the local construction.

\subsection{Spatial notation and regularization}

We write $\|\cdot\|_q$ for the norm of $L^q(\R^3;\R^3)$, with the same notation for scalar and tensor fields when the target space is clear.
Let $C^\infty_{c,\sigma}$ denote the smooth, compactly supported, divergence-free vector fields, and set
\begin{equation}
\label{eq:2-solenoidal-spaces}
\begin{aligned}
L^q_\sigma &:=\overline{C^\infty_{c,\sigma}}^{L^q} =\{v\in L^q:\nabla\cdot v=0\text{ in }\mathcal D'\}, \qquad 1<q<\infty, \\
E&:=L^p_\sigma,\qquad \mathcal H:=L^2_\sigma.
\end{aligned}
\end{equation}
We use $W^{k,q}$ for the usual Sobolev spaces and $H^s=W^{s,2}$ for the Bessel potential spaces, including the negative orders used in distributional formulations.
With the Fourier convention $$ \widehat f(\xi)=\int_{\R^3}e^{-2\pi i x\cdot\xi}f(x)\,\dd x, $$ the $H^s$ norm is $\|(1+4\pi^2|\xi|^2)^{s/2}\widehat f\|_2$.
The corresponding solenoidal subspace is denoted by $H^s_\sigma$.
In particular, $\|v\|_{H^1}^2=\|v\|_2^2+\|\nabla v\|_2^2$.
Intersections carry their sum norms; $\langle\cdot,\cdot\rangle$ denotes a distributional or dual pairing, and $(\cdot,\cdot)_2$ the $L^2$ inner product.
Spatial derivatives are understood weakly.

The Helmholtz--Leray projection is $$ \mathcal P=I+\nabla(-\Delta)^{-1}\nabla\cdot, \qquad (\mathcal Pv)_i=v_i+\sum_{j=1}^3R_iR_jv_j, \qquad R_j=\partial_j(-\Delta)^{-1/2}.
$$ It is bounded on $W^{k,q}$, $1<q<\infty$, and on the Hilbert Sobolev scale, commutes with spatial derivatives, and is the orthogonal projection onto $H$ in $L^2$.
Our tensor convention is $$ (a\otimes b)_{ij}:=b_i a_j, \qquad (\nabla\cdot F)_i:=\sum_{j=1}^3\partial_jF_{ij}, \qquad (\nabla\psi)_{ij}:=\partial_j\psi_i.
$$ Thus $\nabla\cdot(a\otimes b)=(a\cdot\nabla)b$ when $\nabla\cdot a=0$.
Writing $$ (\mathcal P^{(1)}F)_{ij} :=F_{ij}+\sum_{\ell=1}^3R_iR_\ell F_{\ell j}, $$ we have $\nabla\cdot\mathcal P^{(1)}F=\mathcal P\nabla\cdot F$ in distributions.

Let $S_\nu(t)=e^{\nu t\Delta}$.
We retain the Gaussian regularization of \cite{MR4908978} :
\begin{equation}
\label{eq:2-smoothing}
\begin{aligned}
S_m f:={\mathrm P}_{\le m}f &:=\mathcal F^{-1}\bigl(e^{-|\xi|^2/m^2}\widehat f(\xi)\bigr) =K_m*f, \\
K_m(x)&:=m^3K(mx),\qquad K(x):=\pi^{3/2}e^{-\pi^2|x|^2},\qquad m\in\mathbb N.
\end{aligned}
\end{equation}
The operators $S_m$ are self-adjoint on $L^2$ and commute with $\mathcal P$, spatial derivatives, and $S_\nu(t)$.
They are smoothing operators, not projections.
For every Banach space $X$,
\begin{equation}
\label{eq:2-smoothing-properties}
\begin{aligned}
\|S_m f\|_{L^q(X)}&\le\|f\|_{L^q(X)}, &&1\le q\le\infty, \\
S_m f&\longrightarrow f\quad\text{in }L^q(X), &&1\le q<\infty, \\
\|S_m f-S_n f\|_{L^q(X)} &\le C_K|m^{-1}-n^{-1}|\,\|\nabla f\|_{L^q(X^3)}, &&1\le q<\infty,
\end{aligned}
\end{equation}
where the last assertion holds for $f\in W^{1,q}(\R^3;X)$ and $C_K=\int_{\R^3}|x|K(x)\,\dd x$.
We also use $$ \|\nabla^j S_m f\|_{L^q(X)} \le C_{j,r,q}m^{j+3(1/r-1/q)}\|f\|_{L^r(X)}, \qquad 1\le r\le q\le\infty,\quad j\in\mathbb N_0.
$$ These bounds follow from the scalar convolution kernel and apply equally to vector fields, tensors, and Wiener coefficients; see \cite{PodderKumar2026}, Section~2.

We shall repeatedly use interpolation and the Sobolev inequalities $$ \|v\|_6\le C\|\nabla v\|_2, \qquad \|v\|_\infty\le C\|v\|_{H^1}^{1/2}\|v\|_{H^2}^{1/2} \le C\|v\|_{H^2}, $$ on their respective Sobolev domains.
In particular,
\begin{equation}
\label{eq:2-native-Sobolev}
\|v\|_{3p}^p =\bigl\||v|^{p/2}\bigr\|_6^2 \le C\left\|\nabla\bigl(|v|^{p/2}\bigr)\right\|_2^2, \qquad |v|^{p/2}\in H^1.
\end{equation}
Since the spatial domain is unbounded, an $L^p$ assumption does not imply finite $L^2$ energy.
Every additional condition on the initial datum is therefore stated explicitly as an intersection of spaces.

\subsection{Stochastic basis, integrals, and path spaces}
\label{subsec:2-stochastic}

Let $(\Omega,\mathcal F,(\mathcal F_t)_{t\ge0},\P)$ be a complete filtered probability space satisfying the usual conditions, and let $\mathscr P$ denote its predictable $\sigma$-algebra.
Let $\mathcal U$ be a separable real Hilbert space, $W$ a cylindrical $\mathcal U$-Wiener process, and $N$ a Poisson random measure on $(0,\infty)\times Z$, where $(Z,\mathcal Z,\mu)$ is $\sigma$-finite.
The compensator of $N$ relative to $(\mathcal F_t)$ is $\dd t\,\mu(\dd z)$, and $$ \widetilde N(\dd t,\dd z):=N(\dd t,\dd z)-\dd t\,\mu(\dd z).
$$ The noises are independent and jointly compatible with the filtration: for every deterministic $t\ge0$, their joint increments after $t$ are independent of $\mathcal F_t$.
This is the hypothesis used for restart in \cref{thm:2-Paper-I}(iii).
No additional Euclidean L\'evy-measure condition is imposed on $\mu$; the required integrability is specified through the jump coefficient.

For $1<q<\infty$, set
\begin{equation}
\label{eq:2-gamma-notation}
\begin{aligned}
\gamma_q&:=\gamma\bigl(\mathcal U;L^q(\R^3;\R^3)\bigr), \\
\mathbb L^q&:=L^q\bigl(\R^3;\gamma(\mathcal U;\R^3)\bigr) \simeq\gamma_q.
\end{aligned}
\end{equation}
The identification is the $\gamma$-Fubini isomorphism, with constants depending only on $q$.
For spatial derivatives, the finite-dimensional target is replaced by the appropriate tensor space.
The ideal property gives the bounded action of $\mathcal P$ and $S_m$ on these coefficient spaces.
The spaces $L^p$ and $E$ are UMD and have martingale type~$2$.
Stochastic integrals are defined by completion of elementary predictable integrands and, when necessary, localization.
For predictable $B$ and $r\ge2$,
\begin{equation}
\label{eq:2-Wiener-maximal}
\E\sup_{t\le T} \left\|\int_0^t B(s)\,\dd W_s\right\|_p^r \le C_{p,r}\E \left(\int_0^T\|B(s)\|_{\gamma_p}^2\,\dd s\right)^{r/2}.
\end{equation}
For a $\mathscr P\otimes\mathcal Z$-measurable $L^p$-valued integrand $J$,
\begin{equation}
\label{eq:2-Poisson-maximal}
\begin{aligned}
\E\sup_{t\le T} \left\|\int_{(0,t]\times Z}J(s,z)\, \widetilde N(\dd s,\dd z)\right\|_p^p \le&C_p\E \left(\int_0^T\int_Z\|J(s,z)\|_p^2 \,\mu(\dd z)\,\dd s\right)^{p/2} \\
&+C_p\E\int_0^T\int_Z\|J(s,z)\|_p^p \,\mu(\dd z)\,\dd s.
\end{aligned}
\end{equation}
We refer to \cite{MR2330977,MR3617205} for Wiener integration and to \cite{MR3265175} for the Poisson estimate.
The latter is used as an upper bound in the two displayed integrand norms.
Both inequalities apply to stopped integrands.

For a real local martingale $M$ starting at zero, the Davis--BDG inequality gives $$ \E\sup_{t\le T}|M(t)|\le C\E[M]_T^{1/2}, $$ with preliminary localization when required.
We also use the following consequence of the compensator identity \cite{MR1464694}: if $K\ge0$ is predictable and $\E\int_0^T\int_ZK\,\mu(\dd z)\,\dd s<\infty$, then $$ \E\sup_{t\le T} \left|\int_{(0,t]\times Z}K(s,z)\, \widetilde N(\dd s,\dd z)\right| \le2\E\int_0^T\int_ZK(s,z)\,\mu(\dd z)\,\dd s.
$$ Here the compensated integral is the difference between the integrable Poisson integral and its compensator.
For $K$ equal to a squared jump norm, this argument requires only the corresponding second moment of the jump coefficient.

For a separable Banach space $X$, $\mathbb D([0,T];X)$ denotes the space of c\`adl\`ag paths, equipped with the Skorokhod $J_1$ topology whenever a path-space topology is invoked.
Following Paper~I, $L^r(\Omega;\mathbb D([0,T];X))$ denotes adapted c\`adl\`ag processes, identified up to indistinguishability, for which $$ \E\sup_{t\le T}\|v(t)\|_X^r<\infty.
$$ Convergence in this notation means convergence in the expected supremum norm.
This is a convention for a space of processes, rather than a Bochner-space identification with the $J_1$ path space.
It implies uniform convergence in probability and hence convergence in probability in the $J_1$ topology.
The notation on a random closed interval has the same meaning, with the supremum taken through its terminal time.

For an adapted c\`adl\`ag process, $v_-$ is predictable; we set $v(0-)=v(0)$.
Moreover, $v=v_-$ for $\dd\P\otimes\dd t$-almost every point.
For a finite stopping time $\rho$, $v(\rho)$ is $\mathcal F_\rho$-measurable and $v(\rho-)$ is $\mathcal F_{\rho-}$-measurable.
If $\rho\le\theta$ are stopping times, the processes
\begin{equation}
\label{eq:2-predictable-intervals}
\mathbf1_{(0,\theta]},\qquad \mathbf1_{(\rho,\theta]},\qquad \mathbf1_A\mathbf1_{(\rho,\theta]},\quad A\in\mathcal F_\rho,
\end{equation}
are predictable.
Integrands defined on such intervals are extended by zero elsewhere.
Accordingly, stopping retains a possible Poisson jump at the terminal time, whereas a restart at $\rho$ uses $(\rho,\theta]$ and excludes the jump already contained in the restart datum.

The Poisson random measure has no mark at a positive predictable stopping time.
Indeed, for bounded predictable $\theta>0$ and $B\in\mathcal Z$ with $\mu(B)<\infty$, $$ \E N(\{\theta\}\times B) =\mu(B)\E\int_0^\infty\mathbf1_{\{s=\theta\}}\,\dd s=0.
$$ A countable exhaustion of $Z$ and localization in time give the general assertion.
This fact will be applied only after predictability of the relevant lifetime has been established.

\subsection{The equation and coefficient hypotheses}
\label{subsec:2-hypotheses}

The coefficients are defined on the ambient velocity space $L^p$.
We assume that
\begin{equation}
\label{eq:2-coefficient-measurability}
\begin{aligned}
\sigma&:\Omega\times\R_+\times L^p\longrightarrow\gamma_p, \\
G&:\Omega\times\R_+\times L^p\times Z\longrightarrow L^p
\end{aligned}
\end{equation}
are respectively $\mathscr P\otimes\mathcal B(L^p)$- and $\mathscr P\otimes\mathcal B(L^p)\otimes\mathcal Z$-measurable.
Suppressing the dependence on $\omega$, write $$ \Sigma(t,v):=\mathcal P\sigma(t,v), \qquad \mathcal G(t,v,z):=\mathcal PG(t,v,z).
$$ The equation is
\begin{equation}
\label{eq:1_Main}
\begin{cases}
\begin{aligned}
\dd u=&\bigl[\nu\Delta u-\mathcal P((u\cdot\nabla)u) -\mathcal P(g_N(|u|^2)u)\bigr]\,\dd t \\
&+\Sigma(t,u(t-))\,\dd W_t +\displaystyle\int_Z\mathcal G(t,u(t-),z)\, \widetilde N(\dd t,\dd z),
\end{aligned}
\\
\nabla\cdot u=0,\qquad u(0)=u_0.
\end{cases}
\end{equation}
Using $u(t-)$ in the Wiener coefficient merely selects a predictable representative.
In the Poisson coefficient, the left limit is part of the equation.

All coefficient estimates below hold outside a common $\dd\P\otimes\dd t$-null set, independent of the state variables, with constants uniform in $(\omega,t)$.
An estimate involving an additional spatial norm is required on the intersection where its right-hand side is finite.
The coefficient realizations in different spatial spaces agree as distributions on their common domains.

\paragraph{Local taming assumptions.}
We assume $g_N\in C^1([0,\infty))$ and retain the bounds used in Paper~I:
\begin{equation*}
\label{eq:2-taming-local}
\begin{aligned}
0\le g_N(r)&\le C_\nu r,\qquad r\ge0, \\
|g_N(|a|^2)a-g_N(|b|^2)b| &\le C_{N,\nu}(|a|^2+|b|^2)|a-b|, \qquad a,b\in\R^3.
\end{aligned}
\tag{T0}
\end{equation*}
In particular, $|g_N(|a|^2)a|\le C_\nu|a|^3$.
The difference bound holds for the standard taming functions with bounded derivative.

\paragraph{Local Wiener assumptions.}
Fix $q_\sigma\in(p,3p/2)$ , with $q_\sigma$ sufficiently close to $3p/2$ for the interpolation arguments used there.
We assume
\begin{equation*}
\label{eq:2-W-local}
\begin{aligned}
\|\sigma(t,v)\|_{\mathbb L^p} &\le C\bigl(1+\|v\|_{q_\sigma}^2\bigr), \\
\|\sigma(t,v)-\sigma(t,w)\|_{\mathbb L^p} &\le C\left\|(|v|+|w|)^{1/2}|v-w|\right\|_p, \\
\|\nabla\sigma(t,v)\|_{\mathbb L^p} &\le C\bigl(1+\|v\|_{3p/2}^2\bigr).
\end{aligned}
\tag{W}
\end{equation*}
Thus $(3p/2)^-$ in \cite{PodderKumar2026} denotes the fixed exponent $q_\sigma$ used here.

\paragraph{Local jump assumptions.}
For a fixed $\alpha\in[0,2/3)$ and each $r\in\{2,p\}$, we assume
\begin{equation*}
\label{eq:2-J-local}
\begin{aligned}
\int_Z\|G(t,v,z)\|_p^r\,\mu(\dd z) &\le C_r(1+\|v\|_p^r), \\
\int_Z\|G(t,v,z)-G(t,w,z)\|_p^r\,\mu(\dd z) &\le C_r\left\|(|v|+|w|)^\alpha|v-w|\right\|_p^r, \\
\int_Z\|\nabla G(t,v,z)\|_p^r\,\mu(\dd z) &\le C_r(1+\|v\|_{3p/2}^r).
\end{aligned}
\tag{J}
\end{equation*}
When $\alpha=0$, the weight in the difference estimate is understood as~$1$.

\paragraph{$L^2$ coefficient bounds.}
The final local theorem of Paper~I also uses the unprojected bound
\begin{equation}
\label{eq:2-raw-L2-growth}
\|\sigma(t,v)\|_{\gamma_2}^2 +\int_Z\|G(t,v,z)\|_2^2\,\mu(\dd z) \le C_0(1+\|v\|_2^2),\qquad v\in L^p\cap L^2.
\end{equation}
We retain this coefficient hypothesis throughout.
It does not require the initial datum to belong to $L^2$ in the intrinsic continuation theory.
Orthogonality of $\mathcal P$ yields
\begin{equation*}
\label{eq:2-H0}
\|\Sigma(t,v)\|_{\gamma_2}^2 +\int_Z\|\mathcal G(t,v,z)\|_2^2\,\mu(\dd z) \le C_0(1+\|v\|_2^2),\qquad v\in E\cap H.
\tag{H0}
\end{equation*}

\paragraph{Additional assumptions for the Hilbert-space estimates.}
For the energy and global continuation arguments, we supplement \eqref{eq:2-taming-local} with
\begin{equation*}
\label{eq:2-taming-coercive}
g_N'(r)\ge0,\qquad c_{N,\nu}:=\sup_{r\ge0}\left(\frac r\nu-2g_N(r)\right)<\infty.
\tag{T}
\end{equation*}
The $H^1$ persistence and positive-time regularization arguments also use
\begin{equation*}
\label{eq:2-H1}
\begin{aligned}
&\|\nabla\Sigma(t,v)\|_{\gamma(\mathcal U;L^2)}^2 +\int_Z\|\nabla\mathcal G(t,v,z)\|_2^2\,\mu(\dd z) \\
&\hspace{3em}\le C_1(1+\|v\|_{H^1}^2), \qquad v\in E\cap H^1_\sigma.
\end{aligned}
\tag{H1}
\end{equation*}
The tensor target in the first norm is understood as above.
Together with \eqref{eq:2-H0}, this gives the corresponding $H^1$ realizations of the noise at an $H^1$ state.
Only a second jump moment in $H^1$ is included in these standing assumptions.

\paragraph{Consequences in $L^p$.}
The value of $\sigma(t,0)$ is uniformly bounded by \eqref{eq:2-W-local}.
Applying its difference estimate with $w=0$ and then projecting gives
\begin{equation*}
\label{eq:2-P1}
\|\Sigma(t,v)\|_{\gamma_p} \le C_p\bigl(1+\||v|^{3/2}\|_p\bigr) =C_p\bigl(1+\|v\|_{3p/2}^{3/2}\bigr), \quad v\in E\cap L^{3p/2}.
\tag{P1}
\end{equation*}
Likewise, \eqref{eq:2-J-local} implies
\begin{equation*}
\label{eq:2-P2}
\int_Z\|\mathcal G(t,v,z)\|_p^r\,\mu(\dd z) \le C_r(1+\|v\|_p^r), \qquad v\in E,\quad r\in\{2,p\}.
\tag{P2}
\end{equation*}
Thus \eqref{eq:2-P1}--\eqref{eq:2-P2}, used in \cref{Sec:3}, \cref{Section 5}, impose no additional assumptions.
The difference and spatial derivative bounds remain part of the hypotheses inherited from Paper~I.

For the pressure estimates, set $J_p(v):=|v|^{p-2}v$.
This field need not be divergence-free, even when $v$ is, so the Leray projection must be retained in an $L^p$ energy pairing.
For $v\in C^\infty_{c,\sigma}$, define the deterministic pressure by the Newtonian potential of the right-hand side below:
\begin{equation}
\label{eq:2-pressure-convention}
\begin{aligned}
-\Delta\pi(v) &=\sum_{i,j=1}^3\partial_i\partial_j(v_i v_j) +\nabla\cdot\bigl(g_N(|v|^2)v\bigr), \\
\nu\Delta v-\mathcal P((v\cdot\nabla)v) -\mathcal P(g_N(|v|^2)v) &=\nu\Delta v-(v\cdot\nabla)v-g_N(|v|^2)v-\nabla\pi(v).
\end{aligned}
\end{equation}
Lemma~\ref{lem:5.1-pressure} establishes the pressure bounds and extends this convention to the required energy class.

\subsection{Closed local representatives and the input from Paper I}

Continuation requires a terminal state from which the equation can be restarted.
We therefore make explicit the closed-interval convention already present in the stopped formulation of Paper~I.
It concerns the terminal time of each local representative and does not prescribe a value at the maximal lifetime.

For finite stopping times $\alpha\le\beta$ and a process $v$ on $[\alpha,\beta]$, put
\begin{equation}
\label{eq:2-local-energy}
\mathcal E_p(v;\alpha,\beta) :=\E\left[ \sup_{\alpha\le s\le\beta}\|v(s)\|_p^p +\int_\alpha^\beta \left\|\nabla\bigl(|v(s)|^{p/2}\bigr)\right\|_2^2\,\dd s \right].
\end{equation}
For $\psi\in C_c^\infty(\R^3;\R^3)$ and $v\in E$, define $$ \mathcal B_\psi(v) :=\nu\langle v,\Delta\psi\rangle +\langle v\otimes v,\nabla\mathcal P\psi\rangle -\langle g_N(|v|^2)v,\mathcal P\psi\rangle.
$$ The pairings are well defined for $p>3$ by the Leray bounds and \eqref{eq:2-taming-local}; in particular, $$ |\mathcal B_\psi(v)| \le C_\psi\bigl(\|v\|_p+\|v\|_p^2+\|v\|_p^3\bigr).
$$

\begin{definition}
[Admissible closed local solution] \label{def:2-local-solution} Let $u_0$ be an $\mathcal F_0$-measurable $E$-valued random variable.
An admissible closed local solution of \eqref{eq:1_Main} is a pair $(v,\theta)$ such that:
\begin{enumerate}
\item[(i)] $\theta$ is an almost surely positive finite stopping time; \item[(ii)] $v$ is an adapted $E$-valued c\`adl\`ag process, stopped at $\theta$, with $v(0)=u_0$ and $\mathcal E_p(v;0,\theta)<\infty$; \item[(iii)] for every $\psi\in C_c^\infty(\R^3;\R^3)$, the identity
\begin{equation}
\label{eq:2-stopped-equation}
\begin{aligned}
\langle v(t),\psi\rangle =&\langle u_0,\psi\rangle +\int_0^{t\wedge\theta}\mathcal B_\psi(v(s))\,\dd s \\
&+\int_0^{t\wedge\theta} \left\langle\Sigma(s,v(s-))^*\psi,\dd W_s\right\rangle_{\mathcal U}+\int_{(0,t\wedge\theta]\times Z} \langle\mathcal G(s,v(s-),z),\psi\rangle\, \widetilde N(\dd s,\dd z)
\end{aligned}
\end{equation}
holds indistinguishably in $t\ge0$.
\end{enumerate}
The stochastic integrals are interpreted by localization in the integrand norms of \eqref{eq:2-Wiener-maximal}--\eqref{eq:2-Poisson-maximal}.
\end{definition}

The word \emph{strong} refers to the prescribed stochastic basis and noises; the equation is interpreted weakly in space.
The closed formulation \eqref{eq:2-stopped-equation} includes any jump at $\theta$ and gives an $\mathcal F_\theta$-measurable restart datum $v(\theta)$ with finite $p$th moment.
It does not require $\theta$ to be predictable or the post-jump value $v(\theta)$ to satisfy a deterministic exit-level bound.
Auxiliary estimates may also be stopped at times equal to zero, in which case the time integrals are empty.
Such zero lifetimes are not included in the family of admissible positive lifetimes.

By \eqref{eq:2-native-Sobolev}, an admissible solution also satisfies
\begin{equation}
\label{eq:2-local-intrinsic-energy}
\E\int_0^\theta\|v(s)\|_{3p}^p\,\dd s \le C\mathcal E_p(v;0,\theta).
\end{equation}
A solution on a stochastic interval $[0,\zeta)$ belongs to the local-energy uniqueness class if it admits increasing finite positive stopping times $\theta_n\uparrow\zeta$ and admissible closed representatives on $[0,\theta_n]$ that agree on every closed overlap and represent the solution before $\zeta$.
Strict inequality $\theta_n<\zeta$ is not part of this definition.
When $\zeta=\infty$ almost surely, this is the global local-energy class used below.

Localization on an event $A\in\mathcal F_0$ means that the weak formulation and the energy bounds are restricted to $A$.
The indicator $\mathbf1_A$ is predictable and may be brought inside the stochastic integrals.
For the extension to initial data without moment assumptions at the end of \cref{Section 6}, the uniqueness class is understood after localization on increasing $\mathcal F_0$-measurable events exhausting $\Omega$; the family of attainable lifetimes below continues to use Definition~\ref{def:2-local-solution} with its stated moment requirement.

We now collect the local existence, maximality, restart, and pasting results of \cite{PodderKumar2026}.
The maximality statement is Proposition~2.4 of that paper; its proof is given in Section~6.
The stronger taming and gradient-noise assumptions used for the Hilbert-space estimates are not required for this input.

\begin{theorem}
[Local existence, maximality, and restart from Paper I] \label{thm:2-Paper-I} Assume the stochastic hypotheses of Subsection~\ref{subsec:2-stochastic}, the measurability in \eqref{eq:2-coefficient-measurability}, and \eqref{eq:2-taming-local}, \eqref{eq:2-W-local}, \eqref{eq:2-J-local}, and \eqref{eq:2-raw-L2-growth}.
Let $u_0\in L^p(\Omega,\mathcal F_0;E)$.
\begin{enumerate}
\item[(i)] \emph{Local existence and uniqueness} (Paper~I, Theorem~2.3 \cite{PodderKumar2026}).
There exist an $(\mathcal F_t)$-stopping time $\tau$ with $\P(\tau>0)=1$ and an adapted $E$-valued c\`adl\`ag local strong solution $u$ on $[0,\tau]$ such that $$u\in L^p\bigl(\Omega;\mathbb D([0,\tau];E)\bigr) \cap L^p\bigl(\Omega;L^p(0,\tau;L^{3p})\bigr),$$ and $$\E\left[ \sup_{0\le s\le\tau}\|u(s)\|_p^p + \int_0^\tau \left\|\nabla\bigl(|u(s)|^{p/2}\bigr)\right\|_2^2\,\dd s \right] \le C\bigl(1+\E\|u_0\|_p^p\bigr).$$ The solution is pathwise unique among local strong solutions satisfying this local-energy regularity.

In particular, let $ \theta:=\tau\wedge1 $ and $v:=u(\cdot\wedge\theta)$.
$(v,\theta)$ is an admissible closed local solution in the sense of Definition~\ref{def:2-local-solution}, with $ 0<\theta\le1 \ \P\text{-a.s.} $ There exists an admissible solution $(v,\theta)$ with $0<\theta\le1$ almost surely and
\begin{equation}
\label{eq:2-imported-local-bound}
\mathcal E_p(v;0,\theta) \le C\bigl(1+\E\|u_0\|_p^p\bigr).
\end{equation}
Any two admissible solutions with the same initial datum and driving noises agree indistinguishably on their closed common lifetime.

\item[(ii)] \emph{Maximality} (Paper~I, Proposition~2.4 \cite{PodderKumar2026}).
Fix the initial datum, stochastic basis, coefficient maps, and noises, and let $\mathscr T$ be the family of finite lifetimes attainable in Definition~\ref{def:2-local-solution}.
There is a maximal construction $(u,(\tau_n),\tau_{\max})$ with $$ \tau_{\max}=\operatorname*{ess\,sup}_{\theta\in\mathscr T}\theta, \qquad 0<\tau_n\le n, \qquad \tau_n\uparrow\tau_{\max}\quad\P\text{-a.s.} $$ Each $\tau_n\in\mathscr T$ has an admissible closed stopped representative $U_n$.
After a common null modification, these representatives agree on all closed overlaps and determine an adapted, locally $E$-valued c\`adl\`ag process $u$ on $[0,\tau_{\max})$.
Every other solution in the local-energy uniqueness class has lifetime at most $\tau_{\max}$ and agrees with $u$ before that lifetime.
For a closed local solution, agreement includes its endpoint whenever that endpoint lies strictly below $\tau_{\max}$.
The maximal lifetime is unique up to almost sure equality, and the maximal process is unique up to indistinguishability on $[0,\tau_{\max})$.

\item[(iii)] \emph{Restart} (Paper~I, Lemma~6.2 \cite{PodderKumar2026}).
Let $\rho$ be a bounded stopping time, and choose an $\mathcal F_\rho$-measurable $\xi\in L^p(\Omega;E)$.
Define $$
\begin{aligned}
\mathcal F_s^\rho&:=\mathcal F_{\rho+s}, \\
W_s^\rho(h)&:=W_{\rho+s}(h)-W_\rho(h), \qquad h\in\mathcal U, \\
N^\rho((0,s]\times B)&:=N((\rho,\rho+s]\times B), \qquad B\in\mathcal Z,\quad\mu(B)<\infty.
\end{aligned}
$$ The shifted filtration and noises satisfy the same stochastic hypotheses.
The translated coefficients $\sigma^\rho(\omega,s,a)=\sigma(\omega,\rho(\omega)+s,a)$ and $G^\rho(\omega,s,a,z)=G(\omega,\rho(\omega)+s,a,z)$ retain the required measurability and structural constants.
There is a pathwise unique restarted branch $\widehat v$ on $[\rho,\widehat\theta]$ satisfying
\begin{equation}
\label{eq:2-imported-restart}
\begin{aligned}
\rho<\widehat\theta&\le\rho+1\quad\P\text{-a.s.}, \qquad \widehat v(\rho)=\xi, \\
\mathcal E_p(\widehat v;\rho,\widehat\theta) &\le C\bigl(1+\E\|\xi\|_p^p\bigr).
\end{aligned}
\end{equation}
Its extension by zero before $\rho$ and by its terminal value after $\widehat\theta$ is adapted and c\`adl\`ag.
For every shifted stopping time $S$, $\rho+S$ is an original stopping time and $\mathcal F_S^\rho=\mathcal F_{\rho+S}$.

\item[(iv)] \emph{Pasting} (Paper~I, Lemma~6.3 \cite{PodderKumar2026}).
The family $\mathscr T$ is upward directed.
If $(v,\rho)$ is admissible, $\rho$ is bounded, and the branch in \textup{(iii)} starts from $\xi=v(\rho)$, then $$ w(t):=
\begin{cases}
v(t),&0\le t\le\rho, \\
\widehat v(t\wedge\widehat\theta),&t>\rho
\end{cases}
$$ is its unique admissible extension through $\widehat\theta$, and $$ \mathcal E_p(w;0,\widehat\theta) \le\mathcal E_p(v;0,\rho) +\mathcal E_p(\widehat v;\rho,\widehat\theta)<\infty.
$$ The original and restarted Poisson integrals are supported on $(0,\rho]$ and $(\rho,\widehat\theta]$, respectively.
\end{enumerate}
The constants in \eqref{eq:2-imported-local-bound} and \eqref{eq:2-imported-restart} depend only on the structural parameters and are independent of auxiliary cutoffs and the bounded restart time.
\end{theorem}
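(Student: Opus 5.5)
The theorem is a compilation of results proved in Paper~I \cite{PodderKumar2026}, so the plan is not to reprove the local theory. Instead I would check that each cited statement, in the form proved there, gives the formulation recorded here under exactly the hypotheses listed, namely \eqref{eq:2-taming-local}, \eqref{eq:2-W-local}, \eqref{eq:2-J-local}, \eqref{eq:2-raw-L2-growth} and the stochastic setting of Subsection~\ref{subsec:2-stochastic}. Only the closed-interval reformulation in (i) and the endpoint clause in (ii) need an additional argument. For (i), I would quote Paper~I, Theorem~2.3, which gives $\tau$, $u$, the local-energy bound and pathwise uniqueness. I would then set $\theta:=\tau\wedge1$ and $v:=u(\cdot\wedge\theta)$ and verify Definition~\ref{def:2-local-solution} item by item:
\begin{itemize}
\item[(a)] $\theta$ is a finite, almost surely positive stopping time, since $\P(\tau>0)=1$.
\item[(b)] $v$ is adapted, c\`adl\`ag and stopped, and $\mathcal E_p(v;0,\theta)\le\mathcal E_p(u;0,\tau)$, because both the supremum and the dissipation integral are monotone in the endpoint. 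This gives \eqref{eq:2-imported-local-bound}.
\item[(c)] The identity \eqref{eq:2-stopped-equation} follows by testing the strong equation of Paper~I against $\psi\in C_c^\infty$. The bounds on $\mathcal P$ and $\nabla\mathcal P$ turn the drift into $\mathcal B_\psi$, and the estimates \eqref{eq:2-Wiener-maximal}--\eqref{eq:2-Poisson-maximal} together with \eqref{eq:2-P1}--\eqref{eq:2-P2} and \eqref{eq:2-local-intrinsic-energy} make the stopped stochastic integrals well defined.
\end{itemize}

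The one delicate point in (i) is the closed endpoint. The Poisson integral over $(0,t\wedge\theta]$ must contain the jump at $\theta$, with the coefficient evaluated at $v(\theta-)$. I would derive this from the predictability of $\mathbf 1_{(0,\theta]}$ in \eqref{eq:2-predictable-intervals} and from the fact that stopping a stochastic integral equals integrating the stopped integrand. I would then deduce the uniqueness statement for admissible solutions by applying Paper~I's pathwise uniqueness on $[0,\theta_1\wedge\theta_2]$, since both solutions lie in the local-energy class.

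For (ii), I would invoke Paper~I, Proposition~2.4, together with its proof in Section~6: $\mathscr T$ is upward directed by pasting, so the essential supremum is attained along an increasing sequence $\tau_n\in\mathscr T$, and one can replace $\tau_n$ by $\tau_n\wedge n$. The representatives $U_n$ are consistent on overlaps by the uniqueness in (i), and a countable union of null sets gives the common modification. For the endpoint clause, I would take a closed local solution $(w,\zeta)$ and show that on $\{\zeta<\tau_{\max}\}$ there is some $n$ with $\zeta\le\tau_n$, by the essential-supremum property. Uniqueness on $[0,\zeta]$ then gives agreement at $\zeta$ itself.

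Items (iii) and (iv) are Lemmas~6.2 and~6.3 of Paper~I. Here I would check only three things.
\begin{itemize}
\item[(a)] The shifted noises are again a cylindrical Wiener process and a Poisson random measure with compensator $\dd s\,\mu(\dd z)$ relative to $(\mathcal F_{\rho+s})$. This uses the strong Markov, or independent-increment, property at bounded stopping times, which follows from the joint compatibility hypothesis.
\item[(b)] The translated coefficients remain $\mathscr P\otimes\mathcal B\otimes\mathcal Z$-measurable with the same constants.
\item[(c)] The Poisson integrals of the pasted process split over $(0,\rho]$ and $(\rho,\widehat\theta]$, so the jump at $\rho$ is counted once.
\end{itemize}

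I expect the main obstacle to be exactly this terminal-jump bookkeeping, in (i) and again in (iv). It amounts to showing that the closed stopped formulation, with the jump at $\theta$ included and the coefficient evaluated at the predictable left limit, agrees with the form in which Paper~I states its solutions. I would handle it uniformly through the predictable-interval indicators, and through the additivity of the energy functional $\mathcal E_p$ over $[0,\rho]\cup[\rho,\widehat\theta]$.
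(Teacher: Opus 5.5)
Your proposal is correct and matches the paper. The paper gives no separate proof of this theorem: it imports items (i)--(iv) from Paper~I. The only extra bookkeeping it records is what you plan to check, namely that an admissible solution may be stopped at a smaller positive stopping time, so that $\tau_n\le n$ costs nothing, and that the closed stopped formulation keeps the terminal Poisson jump through the predictable indicator $\mathbf 1_{(0,\theta]}$. Your remaining checks (monotonicity of $\mathcal E_p$ under stopping, uniqueness on $[0,\theta_1\wedge\theta_2]$, and the endpoint clause via $\tau_n\uparrow\tau_{\max}$) are sound.
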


Restricting the attainable family in Theorem~\ref{thm:2-Paper-I} to finite, or to bounded, lifetimes leaves its essential supremum unchanged: an admissible solution can be stopped at each positive integer.
The theorem supplies closed local representatives without imposing a terminal value at $\tau_{\max}$.
Lemma~\ref{lem:3.1-strict-localization} will prove that their localizing times satisfy $\tau_n<\tau_{\max}$; endpoint completion and the continuation criteria are established subsequently in \cref{Sec:3}.

\subsection{The linear estimate and lower semicontinuity}

The continuation argument uses a linear estimate to recover the nonlinear dissipation and to complete finite endpoints.
We state the three-dimensional, vector-valued specialization of Paper~I, Theorem~3.1 \cite{PodderKumar2026}, with viscosity $\nu$.
This specialization leaves $E=L^p_\sigma$ unchanged; the linear equation itself is posed in the ambient space $L^p(\R^3;\R^3)$.

\begin{theorem}
[Stochastic heat equation with Wiener and jump forcing] \label{Thm:heat-exis} Let $T\in(0,\infty)$ and $$ \frac{3p}{p+1}\le q_f\le p, \qquad \frac{3p}{2p+1}\le q_h\le p.
$$ Suppose that $v_0\in L^p(\Omega,\mathcal F_0;L^p)$, that $f$ and $h$ are progressively measurable with $$
\begin{aligned}
f&\in L^p\bigl(\Omega\times(0,T); L^{q_f}(\R^3;\R^{3\times3})\bigr), \\
h&\in L^p\bigl(\Omega\times(0,T); L^{q_h}(\R^3;\R^3)\bigr),
\end{aligned}
$$ and that $g$ is predictable with $g\in L^p(\Omega\times(0,T);\mathbb L^p)$.
Let $G_0$ be $\mathscr P\otimes\mathcal Z$-measurable, with values in $L^p$, and assume $$ \E\int_0^T\int_Z\|G_0(s,z)\|_p^p\,\mu(\dd z)\,\dd s +\E\left(\int_0^T\int_Z\|G_0(s,z)\|_p^2 \,\mu(\dd z)\,\dd s\right)^{p/2}<\infty.
$$ Then $$
\begin{aligned}
\dd v(t) &=\bigl[\nu\Delta v(t)+\nabla\cdot f(t)+h(t)\bigr]\,\dd t +g(t)\,\dd W_t +\int_ZG_0(t,z)\,\widetilde N(\dd t,\dd z), \\
v(0)&=v_0,
\end{aligned}
$$ has a unique adapted solution in the distributional formulation such that $$ v\in L^p\bigl(\Omega;\mathbb D([0,T];L^p)\bigr), \qquad |v|^{p/2}\in L^2\bigl(\Omega\times(0,T);H^1\bigr).
$$ Moreover,
\begin{equation}
\label{ineq:3_eng-est}
\begin{aligned}
\E\bigg[ &\sup_{0\le t\le T}\|v(t)\|_p^p +\int_0^T\left\|\nabla\bigl(|v(s)|^{p/2}\bigr)\right\|_2^2\,\dd s \bigg] \\
\le&C\E\bigg[ \|v_0\|_p^p +\int_0^T\bigl(\|f(s)\|_{q_f}^p +\|h(s)\|_{q_h}^p +\|g(s)\|_{\mathbb L^p}^p\bigr)\,\dd s \\
&\hspace{4em} +\int_0^T\int_Z\|G_0(s,z)\|_p^p\,\mu(\dd z)\,\dd s +\left(\int_0^T\int_Z\|G_0(s,z)\|_p^2 \,\mu(\dd z)\,\dd s\right)^{p/2} \bigg],
\end{aligned}
\end{equation}
where $C=C(p,q_f,q_h,\nu,T)$ is independent of the initial datum and the forcing terms.
\end{theorem}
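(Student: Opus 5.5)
The statement is the linear estimate of Theorem~\ref{Thm:heat-exis}, the specialization of Paper~I, Theorem~3.1. I would prove it by a Galerkin-free regularization argument: apply the It\^o formula for $\|\cdot\|_p^p$ to smoothed solutions and pass to the limit. By linearity, split $v=v^{(0)}+v^{(1)}$, where $v^{(0)}=S_\nu(t)v_0$ and $v^{(1)}$ solves the problem with zero data. Uniqueness is immediate from linearity: the difference of two solutions solves the homogeneous heat equation with zero data in the distributional class, hence vanishes.

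For existence, replace $v_0,f,h,g,G_0$ by $S_m v_0$, $S_m f$, $S_m h$, $S_m g$, $S_m G_0$, truncated in $(\omega,t,z)$ where needed so that all moments are finite. The mild solution $v_m$ then lies in $W^{k,q}$ for all $k$ and all $q\in[2,\infty]$, and the It\^o formula with jumps of Gy\"ongy--Wu \cite{MR4165650} applies to $\|v_m\|_p^p$. In the dissipative term, $p\int|v|^{p-2}v\cdot\Delta v=-p\int|v|^{p-2}|\nabla v|^2-p(p-2)\int|v|^{p-2}|\nabla|v||^2$, which dominates $c_p\|\nabla(|v|^{p/2})\|_2^2$.

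The forcing terms are handled as follows.
\begin{itemize}
\item[(a)] For the divergence term, integrate by parts to get $|\langle\nabla\cdot f,J_p(v)\rangle|\le C\int|f||v|^{p-2}|\nabla v|\le C\|f\,|v|^{(p-2)/2}\|_2\,\|\nabla(|v|^{p/2})\|_2$. Bound $\||v|^{(p-2)/2}f\|_2$ by H\"older using $|v|^{p/2}\in L^6$ (through \eqref{eq:2-native-Sobolev}) interpolated with $\|v\|_p$. The lower endpoint $q_f=3p/(p+1)$ is exactly where $\|v\|^{p-2}_{3p}$-type control suffices, and intermediate $q_f$ follow by interpolation between $\|v\|_p$ and $\|v\|_{3p}$. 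Young's inequality absorbs the gradient part and leaves $\|f\|_{q_f}^p+\|v\|_p^p$.
\item[(b)] The term $h$ is treated the same way, with $\|h|v|^{p-1}\|_1$ and $q_h\ge 3p/(2p+1)$.
\item[(c)] The It\^o correction for $g$ is bounded by $C\int|v|^{p-2}|g|^2\le C\|v\|_p^{p-2}\|g\|_{\mathbb L^p}^2$.
\item[(d)] For the jump remainder, use $\bigl||a+b|^p-|a|^p-p|a|^{p-2}a\cdot b\bigr|\le C(|a|^{p-2}|b|^2+|b|^p)$ pointwise. Control its compensator by $\int_Z(\|v\|_p^{p-2}\|G_0\|_p^2+\|G_0\|_p^p)\,\mu(\dd z)$, and handle the remainder as a nonnegative Poisson integral through the compensator identity.
\end{itemize}

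Take the supremum in time and apply the Davis--BDG inequality to the Wiener and compensated-Poisson martingales. Their quadratic variations are bounded by $\sup\|v\|_p^{p}\cdot\bigl(\int\|g\|_{\mathbb L^p}^2\bigr)^{1/2}$-type products. Then Young with $\varepsilon\E\sup\|v\|_p^p$, absorption (legitimate after stopping at $\inf\{t:\|v_m\|_p>R\}$ and letting $R\to\infty$), and Gronwall give \eqref{ineq:3_eng-est} for $v_m$. The constant depends only on $p,q_f,q_h,\nu,T$.

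Applying the same estimate to the difference $v_m-v_n$ shows that the sequence is Cauchy in $L^p(\Omega;\mathbb D([0,T];L^p))$. The gradient bound passes to the limit by weak lower semicontinuity of $\|\nabla(|v|^{p/2})\|_{L^2}$, since $|v_m|^{p/2}\to|v|^{p/2}$ in $L^1_{loc}$. The distributional formulation passes to the limit termwise by \eqref{eq:2-Wiener-maximal}--\eqref{eq:2-Poisson-maximal}.

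The main obstacle I expect is the jump term at the level of the supremum: the cross term $\|v_-\|_p^{p-2}\|G_0\|_p^2$ inside the BDG square root. I would handle it by bounding the quadratic variation of $\int\langle J_p(v_-),G_0\rangle\,\widetilde N$ by $\sup\|v\|_p^{p-1}\bigl(\int\int\|G_0\|_p^2\,N(\dd s,\dd z)\bigr)^{1/2}$. Young then produces $\E(\int\int\|G_0\|_p^2\,N)^{p/2}$, which is comparable to the two moments assumed on $G_0$ by the Dirksen-type estimate \eqref{eq:2-Poisson-maximal}, applied in scalar form to the sum of squares.
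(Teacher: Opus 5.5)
The paper does not prove this theorem itself. It imports Paper~I, Theorem~3.1, and obtains the viscosity-dependent form by the spatial change of variables $x=\sqrt\nu\,y$. The only indications of Paper~I's method are the remark in Proposition~\ref{prop:5.3-stopped-Lp}, Step~2, that it rests on an $L^p$ It\^o formula with jumps, and the recorded lower-semicontinuity principle (Paper~I, Lemma~3.2). Your argument uses both, so it is a self-contained reconstruction rather than the route taken here, and it is correct in substance.

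Working directly with $\nu\Delta$ makes the rescaling unnecessary. Your H\"older bookkeeping also explains the admissible ranges. The lower endpoints $3p/(p+1)$ and $3p/(2p+1)$ are exactly where the exponents $(p-2)q_f/(q_f-2)$ and $(p-1)q_h/(q_h-1)$ produced by H\"older reach $3p$. The upper endpoint $p$ is where they equal $p$. In between, interpolation between $L^p$ and $L^{3p}$ plus Young absorbs the dissipation. The citation, in turn, buys consistency with the solution concept and uniqueness class already fixed in Paper~I without repeating the stochastic calculus.

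A few details need repair; none is structural.
\begin{itemize}
\item In (a), $|v|^{p-2}|\nabla v|$ is not pointwise dominated by $|v|^{(p-2)/2}\bigl|\nabla(|v|^{p/2})\bigr|$; take $|v|$ locally constant with rotating direction. So Cauchy--Schwarz must be taken against $\mathcal D_p(v)^{1/2}=\bigl(\int|v|^{p-2}|\nabla v|^2\,\dd x\bigr)^{1/2}$. This is harmless, because the Laplacian supplies $p\nu\mathcal D_p(v)$ in addition to $\|\nabla(|v|^{p/2})\|_2^2$. It is exactly how Step~1 of the proof of Theorem~\ref{thm:native-serrin-continuation} is organized.
\item The smoothed solution lies in $W^{k,q}$ only for $q\ge p$, not for $q\in[2,p)$, since an $L^p$ datum on $\R^3$ need not be square integrable. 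The $L^p$ It\^o formula only needs the $W^{k,p}$ level, so nothing is lost.
\item The Davis--BDG input should read $[M^W]_T^{1/2}\le p\sup_{t\le T}\|v(t)\|_p^{p-1}\bigl(\int_0^T\|g(s)\|_{\gamma_p}^2\,\dd s\bigr)^{1/2}$. The power $p-1$ is what makes Young against $\varepsilon\E\sup_t\|v(t)\|_p^p$ work.
\item Bounding $\E\bigl(\int_0^T\int_Z\|G_0\|_p^2\,N(\dd s,\dd z)\bigr)^{p/2}$ by the two assumed moments is the scalar Novikov (Bichteler--Jacod) inequality for nonnegative predictable integrands, not \eqref{eq:2-Poisson-maximal} itself. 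You can bypass it as the paper does in Lemma~\ref{lem:5.2-noise-jumps}. Write $[M^{J,1}]_T\le p^2\sup_t\|v(t)\|_p^p\int_0^T\int_Z\|v(s-)\|_p^{p-2}\|G_0(s,z)\|_p^2\,N(\dd s,\dd z)$. Apply Young before the compensator identity. Then use $\|v(s-)\|_p^{p-2}\le\sup_t\|v(t)\|_p^{p-2}$ and Young once more.
\end{itemize}
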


The viscosity-dependent form follows from the spatial change of variables $x=\sqrt\nu\,y$ in the result of Paper~I.
Estimate~\eqref{ineq:3_eng-est} also applies to differences, with the differences of the initial data and forcing on its right-hand side.
In particular, its dissipation term then involves $\nabla(|v_1-v_2|^{p/2})$.

For a stopping time $0\le\theta\le T$, restriction of the forcing to $(0,\theta]$ retains its possible terminal jump.
The resulting linear solution agrees with the original solution through $\theta$ and thereafter evolves as $S_\nu(t-\theta)v(\theta)$.
Applying \eqref{ineq:3_eng-est} to this heat extension gives the estimates used in Lemmas~\ref{lem:3.3-energy-recovery} and~\ref{lem:3.4-tails-endpoint}.
A process made constant after $\theta$ is instead governed by the stopped equation, in which the Laplacian is stopped as well.
The fixed-cutoff Picard construction of Paper~I, Section~4 \cite{PodderKumar2026}, is also used in Lemma~\ref{lem:4.1-regularization}; the estimates uniform in the cutoffs are proved separately in Section~4 below.

We record the lower-semicontinuity principle of Paper~I, Lemma~3.2 \cite{PodderKumar2026}.
If $v_m,v\in L^p(\Omega;\mathbb D([0,T];L^p))$ satisfy $$ \E\sup_{t\le T}\|v_m(t)-v(t)\|_p^p\longrightarrow0, \qquad \sup_m\E\int_0^T \left\|\nabla\bigl(|v_m(s)|^{p/2}\bigr)\right\|_2^2\,\dd s<\infty, $$ then
\begin{equation}
\label{eq:2-lsc-conclusion}
\E\int_0^T \left\|\nabla\bigl(|v(s)|^{p/2}\bigr)\right\|_2^2\,\dd s \le\liminf_{m\to\infty}\E\int_0^T \left\|\nabla\bigl(|v_m(s)|^{p/2}\bigr)\right\|_2^2\,\dd s.
\end{equation}
Indeed, the powers converge strongly in $L^2(\Omega\times(0,T);L^2)$.
Along a subsequence realizing the lower limit, their gradients converge weakly in $L^2(\Omega\times(0,T);L^2(\R^3;\R^3))$.
The distributional derivative identity identifies the limit, and weak lower semicontinuity proves \eqref{eq:2-lsc-conclusion}.
The same derivative argument applies pathwise on a fixed sample's time interval whenever the powers converge distributionally and the lower limit of their gradient energies is finite; no pathwise uniform bound is inferred from an expectation bound alone.

For the nonlinear forcing estimates, we choose the exponents as in Paper~I, Section~5 \cite{PodderKumar2026}:
\begin{equation}
\label{eq:2_q_fq_h_choice}
\max\left\{\frac p2,\frac{3p}{p+1}\right\} <q_f<\frac{3p}{4}, \qquad \max\left\{\frac{3p}{2p+1},\frac p3\right\} <q_h<\frac{3p}{7}.
\end{equation}
These intervals are nonempty for every $p>3$ and lie within the ranges of Theorem~\ref{Thm:heat-exis}.
Their strict upper bounds provide the positive time exponents in the intrinsic coefficient estimates.

\subsection{Initial data and the scope of the continuation results}

Section~\ref{Sec:3} assumes only $$ u_0\in L^p(\Omega,\mathcal F_0;E) $$ and the hypotheses of Theorem~\ref{thm:2-Paper-I}.
For its maximal solution, define $$ \mathcal Q_u(t) :=\sup_{0\le s\le t}\|u(s)\|_p^p +\int_0^t\|u(s)\|_{3p}^p\,\dd s, \qquad 0\le t<\tau_{\max}.
$$ By \eqref{eq:2-local-intrinsic-energy} and the consistent closed localizations, this quantity is finite before $\tau_{\max}$, outside a single null set.
Its continuation properties and associated exit times are established in Theorem~\ref{thm:3.6-intrinsic-blowup}.

Under the additional assumptions \eqref{eq:2-taming-coercive} and \eqref{eq:2-H1}, the $H^1$ persistence result and Theorem~\ref{thm:6.1-global-continuation} use
\begin{equation}
\label{eq:2-H1-initial-data}
u_0\in L^p(\Omega,\mathcal F_0;E) \cap L^2(\Omega,\mathcal F_0;H^1_\sigma).
\end{equation}
Theorem~\ref{thm:6.2-finite-energy-global} lowers this requirement to
\begin{equation}
\label{eq:2-L2-initial-data}
u_0\in L^p(\Omega,\mathcal F_0;E) \cap L^2(\Omega,\mathcal F_0;H),
\end{equation}
with the same coefficient and taming hypotheses, by using the positive-time regularization of Proposition~\ref{prop:4.4-time-weighted-H1} and bounded restart.
Assumption~\eqref{eq:2-H1-initial-data} gives $H^1$ regularity through time zero; under \eqref{eq:2-L2-initial-data}, the $H^1$ conclusions are stated at positive times and in time-weighted form.

For data satisfying \eqref{eq:2-H1-initial-data}, the integrating factor in Section~5 is based on
\begin{equation}
\label{eq:2-integral-convention}
\Gamma(t):=\int_0^t \bigl(1+\|u(s)\|_{H^1}^{2p}+\|u(s)\|_\infty\bigr)\,\dd s.
\end{equation}
Theorem~\ref{thm:4.3-H1-persistence} gives $\Gamma((T\wedge\tau_{\max})-)<\infty$ almost surely for every deterministic $T<\infty$.
Here evaluation from the left denotes the integral over $[0,T\wedge\tau_{\max})$ and requires no terminal value of $u$.
For the finite-energy data in \eqref{eq:2-L2-initial-data}, the argument is applied after a positive restart; finiteness of \eqref{eq:2-integral-convention} from time zero is not assumed.

Global solutions are understood in the local-energy class of Definition~\ref{def:2-local-solution}.
Theorem~\ref{thm:6.4-global-continuous-dependence} establishes continuous dependence, for fixed coefficients and driving noises, in probability in the $L^p$ path-supremum and $L^p(0,T;L^{3p})$ norms on every finite interval.
Unweighted $p$th moments on deterministic intervals require a separate estimate; the additional assumptions of Proposition~\ref{prop:6.3-unweighted-intrinsic-moments} are imposed only there.

Constants may depend on $p,\nu,N$, the structural coefficient bounds, the chosen interpolation exponents, and the deterministic time horizon.
Dependence on fixed approximation parameters is permitted in the corresponding construction.
Whenever uniformity is asserted, independence of the relevant approximation parameters and stopping levels is specified in the estimate.

\section{Intrinsic and Serrin continuation criteria}
\label{Sec:3} The purpose of this section is to derive continuation criteria directly from the maximal $L^p$ theory of Paper I.
The main difficulty is that the maximal solution is a priori defined only on $[0,\tau_{\max})$, while a restart argument requires a well-defined terminal state at a stopping time.
We first refine the localizing sequence so that it approaches $\tau_{\max}$ strictly from below.
\begin{lemma}
[Strict localization of the maximal lifetime] \label{lem:3.1-strict-localization} Let $p>3$ and $E=L^p_\sigma(\R^3;\R^3)$.
Assume the hypotheses of \cref{thm:2-Paper-I}.
Let $(u,(\tau_n)_{n\ge1},\tau_{\max})$ be the maximal construction of \cref{thm:2-Paper-I}(ii), with
\begin{equation}
\label{eq:3.1-localizing-sequence}
0<\tau_n\le n, \qquad \tau_n\uparrow\tau_{\max} \quad\P\text{-a.s.}
\end{equation}
Then
\begin{equation}
\label{eq:3.1-strict-localization}
\P\left( \tau_n<\tau_{\max}\text{ for every }n\ge1 \right)=1.
\end{equation}
In particular, the sequence $(\tau_n)$ announces $\tau_{\max}$, and $\tau_{\max}$ is predictable.
\end{lemma}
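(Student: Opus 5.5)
The plan is to argue by contradiction on the event where some localizing time reaches the maximal lifetime, and to use restart and pasting (\cref{thm:2-Paper-I}(iii)--(iv)) to produce an admissible lifetime strictly larger than $\tau_{\max}$ on a set of positive probability. This would contradict the essential-supremum characterization in \cref{thm:2-Paper-I}(ii). Fix $n$ and set $A_n:=\{\tau_n=\tau_{\max}\}$. Since $\tau_n\le\tau_{n+1}\le\tau_{\max}$, we have $A_n\subset A_{n+1}$, so it suffices to show $\P(A_n)=0$ for each $n$.

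First I will use the admissible closed stopped representative $U_n$ on $[0,\tau_n]$. By Definition~\ref{def:2-local-solution}, $\tau_n$ is a finite (indeed bounded by $n$) stopping time, and $\xi:=U_n(\tau_n)$ is $\mathcal F_{\tau_n}$-measurable with $\E\|\xi\|_p^p\le\mathcal E_p(U_n;0,\tau_n)<\infty$. Applying \cref{thm:2-Paper-I}(iii) with $\rho=\tau_n$ and this $\xi$ gives a restarted branch $\widehat v$ on $[\tau_n,\widehat\theta]$ with $\tau_n<\widehat\theta\le\tau_n+1$ almost surely. By \cref{thm:2-Paper-I}(iv), the pasted process $w$ is an admissible closed local solution through $\widehat\theta$, so $\widehat\theta\in\mathscr T$. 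Maximality then forces $\widehat\theta\le\tau_{\max}$ almost surely. On $A_n$ this yields $\tau_{\max}=\tau_n<\widehat\theta\le\tau_{\max}$, which is impossible unless $\P(A_n)=0$.

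Two details need care. First, a solution stopped at $\tau_n$ must genuinely count as admissible at its endpoint even on $A_n$; that is, the value $U_n(\tau_n)$ exists there. This is precisely the closed-interval convention: $U_n$ is a representative on $[0,\tau_n]$ regardless of whether $\tau_n$ equals $\tau_{\max}$, and $\tau_{\max}$ is defined as an essential supremum of admissible lifetimes, not as the point where $u$ ceases to exist. Second, the pasting must respect the identification with $u$. I do not actually need that identification, because the contradiction uses only $\widehat\theta\in\mathscr T$ and $\operatorname{ess\,sup}\mathscr T=\tau_{\max}$. The inequality $\theta\le\tau_{\max}$ a.s.\ for every $\theta\in\mathscr T$ is the defining property of the essential supremum. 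I expect this conceptual point to be the main obstacle: checking that nothing in Definition~\ref{def:2-local-solution} requires the endpoint to lie strictly below $\tau_{\max}$, and that the restart hypotheses ($\rho$ bounded, $\xi\in L^p(\Omega,\mathcal F_\rho;E)$) are met. Boundedness holds since $\tau_n\le n$.

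Finally, since the events $\{\tau_n<\tau_{\max}\}$ each have full probability, their countable intersection does as well, which proves \eqref{eq:3.1-strict-localization}. Combined with $\tau_n\uparrow\tau_{\max}$, this says $(\tau_n)$ is an announcing sequence for $\tau_{\max}$ on $\{\tau_{\max}>0\}=\Omega$ (a.s.). Under the usual conditions, a stopping time admitting an announcing sequence is predictable. Here $\tau_{\max}$ is a stopping time as the increasing limit of stopping times, and $\tau_n>0$, so predictability follows from the standard characterization, e.g.\ in \cite{MR1464694}.
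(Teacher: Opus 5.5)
Your proposal is correct and follows essentially the same route as the paper. Both arguments restart at the bounded time $\tau_n$ from the closed-endpoint value $U_n(\tau_n)$ via \cref{thm:2-Paper-I}(iii), paste via (iv) to get $\widehat\theta\in\mathscr T$ with $\widehat\theta>\tau_n$, use the essential-supremum characterization to conclude $\tau_n<\widehat\theta\le\tau_{\max}$ almost surely, and intersect over $n$. The differences are cosmetic: you phrase it as a contradiction on $\{\tau_n=\tau_{\max}\}$ where the paper reads off the strict inequality directly, and you delegate the adaptedness and predictable-interval checks of the pasted process to (iv) rather than verifying them by hand.
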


\begin{proof}
Let $\mathcal E_p(v;\alpha,\beta)$ be defined in \eqref{eq:2-local-energy}.
Fix $n\ge1$, and let $U_n$ be the stopped c\`adl\`ag local representative on $[0,\tau_n]$ supplied by \cref{thm:2-Paper-I}(ii).
This representative is defined on the closed interval even before strict localization has been established.
Since $U_n$ is adapted and c\`adl\`ag, its stopped value $\xi_n:=U_n(\tau_n)$ is $\mathcal F_{\tau_n}$-measurable.
Moreover,
\begin{equation*}
\E\|\xi_n\|_p^p \le \mathcal E_p(U_n;0,\tau_n)<\infty.
\end{equation*}

Because $\tau_n\le n$, the bounded-restart result of \cref{thm:2-Paper-I}(iii), applies at $\tau_n$ with initial datum $\xi_n$.
It supplies an $(\mathcal F_t)$-stopping time $\widehat\tau_n$ and a restarted branch $\widehat v_n$ such that
\begin{equation*}
\begin{aligned}
\tau_n&<\widehat\tau_n\le n+1 \quad\P\text{-a.s.}, \\
\widehat v_n(\tau_n)&=\xi_n, \\
\mathcal E_p(\widehat v_n;\tau_n,\widehat\tau_n) &\le C\bigl(1+\E\|\xi_n\|_p^p\bigr).
\end{aligned}
\end{equation*}
Define the pasted process directly by
\begin{equation*}
w_n(t) :=
\begin{cases}
U_n(t),&0\le t\le\tau_n, \\
\widehat v_n(t\wedge\widehat\tau_n),&t>\tau_n.
\end{cases}
\end{equation*}
For each deterministic $t$, the two branches are $\mathcal F_t$-measurable on $\{t\le\tau_n\}$ and $\{\tau_n<t\}$, respectively; for the second branch this follows from the adapted stopped extension in the restart lemma.
Thus $w_n$ is adapted.
The identity $\widehat v_n(\tau_n)=U_n(\tau_n)$ gives c\`adl\`ag paths across the pasting time, and hence predictable left limits.

The stochastic intervals used in the two stopped formulations satisfy
\begin{equation*}
\mathds 1_{(0,\tau_n]} +\mathds 1_{(\tau_n,\widehat\tau_n]} =\mathds 1_{(0,\widehat\tau_n]}.
\end{equation*}
Both indicators on the left are predictable.
By , [Lemma 6.3(ii) \cite{PodderKumar2026}], the pasted process therefore satisfies the original equation through $\widehat\tau_n$, with the prescribed initial datum and driving noises.
A Poisson jump at $\tau_n$ is retained by the first interval alone, and a possible jump at $\widehat\tau_n$ is included.
Furthermore,
\begin{equation*}
\begin{aligned}
\mathcal E_p(w_n;0,\widehat\tau_n) &\le \mathcal E_p(U_n;0,\tau_n) +\mathcal E_p(\widehat v_n;\tau_n,\widehat\tau_n) \\
&\le \mathcal E_p(U_n;0,\tau_n) +C\bigl(1+\E\|\xi_n\|_p^p\bigr) <\infty.
\end{aligned}
\end{equation*}
Consequently, $\widehat\tau_n$ belongs to the family $\mathscr T$ of attainable lifetimes defining $\tau_{\max}$.
Maximality yields
\begin{equation*}
\tau_n<\widehat\tau_n\le\tau_{\max} \quad\P\text{-a.s.}
\end{equation*}
For this fix $n$ define $$A_n:=\{\omega \in \Omega | \tau_n(\omega) <\tau_{\max}(\omega)\}$$ which has probability $1$.
Taking the countable intersection over $n$ we get $\P \bigl( \bigcap_{n=1}^\infty A_n\bigr)=1$ which proves \eqref{eq:3.1-strict-localization}, in particular on $\{\tau_{\max}<\infty\}$.
Together with \eqref{eq:3.1-localizing-sequence}, this shows that, after a common null modification, $(\tau_n)$ is an announcing sequence for $\tau_{\max}$.
Hence $\tau_{\max}$ is predictable.
\end{proof}
We next record estimates for the nonlinear and stochastic coefficients on an arbitrary stochastic interval strictly contained in the maximal lifetime.
These estimates express all forcing terms solely in terms of the intrinsic $L^p$ -$L^{3p}$ control of the solution and will be used both to recover the local-energy dissipation and to control tails approaching the endpoint.
\begin{lemma}
[Intrinsic coefficient estimates] \label{lem:3.2-intrinsic-coefficients} Let $p>3$, and let $(u,(\tau_n),\tau_{\max})$ be the maximal solution of Lemma \ref{lem:3.1-strict-localization}.
For $0<t<\tau_{\max}$,
\begin{equation*}
\begin{aligned}
f(t)&=-\mathcal P^{(1)}(u(t)\otimes u(t)), & h(t)&=-\mathcal P\bigl(g_N(|u(t)|^2)u(t)\bigr), \\
g(t)&=\mathcal P\sigma(t,u(t-)), & H(t,z)&=\mathcal PG(t,u(t-),z).
\end{aligned}
\end{equation*}

Choose $q_f,\ q_h$ as in \eqref{eq:2_q_fq_h_choice} and define
\begin{equation}
\label{eq:3.2-time-exponents}
\frac1\ell=\frac1{q_f}-\frac1p, \qquad \alpha_f=\frac{3p/\ell-1}{2}, \qquad \theta_h=\frac{p/q_h-1}{2}, \qquad r_h=3(1-\theta_h).
\end{equation}

Let $0\le\rho\le\theta\le T$ be stopping times, where $T<\infty$ and $\theta<\tau_{\max}$ a.s., and set
\begin{equation*}
\delta:=\theta-\rho,\qquad M:=\sup_{\rho\le s<\theta}\|u(s)\|_p,\qquad A:=\int_\rho^\theta\|u(s)\|_{3p}^p\,\dd s,
\end{equation*}
with $M=0$ on $\{\rho=\theta\}$.
Then $M,A<\infty$ a.s.
and almost surely,
\begin{equation}
\label{eq:3.2-drift-integrability}
\int_\rho^\theta\|f(s)\|_{q_f}^p\,\dd s \le C M^{p(1+\alpha_f)} \delta^{\alpha_f}A^{1-\alpha_f}, \quad \int_\rho^\theta\|h(s)\|_{q_h}^p\,\dd s \le C M^{3p\theta_h} \delta^{1-r_h}A^{r_h},
\end{equation}
Moreover, $\mathds 1_{(\rho,\theta]}g$ and $\mathds 1_{(\rho,\theta]}H$ are respectively predictable and $\mathscr P\otimes\mathcal Z$-measurable, and
\begin{equation}
\label{eq:3.2-wiener-integrability}
\int_\rho^\theta\|g(s)\|_{\mathbb L^p}^p\,\dd s \le C\left( \delta+M^{3p/4}\delta^{1/4}A^{3/4} \right),
\end{equation}
\begin{equation}
\label{eq:3.2-poisson-integrability}
\int_\rho^\theta\int_Z\|H(s,z)\|_p^p\,\mu(\dd z)\,\dd s+ \left( \int_\rho^\theta\int_Z\|H(s,z)\|_p^2\,\mu(\dd z)\,\dd s \right)^{p/2} \le C(\delta+\delta^{p/2})(1+M^p).
\end{equation}
If, in addition, $M^p+A\le L$ a.s.
for some deterministic $L\ge1$, then every left-hand side in \eqref{eq:3.2-drift-integrability}--\eqref{eq:3.2-poisson-integrability} has expectation bounded by
\begin{equation*}
C_T(1+L^3).
\end{equation*}
Here $C$ and $C_T$ depend only on the structural parameters and the coefficient bounds, and are independent of the localizing sequence and of the auxiliary regularizations.
\end{lemma}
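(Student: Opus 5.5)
First I would establish finiteness and measurability. Since $\theta<\tau_{\max}$ a.s.\ and $\tau_n\uparrow\tau_{\max}$ with $\tau_n<\tau_{\max}$ (Lemma~\ref{lem:3.1-strict-localization}), for a.e.\ $\omega$ there is $n$ with $\theta\le\tau_n$. Hence $M\le\sup_{s\le\tau_n}\|U_n(s)\|_p<\infty$, and by \eqref{eq:2-local-intrinsic-energy} also $A<\infty$. Predictability of $\mathds 1_{(\rho,\theta]}g$ follows from \eqref{eq:2-predictable-intervals}, the predictability of $u_-$ (pasted from the $U_n$), and the $\mathscr P\otimes\mathcal B(L^p)$-measurability of $\sigma$ composed with a predictable process. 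The same argument gives $\mathscr P\otimes\mathcal Z$-measurability for $H$. Because $u(s-)=u(s)$ for a.e.\ $s$, all time integrals may use $u(s)$. Also, $\sup_{\rho\le s\le\theta}\|u(s-)\|_p\le M$ for $s>\rho$.

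The drift bounds are pointwise-in-time Hölder/interpolation estimates followed by Hölder in time. For $f$, the Riesz transforms are bounded, so $\|f\|_{q_f}\le C\|u\|_{2q_f}^2\le C\|u\|_p\|u\|_\ell$ with $1/q_f=1/p+1/\ell$. Since $p<\ell<3p$ (from $p/2<q_f<3p/4$), interpolate
\[
\|u\|_\ell\le\|u\|_p^{1-\beta}\|u\|_{3p}^{\beta},\qquad \frac1\ell=\frac{1-\beta}{p}+\frac{\beta}{3p},
\]
which gives $\beta=\tfrac32(1-p/\ell)$. The choice of $\alpha_f$ in \eqref{eq:3.2-time-exponents} should make $p\beta=p(1-\alpha_f)$, so that
\[
\|f\|_{q_f}^p\le CM^{p(1+\alpha_f)}\|u\|_{3p}^{p(1-\alpha_f)},
\]
and Hölder in time with exponents $1/\alpha_f$ and $1/(1-\alpha_f)$ yields $\delta^{\alpha_f}A^{1-\alpha_f}$. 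I will verify the exponent bookkeeping and that $0<\alpha_f<1$ follows from \eqref{eq:2_q_fq_h_choice}. For $h$, \eqref{eq:2-taming-local} gives
\[
\|h\|_{q_h}\le C\|u\|_{3q_h}^3,\qquad \|u\|_{3q_h}\le\|u\|_p^{\theta'}\|u\|_{3p}^{1-\theta'},
\]
with $\theta'$ determined by $1/(3q_h)$, since $p/3<q_h<p$. Raising to the power $p$ gives $M^{3p\theta'}\|u\|_{3p}^{3p(1-\theta')}$, and Hölder in time with $r_h=3(1-\theta')<1$ (using $q_h<3p/7$ to ensure $r_h<1$ and $\theta'=\theta_h$) yields the claim.

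For the Wiener term I would use \eqref{eq:2-P1}: $\|g\|_{\mathbb L^p}^p\le C(1+\|u\|_{3p/2}^{3p/2})$. Interpolating $L^{3p/2}$ between $L^p$ and $L^{3p}$ gives $\|u\|_{3p/2}\le\|u\|_p^{1/2}\|u\|_{3p}^{1/2}$, so the integrand is bounded by $1+M^{3p/4}\|u\|_{3p}^{3p/4}$. Hölder with exponents $4$ and $4/3$ then gives \eqref{eq:3.2-wiener-integrability}. The Poisson bound follows directly from \eqref{eq:2-P2} with $r=p$ and $r=2$, using $\|u(s-)\|_p\le M$ and $(1+M^2)^{p/2}\le C(1+M^p)$.

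For the expectation bound under $M^p+A\le L$, each right side is a product of powers of $M^p$, $A$ and $\delta\le T$. The total power of $L$ is at most $1+\alpha_f+1-\alpha_f=2$ for $f$, $3\theta_h+r_h=3$ for $h$, $3/4+3/4$ for $g$, and $1$ for $H$. All are therefore $\le C_T(1+L^3)$ using $L\ge1$.

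The main obstacle is the exponent arithmetic, specifically confirming that the definitions in \eqref{eq:3.2-time-exponents} exactly match the interpolation parameters and lie in $(0,1)$ under \eqref{eq:2_q_fq_h_choice}. If $\alpha_f$ as written does not match, I would redefine it through the interpolation identity and adjust accordingly.
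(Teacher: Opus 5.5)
Your proposal is correct and follows essentially the same route as the paper. The shared steps are: pointwise-in-time Hölder and $L^p$--$L^{3p}$ interpolation, Hölder in time, the noise growth bounds \eqref{eq:2-P1}--\eqref{eq:2-P2} with $\|u(s-)\|_p\le M$, and the substitution $M\le L^{1/p}$, $A\le L$, $\delta\le T$ for the expectation bound.

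The exponent check you deferred goes through exactly, so no redefinition is needed. Your $\beta=\tfrac32(1-p/\ell)$ equals $1-\alpha_f$, and your $\theta'$ coincides with $\theta_h$.
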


\begin{proof}
For $p>3$, the intervals \eqref{eq:2_q_fq_h_choice} are non empty and lie within the admissible ranges of the heat estimate in \cref{Thm:heat-exis} .
Also,
\begin{equation*}
p<\ell<3p,\qquad 0<\alpha_f<1,\qquad \frac23<\theta_h<1,\qquad 0<r_h<1,
\end{equation*}
as follows directly from \eqref{eq:2_q_fq_h_choice},\eqref{eq:3.2-time-exponents} [see Theorem 5.1 \cite{PodderKumar2026}].

The local-energy estimate \cref{thm:2-Paper-I} and the Sobolev inequality imply, simultaneously for all $n$, almost surely,
\begin{equation}
\label{eq:3.2-local-spatial-integrability}
\int_0^{\tau_n}\|u(s)\|_{3p}^p\,\dd s \le C\int_0^{\tau_n} \left\|\nabla\bigl(|u(s)|^{p/2}\bigr)\right\|_2^2\,\dd s <\infty.
\end{equation}
Since $\theta<\tau_{\max}$, each path outside a fixed null set satisfies $\theta\le\tau_n$ for some $n$.
Thus \eqref{eq:3.2-local-spatial-integrability} and the c\`adl\`ag regularity on $[0,\tau_n]$ give $M,A<\infty$.

Let $U=u(\cdot\wedge\theta)$, using the progressively measurable representative of \cref{thm:2-Paper-I}(ii).
Stopping preserves adaptedness, and the preceding pathwise localization shows that $U$ is $L^p$ valued c\`adl\`ag.
Hence $U_-$ is predictable.
The coefficient measurability assumptions and the predictability of $\mathds 1_{(\rho,\theta]}$ \eqref{eq:2-predictable-intervals},therefore give the asserted versions of $g$ and $H$.
Furthermore, on $\{\rho<\theta\}$,
\begin{equation}
\label{eq:3.2-preterminal-control}
\|u(s-)\|_p\le M,\qquad \rho<s\le\theta,
\end{equation}
and $u(s-)=u(s)$ for Lebesgue-almost every $s\in(\rho,\theta)$.
In particular, \eqref{eq:3.2-preterminal-control} includes the left limit at $\theta$, without requiring a bound on the post-jump value $u(\theta)$.

The taming hypothesis gives $|g_N(|v|^2)v|\le C_{N,\nu}|v|^3$.
Since all projection exponents used below belong to $(1,\infty)$, the Leray multiplier bounds and H\"older's inequality yield
\begin{equation}
\label{eq:3.2-basic-spatial-bounds}
\begin{aligned}
\|f(s)\|_{q_f} &\le C\|u(s)\|_p\|u(s)\|_\ell, & \|f(s)\|_{3p/4} &\le C\|u(s)\|_p\|u(s)\|_{3p}, \\
\|h(s)\|_{q_h} &\le C\|u(s)\|_{3q_h}^3, & \|h(s)\|_p &\le C\|u(s)\|_{3p}^3,
\end{aligned}
\end{equation}
for almost every $s\in(\rho,\theta)$.
The interpolation identities $\frac1\ell=\frac{\alpha_f}{p} +\frac{1-\alpha_f}{3p}, \ \frac1{3q_h}=\frac{\theta_h}{p} +\frac{1-\theta_h}{3p}$ therefore imply
\begin{equation}
\|f(s)\|_{q_f} \le C\|u(s)\|_p^{1+\alpha_f} \|u(s)\|_{3p}^{1-\alpha_f},\quad \|h(s)\|_{q_h} \le C\|u(s)\|_p^{3\theta_h} \|u(s)\|_{3p}^{r_h}.
\end{equation}
On $\{\rho<\theta\}$, H\"older's inequality in time gives
\begin{equation}
\label{eq:3.2-time-holder}
\int_\rho^\theta\|u(s)\|_{3p}^{r}\,\dd s \le\delta^{1-r/p}A^{r/p}, \qquad 0<r\le p.
\end{equation}
Applying this with $r=p(1-\alpha_f)$ and $r=pr_h$ proves \eqref{eq:3.2-drift-integrability}.
The remaining bounds in \eqref{eq:3.2-basic-spatial-bounds} give these end point estimates aditionally as, $\int_\rho^\theta\|f(s)\|_{3p/4}^p\,\dd s \le C M^pA, \int_\rho^\theta\|h(s)\|_p^{p/3}\,\dd s \le CA,$.
The estimate $\left(\int_\rho^\theta\|h(s)\|_p\,\dd s\right)^p \le C\delta^{p-3}A^3.$ follows from \eqref{eq:3.2-time-holder} with $r=3<p$.

For the Wiener coefficient, the growth assumption \eqref{eq:2-W-local} at zero and its weighted Lipschitz bound give, for $v\in L^p\cap L^{3p}$,
\begin{equation*}
\begin{aligned}
\|\mathcal P\sigma(t,v)\|_{\mathbb L^p} &\le C\left( \|\sigma(t,0)\|_{\mathbb L^p} +\|\sigma(t,v)-\sigma(t,0)\|_{\mathbb L^p} \right) \\
&\le C\left(1+\bigl\||v|^{3/2}\bigr\|_p\right) =C\left(1+\|v\|_{3p/2}^{3/2}\right).
\end{aligned}
\end{equation*}
Here the first inequality uses the ideal property of $\gamma$-radonifying operators and the $\gamma$-Fubini identification.
Since $\|v\|_{3p/2} \le\|v\|_p^{1/2}\|v\|_{3p}^{1/2},$ we obtain
\begin{equation*}
\|g(s)\|_{\mathbb L^p}^p \le C\left( 1+\|u(s)\|_p^{3p/4}\|u(s)\|_{3p}^{3p/4} \right)
\end{equation*}
for almost every $s\in(\rho,\theta)$.
Equation \eqref{eq:3.2-time-holder} with $r=3p/4$ proves the estimate in \eqref{eq:3.2-wiener-integrability}.
Finally, the jump growth assumption \eqref{eq:2-J-local}, the $L^p$ boundedness of $\mathcal P$, and \eqref{eq:3.2-preterminal-control} imply
\begin{equation*}
\int_\rho^\theta\int_Z\|H(s,z)\|_p^r \,\mu(\dd z)\,\dd s \le C\delta(1+M^r), \qquad r\in\{2,p\}.
\end{equation*}
Raising the estimate for $r=2$ to the power $p/2$ and adding the estimate for $r=p$ proves \eqref{eq:3.2-poisson-integrability}.
All integral estimates vanish on $\{\rho=\theta\}$.

If $M^p+A\le L$, substitute $M\le L^{1/p}$, $A\le L$, and $\delta\le T$ into the preceding bounds.
The largest resulting power of $L$ is $3$, since $3\theta_h+r_h=3$, and all powers of $\delta$ are positive.
The expectation bound follows.
Only the stated coefficient bounds, spatial multiplier bounds, interpolation, and H\"older's inequality enter the estimates, so their constants have the asserted independence.
\end{proof}
The intrinsic quantities controlled in Lemma \ref{lem:3.2-intrinsic-coefficients} do not, by themselves, contain the dissipation appearing in the local-energy class of \cref{thm:2-Paper-I}.
The next lemma shows that this missing dissipation can be recovered uniformly from the intrinsic bound by freezing the nonlinear forcing at a stopping time and applying the stochastic heat estimate.
\begin{lemma}
[Recovery of the local-energy dissipation] \label{lem:3.3-energy-recovery} Assume the hypotheses of Lemma \ref{lem:3.2-intrinsic-coefficients}, and retain its notation.
Fix $T>0$ and $L\ge1$.
Let $\theta$ be a stopping time satisfying $0\le\theta\le T$ and $\theta<\tau_{\max}$ almost surely.
If
\begin{equation}
\label{eq:3.3-intrinsic-bound}
\sup_{0\le s<\theta}\|u(s)\|_p^p +\int_0^\theta\|u(s)\|_{3p}^p\,\dd s \le L \qquad\P\text{-a.s.} \qquad \sup \varnothing:=0,
\end{equation}
then
\begin{equation}
\label{eq:3.3-stopped-energy}
\E\left[ \sup_{0\le s\le\theta}\|u(s)\|_p^p +\int_0^\theta \left\|\nabla\bigl(|u(s)|^{p/2}\bigr)\right\|_2^2\,\dd s \right] \le C_T\left(1+\E\|u_0\|_p^p+L^3\right).
\end{equation}
The constant has the structural dependencies of Lemma \ref{lem:3.2-intrinsic-coefficients}, together with $T$, and is independent of $\theta$, $L$, the localizing sequence.
The supremum in \eqref{eq:3.3-stopped-energy} includes a possible jump at $\theta$.

For $t<\tau_{\max}$, define
\begin{equation*}
\mathcal Q_u(t) :=\sup_{0\le s\le t}\|u(s)\|_p^p +\int_0^t\|u(s)\|_{3p}^p\,\dd s.
\end{equation*}
Set
\begin{equation}
\label{eq:3.3-terminal-localization}
\zeta_T:=T\wedge\tau_{\max},\quad \mathcal Q_u(\zeta_T-) :=\sup_{0\le t<\zeta_T}\mathcal Q_u(t),\quad B_{L,T}:=\{\mathcal Q_u(\zeta_T-)\le L\}.
\end{equation}
Then
\begin{equation}
\label{eq:3.3-localized-energy}
\E\left[ \mathds 1_{B_{L,T}} \int_0^{\zeta_T} \left\|\nabla\bigl(|u(s)|^{p/2}\bigr)\right\|_2^2\,\dd s \right] \le C_T\left(1+\E\|u_0\|_p^p+L^3\right).
\end{equation}
The integral ending at $\zeta_T$ is understood as an increasing limit from below.
In particular,
\begin{equation}
\label{eq:3.3-finite-energy}
\P\left( \mathcal Q_u(\zeta_T-)<\infty,\quad \int_0^{\zeta_T} \left\|\nabla\bigl(|u(s)|^{p/2}\bigr)\right\|_2^2\,\dd s =\infty \right)=0.
\end{equation}
\end{lemma}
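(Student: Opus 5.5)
The plan is to view the stopped solution as the solution of a \emph{linear} stochastic heat equation, with forcing frozen on $(0,\theta]$, and then apply Theorem~\ref{Thm:heat-exis}. Set $\rho=0$ in Lemma~\ref{lem:3.2-intrinsic-coefficients} and define
\[
F:=\mathds 1_{(0,\theta]}f,\qquad \mathcal H:=\mathds 1_{(0,\theta]}h,\qquad \mathcal S:=\mathds 1_{(0,\theta]}g,\qquad \mathcal K:=\mathds 1_{(0,\theta]}H .
\]
The hypothesis \eqref{eq:3.3-intrinsic-bound} gives $M^p+A\le L$. The lemma then bounds the expectations of all forcing norms required in \eqref{ineq:3_eng-est} by $C_T(1+L^3)$. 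Here \eqref{eq:3.2-preterminal-control} controls the jump coefficient through the pre-jump state, so no bound on $u(\theta)$ is needed.

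Let $v$ be the unique solution from Theorem~\ref{Thm:heat-exis} on $[0,T]$ with data $u_0$ and this forcing. The estimate \eqref{ineq:3_eng-est} gives
\[
\E\Bigl[\sup_{t\le T}\|v\|_p^p+\int_0^T\|\nabla(|v|^{p/2})\|_2^2\Bigr]\le C_T\bigl(1+\E\|u_0\|_p^p+L^3\bigr).
\]
The key step is to identify $v=u$ on $[0,\theta]$, including the terminal jump.

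Since $\theta<\tau_{\max}$ and $\tau_n\uparrow\tau_{\max}$ strictly by Lemma~\ref{lem:3.1-strict-localization}, we have $\theta\le\tau_n$ for $n$ large (a.s.). The stopped representatives $U_n$ therefore satisfy the mild or weak formulation on $[0,\theta\wedge\tau_n]$, and by maximality they agree with $u$. On $[0,\theta\wedge\tau_n]$, $u$ is a solution of the same linear equation in the class of Theorem~\ref{Thm:heat-exis}: its local energy is finite there, and its heat extension after $\theta\wedge\tau_n$ is as described after that theorem. Linear uniqueness applied to the difference $v-u$ on $[0,\theta\wedge\tau_n]$, whose forcing vanishes there, gives $v=u$ on $[0,\theta\wedge\tau_n]$. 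Letting $n\to\infty$ yields agreement on $[0,\theta]$, the jump at $\theta$ included, since both Poisson integrals are taken over $(0,\theta]$. This proves \eqref{eq:3.3-stopped-energy} with a constant independent of $\theta$, $L$ and $(\tau_n)$.

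For \eqref{eq:3.3-localized-energy}, apply the first part with
\[
\theta_k:=\zeta_T\wedge\tau_k\wedge\inf\{t:\mathcal Q_u(t)>L\}.
\]
One caveat: $\mathcal Q_u$ may jump above $L$ at the exit time. To enforce \eqref{eq:3.3-intrinsic-bound}, which involves only $\sup_{s<\theta}$ and the integral up to $\theta$, I use left-continuous control. The integral part is continuous. The supremum over $[0,\theta)$ is at most $L$ by the definition of the hitting time, and the integral up to the hitting time is at most $L$ by continuity. So the bound holds with $\theta_k<\tau_{\max}$. On $B_{L,T}$ the hitting time is $\ge\zeta_T$, hence $\theta_k=\zeta_T\wedge\tau_k\uparrow\zeta_T$. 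Monotone convergence in $k$ applied to $\mathds 1_{B_{L,T}}\int_0^{\theta_k}$ then gives \eqref{eq:3.3-localized-energy}. Finally, \eqref{eq:3.3-finite-energy} follows because the integral is finite a.s.\ on each $B_{L,T}$, and $\{\mathcal Q_u(\zeta_T-)<\infty\}=\bigcup_L B_{L,T}$ is a countable union.

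I expect the main obstacle to be the identification $v=u$ through the closed endpoint $\theta$. It requires verifying that $u$, stopped at $\theta$, belongs to the uniqueness class of the linear heat theorem with the frozen forcing. It also requires checking that the stopped Laplacian in the admissible formulation matches the heat extension, which I would handle through the remark following Theorem~\ref{Thm:heat-exis}.
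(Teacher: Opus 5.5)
Your proposal is correct and follows essentially the paper's route. The paper also freezes the forcing on a stopped interval via Lemma~\ref{lem:3.2-intrinsic-coefficients}, applies \eqref{ineq:3_eng-est} to the heat extension of the stopped representative at $\theta\wedge\tau_n$ (directly, rather than to a separately built solution identified by uniqueness), uses $\theta\wedge\tau_n=\theta$ eventually, and localizes with the same hitting times $T\wedge\tau_n\wedge\inf\{t:\mathcal Q_n(t)>L\}$ before restricting to $B_{L,T}$. One small fix: bound the pre-exit sum in one step by $\mathcal Q_u(\theta_k-)\le L$, since bounding the supremum and the integral separately by $L$ only gives $2L$ (harmless for the constant).
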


\begin{proof}
Let $\theta_n:=\theta\wedge\tau_n$ and $\chi_n:=\mathds 1_{(0,\theta_n]}$.
By Lemma \ref{lem:3.1-strict-localization}, $\theta_n<\tau_{\max}$ almost surely, and \eqref{eq:3.3-intrinsic-bound} holds with $\theta_n$ in place of $\theta$.
The process $\chi_n$ is predictable.
The restrictions $\chi_n f$ and $\chi_n h$, extended by zero, have progressively measurable versions in the spatial forcing spaces of Lemma \ref{lem:3.2-intrinsic-coefficients}.
That lemma also supplies the required predictable versions of $\chi_n g$ and $\chi_n H$.
Its coefficient estimates give
\begin{equation}
\label{eq:3.3-frozen-coefficient}
\begin{aligned}
\E\Bigg[ &\int_0^{\theta_n} \left( \|f(s)\|_{q_f}^p+\|h(s)\|_{q_h}^p +\|g(s)\|_{\mathbb L^p}^p \right)\,\dd s \\
&+\int_0^{\theta_n}\int_Z \|H(s,z)\|_p^p\,\mu(\dd z)\,\dd s+\left( \int_0^{\theta_n}\int_Z \|H(s,z)\|_p^2\,\mu(\dd z)\,\dd s \right)^{p/2} \Bigg] \le C_T(1+L^3),
\end{aligned}
\end{equation}
uniformly in $n$.
Thus all forcing hypotheses of the stochastic heat theorem \cref{Thm:heat-exis} are satisfied.

Write $S_\nu(t):=e^{\nu t\Delta}$, let $U_n=u(\cdot\wedge\tau_n)$ be the stopped representative , and set $\xi_n:=U_n(\theta_n)$.
Define,
\begin{equation*}
V_n(t) :=
\begin{cases}
U_n(t),&0\le t\le\theta_n, \\
S_\nu(t-\theta_n)\xi_n,&\theta_n<t\le T.
\end{cases}
\end{equation*}
The random variable $\xi_n$ is $\mathcal F_{\theta_n}$-measurable.
For each deterministic $t$, the second branch is $\mathcal F_t$-measurable on $\{\theta_n<t\}$.
Hence $V_n$ is adapted.
Strong continuity of $S_\nu$ at zero gives c\`adl\`ag paths across $\theta_n$, with $V_n(\theta_n)=U_n(\theta_n)$.

For every $\varphi\in C_c^\infty(\R^3;\R^3)$, the semigroup identity gives, pathwise on $\{\theta_n<t\}$,
\begin{equation*}
\begin{aligned}
\left\langle S_\nu(t-\theta_n)\xi_n-\xi_n,\varphi\right\rangle =\nu\int_{\theta_n}^t \left\langle S_\nu(s-\theta_n)\xi_n,\Delta\varphi \right\rangle\,\dd s.
\end{aligned}
\end{equation*}
Combining this identity with the stopped weak formulation for $U_n$ shows that $V_n$ solves
\begin{equation}
\label{eq:3.3-frozen-heat-equation}
\begin{cases}
\dd V_n =\left[ \nu\Delta V_n+\nabla\cdot(\chi_n f)+\chi_n h \right]\dd t +\chi_n g\,\dd W_t +\displaystyle\int_Z\chi_n H(t,z)\, \widetilde N(\dd t,\dd z), \\
V_n(0)=u_0,
\end{cases}
\end{equation}
on $[0,T]$.
The Poisson integral is supported on $(0,\theta_n]$, so the possible jump at $\theta_n$ is retained exactly once.
The Leray projection remains part of the coefficients $f$ and $h$ in this equation.

Contractivity and the deterministic heat energy inequality give, for $a\in L^p(\R^3;\R^3)$,
\begin{equation}
\label{eq:3.3-heat-energy}
\sup_{0\le r\le T}\|S_\nu(r)a\|_p^p \le\|a\|_p^p, \qquad \int_0^T \left\|\nabla\bigl(|S_\nu(r)a|^{p/2}\bigr)\right\|_2^2\,\dd r \le\frac{p}{4\nu(p-1)}\|a\|_p^p.
\end{equation}
The second inequality follows by the $L^p$ heat energy identity for smooth data and approximation, using the lower-semicontinuity result.
Together with the finite local-energy moment at $\tau_n$, \eqref{eq:3.3-heat-energy} places $V_n$ in the uniqueness class of the heat theorem.
Indeed,
\begin{equation*}
\E\sup_{t\le T}\|V_n(t)\|_p^p \le\E\sup_{t\le\tau_n}\|U_n(t)\|_p^p<\infty,
\end{equation*}
and its dissipation is integrable before $\theta_n$ by \cref{thm:2-Paper-I} and after $\theta_n$ by \eqref{eq:3.3-heat-energy}.

Apply the heat estimate \eqref{ineq:3_eng-est} to \eqref{eq:3.3-frozen-heat-equation}.
The spatial change $x=\sqrt{\nu}\,y$ gives its version with viscosity $\nu$.
Using \eqref{eq:3.3-frozen-coefficient}, we obtain
\begin{equation*}
\E\left[ \sup_{t\le T}\|V_n(t)\|_p^p +\int_0^T \left\|\nabla\bigl(|V_n(s)|^{p/2}\bigr)\right\|_2^2\,\dd s \right] \le C_T\left(1+\E\|u_0\|_p^p+L^3\right).
\end{equation*}

Since $V_n=u$ on $[0,\theta_n]$, this bounds the left-hand side of \eqref{eq:3.3-stopped-energy} with $\theta_n$ in place of $\theta$.
Moreover, $\theta<\tau_{\max}$ implies that $\theta_n=\theta$ for all sufficiently large $n$ on almost every path.
Fatou's lemma therefore proves \eqref{eq:3.3-stopped-energy}, including the terminal value $u(\theta)$.

To pass to the possibly maximal endpoint, define the finite, adapted, nondecreasing c\`adl\`ag processes
\begin{equation}
\label{eq:3.3-stopped-functional}
\mathcal Q_n(t) :=\sup_{0\le s\le t\wedge\tau_n}\|u(s)\|_p^p +\int_0^{t\wedge\tau_n}\|u(s)\|_{3p}^p\,\dd s, \qquad t\ge0,
\end{equation}
and set
\begin{equation}
\label{eq:3.3-stops}
\sigma_{n,L} :=T\wedge\tau_n\wedge \inf\{t\ge0:\mathcal Q_n(t)>L\}, \qquad \inf\varnothing:=\infty.
\end{equation}
The entrance time in this definition is a stopping time by adaptedness and right continuity of $\mathcal Q_n$ and the usual conditions on the filtration.
Thus $\sigma_{n,L}$ is a stopping time, and $\sigma_{n,L}<\tau_{\max}$ almost surely.

On $\{\sigma_{n,L}>0\}$, continuity of the time integral and the definition of the entrance time imply
\begin{equation*}
\sup_{0\le s<\sigma_{n,L}}\|u(s)\|_p^p +\int_0^{\sigma_{n,L}}\|u(s)\|_{3p}^p\,\dd s =\mathcal Q_n(\sigma_{n,L}-) \le L.
\end{equation*}
The same bound holds under the empty-interval convention when $\sigma_{n,L}=0$.
Consequently, \eqref{eq:3.3-stopped-energy} applies to $\sigma_{n,L}$, uniformly in $n$.
On $B_{L,T}$, no entrance into $(L,\infty)$ occurs strictly before $T\wedge\tau_n$.
Therefore,
\begin{equation}
\label{eq:3.3-localized-event}
\sigma_{n,L}=T\wedge\tau_n \qquad\text{on }B_{L,T}.
\end{equation}
Since $\mathcal{Q}_u$ is adapted and c\`adl\`ag on every strict localization and $\zeta_T$ is predictable, $\mathcal{Q}_u(\zeta_T-)$ is $\mathcal{F}_{\zeta_T-}$-measurable, hence $B_{L,T} \in \mathcal{F}_{\zeta_T-}$.
A possible jump at $T\wedge\tau_n$ does not affect this equality.
Using \eqref{eq:3.3-stopped-energy} and \eqref{eq:3.3-localized-event}, then applying Fatou's lemma to nonnegative random variables, yields
\begin{equation*}
\begin{aligned}
\E\left[ \mathds 1_{B_{L,T}} \int_0^{\zeta_T} \left\|\nabla\bigl(|u(s)|^{p/2}\bigr)\right\|_2^2\,\dd s \right]&\le \liminf_{n\to\infty} \E\int_0^{\sigma_{n,L}} \left\|\nabla\bigl(|u(s)|^{p/2}\bigr)\right\|_2^2\,\dd s \\
&\quad\le C_T\left(1+\E\|u_0\|_p^p+L^3\right).
\end{aligned}
\end{equation*}
This proves \eqref{eq:3.3-localized-energy}.
Finally,
\begin{equation*}
\{\mathcal Q_u(\zeta_T-)<\infty\} =\bigcup_{L\in\mathbb N,\ L\ge1}B_{L,T}.
\end{equation*}
The dissipation is finite almost surely on every $B_{L,T}$ by \eqref{eq:3.3-localized-energy}.
Taking the countable union proves \eqref{eq:3.3-finite-energy}.
\end{proof}
The preceding estimate yields finite local energy up to controlled stopping times, but it does not yet provide a state at a possibly maximal endpoint.
We now use the tail estimates of Lemma \ref{lem:3.2-intrinsic-coefficients} to construct a strong $L^p$ endpoint and a c\`adl\`ag completion of the solution, including the contribution of a possible terminal Poisson jump.
\begin{lemma}
[Tail estimates and endpoint completion] \label{lem:3.4-tails-endpoint} Assume the hypotheses of Lemma \ref{lem:3.2-intrinsic-coefficients} and \ref{lem:3.3-energy-recovery}.
Fix $T>0$ and $L\ge1$.
Let $\theta$ be a stopping time such that
\begin{equation}
\label{eq:3.4-endpoint}
0\le\theta\le T\wedge\tau_{\max}, \qquad \mathcal Q_u(\theta-) :=\sup_{0\le t<\theta}\mathcal Q_u(t)\le L \quad\P\text{-a.s.},
\end{equation}
with $\mathcal Q_u(0-):=0$.
Then $u$ admits a unique adapted, stopped, $L^p_\sigma$-valued c\`adl\`ag completion $\overline u$ on $[0,\theta]$ satisfying the original equation through $\theta$ and
\begin{equation}
\label{eq:3.4-completed-energy}
\E\left[ \sup_{t\le\theta}\|\overline u(t)\|_p^p +\int_0^\theta \left\|\nabla\bigl(|\overline u(s)|^{p/2}\bigr)\right\|_2^2 \,\dd s \right] \le C_T\left(1+\E\|u_0\|_p^p+L^3\right).
\end{equation}

On $\{\theta>0\}$, the strong spatial limit $\xi_-:=\lim_{t\uparrow\theta}u(t) \ \text{ exists in }L^p(\R^3;\R^3 ) \ \P\text{-a.s.}.$ With $\xi_-:=u_0$ on $\{\theta=0\}$ and $\xi:=\overline u(\theta)$, one has
\begin{equation*}
\xi_-\in L^p(\Omega,\mathcal F_{\theta-};L^p_\sigma), \quad \xi\in L^p(\Omega,\mathcal F_\theta;L^p_\sigma).
\end{equation*}
The terminal jump is
\begin{equation}
\label{eq:3.4-terminal-jump}
\xi-\xi_- =\int_Z\mathcal PG(\theta,\xi_-,z)\, N(\{\theta\},\dd z) \quad\text{on }\{\theta>0\}.
\end{equation}
In particular, $\xi=\xi_-$ on $\{\theta=\tau_{\max}\}$.
Finally, let $\zeta_T$ and $B_{L,T}$ be as in \eqref{eq:3.3-terminal-localization}, and set $B_T:=\{\mathcal Q_u(\zeta_T-)<\infty\}$.

There is an $\mathcal F_{\zeta_T-}$-measurable $L^p_\sigma$-valued random variable $\xi_T$, set equal to zero outside $B_T$, such that
\begin{equation}
\label{eq:3.4-eventwise-endpoint}
\begin{aligned}
u(t)&\longrightarrow\xi_T \quad\text{in }L^p \quad\text{as }t\uparrow\zeta_T, \qquad\P\text{-a.s. on }B_T, \\
\E\left[\mathds 1_{B_{L,T}}\|\xi_T\|_p^p\right] &\le C_T\left(1+\E\|u_0\|_p^p+L^3\right).
\end{aligned}
\end{equation}
The constant depends only on $T$ and the structural parameters and is independent of the localizing sequence.
Moreover, on each $B_{L,T}$, $\xi_T$ coincides with the terminal value of the corresponding stopped completion at $\zeta_T$.
\end{lemma}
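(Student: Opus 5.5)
The plan is to build the completion from the mild/stochastic heat representation of Lemma~\ref{lem:3.3-energy-recovery}, now with forcing frozen on the open interval $(0,\theta)$. I first approximate $\theta$ strictly from below by $\theta_n:=\theta\wedge\tau_n$, which satisfies $\theta_n<\tau_{\max}$ by Lemma~\ref{lem:3.1-strict-localization}. On $\{\theta<\tau_{\max}\}$ one has $\theta_n=\theta$ eventually, and nothing needs to be done. The substance is therefore on $\{\theta=\tau_{\max}\}$.

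By \eqref{eq:3.4-endpoint}, each $\theta_n$ satisfies \eqref{eq:3.3-intrinsic-bound} with the same $L$. Lemma~\ref{lem:3.2-intrinsic-coefficients} applied with $\rho=0$ bounds the forcing $\chi f,\chi h,\chi g,\chi H$ with $\chi:=\mathds 1_{(0,\theta]}$ in the norms of \cref{Thm:heat-exis}. This uses monotone convergence in $n$: $M$ and $A$ up to $\theta$ are controlled by $\mathcal Q_u(\theta-)\le L$, and the Poisson coefficient only sees the left limits $u(s-)$, $s\le\theta$, which are bounded by $M$ even at $s=\theta$. The expectation is at most $C_T(1+L^3)$.

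Let $\overline V$ be the unique solution of the linear equation \eqref{eq:3.3-frozen-heat-equation} with these forcings, given by \cref{Thm:heat-exis}. It satisfies \eqref{ineq:3_eng-est} with right-hand side $C_T(1+\E\|u_0\|_p^p+L^3)$. Let $V_n$ be the frozen solutions of Lemma~\ref{lem:3.3-energy-recovery}. The difference $\overline V-V_n$ solves the heat equation with forcing supported on $(\theta_n,\theta]$, initial datum zero, and heat continuation data. I apply the difference form of \eqref{ineq:3_eng-est} on $[0,\theta_n]$ only, so that $V_n=u=\overline V$ there by linear uniqueness. Then $\overline u:=\overline V(\cdot\wedge\theta)$ agrees with $u$ on $[0,\theta)$. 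It is c\`adl\`ag, so $\xi_-=\overline V(\theta-)$ exists as a strong $L^p$ limit and is $\mathcal F_{\theta-}$-measurable. The value $\xi=\overline V(\theta)$ is $\mathcal F_\theta$-measurable, and \eqref{eq:3.4-completed-energy} follows from the heat estimate.

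To check that $\overline u$ solves the original nonlinear stopped equation \eqref{eq:2-stopped-equation} through $\theta$, I pass to the limit $t\uparrow\theta$ in the weak formulation of $u$ on $[0,\theta_n]$. The drift integrals converge because $\mathcal B_\psi(u)$ is integrable up to $\theta$; this is Lemma~\ref{lem:3.2-intrinsic-coefficients} again, with $|\mathcal B_\psi(v)|$ bounded by powers of $\|v\|_p$. The stochastic integrals are taken over $(0,\theta]$ with predictable integrands evaluated at $u(s-)$, so the terminal jump appears exactly once.

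Comparing jumps of both sides at $\theta$ gives \eqref{eq:3.4-terminal-jump}: only the Poisson integral jumps, and its jump is $\int_Z\mathcal PG(\theta,\xi_-,z)N(\{\theta\},\dd z)$. On $\{\theta=\tau_{\max}\}$, the predictability of $\tau_{\max}$ from Lemma~\ref{lem:3.1-strict-localization} makes $\theta\mathds 1_{\{\theta=\tau_{\max}\}}$ a predictable time, up to the standard restriction of predictable times to $\mathcal F_{\theta-}$-events. Hence $N$ has no mark there, as shown in Subsection~\ref{subsec:2-stochastic}, and $\xi=\xi_-$. Uniqueness of the completion follows because any two completions agree on $[0,\theta)$ and their terminal values are both determined by \eqref{eq:3.4-terminal-jump} from $\xi_-$.

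For the eventwise endpoint $\xi_T$, I apply the construction with $\theta:=\sigma_{L}$, the increasing limit of the stopping times $\sigma_{n,L}$ of \eqref{eq:3.3-stops}. This $\sigma_L$ satisfies \eqref{eq:3.4-endpoint} and equals $\zeta_T$ on $B_{L,T}$ by \eqref{eq:3.3-localized-event}. I define $\xi_T:=\xi_-^{(L)}$ on $B_{L,T}\setminus B_{L-1,T}$ and $\xi_T:=0$ off $B_T$. This is consistent across $L$ because the left limits all equal $\lim_{t\uparrow\zeta_T}u(t)$. Measurability with respect to $\mathcal F_{\zeta_T-}$ follows from $B_{L,T}\in\mathcal F_{\zeta_T-}$. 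The moment bound in \eqref{eq:3.4-eventwise-endpoint} follows from \eqref{eq:3.4-completed-energy}. On $B_{L,T}$ we have $\zeta_T=\sigma_L$; at the maximal time there is no terminal jump, and at $T<\tau_{\max}$ the left limit is the value up to a jump. So the claimed coincidence with the completion's terminal value holds either way after checking jump-freeness at $\tau_{\max}$.

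The main obstacle I expect is the step $\theta=\tau_{\max}$: verifying that the linear solution $\overline V$ coincides with the nonlinear solution on all of $[0,\theta)$. The coefficients there are defined only through $u$ on the open interval, so the identification must go through the localized $\theta_n$ together with linear uniqueness, and must not presuppose a terminal value.
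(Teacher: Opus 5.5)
Your proof is correct. Its outer structure is the paper's: the approximations $\theta_n=\theta\wedge\tau_n$, the jump identity at $\theta$, the absence of marks at the predictable time $\tau_{\max}$, uniqueness via $\xi_-$ and the jump formula, and the levelwise definition of $\xi_T$ through $\lim_n\sigma_{n,L}$.

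The genuine difference is how you build the completion.

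\emph{The paper's route.} It proves a tail estimate: Lemma~\ref{lem:3.2-intrinsic-coefficients} on $(\theta_n,\theta_m]$ gives $\mathcal R_{n,m}\le C_T(1+L^3)\,\E(\theta_m-\theta_n)^\beta$, so $(\mathds 1_{(0,\theta_n]}f,\dots)$ is Cauchy in the forcing spaces. It then solves the heat equation with the norm limits and shows $V_n\to V$ in expected supremum norm. It obtains $V=u$ on each $[0,\theta_n]$ from an almost surely uniformly convergent subsequence. Only afterwards does it identify the limit coefficients as $\mathds 1_{(0,\theta]}$ times the nonlinear expressions in $V$.

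\emph{Your route.} You write the limiting forcing down directly and bound it by monotone convergence of the same Lemma~\ref{lem:3.2-intrinsic-coefficients} bounds. You solve once, and get exact equality $\overline V=u$ on $[0,\theta_n]$ from linear uniqueness: $\overline V-V_n$ has zero datum and forcing supported on $(\theta_n,\theta]$, so its heat continuation after $\theta_n$ solves the homogeneous problem. This is shorter and removes the coefficient-identification step.

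\emph{What the paper's route buys.} It gives the quantitative convergence $V_n\to V$ with rate $\E(\theta-\theta_n)^\beta$ and the by-product $\E\|u(\theta_n)-\xi\|_p^p\to0$. The statement does not require either. You could recover the convergence, without the rate, by applying the heat estimate to $\overline V-V_n$ on all of $[0,T]$ together with dominated convergence.

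Two small points to tighten.

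First, on $\{\theta=\tau_{\max}\}$ the left limit $u(\theta-)$ is not yet available when you define the Poisson integrand at $s=\theta$. This is harmless, but you should say why. That graph is predictable and $\dd t\otimes\mu$-null, so any predictable choice gives an indistinguishable integral. In particular you may use $\mathcal PG(\theta,\overline V(\theta-),z)$ once $\overline V$ exists. The nonlinear stopped equation for $\overline u$ then follows most directly by stopping the linear equation for $\overline V$ at $\theta$, rather than by passing $t\uparrow\theta$ in the equation for $u$.

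Second, for the final coincidence on $B_{L,T}\cap\{\zeta_T=T<\tau_{\max}\}$, the phrase ``the left limit is the value up to a jump'' does not suffice. You need that the deterministic time $T$ carries no Poisson mark almost surely. The paper handles both cases at once by noting that $\zeta_T=T\wedge\tau_{\max}$ is predictable, so $N(\{\zeta_T\}\times Z)=0$ almost surely.
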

\begin{proof}
Put $\chi_n:=\mathds 1_{(0,\theta_n]}$ and $\theta_n:=\theta\wedge\tau_n,$.
By Lemma \ref{lem:3.1-strict-localization}, $\theta_n<\tau_{\max}$ almost surely.
The bound \eqref{eq:3.4-endpoint} implies
\begin{equation*}
\sup_{0\le s<\theta_n}\|u(s)\|_p^p +\int_0^{\theta_n}\|u(s)\|_{3p}^p\,\dd s \le L,
\end{equation*}
with the usual empty-interval convention.
Lemma \ref{lem:3.3-energy-recovery} therefore gives finite local-energy moments through each $\theta_n$.
Define the heat extensions
\begin{equation*}
V_n(t) :=
\begin{cases}
u(t),&0\le t\le\theta_n, \\
S_\nu(t-\theta_n)u(\theta_n),&\theta_n<t\le T.
\end{cases}
\end{equation*}

The adaptedness argument and the homogeneous heat continuation show that $V_n$ is an adapted c\`adl\`ag solution on $[0,T]$ of
\begin{equation*}
\begin{cases}
\dd V_n =\left[\nu\Delta V_n+\nabla\cdot(\chi_n f)+\chi_n h\right]\dd t +\chi_n g\,\dd W_t +\displaystyle\int_Z\chi_n H(t,z)\,\widetilde N(\dd t,\dd z), \\
V_n(0)=u_0.
\end{cases}
\end{equation*}
In particular, its stochastic coefficients are predictable, and a possible jump at $\theta_n$ is included.
For $m\ge n$, write $\delta_{n,m}:=\theta_m-\theta_n$ and define
\begin{equation*}
\begin{aligned}
\mathcal R_{n,m}:=\E\Bigg[ &\int_{\theta_n}^{\theta_m} \left( \|f(s)\|_{q_f}^p+\|h(s)\|_{q_h}^p +\|g(s)\|_{\mathbb L^p}^p \right)\,\dd s \\
&+\int_{\theta_n}^{\theta_m}\int_Z \|H(s,z)\|_p^p\,\mu(\dd z)\,\dd s+\left( \int_{\theta_n}^{\theta_m}\int_Z \|H(s,z)\|_p^2\,\mu(\dd z)\,\dd s \right)^{p/2} \Bigg].
\end{aligned}
\end{equation*}
Apply Lemma \ref{lem:3.2-intrinsic-coefficients} on $(\theta_n,\theta_m]$.
The intrinsic quantities on this interval satisfy $M^p+A\le L$, so
\begin{equation*}
\mathcal R_{n,m} \le C\,\E\Big[ L^2\delta_{n,m}^{\alpha_f} +L^3\delta_{n,m}^{1-r_h} +L^{3/2}\delta_{n,m}^{1/4} +(1+L) \left(\delta_{n,m}+\delta_{n,m}^{p/2}\right) \Big] \le C_T(1+L^3)\, \E\delta_{n,m}^{\beta}.
\end{equation*}
where $\beta:=\min\left\{\alpha_f,1-r_h,\frac14\right\}>0$.
Since $\delta_{n,m}\le\theta-\theta_n\le T$ and $\theta_n\uparrow\theta$, dominated convergence gives
\begin{equation*}
\sup_{m\ge n}\mathcal R_{n,m}\longrightarrow0.
\end{equation*}

Consequently, $(\chi_n f,\chi_n h,\chi_n g,\chi_n H)$ is Cauchy in the forcing spaces of the stochastic heat \cref{Thm:heat-exis}, including both Poisson spaces $L^p\bigl(\Omega;L^p((0,T)\times Z;L^p)\bigr), \ L^p\bigl(\Omega;L^2((0,T)\times Z;L^p)\bigr),$ where the time-mark measure is $\dd t\otimes\mu$.
Denote the limits by $(f^\theta,h^\theta,g^\theta,H^\theta)$.
Progressive and predictable versions are preserved under these norm limits as subsequences converge almost everywhere in the respective product measures, and all spatial target spaces are separable.

Let $V$ be the solution supplied by the heat theorem with these limiting coefficients and initial datum $u_0$.
The uniform coefficient bounds of Lemma \ref{lem:3.2-intrinsic-coefficients} give
\begin{equation}
\label{eq:3.4-limit-heat-energy}
\E\left[ \sup_{t\le T}\|V(t)\|_p^p +\int_0^T \left\|\nabla\bigl(|V(s)|^{p/2}\bigr)\right\|_2^2\,\dd s \right] \le C_T\left(1+\E\|u_0\|_p^p+L^3\right).
\end{equation}

Here and below the heat theorem is used with viscosity $\nu$, by the spatial rescaling already used in Lemma \ref{lem:3.3-energy-recovery}.
The difference $V_m-V_n$ solves the same linear equation with zero initial datum and forcing supported on the predictable interval $(\theta_n,\theta_m]$.
Its heat estimate yields
\begin{equation*}
\E\sup_{t\le T}\|V_m(t)-V_n(t)\|_p^p \le C_T\mathcal R_{n,m}.
\end{equation*}
Applying stochastic heat estimates to $V-V_n$ and passing the limit of the forcing coefficient gives,
\begin{equation*}
\E\sup_{t\le T}\|V(t)-V_n(t)\|_p^p \le C_T(1+L^3)\, \E(\theta-\theta_n)^\beta
\end{equation*}
Then by Dominated convergence theorem one gets, $\E\sup_{t\le T}\|V(t)-V_n(t)\|_p^p \longrightarrow 0$ In particular, a subsequence converges to $V$ uniformly on $[0,T]$ almost surely.
For each fixed $n$, every $V_m$ with $m\ge n$ agrees with $u$ on $[0,\theta_n]$.
Taking a countable intersection therefore gives
\begin{equation}
\label{eq:3.4-agreement}
V(t)=u(t)\quad\text{for all }0\le t\le\theta_n,\ n\ge1, \qquad V(t)=u(t)\quad\text{for all }0\le t<\theta,
\end{equation}
outside one null set.
The process $V$ is divergence-free by this uniform convergence, since each $V_n$ is divergence-free.
Its c\`adl\`ag regularity is supplied by the heat theorem and the convergence above is in the expected supremum norm.

Set $\chi_\theta:=\mathds 1_{(0,\theta]}$.
For $s<\theta$, the coefficients $\chi_n f,\chi_n h, \chi_n g,\chi_n H$ eventually agree with the original coefficients, whereas they vanish for $s>\theta$.
Using \eqref{eq:3.4-agreement}, their norm limits are therefore identified, in their respective product measures, as
\begin{equation}
\label{eq:3.4-limit-coefficients}
\begin{aligned}
f^\theta(s) &=-\chi_\theta(s)\mathcal P^{(1)}(V(s)\otimes V(s)), & h^\theta(s) &=-\chi_\theta(s)\mathcal P(g_N(|V(s)|^2)V(s)), \\
g^\theta(s) &=\chi_\theta(s)\mathcal P\sigma(s,V(s-)), & H^\theta(s,z) &=\chi_\theta(s)\mathcal PG(s,V(s-),z).
\end{aligned}
\end{equation}
The graph of $\theta$ has zero Lebesgue-time measure.
The last two expressions are predictable, with the corresponding product measurability for $H^\theta$.
Thus their equality in the stochastic integrand norms identifies the stochastic integrals indistinguishably, including their terminal values.

Define $\overline u(t):=V(t\wedge\theta)$ for $t\ge0$.
It is adapted, stopped, and c\`adl\`ag.
Stopping the limiting heat equation and using \eqref{eq:3.4-limit-coefficients} gives, for every $\varphi\in C_c^\infty(\R^3;\R^3)$, indistinguishably in $t$,
\begin{equation}
\label{eq:3.4-completed-weak-equation}
\begin{aligned}
\langle\overline u(t),\varphi\rangle =&\langle u_0,\varphi\rangle+\int_0^{t\wedge\theta} \Big[ \nu\langle\overline u(s),\Delta\varphi\rangle +\langle\mathcal P^{(1)} (\overline u(s)\otimes\overline u(s)),\nabla\varphi\rangle -\langle\mathcal P(g_N(|\overline u(s)|^2)\overline u(s)), \varphi\rangle \Big]\,\dd s \\
&+\int_0^{t\wedge\theta} \left\langle (\mathcal P\sigma(s,\overline u(s-)))^*\varphi,\dd W_s \right\rangle_{\mathcal U}+\int_0^{t\wedge\theta}\int_Z \left\langle\mathcal PG(s,\overline u(s-),z),\varphi\right\rangle \,\widetilde N(\dd s,\dd z).
\end{aligned}
\end{equation}
The stochastic integrals in this identity are stopped integrals with the predictable indicator $\chi_\theta$.
Their upper endpoints are included.
Equation \eqref{eq:3.4-limit-heat-energy} implies \eqref{eq:3.4-completed-energy}.

By \eqref{eq:3.4-agreement}, $V(t)=u(t)$ for every $t<\theta$ outside a fixed null set.
Since $V$ has $L^p$ valued c\`adl\`ag paths, its strong left limit $V(\theta-)$ exists on $\{\theta>0\}$, and therefore
\begin{equation*}
\xi_-:=\lim_{t\uparrow\theta}u(t)=V(\theta-) \qquad\text{in }L^p_\sigma.
\end{equation*}
The left-limit process $V_-$ is predictable, so evaluation at the stopping time $\theta$ gives $V(\theta-)\in\mathcal F_{\theta-}$, whereas adaptedness and c\`adl\`ag regularity give $V(\theta)\in\mathcal F_\theta$.
Together with \eqref{eq:3.4-limit-heat-energy}, this yields
\begin{equation*}
\xi_-\in L^p(\Omega,\mathcal F_{\theta-};L^p_\sigma), \qquad \xi:=V(\theta)\in L^p(\Omega,\mathcal F_\theta;L^p_\sigma).
\end{equation*}

The finite-variation terms in \eqref{eq:3.4-completed-weak-equation} are continuous in time, and the Wiener integral has continuous paths.
Hence the only possible jump at $\theta$ is contributed by the compensated Poisson integral.
For a measurable $Z_j\subset Z$ with $\mu(Z_j)<\infty$, its jump identity gives
\begin{equation*}
\Delta_\theta \int_0^\cdot\int_{Z_j} \mathcal PG(s,\overline u(s-),z)\, \widetilde N(\dd s,\dd z) = \int_{Z_j} \mathcal PG(\theta,\xi_-,z)\, N(\{\theta\},\dd z),
\end{equation*}
because the compensator $\dd s\,\mu(\dd z)$ has no atom at the singleton $\{\theta\}$.
Letting $Z_j\uparrow Z$ and using the convergence in the $L^p$ and $L^2$ Poisson norms, together with the Poisson maximal estimate , extends this identity to the full mark space.
A countable separating family of test functions then yields the $L^p_\sigma$-valued identity
\begin{equation*}
\xi-\xi_- = \int_Z\mathcal PG(\theta,\xi_-,z)\, N(\{\theta\},\dd z).
\end{equation*}
Thus the preterminal path uniquely determines $\xi_-$, and the preceding jump identity uniquely determines the terminal value $\xi$, hence the stopped completion is unique.

It remains to identify the endpoint on $\{\theta=\tau_{\max}\}$.
Choose measurable sets $Z_j\uparrow Z$ with $\mu(Z_j)<\infty$.
By Lemma \ref{lem:3.1-strict-localization}, $\tau_{\max}$ is predictable, so its graph is a predictable set.
The compensation formula therefore yields
\begin{equation*}
\begin{aligned}
\E\int_0^T\int_{Z_j} \mathds 1_{\{s=\tau_{\max}\}}\,N(\dd s,\dd z) &= \mu(Z_j)\, \E\int_0^T \mathds 1_{\{s=\tau_{\max}\}}\,\dd s =0.
\end{aligned}
\end{equation*}
Since the random variable on the left is nonnegative,
\begin{equation*}
N(\{\tau_{\max}\}\times Z_j)=0 \qquad\text{a.s. on }\{\tau_{\max}\le T\}.
\end{equation*}
Letting $j\uparrow\infty$ gives $N(\{\tau_{\max}\}\times Z)=0$ on the same event.
In particular, on $\{\theta=\tau_{\max}\}$, \eqref{eq:3.4-terminal-jump} implies $\xi=\xi_-$.

Finally, on $\{\theta<\tau_{\max}\}$ one has $\theta_n=\theta$ for all sufficiently large $n$, whereas on $\{\theta=\tau_{\max}\}$ strict localization gives $\theta_n=\tau_n<\theta$ and $\theta_n\uparrow\theta$.
Hence, using \eqref{eq:3.4-agreement},
\begin{equation*}
u(\theta_n)=V(\theta_n)\longrightarrow
\begin{cases}
V(\theta),&\theta<\tau_{\max}, \\
V(\theta-)=V(\theta),&\theta=\tau_{\max},
\end{cases}
=\xi \qquad\text{a.s. in }L^p.
\end{equation*}
Moreover,
\begin{equation*}
\|u(\theta_n)-\xi\|_p^p \le 2^p\sup_{t\le T}\|V(t)\|_p^p,
\end{equation*}
and the right-hand side is integrable by \eqref{eq:3.4-limit-heat-energy}.
Dominated convergence therefore yields
\begin{equation*}
\E\|u(\theta_n)-\xi\|_p^p\longrightarrow0.
\end{equation*}

It remains to obtain the eventwise assertion without a deterministic bound on $\mathcal Q_u(\zeta_T-)$.
For each $L\ge1$, use the stopping times $\sigma_{n,L}$ of \eqref{eq:3.3-stops}.
Their stopped intrinsic functionals agree up to the smaller localizing lifetime, hence $\sigma_{n,L}\le\sigma_{n+1,L}$.
Set
\begin{equation}
\label{eq:3.4-budget-endpoint}
\theta^{(L)}:=\lim_{n\to\infty}\sigma_{n,L} \le\zeta_T.
\end{equation}
For every $t<\theta^{(L)}$, some $\sigma_{n,L}>t$.
The pre-exit bounds in Lemma \ref{lem:3.3-energy-recovery} therefore imply $\mathcal Q_u(\theta^{(L)}-)\le L \ \P\text{-a.s.}$ Apply the result already proved with $\theta=\theta^{(L)}$, and denote the resulting heat process by $V^{(L)}$.
By \eqref{eq:3.3-localized-event}, $\theta^{(L)}=\zeta_T \ \text{on }B_{L,T}.$ Thus $V^{(L)}(\zeta_T-)$ is the required left limit on $B_{L,T}$, and the limits agree on overlaps.

For completeness, $B_{L,T}\in\mathcal F_{\zeta_T-}$.
By right continuity of $\mathcal Q_u$ before its lifetime, $$B_{L,T} =\bigcap_{r\in\mathbb Q\cap[0,T)} \left( \{\zeta_T\le r\} \cup \{\zeta_T>r,\ \mathcal Q_u(r)\le L\} \right),$$ where $\mathcal Q_u(r)$ is used only on $\{r<\zeta_T\}$.
Each set belongs to $\mathcal F_{\zeta_T-}$ by the definition of that sigma-field.
Since $V^{(L)}(\zeta_T-)$ is also $\mathcal F_{\zeta_T-}$-measurable, the compatible variables at integer levels define $\xi_T$ on $B_T=\bigcup_{k\in\mathbb N}B_{k,T}$, with value zero elsewhere.
We define it as
\begin{equation*}
B_{0,T} := \varnothing, \qquad \xi_T := \sum_{k=1}^{\infty} \mathbf{1}_{B_{k,T} \setminus B_{k-1,T}} V^{(k)}(\zeta_{T}-) \quad \text{on } B_T,
\end{equation*}
and $\xi_T = 0$ on $B_T^c$.
Compatibility on overlaps makes this independent of the chosen integer level.
If $\omega\in B_T$, then $\omega\in B_{k,T}$ for some $k\in\mathbb N$.
On $B_{k,T}$ one has $\theta^{(k)}=\zeta_T$, and hence the endpoint completion constructed above gives $$ u(t)\longrightarrow V^{(k)}(\zeta_T-)=\xi_T \qquad\text{in }L^p \quad\text{as }t\uparrow\zeta_T.
$$ Thus the first assertion in \eqref{eq:3.4-eventwise-endpoint} holds almost surely on $B_T$.

Moreover, on $B_{L,T}$ compatibility of the levelwise completions gives $\xi_T=V^{(L)}(\zeta_T-)$.
Therefore, $$ \E \left[ \mathds 1_{B_{L,T}}\|\xi_T\|_p^p \right] \le \E\sup_{t\le T}\|V^{(L)}(t)\|_p^p\le C_T\left(1+\E\|u_0\|_p^p+L^3\right), $$ by \eqref{eq:3.4-limit-heat-energy} applied to the level-$L$ completion.
This proves \eqref{eq:3.4-eventwise-endpoint}.
Finally, $\zeta_T=T\wedge\tau_{\max}$ is predictable.
Choose measurable sets $Z_j\uparrow Z$ with $\mu(Z_j)<\infty$.
Since the graph of $\zeta_T$ is predictable, the compensation formula yields
\begin{equation*}
\E\int_0^T\int_{Z_j} \mathds 1_{\{s=\zeta_T\}}\,N(\dd s,\dd z) = \mu(Z_j)\, \E\int_0^T \mathds 1_{\{s=\zeta_T\}}\,\dd s =0.
\end{equation*}
Hence $N(\{\zeta_T\}\times Z_j)=0 \ \P\text{-a.s.}$ for every $j$, and therefore, letting $j\uparrow\infty$, $N(\{\zeta_T\}\times Z)=0 \ \P\text{-a.s.}$ On $B_{L,T}$ one has $\theta^{(L)}=\zeta_T$, so the terminal-jump identity for the corresponding completion gives
\begin{equation*}
V^{(L)}(\zeta_T)-V^{(L)}(\zeta_T-) = \int_Z \mathcal PG\bigl(\zeta_T,V^{(L)}(\zeta_T-),z\bigr) \,N(\{\zeta_T\},\dd z) =0.
\end{equation*}
Consequently,
\begin{equation*}
V^{(L)}(\zeta_T)=V^{(L)}(\zeta_T-)=\xi_T \qquad\text{on }B_{L,T},
\end{equation*}
so the left limit agrees with the completed terminal value, as asserted.

\end{proof}
Once such an endpoint value is available, the bounded-restart theorem \cref{thm:2-Paper-I}(iii) can be applied there.
The following proposition carries out this restart and pastes the new branch to the original solution, thereby showing that every endpoint with finite intrinsic control is strictly extendible.
\begin{proposition}
[Localized continuation] \label{prop:3.5-localized-continuation} Assume the hypotheses of Lemmas \ref{lem:3.1-strict-localization}--\ref{lem:3.4-tails-endpoint}, including the joint compatibility hypothesis of the bounded-restart theorem \cref{thm:2-Paper-I}(iii).
Let $(u,(\tau_n),\tau_{\max})$ be the prescribed maximal local strong solution, and fix $T>0$ and $L\ge1$.
Suppose that $\theta$ is an $(\mathcal F_t)$-stopping time satisfying
\begin{equation*}
0\le\theta\le T\wedge\tau_{\max},\quad \mathcal Q_u(\theta-) :=\sup_{0\le t<\theta}\mathcal Q_u(t)\le L \qquad\P\text{-a.s.},
\end{equation*}
where $\mathcal Q_u(0-):=0$.
Denote by $\overline u$ the stopped completion through $\theta$ supplied by Lemma \ref{lem:3.4-tails-endpoint}.

There exist an $(\mathcal F_t)$-stopping time $\widehat\theta$ and a local strong solution $(w,\widehat\theta)$ on the original stochastic basis, with the prescribed initial datum and driving noises, such that
\begin{equation}
\label{eq:3.5-extended-lifetime}
\theta<\widehat\theta \le\theta+1\le T+1, \ \widehat\theta \le\tau_{\max} \qquad\P\text{-a.s.}
\end{equation}
The stopped process $w$ is adapted, divergence-free, and $L^p$ valued c\`adl\`ag, and
\begin{equation}
\label{eq:3.5-extension}
\P\left(
\begin{aligned}
w(t)&=\overline u(t) &&\text{for every }0\le t\le\theta, \\
w(t)&=u(t) &&\text{for every }0\le t<\widehat\theta
\end{aligned}
\right)=1.
\end{equation}
Moreover,
\begin{equation*}
\E\left[ \sup_{0\le t\le\widehat\theta}\|w(t)\|_p^p +\int_0^{\widehat\theta} \left\|\nabla\bigl(|w(s)|^{p/2}\bigr)\right\|_2^2\,\dd s \right] \le C_T\left(1+\E\|u_0\|_p^p+L^3\right).
\end{equation*}
Here $C_T$ depends only on $T$ and the structural constants of the preceding lemmas and the local theorem of Paper I; it is independent of $L$, $\theta$, the localizing sequence.
Any two such extensions agree almost surely on their common closed lifetime.

In particular, for $\zeta_T,\ B_{L,T}$ and $\theta^{(L)}$ being the stopping time defined in \eqref{eq:3.4-budget-endpoint}, there is an extension $(w^{(L)},\widehat\theta^{(L)})$ with the preceding properties for $\theta=\theta^{(L)}$, and
\begin{equation}
\label{eq:3.5-continuation}
\widehat\theta^{(L)}>\zeta_T \qquad\P\text{-a.s. on }B_{L,T}.
\end{equation}
\end{proposition}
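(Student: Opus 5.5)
The plan is to restart from the completed terminal value and paste, following the pattern of Lemma~\ref{lem:3.1-strict-localization} but at the possibly maximal endpoint $\theta$.

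\emph{Step 1: restart from the completed endpoint.} By Lemma~\ref{lem:3.4-tails-endpoint}, the completion $\overline u$ is an adapted stopped c\`adl\`ag solution through $\theta$. It satisfies \eqref{eq:3.4-completed-energy}, and its terminal value $\xi=\overline u(\theta)$ lies in $L^p(\Omega,\mathcal F_\theta;L^p_\sigma)$ with $\E\|\xi\|_p^p\le C_T(1+\E\|u_0\|_p^p+L^3)$. Since $\theta\le T$ is bounded, \cref{thm:2-Paper-I}(iii) applies at $\rho=\theta$ with datum $\xi$. It produces a branch $\widehat v$ on $[\theta,\widehat\theta]$ with $\theta<\widehat\theta\le\theta+1$ and $\mathcal E_p(\widehat v;\theta,\widehat\theta)\le C(1+\E\|\xi\|_p^p)$. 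On $\{\theta=0\}$ the admissible positive lifetime is simply the local solution of \cref{thm:2-Paper-I}(i), which this restart reproduces. The pasted process $w$ is defined as in \cref{thm:2-Paper-I}(iv): it equals $\overline u$ on $[0,\theta]$ and $\widehat v(\cdot\wedge\widehat\theta)$ afterwards. Adaptedness and c\`adl\`ag regularity follow exactly as in Lemma~\ref{lem:3.1-strict-localization}. The predictable intervals $(0,\theta]$ and $(\theta,\widehat\theta]$ split the stochastic integrals so that the terminal jump at $\theta$, already contained in $\xi$ by \eqref{eq:3.4-terminal-jump}, is counted once. Pasting then gives that $(w,\widehat\theta)$ is admissible in the sense of Definition~\ref{def:2-local-solution}. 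Adding the two energy bounds yields the stated estimate with constant $C_T(1+\E\|u_0\|_p^p+L^3)$.

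\emph{Step 2: maximality and identification.} Since $\widehat\theta\in\mathscr T$, maximality in \cref{thm:2-Paper-I}(ii) gives $\widehat\theta\le\tau_{\max}$ and $w=u$ on $[0,\widehat\theta)$, together with the agreement of $w$ and $\overline u$ on $[0,\theta]$. The delicate point is that $\theta$ may equal $\tau_{\max}$ on a non-null set, which would contradict $\theta<\widehat\theta\le\tau_{\max}$. This is the main obstacle, and I would resolve it by reading the argument as a proof that $\P(\theta=\tau_{\max})=0$. On the event $\{\theta=\tau_{\max}\}$, $w$ would be a solution in the local-energy class whose lifetime strictly exceeds $\tau_{\max}$, against maximality. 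Hence $\theta<\tau_{\max}$ almost surely, and only then is $\widehat\theta\le\tau_{\max}$ consistent.

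Uniqueness of extensions on common closed lifetimes follows from the pathwise uniqueness in \cref{thm:2-Paper-I}(i),(iv), applied after the common restart at $\theta$. The stopped completion itself is unique by Lemma~\ref{lem:3.4-tails-endpoint}. Constants inherit independence from $L$ and $\theta$ because the restart constants are independent of the bounded restart time.

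\emph{Step 3: continuation at $\theta^{(L)}$.} By the construction in \eqref{eq:3.4-budget-endpoint}, $\theta^{(L)}$ satisfies $\mathcal Q_u(\theta^{(L)}-)\le L$, so Steps 1--2 apply to it. By \eqref{eq:3.3-localized-event}, $\theta^{(L)}=\zeta_T$ on $B_{L,T}$. Therefore $\widehat\theta^{(L)}>\theta^{(L)}=\zeta_T$ almost surely on $B_{L,T}$, which is \eqref{eq:3.5-continuation}. The same reasoning also shows that $\zeta_T<\tau_{\max}$ almost surely on $B_{L,T}$, since $\widehat\theta^{(L)}\le\tau_{\max}$.
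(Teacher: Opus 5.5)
Your proposal is correct and follows essentially the same route as the paper: restart at the bounded time $\theta$ from the completed value $\xi=\overline u(\theta)$ via Theorem~\ref{thm:2-Paper-I}(iii), paste along the predictable intervals $(0,\theta]$ and $(\theta,\widehat\theta]$, add the two energy bounds, and conclude $\theta<\widehat\theta\le\tau_{\max}$ from $\widehat\theta\in\mathscr T$ and maximality. Your explicit observation that this chain forces $\P(\theta=\tau_{\max})=0$ is what the paper's proof yields implicitly and later exploits in Theorem~\ref{thm:3.6-intrinsic-blowup}. The only cosmetic difference is that the paper verifies the pasted weak formulation directly instead of citing Theorem~\ref{thm:2-Paper-I}(iv), since the lifetime of $(\overline u,\theta)$ need not be strictly positive.
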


\begin{proof}
\emph{Step 1} By Lemma \ref{lem:3.4-tails-endpoint}, $\overline u$ is an adapted, stopped c\`adl\`ag process satisfying the original equation through $\theta$.
Evaluation at the stopping time gives
\begin{equation}
\label{eq:3.5-restart-datum}
\xi:=\overline u(\theta) \in L^p(\Omega,\mathcal F_\theta;L^p_\sigma),\quad \E\|\xi\|_p^p \le\E\sup_{t\le\theta}\|\overline u(t)\|_p^p \le C_T\left(1+\E\|u_0\|_p^p+L^3\right).
\end{equation}
Since $\theta\le T$, \cref{thm:2-Paper-I}(iii), applies at $\theta$ with initial datum $\xi$.
It supplies a stopping duration $0<\vartheta\le1$ in the shifted filtration and an original-filtration stopping time $\widehat\theta:=\theta+\vartheta$, together with a restarted branch $\widehat v$ on $[\theta,\widehat\theta]$ satisfying $\widehat v(\theta)=\xi,$ and
\begin{equation}
\label{eq:3.5-restarted-branch}
\E\left[ \sup_{\theta\le t\le\widehat\theta}\|\widehat v(t)\|_p^p +\int_\theta^{\widehat\theta} \left\|\nabla\bigl(|\widehat v(s)|^{p/2}\bigr)\right\|_2^2 \,\dd s \right] \le C\left(1+\E\|\xi\|_p^p\right).
\end{equation}
The translated equation uses the original coefficients and noises on $(\theta,\widehat\theta]$.
The restart datum is the completed value $\xi$, including the possible terminal jump in \eqref{eq:3.4-terminal-jump}.
The restart theorem also applies on $\{\theta=0\}$, where $\xi=u_0$.

\emph{Step 2:} Define,
\begin{equation*}
w(t):=
\begin{cases}
\overline u(t),&0\le t\le\theta, \\
\widehat v(t\wedge\widehat\theta),&t>\theta.
\end{cases}
\end{equation*}
For every deterministic $t$, the first branch is $\mathcal F_t$-measurable on $\{t\le\theta\}$.
On $\{\theta<t\}$, measurability of the second branch follows from the adapted stopped extension supplied by the restart theorem.
Thus $w$ is adapted.
The identity $\widehat v(\theta)=\overline u(\theta)$ gives c\`adl\`ag paths at the junction.
The process is divergence-free and stopped at $\widehat\theta$.
Its left-limit process is predictable.
For $\varphi\in C_c^\infty(\R^3;\R^3)$, write
\begin{equation*}
\mathcal B(a) :=\nu\langle a,\Delta\varphi\rangle +\langle\mathcal P^{(1)}(a\otimes a),\nabla\varphi\rangle -\langle\mathcal P(g_N(|a|^2)a),\varphi\rangle, \qquad a\in L^p_\sigma.
\end{equation*}
These pairings are well-defined: the tensor and taming terms belong to $L^{p/2}$ and $L^{p/3}$, respectively, and $p/3>1$.
As in Lemma \ref{lem:3.2-intrinsic-coefficients}, $\mathcal P^{(1)}$ acts on the first tensor index.
Set
\begin{equation*}
\chi_0:=\mathds 1_{(0,\theta]},\quad \chi_1:=\mathds 1_{(\theta,\widehat\theta]} =\mathds 1_{(0,\widehat\theta]}-\mathds 1_{(0,\theta]},\quad \chi_0+\chi_1=\mathds 1_{(0,\widehat\theta]}=:\chi.
\end{equation*}
These indicators are predictable, being adapted and left-continuous on $(0,\infty)$.
The states and their left limits agree with those of $\overline u$ on the first interval and those of $\widehat v$ on the second.
Adding the completed stopped equation \eqref{eq:3.4-completed-weak-equation} to the translated restart increment therefore yields, indistinguishably for each $\varphi$,
\begin{equation}
\label{eq:3.5-weak}
\begin{aligned}
\langle w(t),\varphi\rangle =&\langle u_0,\varphi\rangle +\int_0^t\chi(s)\mathcal B(w(s))\,\dd s \\
&+\int_0^t\chi(s) \left\langle \bigl(\mathcal P\sigma(s,w(s-))\bigr)^*\varphi, \dd W_s \right\rangle_{\mathcal U}+\int_0^t\int_Z\chi(s) \left\langle\mathcal PG(s,w(s-),z),\varphi\right\rangle \,\widetilde N(\dd s,\dd z), \qquad t\ge0.
\end{aligned}
\end{equation}
Only the possible $\omega$-dependence of the coefficients has been suppressed.
The stochastic integrands are predictable, with the additional $\mathcal Z$-measurability for the jump term.
Their integrability is inherited by restriction and addition from the two constituent equations.
The first Poisson interval contains $\theta$, whereas the second excludes $\theta$ and contains $\widehat\theta$.
Hence a jump at the pasting time is counted exactly once, and a terminal jump at $\widehat\theta$ is retained.
When $\theta=0$, the first interval is empty and the same identity is the restarted equation with initial value $u_0$.

\emph{Step 3:} The piecewise definition gives
\begin{equation}
\label{eq:3.5-energy-bound}
\begin{aligned}
\E\left[ \sup_{t\le\widehat\theta}\|w(t)\|_p^p +\int_0^{\widehat\theta} \left\|\nabla\bigl(|w(s)|^{p/2}\bigr)\right\|_2^2\,\dd s \right]&\le \E\left[ \sup_{t\le\theta}\|\overline u(t)\|_p^p +\int_0^\theta \left\|\nabla\bigl(|\overline u(s)|^{p/2}\bigr)\right\|_2^2 \,\dd s \right] \\
&\qquad+ \E\left[ \sup_{\theta\le t\le\widehat\theta}\|\widehat v(t)\|_p^p +\int_\theta^{\widehat\theta} \left\|\nabla\bigl(|\widehat v(s)|^{p/2}\bigr)\right\|_2^2 \,\dd s \right] \\
&\quad\le C_T\left(1+\E\|u_0\|_p^p+L^3\right),
\end{aligned}
\end{equation}
by \eqref{eq:3.4-completed-energy}, \eqref{eq:3.5-restart-datum}, and \eqref{eq:3.5-restarted-branch}.
The Sobolev inequality on $\R^3$, applied to $|w|^{p/2}$, also yields
\begin{equation*}
\E\int_0^{\widehat\theta}\|w(s)\|_{3p}^p\,\dd s \le C\E\int_0^{\widehat\theta} \left\|\nabla\bigl(|w(s)|^{p/2}\bigr)\right\|_2^2\,\dd s <\infty.
\end{equation*}
Consequently, \eqref{eq:3.5-weak} and \eqref{eq:3.5-energy-bound} place $(w,\widehat\theta)$ in the local-energy uniqueness class of \cref{thm:2-Paper-I}(i).
Its lifetime is strictly positive even if $\theta$ can vanish.
Let $\mathscr T$ be the family of attainable lifetimes in \cref{thm:2-Paper-I}(ii).
The preceding construction shows that $\widehat\theta\in\mathscr T$; hence
\begin{equation*}
\theta<\widehat\theta \le\operatorname*{ess\,sup}_{\sigma\in\mathscr T}\sigma =\tau_{\max} \qquad\P\text{-a.s.}
\end{equation*}
Together with $\vartheta\le1$, this proves \eqref{eq:3.5-extended-lifetime}.
Pathwise uniqueness gives agreement of $w$ and the stopped representative of $u$ on $[0,\widehat\theta\wedge\tau_n]$ for each $n$.
Taking a common full-probability event and using $\tau_n\uparrow\tau_{\max}$ proves \eqref{eq:3.5-extension}.
The same uniqueness statement gives agreement of any two extensions on their common closed lifetime.
Finally, Lemma \ref{lem:3.4-tails-endpoint} gives $0\le\theta^{(L)}\le\zeta_T, \ \mathcal Q_u(\theta^{(L)}-)\le L \quad\P\text{-a.s.},\ \theta^{(L)}=\zeta_T \ \P\text{-a.s.
on }B_{L,T}.$ Apply the construction on the whole probability space with $\theta=\theta^{(L)}$ and its adapted completion.
It yields $(w^{(L)},\widehat\theta^{(L)})$ with $\widehat\theta^{(L)}>\theta^{(L)}$ almost surely.
Restricting this inequality to $B_{L,T}$ proves \eqref{eq:3.5-continuation}.
\end{proof}
The continuation mechanism above immediately yields an intrinsic blow-up criterion for the maximal solution.
We also introduce the associated level exit times, which provide an increasing sequence of stopping times exhausting $\tau_{\max}$ and will serve as the canonical localizations in the sequel.
\begin{theorem}
[Intrinsic blow-up alternative] \label{thm:3.6-intrinsic-blowup} Let $p>3$, assume the hypotheses of Proposition \ref{prop:3.5-localized-continuation}, and let $(u,(\tau_n),\tau_{\max})$ be the prescribed maximal $L^p_\sigma(\R^3;\R^3)$-solution.
For $0\le t<\tau_{\max}$, set
\begin{equation*}
\mathcal Q_u(t) :=\sup_{0\le s\le t}\|u(s)\|_p^p +\int_0^t\|u(s)\|_{3p}^p\,\dd s.
\end{equation*}
Then
\begin{equation}
\label{eq:3.6-blowup-alternative}
\P\left( \tau_{\max}<\infty,\quad \lim_{t\uparrow\tau_{\max}}\mathcal Q_u(t)<\infty \right)=0.
\end{equation}
In particular,
\begin{equation}
\label{eq:3.6-intrinsic-divergence}
\lim_{t\uparrow\tau_{\max}} \left( \sup_{0\le s\le t}\|u(s)\|_p^p +\int_0^t\|u(s)\|_{3p}^p\,\dd s \right)=\infty \quad\P\text{-a.s. on }\{\tau_{\max}<\infty\}.
\end{equation}

For each deterministic $L\ge1$, define the intrinsic exit time
\begin{equation}
\label{eq:3.6-intrinsic-exits}
\lambda_L :=\inf\{t\in[0,\tau_{\max}):\mathcal Q_u(t)\ge L\} \wedge\tau_{\max}, \qquad \inf\varnothing:=\infty.
\end{equation}
These are $(\mathcal F_t)$-stopping times, nondecreasing in $L$, and
\begin{equation}
\label{eq:3.6-exit-lifetime}
\lambda_L\uparrow\tau_{\max}\ \text{as }L\to\infty,\ \P\text{-a.s.}, \quad \lambda_L<\tau_{\max}\ \P\text{-a.s. on }\{\tau_{\max}<\infty\}.
\end{equation}
With $\mathcal Q_u(\lambda_L-) :=\sup_{0\le t<\lambda_L}\mathcal Q_u(t)$ and the convention $\mathcal Q_u(0-)=0$, one also has
\begin{equation}
\label{eq:3.6-exit-levels}
\begin{aligned}
\mathcal Q_u(\lambda_L-)\le L \ \P\text{-a.s.}, \qquad \mathcal Q_u(\lambda_L) \ge L \ \P\text{-a.s. on }\{\lambda_L<\tau_{\max}\}.
\end{aligned}
\end{equation}
The definition allows $\lambda_L=0$ when $\|u_0\|_p^p\ge L$.
A jump may produce strict inequality in \eqref{eq:3.6-exit-levels}.
\end{theorem}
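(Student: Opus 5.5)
The argument is a direct application of Proposition~\ref{prop:3.5-localized-continuation}, combined with a countable union over deterministic horizons and budget levels. First I note that $\mathcal Q_u$ is nondecreasing on $[0,\tau_{\max})$. Hence $\lim_{t\uparrow\tau_{\max}}\mathcal Q_u(t)$ exists in $[0,\infty]$ and equals $\mathcal Q_u(\tau_{\max}-)$. Consider the event
\[
D:=\{\tau_{\max}<\infty,\ \mathcal Q_u(\tau_{\max}-)<\infty\}.
\]
It is the union over $T,L\in\mathbb N$ of $D_{T,L}:=\{\tau_{\max}<T,\ \mathcal Q_u(\tau_{\max}-)\le L\}$.

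On $D_{T,L}$ we have $\zeta_T=T\wedge\tau_{\max}=\tau_{\max}$ and $\mathcal Q_u(\zeta_T-)\le L$, so $D_{T,L}\subset B_{L,T}$. By \eqref{eq:3.5-continuation}, the extension $(w^{(L)},\widehat\theta^{(L)})$ satisfies $\widehat\theta^{(L)}>\zeta_T=\tau_{\max}$ almost surely on $D_{T,L}$. Since \eqref{eq:3.5-extended-lifetime} also gives $\widehat\theta^{(L)}\le\tau_{\max}$ almost surely, this forces $\P(D_{T,L})=0$. Taking the countable union gives \eqref{eq:3.6-blowup-alternative}, and \eqref{eq:3.6-intrinsic-divergence} is simply its restatement via monotonicity.

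For the exit times, $\mathcal Q_u$ is adapted and càdlàg on each strict localization $[0,\tau_n]$, where $\tau_n<\tau_{\max}$ by Lemma~\ref{lem:3.1-strict-localization}. I would therefore write
\[
\lambda_L=\lim_n\bigl(\inf\{t\le\tau_n:\mathcal Q_n(t)\ge L\}\wedge\tau_n\bigr)\quad\text{on }\{\lambda_L<\tau_{\max}\},
\]
using the stopped functionals $\mathcal Q_n$ of \eqref{eq:3.3-stopped-functional}, or more directly $\lambda_L=\lim_n\lambda_L^{(n)}$ with $\lambda_L^{(n)}:=\inf\{t:\mathcal Q_n(t)\ge L\}\wedge\tau_n$. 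Each $\lambda_L^{(n)}$ is a hitting time of a closed set by an adapted right-continuous process, so it is a stopping time under the usual conditions. The $\lambda_L^{(n)}$ increase in $n$ because the $\mathcal Q_n$ agree up to $\tau_n$, and their limit equals $\lambda_L$. Monotonicity in $L$ is immediate from the definition.

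The exit levels in \eqref{eq:3.6-exit-levels} follow from right continuity: for $t<\lambda_L$ we have $\mathcal Q_u(t)<L$, so $\mathcal Q_u(\lambda_L-)\le L$. On $\{\lambda_L<\tau_{\max}\}$, right continuity of the monotone process gives $\mathcal Q_u(\lambda_L)\ge L$, possibly with strict inequality because of a jump of $\|u\|_p$.

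For \eqref{eq:3.6-exit-lifetime}, on $\{\tau_{\max}<\infty\}$ the divergence \eqref{eq:3.6-intrinsic-divergence} together with finiteness of $\mathcal Q_u(t)$ for $t<\tau_{\max}$ shows that the level $L$ is reached strictly before $\tau_{\max}$, so $\lambda_L<\tau_{\max}$. On $\{\tau_{\max}=\infty\}$, and in general, $\lim_L\lambda_L\ge t$ for every $t<\tau_{\max}$ because $\mathcal Q_u(t)<\infty$; this gives $\lambda_L\uparrow\tau_{\max}$.

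The only delicate point I expect is measurability. One has to check that the hitting time of $[L,\infty)$ for a process defined only on the random interval $[0,\tau_{\max})$ is a genuine stopping time; this is handled by the approximation through the $\mathcal Q_n$ and the announcing property of $(\tau_n)$. One also has to make sure the null sets in \eqref{eq:3.5-continuation} are taken uniformly over the countably many $(T,L)$. The analytic content is entirely contained in Proposition~\ref{prop:3.5-localized-continuation}.
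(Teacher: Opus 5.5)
Your proposal is correct and follows the paper's proof essentially step by step. The null-event argument, comparing $\widehat\theta^{(L)}>\zeta_T=\tau_{\max}$ on $B_{L,T}$ with $\widehat\theta^{(L)}\le\tau_{\max}$ and then taking a countable union over integer $T,L$, is exactly the paper's Step~1, and your exit-level, monotonicity and $\lambda_L\uparrow\tau_{\max}$ arguments coincide with Steps~2--3. The only cosmetic difference is how you show $\lambda_L$ is a stopping time: you take the increasing limit of the stopping times $\lambda_L\wedge\tau_n$ built from the stopped functionals $\mathcal Q_n$, whereas the paper writes $\{\lambda_L\le t\}=\{\tau_{\max}\le t\}\cup\bigcup_n\bigl(\{t<\tau_n\}\cap\{\mathcal Q_n(t)\ge L\}\bigr)$ directly, and both rest on the same compatibility of the $\mathcal Q_n$ and completeness of the filtration.
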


\begin{proof}
The local regularity from \cref{thm:2-Paper-I}(i) implies, on a common full-probability event, that $u$ is locally c\`adl\`ag in $L^p$ and belongs locally to $L^p(0,\tau_{\max};L^{3p})$.
Consequently, $\mathcal Q_u$ is finite, nondecreasing, and c\`adl\`ag on $[0,\tau_{\max})$.
In particular, its limit at the maximal lifetime exists in $[0,\infty]$ and equals
\begin{equation*}
\mathcal Q_u(\tau_{\max}-) :=\sup_{0\le t<\tau_{\max}}\mathcal Q_u(t).
\end{equation*}

\emph{Step 1:} Fix $T>0$ and $L\ge1$, and retain the notation $\zeta_T, B_{L,T}$ and define $A_{L,T}:=\{\tau_{\max}\le T\}\cap B_{L,T}$.
The endpoint functional and these events are measurable by Lemma \ref{lem:3.4-tails-endpoint}.
Proposition \ref{prop:3.5-localized-continuation}, applied to the stopping time $\theta^{(L)}$ of \eqref{eq:3.4-budget-endpoint}, supplies an attainable lifetime $\widehat\theta^{(L)}$ such that
\begin{equation*}
\widehat\theta^{(L)}\le\tau_{\max} \ \P\text{-a.s.},\qquad \widehat\theta^{(L)}>\zeta_T \ \P\text{-a.s. on }B_{L,T}.
\end{equation*}
On $A_{L,T}$, these relations give $\tau_{\max}=\zeta_T<\widehat\theta^{(L)}\le\tau_{\max}$.
Therefore
\begin{equation}
\label{eq:3.6-null-event}
\P(A_{L,T})=0.
\end{equation}
The continuation is constructed on the whole probability space at $\theta^{(L)}$, the event $A_{L,T}$ is used only to compare the resulting lifetimes.
Since
\begin{equation*}
\{\tau_{\max}<\infty,\ \mathcal Q_u(\tau_{\max}-)<\infty\} =\bigcup_{m=1}^{\infty}\bigcup_{k=1}^{\infty}A_{k,m},
\end{equation*}
the countable union of \eqref{eq:3.6-null-event} proves \eqref{eq:3.6-blowup-alternative}.
Monotonicity of $\mathcal Q_u$ then gives \eqref{eq:3.6-intrinsic-divergence}.

\emph{Step 2:} Fix $L\ge1$.
On $\{\lambda_L<\tau_{\max}\}$, the defining set in \eqref{eq:3.6-intrinsic-exits} is nonempty.
Taking times in this set converging to $\lambda_L$ from the right and using right continuity gives $\mathcal Q_u(\lambda_L)\ge L$.
For every $t<\lambda_L$, one has $\mathcal Q_u(t)<L$; hence $\mathcal Q_u(\lambda_L-)\le L$, including the case $\lambda_L=0$ by convention.
This proves \eqref{eq:3.6-exit-levels}.

For deterministic $t\ge0$, monotonicity and the preceding property yield
\begin{equation}
\label{eq:3.6-exit-measurability}
\{\lambda_L\le t\} =\{\tau_{\max}\le t\} \cup\{t<\tau_{\max},\ \mathcal Q_u(t)\ge L\}.
\end{equation}
To verify the measurability of the second event, let $\mathcal Q_n$ be the adapted stopped functional in \eqref{eq:3.3-stopped-functional}.
Compatibility of the stopped representatives and $\tau_n\uparrow\tau_{\max}$ give
\begin{equation*}
\{t<\tau_{\max},\ \mathcal Q_u(t)\ge L\} =\bigcup_{n=1}^{\infty} \bigl(\{t<\tau_n\}\cap\{\mathcal Q_n(t)\ge L\}\bigr) \in\mathcal F_t.
\end{equation*}
Together with \eqref{eq:3.6-exit-measurability} and the completeness of the filtration, this proves that $\lambda_L$ is a stopping time.

\emph{Step 3:} The defining level sets show that $\lambda_L$ is nondecreasing in $L$ and bounded above by $\tau_{\max}$.
Set $\lambda_\infty:=\lim_{k\to\infty}\lambda_k$, where $k$ ranges over the positive integers.
Fix a path on which the preceding regularity holds and let $t<\tau_{\max}$.
Since $\mathcal Q_u(t)<\infty$, choose an integer $k>\max\{1,\mathcal Q_u(t)\}$.
Equation \eqref{eq:3.6-exit-measurability} gives $\lambda_k>t$.
Thus $\lambda_\infty\ge t$ for every $t<\tau_{\max}$, and consequently $\lambda_\infty=\tau_{\max}$.
Monotonicity extends this convergence to real $L\to\infty$.

Finally, on $\{\tau_{\max}<\infty\}$ outside the null event in \eqref{eq:3.6-blowup-alternative}, every finite level $L$ is exceeded at some time $t<\tau_{\max}$.
Therefore $\lambda_L\le t<\tau_{\max}$.
This proves \eqref{eq:3.6-exit-lifetime} and completes the proof.
\end{proof}
The intrinsic criterion still involves both the running $L^p$ norm and the accumulated $L^{3p}$ norm.
The final result of this section reduces it to the critical Serrin quantity in the native space $L^p$, yielding in particular the usual $L^p$ norm blow-up alternative at a finite maximal lifetime.
\begin{theorem}
[Serrin continuation in the native $L^p$ space] \label{thm:native-serrin-continuation} Let $p>3$ and assume the hypotheses of \cref{thm:3.6-intrinsic-blowup}.
Let $(u,(\tau_n),\tau_{\max})$ be the corresponding maximal local strong solution of \eqref{eq:1_Main}, with $u_0\in L^p(\Omega;E)$.
Retain the coefficient measurability assumption \eqref{eq:2-coefficient-measurability}.
We shall use \eqref{eq:2-taming-local} together with the consequences \eqref{eq:2-P1}--\eqref{eq:2-P2}; equivalently,,
\begin{equation}
\label{eq:3.7-assumptions}
\begin{aligned}
0\le g_N(r)&\le C_g r, &&r\ge0, \\
\|\Sigma(t,v)\|_{\gamma(\mathcal U;L^p)} &\le C_\Sigma \bigl(1+\|v\|_{3p/2}^{3/2}\bigr), &&v\in E\cap L^{3p}, \\
\int_Z\|\mathcal G(t,v,z)\|_p^q\,\mu(\dd z) &\le C_q\bigl(1+\|v\|_p^q\bigr), &&v\in E,\quad q\in\{2,p\}.
\end{aligned}
\end{equation}

Define
\begin{equation*}
\begin{aligned}
r_p&:=\frac{2p}{p-3}, & a_p(v)&:=1+\|v\|_p^{r_p}, \\
\mathcal S_p(t) &:=\int_0^t a_p(u(s))\,\dd s, & \mathcal V_p(v) &:=\left\|\nabla\bigl(|v|^{p/2}\bigr)\right\|_2^2, \qquad t<\tau_{\max}.
\end{aligned}
\end{equation*}
Let $\lambda_L$ denote the intrinsic exit times of \cref{thm:3.6-intrinsic-blowup}, and set
\begin{equation}
\label{eq:3.7-localizations}
\eta_K:=\inf\left\{ 0\le t<\tau_{\max}:\mathcal S_p(t)\ge K \right\}\wedge\tau_{\max}, \quad \rho_{L,K,T} :=T\wedge\lambda_L\wedge\eta_K, \quad L,K\ge1,\quad T>0,
\end{equation}
with $\inf\varnothing=\infty$.
There exist constants $c,C>0$, depending only on $p,\nu$ and the constants in \eqref{eq:3.7-assumptions}, such that, with $\rho=\rho_{L,K,T}$,
\begin{equation}
\label{eq:3.7-weighted-estimate}
\E\Bigg[ \sup_{0\le t\le\rho} e^{-c\mathcal S_p(t)} \bigl(1+\|u(t)\|_p^p\bigr) +\int_0^\rho e^{-c\mathcal S_p(s)}\mathcal V_p(u(s))\,\dd s+ \int_0^\rho e^{-c\mathcal S_p(s)} \bigl(1+\|u(s)\|_p^p\bigr)\, \dd\mathcal S_p(s) \Bigg] \le C\bigl(1+\E\|u_0\|_p^p\bigr).
\end{equation}
In particular,
\begin{equation}
\label{eq:3.7-unweighted-stopped}
\E\mathcal Q_u(\rho_{L,K,T}) \le C e^{cK}\bigl(1+\E\|u_0\|_p^p\bigr),
\end{equation}
with constants independent of $L,K,T$ and all auxiliary regularization parameters.

Consequently,
\begin{equation}
\label{eq:3.7-blowup-alternative}
\P\left( \tau_{\max}<\infty,\; \int_0^{\tau_{\max}} \|u(s)\|_p^{r_p}\,\dd s<\infty \right)=0.
\end{equation}
And,
\begin{equation}
\label{eq:3.7-norm-blowup}
\limsup_{t\uparrow\tau_{\max}}\|u(t)\|_p=\infty \quad\text{almost surely on } \{\tau_{\max}<\infty\}.
\end{equation}
\end{theorem}

\begin{proof}
Put
\begin{equation*}
\Phi(v):=\|v\|_p^p, \qquad \Psi(v):=1+\Phi(v), \qquad J_p(v):=|v|^{p-2}v.
\end{equation*}
All spatial norms below are taken over $\R^3$.
Constants may change between occurrences.

\emph{Step 1:} Let
\begin{equation*}
\ell:=\frac{p^2}{p-2}, \qquad q_f:=\frac{p^2}{2(p-1)}, \qquad q_h:=\frac{p^2}{3p-2}.
\end{equation*}
Since $p>3$, these exponents satisfy $p<\ell<3p$ and $1<q_f,q_h<\infty$.
Moreover,
\begin{equation}
\label{eq:3.7-identities}
\begin{aligned}
\frac1{q_f} &=\frac1p+\frac1\ell, & \frac1{q_f'} &=\frac12+\frac{p-2}{2\ell}, \\
\frac1{q_h} &=\frac2p+\frac1\ell, & (p-1)q_h' &=\ell.
\end{aligned}
\end{equation}

For a divergence-free $v\in L^p$ with $|v|^{p/2}\in H^1$, interpolation and the Sobolev inequality give
\begin{equation}
\label{eq:3.7-spatial-interpolation}
\|v\|_\ell^p \le \|v\|_p^{p-3}\|v\|_{3p}^3 \le C\|v\|_p^{p-3}\mathcal V_p(v)^{3/p}.
\end{equation}
Define $f(v):=-\mathcal P^{(1)}(v\otimes v), \ h(v):=-\mathcal P\bigl(g_N(|v|^2)v\bigr)$, where $\mathcal P^{(1)}$ acts on the first tensor index.
Thus $\nabla\cdot f(v)=-\mathcal P((v\cdot\nabla)v)$ in distributions.
Let
\begin{equation*}
S_m:=e^{m^{-2}\Delta}, \qquad v_m:=S_m v,
\end{equation*}
and, for smooth $z$, write
\begin{equation*}
\mathcal D_p(z) :=\int_{\R^3}|z|^{p-2}|\nabla z|^2\,\dd x.
\end{equation*}
For each $m$, $S_m$ is a deterministic scalar convolution operator, contractive on $L^q, 1 \le q \le \infty$, commuting with spatial derivatives and $\mathcal{P}$; in particular it preserves divergence-free fields, adaptedness and predictability, while $S_m \to I$ strongly on $L^q, 1 \le q < \infty$.
The operators $S_m$ are contractions on every $L^q$, commute with derivatives and the Leray projection, and converge strongly to the identity on $L^q$ for $1\le q<\infty$.

By \eqref{eq:3.7-identities}, the $L^q$-boundedness of the Leray projection and H\"older's inequality,
\begin{equation}
\label{eq:3.7-projected-drift}
\begin{aligned}
\left| \left\langle \nabla\cdot S_m f(v),J_p(v_m) \right\rangle \right| &\le C\|v\|_p\|v\|_\ell^{p/2} \mathcal D_p(v_m)^{1/2}, \\
\left| \left\langle S_mh(v),J_p(v_m)\right\rangle \right| &\le C\|v\|_p^2\|v\|_\ell^p.
\end{aligned}
\end{equation}
Indeed,
\begin{equation*}
\|\nabla J_p(v_m)\|_{q_f'} \le C\mathcal D_p(v_m)^{1/2} \|v_m\|_\ell^{(p-2)/2}, \quad \|h(v)\|_{q_h} \le C\|v\|_p^2\|v\|_\ell.
\end{equation*}
These estimates are applied directly to the projected nonlinearities; in particular, the pressure contributions are retained through the Leray projection, and no commutation of $\mathcal{P}$ with the nonlinear test function $J_p(v_m)$ is used.

The viscous term satisfies
\begin{equation*}
-\langle\Delta v_m,J_p(v_m)\rangle \ge\mathcal D_p(v_m), \qquad \mathcal V_p(v_m) \le\frac{p^2}{4}\mathcal D_p(v_m).
\end{equation*}
Combining these inequalities with \eqref{eq:3.7-spatial-interpolation} and \eqref{eq:3.7-projected-drift}, and applying Young's inequality, yields, for every $\varepsilon>0$,
\begin{equation}
\label{eq:3.7-drift-energy}
p\left\langle \nu\Delta v_m+\nabla\cdot S_mf(v)+S_mh(v), J_p(v_m) \right\rangle\le -\frac{2\nu}{p}\mathcal V_p(v_m) +\varepsilon\mathcal V_p(v) +C_\varepsilon a_p(v)\Psi(v).
\end{equation}
Here the remaining nonlinear expression is bounded by $C\|v\|_p^{p-1}\mathcal V_p(v)^{3/p},$ and the relevant Young exponent is determined by $\frac{p(p-1)}{p-3}=p+r_p.$ The full weighted gradient $\mathcal D_p$ has been used only for the smooth function $v_m$.

\emph{Step 2:} From \eqref{eq:3.7-assumptions}, $\|v\|_{3p/2} \le \|v\|_p^{1/2}\|v\|_{3p}^{1/2},$ and Sobolev's inequality, one obtains
\begin{equation}
\label{eq:3.7-wiener-energy}
\|v\|_p^{p-2} \|\Sigma(t,v)\|_{\gamma(\mathcal U;L^p)}^2 \le C\|v\|_p^{p-2} + C\|v\|_p^{p-1/2} \mathcal V_p(v)^{3/(2p)}\le \varepsilon\mathcal V_p(v) +C_\varepsilon a_p(v)\Psi(v).
\end{equation}
The last step uses
\begin{equation*}
\frac{p-\frac12}{1-\frac{3}{2p}} = p+\frac{2p}{2p-3} < p+r_p.
\end{equation*}
The jump assumptions imply
\begin{equation}
\label{eq:3.7-jump-energy}
\|v\|_p^{p-2} \int_Z\|\mathcal G(t,v,z)\|_p^2\,\mu(\dd z) + \int_Z\|\mathcal G(t,v,z)\|_p^p\,\mu(\dd z)\le C\Psi(v).
\end{equation}
Finally, the Taylor remainder $R_p(v,h):=\Phi(v+h)-\Phi(v)-D\Phi(v)[h]$ satisfies
\begin{equation}
\label{eq:3.7-taylor-remainder}
0\le R_p(v,h) \le C_p\left( \|v\|_p^{p-2}\|h\|_p^2+\|h\|_p^p \right).
\end{equation}

\emph{Step 3:} $\mathcal S_p$ is continuous and adapted on $[0,\tau_{\max})$, and is finite on every compact subinterval of this stochastic interval.
To verify the stopping-time assertion in \eqref{eq:3.7-localizations}, use the announcing sequence from \cref{lem:3.1-strict-localization} and define
\begin{equation*}
\mathcal S_{p,n}(t) :=\int_0^{t\wedge\tau_n}a_p(u(s))\,\dd s, \quad \eta_{K,n} :=\tau_n\wedge \inf\{t\ge0:\mathcal S_{p,n}(t)\ge K\}.
\end{equation*}
Each $\mathcal S_{p,n}$ is continuous and adapted.
Compatibility of the stopped representatives gives $\eta_{K,n}\uparrow\eta_K$.
Hence $\eta_K$ is a stopping time.Fix $L,K\ge1$ and $T>0$, and abbreviate $\rho=\rho_{L,K,T}$.
\cref{thm:3.6-intrinsic-blowup} gives $\rho<\tau_{\max}$ almost surely and
\begin{equation}
\label{eq:3.7-control}
\mathcal Q_u(\rho-)\le L, \qquad \mathcal S_p(\rho)\le K,
\end{equation}
where the preterminal intrinsic quantity is taken to be zero when $\rho=0$.
\cref{lem:3.3-energy-recovery} and \eqref{eq:3.7-control} imply
\begin{equation}
\label{eq:3.7-integrability}
\E\Bigg[ \sup_{0\le t\le\rho}\Psi(u(t)) + \int_0^\rho\mathcal V_p(u(s))\,\dd s + \int_0^\rho\Psi(u(s))\,\dd\mathcal S_p(s) \Bigg] <\infty.
\end{equation}
The last integral is bounded by $K\sup_{t\le\rho}\Psi(u(t))$.
No bound on the post-jump value $\mathcal Q_u(\rho)$ is asserted in \eqref{eq:3.7-control}.
Set
\begin{equation*}
U(t):=u(t\wedge\rho), \qquad U_m(t):=S_mU(t), \qquad w(t):=\exp\bigl(-c\mathcal S_p(t\wedge\rho)\bigr).
\end{equation*}
The processes $U,U_m$ are adapted and c\`adl\`ag.
Their left limits are predictable.
The process $w$ is continuous, adapted and predictable, and $e^{-cK}\le w(t)\le1.$

\emph{Step 4:} Apply $S_m$ to the stopped equation for $U$.
Lemma 3.2 and Gaussian smoothing imply that the resulting drift is integrable with values in $L^p$, and the regularized equation is an $L^p$ valued semimartingale.
On $(0,\rho]$, write
\begin{equation*}
B_m(s):=S_m\Sigma(s,u(s-)), \qquad H_m(s,z):=S_m\mathcal G(s,u(s-),z),
\end{equation*}
and extend these integrands by zero outside $(0,\rho]$.
They are predictable, with product predictability for $H_m$, since $\mathds 1_{(0,\rho]}$ is predictable.

The infinite-dimensional It\^o formula for $\Phi$, followed by integration by parts with $w$, has martingale terms
\begin{equation}
\begin{aligned}
M_W^m(t) &:=\int_0^{t\wedge\rho} w(s)D\Phi(U_m(s-)) \bigl[B_m(s)\,\dd W_s\bigr], \\
M_{J,1}^m(t) &:=\int_0^{t\wedge\rho}\int_Z w(s)D\Phi(U_m(s-))[H_m(s,z)] \,\widetilde N(\dd s,\dd z), \\
M_{J,2}^m(t) &:=\int_0^{t\wedge\rho}\int_Z w(s)R_p(U_m(s-),H_m(s,z)) \,\widetilde N(\dd s,\dd z).
\end{aligned}
\end{equation}
The last integral is defined as the difference between its nonnegative Poisson integral and its compensator.
Its integrability follows from \eqref{eq:3.7-jump-energy}--\eqref{eq:3.7-taylor-remainder}.
For an orthonormal basis $(e_k)$ of $\mathcal U$, the Wiener correction satisfies
\begin{equation}
\label{eq:3.7-trace-bound}
\begin{aligned}
\frac12\sum_{k\ge1} D^2\Phi(U_m(s-)) [B_m(s)e_k,B_m(s)e_k] &\le C_p\|U_m(s-)\|_p^{p-2} \|B_m(s)\|_{\gamma(\mathcal U;L^p)}^2 \\
&\le \varepsilon\mathcal V_p(u(s)) + C_\varepsilon a_p(u(s))\Psi(u(s))
\end{aligned}
\end{equation}
for almost every $(\omega,s)$ on $(0,\rho)$.
Here contraction of $S_m$ on $L^p$ and the ideal property of $\gamma$-radonifying operators have been used.
Introduce the integrable random variables
\begin{equation}
\label{eq:3.7-energy-functionals}
\begin{aligned}
\mathcal A &:=\sup_{0\le t\le\rho}w(t)\Psi(u(t)), \\
\mathcal D &:=\int_0^\rho w(s)\mathcal V_p(u(s))\,\dd s, \\
\mathcal I &:=\int_0^\rho w(s)\Psi(u(s))\,\dd\mathcal S_p(s).
\end{aligned}
\end{equation}
The real-valued BDG inequality, followed by Young's inequality and \eqref{eq:3.7-wiener-energy}, gives, for arbitrary $\delta,\varepsilon>0$,
\begin{equation}
\label{eq:3.7-wiener-bound}
\begin{aligned}
\E\sup_{t\le T}|M_W^m(t)| &\le C\E\Bigg[ \left( \sup_{t\le\rho}w(t)\|U_m(t)\|_p^p \right)^{1/2} \left( \int_0^\rho w(s)\|U_m(s-)\|_p^{p-2} \|B_m(s)\|_{\gamma(\mathcal U;L^p)}^2\,\dd s \right)^{1/2} \Bigg] \\
&\le \delta\E\mathcal A +\varepsilon\E\mathcal D +C_{\delta,\varepsilon}\E\mathcal I .
\end{aligned}
\end{equation}

For the linear jump martingale, Davis' inequality is applied to its optional quadratic variation.
Factoring out the weighted supremum before applying Young's inequality yields
\begin{equation}
\label{eq:3.7-jump-bounds}
\begin{aligned}
\E\sup_{t\le T}|M_{J,1}^m(t)| &\le \delta\E\mathcal A+ C_\delta\E \int_0^\rho\int_Z w(s)\|U_m(s-)\|_p^{p-2} \|H_m(s,z)\|_p^2\,N(\dd s,\dd z) \\
&\le \delta\E\mathcal A +C_\delta\E\mathcal I , \\
\E\sup_{t\le T}|M_{J,2}^m(t)| &\le 2\E \int_0^\rho\int_Z w(s)R_p(U_m(s-),H_m(s,z)) \,\mu(\dd z)\,\dd s \\
&\le C\E\mathcal I .
\end{aligned}
\end{equation}
In the first estimate, compensation is used only after Young's inequality.
Thus no jump moment beyond those in \eqref{eq:3.7-assumptions} is required.

The compensator in the It\^o formula is bounded by the same remainder estimate.
Combining \eqref{eq:3.7-drift-energy}, \eqref{eq:3.7-trace-bound}, \eqref{eq:3.7-wiener-bound} and \eqref{eq:3.7-jump-bounds} with the terminal and supremum forms of the It\^o identity gives
\begin{equation}
\label{eq:3.7-energy}
\begin{aligned}
\E\Bigg[ &\sup_{t\le\rho}w(t)\Psi(U_m(t)) + a_0\int_0^\rho w(s)\mathcal V_p(U_m(s))\,\dd s+ \frac c2\int_0^\rho w(s)\Psi(U_m(s))\,\dd\mathcal S_p(s) \Bigg] \\
&\le C\E\Psi(u_0) + \delta\E\mathcal A + \varepsilon\E\mathcal D + C_{\delta,\varepsilon}\E\mathcal I ,
\end{aligned}
\end{equation}
where $a_0>0$ depends only on $p,\nu$.
After rescaling $\delta,\varepsilon$, these parameters remain arbitrary.
All constants on the right are independent of $m,c,L,K,T$.
Every stochastic integral above is taken over $(0,t\wedge\rho]$.
Its state variable is the predictable left limit, and a possible jump at $\rho$ is retained.

\emph{Step 5:} For almost every $\omega$, the range of the c\`adl\`ag map $U(\cdot,\omega)$ on $[0,T]$ is relatively compact in $L^p$.
Uniform boundedness and strong convergence of $S_m$ therefore imply uniform convergence on this range.
By \eqref{eq:3.7-integrability} and dominated convergence,
\begin{equation}
\label{eq:3.7-path-convergence}
\E\sup_{0\le t\le T} \|U_m(t)-U(t)\|_p^p \longrightarrow0.
\end{equation}
In particular, convergence holds in probability in $\mathbb D([0,T];L^p)$ equipped with the $J_1$ topology.

The inequality
\begin{equation*}
\bigl\||a|^{p/2}-|b|^{p/2}\bigr\|_2 \le C_p\bigl( \|a\|_p^{(p-2)/2}+\|b\|_p^{(p-2)/2} \bigr)\|a-b\|_p
\end{equation*}
after squaring and applying H\"older inequality together with $L^p$ contractivity of $S_m$ and \eqref{eq:3.7-path-convergence} give,
\begin{equation}
\label{eq:3.7-power-convergence}
|U_m|^{p/2} \longrightarrow |U|^{p/2} \quad\text{strongly in } L^2(\Omega\times(0,T);L^2).
\end{equation}
Moreover,
\begin{equation*}
\E\sup_{t\le\rho}w(t)\Psi(U_m(t)) \longrightarrow\E\mathcal A, \quad \E\int_0^\rho w(s)\Psi(U_m(s))\,\dd\mathcal S_p(s) \longrightarrow\E\mathcal I .
\end{equation*}
For the second convergence, the total mass of $\dd\mathcal S_p$ on $[0,\rho]$ is at most $K$.
For fixed $c,L,K,T$, equations \eqref{eq:3.7-integrability} and \eqref{eq:3.7-energy}, together with $w\ge e^{-cK}$, bound
\begin{equation*}
\mathds 1_{\{s<\rho\}} \nabla\bigl(|U_m(s)|^{p/2}\bigr)
\end{equation*}
in $L^2(\Omega\times(0,T);L^2)$.
By \eqref{eq:3.7-power-convergence}, every weak limit is the spatial distributional gradient of
\begin{equation*}
\mathds 1_{\{s<\rho\}}|U(s)|^{p/2}.
\end{equation*}
Weak lower semicontinuity, applied after multiplication by the bounded function $w^{1/2}$, gives
\begin{equation*}
\E\mathcal D \le \liminf_{m\to\infty} \E\int_0^\rho w(s)\mathcal V_p(U_m(s))\,\dd s.
\end{equation*}
Thus the passage to the limit uses strong convergence of the stopped paths and weak convergence of their spatial energy gradients on the original stochastic basis.
Passing to the limit in \eqref{eq:3.7-energy} yields
\begin{equation*}
(1-\delta)\E\mathcal A + (a_0-\varepsilon)\E\mathcal D + \left(\frac c2-C_{\delta,\varepsilon}\right) \E\mathcal I \le C\E\Psi(u_0).
\end{equation*}
Choose $\delta<1/2$, $\varepsilon<a_0/2$, and then $c>2C_{\delta,\varepsilon}+2$.
This proves \eqref{eq:3.7-weighted-estimate}.
Finally, $\|v\|_{3p}^p = \bigl\||v|^{p/2}\bigr\|_6^2 \le C\mathcal V_p(v),$ and $w\ge e^{-cK}$ on $[0,\rho]$.
Equation \eqref{eq:3.7-unweighted-stopped} follows.

\emph{Step 6:} For positive integers $K,T$, define
\begin{equation*}
A_{K,T} := \left\{ \tau_{\max}\le T,\; \mathcal S_p(\tau_{\max}-)<K \right\},
\end{equation*}
where the terminal time is the increasing limit along the announcing sequence.
On $A_{K,T}$ one has $\eta_K=\tau_{\max}$.
The intrinsic blow-up alternative of \cref{thm:3.6-intrinsic-blowup} gives $\lambda_L<\tau_{\max}, \ \mathcal Q_u(\lambda_L)\ge L$ on this event, almost surely, for every integer $L\ge1$.
Consequently, $\rho_{L,K,T}=\lambda_L \quad\text{on }A_{K,T},$ and \eqref{eq:3.7-unweighted-stopped} implies
\begin{equation*}
L\,\P(A_{K,T}) \le \E\mathcal Q_u(\rho_{L,K,T}) \le C e^{cK}\bigl(1+\E\|u_0\|_p^p\bigr).
\end{equation*}
The event $A_{K,T}$ is used only in this pathwise comparison.
Letting $L\to\infty$ and then taking the countable union over $K,T$ proves
\begin{equation*}
\P\left( \tau_{\max}<\infty,\; \mathcal S_p(\tau_{\max}-)<\infty \right)=0.
\end{equation*}
Since $\tau_{\max}$ is finite on this event, this is equivalent to \eqref{eq:3.7-blowup-alternative}.
If $\tau_{\max}<\infty$ and $\limsup_{t\uparrow\tau_{\max}}\|u(t)\|_p<\infty$, then the c\`adl\`ag local paths are bounded on the entire interval $[0,\tau_{\max})$.
Hence their Serrin integral is finite, contradicting \eqref{eq:3.7-blowup-alternative}.
This proves \eqref{eq:3.7-norm-blowup}.
\end{proof}
\section{Energy estimates and positive-time regularization}
\label{Section 4} The continuation criteria of the preceding section must be supplemented by energy bounds for the prescribed maximal $L^p$ solution.
We establish $L^2$ and $H^1$ persistence, followed by positive-time regularization for data in $L^p\cap L^2$.

Throughout this section let $p>3,\ E=L^p_\sigma(\R^3;\R^3), \ \mathcal H=L^2_\sigma(\R^3;\R^3),$ and retain the maximal solution $(u,(\tau_n),\tau_{\max})$ of Section \ref{Sec:3}.
Assume
\begin{equation*}
u_0\in L^p(\Omega,\mathcal F_0;E) \cap L^2(\Omega,\mathcal F_0;\mathcal H),
\end{equation*}
and let $\Sigma, \ \mathcal G$ defined earlier.
We assume the $L^2$ growth condition \eqref{eq:2-raw-L2-growth}.
Whenever gradient estimates are invoked, we additionally assume \eqref{eq:2-H1}.
We assume \eqref{eq:2-taming-local} and \eqref{eq:2-taming-coercive} for the taming term.
Let $S_m={\mathrm P}_{\le m}$ be the smoothing operators \eqref{eq:2-smoothing}.

Choose $\chi\in C^\infty([0,\infty);[0,1])$ such that $\chi=1$ on $[0,1]$ and $\chi=0$ on $[2,\infty)$, and set $\chi_R(r)=\chi(r/R)$.
For $m\in\mathbb N$ and $R\ge1$, consider
\begin{equation}
\label{eq:4.1-regularized-equation}
\begin{aligned}
\dd u_m^R =&\Big[ \nu\Delta u_m^R -a_m^R S_m\mathcal P((v_m^R\cdot\nabla)v_m^R) -a_m^R S_m\mathcal P(g_N(|v_m^R|^2)v_m^R) \Big]\dd t+B_m^R(t)\,\dd W_t +\int_ZK_m^R(t,z)\,\widetilde N(\dd t,\dd z), \\
u_m^R(0)&=S_m u_0,
\end{aligned}
\end{equation}
where
\begin{equation*}
\begin{aligned}
v_m^R&:=S_m u_m^R, & a_m^R(t)&:=\chi_R(|u_m^R(t)|_p)^2, \\
B_m^R(t)&:=a_m^R(t-)S_m\Sigma(t,v_m^R(t-)), & K_m^R(t,z)&:=a_m^R(t-)S_m\mathcal G(t,v_m^R(t-),z).
\end{aligned}
\end{equation*}
When no confusion can arise, we suppress the superscript $R$.
Set $D_0=I$, $D_1=\nabla$, and define
\begin{equation}
\label{eq:4.1-energy-notation}
\begin{aligned}
\mathcal T_0(v) &:=\int_{\R^3}g_N(|v|^2)|v|^2\,\dd x, \\
\mathcal T_1(v) &:=\int_{\R^3}g_N(|v|^2)|\nabla v|^2\,\dd x +2\sum_{\ell=1}^3 \int_{\R^3} g_N'(|v|^2)(v\cdot\partial_\ell v)^2\,\dd x, \\
q_{m,j}(s) &:=|D_jB_m(s)|_{\gamma(\mathcal U;L^2)}^2 +\int_Z|D_jK_m(s,z)|_2^2\,\mu(\dd z), \qquad j=0,1.
\end{aligned}
\end{equation}
Finally, let $\mathcal M_{m,j}=M^W_{m,j}+M^{J,1}_{m,j}+M^{J,2}_{m,j}$, where
\begin{equation*}
\begin{aligned}
M^W_{m,j}(t) &:=2\int_0^t \left\langle (D_jB_m(s))^*D_j u_m(s-),\dd W_s \right\rangle_{\mathcal U}, \\
M^{J,1}_{m,j}(t) &:=2\int_0^t\int_Z (D_j u_m(s-),D_jK_m(s,z))_2\, \widetilde N(\dd s,\dd z), \\
M^{J,2}_{m,j}(t) &:=\int_0^t\int_Z |D_jK_m(s,z)|_2^2\, \widetilde N(\dd s,\dd z).
\end{aligned}
\end{equation*}
As usual, the last integral is interpreted as the corresponding nonnegative Poisson integral minus its compensator.

The next lemma justifies the regularized energy identities and identifies the approximations with the prescribed maximal solution through convergence in the native $L^p$ class.
This permits the transfer of energy estimates without assuming Sobolev regularity of the limit in advance.
\begin{lemma}
[Regularization and energy identities] \label{lem:4.1-regularization} Under the preceding assumptions, the following assertions hold.
\begin{enumerate}
\item[(i)] For every $m\in\mathbb N$ and $R\ge1$, \eqref{eq:4.1-regularized-equation} has a unique global adapted solution.
For every $T<\infty$,
\begin{equation*}
u_m^R\in L^p\bigl(\Omega;\mathbb D([0,T];E)\bigr) \cap L^2\bigl(\Omega;\mathbb D([0,T];H^1_\sigma)\bigr) \cap L^2\bigl(\Omega;L^2(0,T;H^2_\sigma)\bigr),
\end{equation*}
and
\begin{equation}
\label{eq:4.1-local-energy}
\E\int_0^T \left|\nabla\bigl(|u_m^R(s)|^{p/2}\bigr)\right|_2^2 \,\dd s<\infty .
\end{equation}
The bounds in this assertion may depend on $m,R,T$.

\item[(ii)] Indistinguishably on every finite interval,
\begin{equation}
\label{eq:4.1-L2-identity}
|u_m(t)|_2^2 +2\nu\int_0^t|\nabla u_m(s)|_2^2\,\dd s +2\int_0^t a_m(s)\mathcal T_0(v_m(s))\,\dd s=|S_m u_0|_2^2 +\int_0^tq_{m,0}(s)\,\dd s +\mathcal M_{m,0}(t),
\end{equation}
whereas, under \eqref{eq:2-H1},
\begin{equation}
\label{eq:4.1-H1-identity}
\begin{aligned}
|\nabla u_m(t)|_2^2 &+2\nu\int_0^t|\Delta u_m(s)|_2^2\,\dd s +2\int_0^t a_m(s)\mathcal T_1(v_m(s))\,\dd s \\
&=|\nabla S_m u_0|_2^2 +2\int_0^t a_m(s) ((v_m(s)\cdot\nabla)v_m(s),\Delta v_m(s))_2\,\dd s+\int_0^tq_{m,1}(s)\,\dd s +\mathcal M_{m,1}(t).
\end{aligned}
\end{equation}
Both identities remain valid after stopping at $t\wedge\eta$ for every stopping time $\eta\le T$, with the terminal jump included.

\item[(iii)] The estimates
\begin{equation}
\label{eq:4.1-coercivity}
\begin{aligned}
2a_m((v_m\cdot\nabla)v_m,\Delta v_m)_2 -2a_m\mathcal T_1(v_m) \le \nu|\Delta u_m|_2^2 +c_{N,\nu}|\nabla u_m|_2^2, \\
q_{m,0}(s) \le C_0(1+|u_m(s-)|_2^2), \quad q_{m,1}(s) \le C_1(1+|u_m(s-)|_{H^1}^2)
\end{aligned}
\end{equation}
hold, with the last estimate under \eqref{eq:2-H1} and, the constants of the estimates are independent of $m$ and $R$.
Moreover, for $j\in\{0,1\}$, $\delta>0$, and every stopping time $\eta\le T$,
\begin{equation}
\label{eq:4.1-martingale-estimates}
\begin{aligned}
\E\sup_{t\le\eta}|M^W_{m,j}(t)| +\E\sup_{t\le\eta}|M^{J,1}_{m,j}(t)| &\le \delta\E\sup_{t\le\eta}|D_ju_m(t)|_2^2 +C_\delta\E\int_0^\eta q_{m,j}(s)\,\dd s, \\
\E\sup_{t\le\eta}|M^{J,2}_{m,j}(t)| &\le 2\E\int_0^\eta\int_Z |D_jK_m(s,z)|_2^2\,\mu(\dd z)\,\dd s .
\end{aligned}
\end{equation}

\item[(iv)] Let $L\ge1$ and let $\rho\le T$ be a stopping time such that $\rho<\tau_{\max}, \ \mathcal Q_u(\rho-)\le L \ \P\text{-a.s.}$ For every fixed $R>(2L)^{1/p}$,
\begin{equation}
\label{eq:4.1-native-convergence}
\sup_{t\le\rho}|u_m^R(t)-u(t)|_p^p +\int_0^\rho |u_m^R(s)-u(s)|_{3p}^p\,\dd s \longrightarrow0
\end{equation}
in probability as $m\to\infty$.
Consequently, a deterministic subsequence converges almost surely in both terms of \eqref{eq:4.1-native-convergence}.
No $H^1$ regularity of the maximal solution is required for this consistency assertion.
\end{enumerate}
\end{lemma}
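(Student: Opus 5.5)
I would prove the four assertions in order, treating (i)--(iii) as properties of a fixed-parameter, essentially globally Lipschitz system and reserving the real work for the consistency statement (iv).

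For (i), fix $m,R$. The Gaussian smoothing $S_m$ maps $L^p$ boundedly into every $W^{k,q}$ with constants depending on $m$. The cutoff $a_m^R$ vanishes once $|u_m^R|_p\ge 2R^{1/2}$-type levels are reached; more precisely, $\chi_R(|u|_p)^2$ is zero for $|u|_p\ge 2R$. Consequently the drift $a_m^RS_m\mathcal P((v\cdot\nabla)v)+a_m^RS_m\mathcal P(g_N(|v|^2)v)$ and the noise coefficients $B_m^R,K_m^R$ are globally Lipschitz and bounded as maps $L^p\to L^p\cap H^1$. The same holds into $L^2$-type spaces once $u\in L^2$, by \eqref{eq:2-raw-L2-growth} and \eqref{eq:2-H1}.

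Global existence and uniqueness then follow from the fixed-cutoff Picard construction of Paper~I, Section~4, in the space $L^p(\Omega;\mathbb D([0,T];E))$. Regularity in $H^1$ and $H^2$ follows from the stochastic heat theory in the Hilbert scale. The forcing is smooth in space, and the initial datum $S_mu_0\in L^2(\Omega;H^k)$ because $u_0\in L^2(\Omega;\mathcal H)$. The local energy bound \eqref{eq:4.1-local-energy} comes from \cref{Thm:heat-exis}, applied with the frozen forcing.

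For (ii), the solution is a strong $H^1$-valued semimartingale with $L^2(0,T;H^2)$ drift, so the Hilbert-space Itô formula applies to $|u_m|_2^2$ and $|\nabla u_m|_2^2$. In the $L^2$ identity I would use two facts. First, $S_m$ is self-adjoint and commutes with $\mathcal P$, so $(S_m\mathcal P((v_m\cdot\nabla)v_m),u_m)_2=((v_m\cdot\nabla)v_m,v_m)_2=0$. Second, the taming term produces $a_m\mathcal T_0(v_m)$. In the $H^1$ identity, the taming term, paired against $-\Delta v_m$ and integrated by parts, produces $a_m\mathcal T_1(v_m)$, while convection leaves $((v_m\cdot\nabla)v_m,\Delta v_m)_2$. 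The jump part of the Itô formula splits into $M^{J,1}$, the compensated remainder $M^{J,2}$, and its compensator, which is absorbed into $q_{m,j}$. Stopping at $t\wedge\eta$ uses the predictable indicator $\mathbf 1_{(0,\eta]}$, which keeps the terminal jump.

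For (iii), the coercivity bound is the standard tamed estimate. By Cauchy--Schwarz and Young,
\[
2|((v\cdot\nabla)v,\Delta v)_2|\le \nu|\Delta v|_2^2+\nu^{-1}\int|v|^2|\nabla v|^2\,\dd x.
\]
Next, $\mathcal T_1\ge\int g_N(|v|^2)|\nabla v|^2$ since $g_N'\ge0$, and \eqref{eq:2-taming-coercive} gives $r/\nu-2g_N(r)\le c_{N,\nu}$. Together these yield
\[
\nu^{-1}\int|v|^2|\nabla v|^2-2\mathcal T_1(v)\le c_{N,\nu}|\nabla v|_2^2.
\]
Since $0\le a_m\le1$ and $S_m$ contracts both $|\Delta\cdot|_2$ and $|\nabla\cdot|_2$, the bound transfers to $u_m$. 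The bounds on $q_{m,j}$ follow from the ideal property, the contractivity of $S_m$, $a_m\le1$, and \eqref{eq:2-H0}, \eqref{eq:2-H1}. The martingale estimates follow from Davis--BDG and Young for $M^W$ and $M^{J,1}$, and from the compensator identity for $M^{J,2}$.

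The main obstacle is (iv). Take $\rho$ with $\mathcal Q_u(\rho-)\le L$. Since $R^p>2L$, we have $|u(t)|_p\le L^{1/p}<R$ for $t<\rho$, so the cutoff is inactive along $u$ on $[0,\rho)$, including at the left limit $u(\rho-)$. The plan is to compare $u_m$ with $S_m$-regularized versions of $u$. I would introduce the exit time $\kappa_m:=\inf\{t:|u_m(t)-u(t)|_p>\epsilon\}\wedge\rho$, so that on $(0,\kappa_m]$ the cutoffs $a_m$ evaluated at left limits equal $1$ for small $\epsilon$. On this interval, write $u_m-u$ as the solution of the linear equation of \cref{Thm:heat-exis}. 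Its forcing consists of differences of the nonlinearities, using (T0), (W), (J), plus commutator terms $S_mF(S_mu)-F(u)$. I would estimate the difference forcing with the interpolation of Lemma~\ref{lem:3.2-intrinsic-coefficients}. To close a Gronwall estimate, I would localize further on the intrinsic budget $\int_0^t|u_m-u|_{3p}^p$ and on $M,A\le L$.

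The commutator terms, and the initial error $S_mu_0-u_0$, tend to zero in the forcing norms by strong convergence of $S_m$ and dominated convergence. Dominated convergence applies because of the integrability of $u$ up to $\rho$ given by Lemma~\ref{lem:3.3-energy-recovery}.

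The delicate point is the possible jump at $\rho$. There the post-jump value of $u$ is uncontrolled, but the coefficients are evaluated at $u(\rho-)$, which is bounded, so the same estimate holds through $\rho$ inclusive. A standard stopping argument then gives $\P(\kappa_m<\rho)\to0$, and hence \eqref{eq:4.1-native-convergence} in probability. The almost sure subsequence follows by Borel--Cantelli.
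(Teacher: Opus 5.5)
Your arguments for (ii)--(iv) follow the paper closely. You use the same $L^2$ and $H^1$ cancellations, the same integration by parts for the taming term, the same tamed coercivity bound, and BDG/Davis with Young plus the compensator identity for the three martingales. In (iv) you make the same splitting of $u_m^R-u$ into a state difference and a consistency error, and feed both into the linear heat theorem. The state difference is interpolated between $L^p$ and $L^{3p}$, with constants depending only on $L^p$ bounds of the two states and their left limits; the consistency error vanishes by dominated convergence. The differences from the paper are minor:
\begin{itemize}
\item The paper justifies the It\^o formulas by mollifying with $e^{\varepsilon\Delta}$, rather than citing a variational It\^o formula.
\item The paper uses the exit time $\inf\{t:\mathcal Q_{U_m}(t)\ge 2L\}$, where you use $\inf\{t:\|u_m^R(t)-u(t)\|_p>\epsilon\}\wedge\rho$ with $L^{1/p}+\epsilon<R$. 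Yours is equally good, since only $L^p$ bounds enter the Lipschitz constants.
\item The paper closes with a short-interval partition rather than Gronwall.
\end{itemize}

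The gap is in (i). You claim the truncated drift and noise coefficients are bounded from $L^p$ into $L^p\cap H^1$, and then apply the Hilbert-scale heat theory. This is false for $p>3$, for three reasons:
\begin{itemize}
\item The Gaussian convolution $S_m$ maps $L^p$ only into $L^q$ with $q\ge p$, so $S_m w$ and $\nabla S_m w$ need not lie in $L^2$.
\item The cutoff only sees $\|w\|_p$.
\item \eqref{eq:2-raw-L2-growth} controls $\Sigma(t,S_m w)$ in $\gamma(\mathcal U;L^2)$ only through $\|S_m w\|_2$.
\end{itemize}
In the Hilbert norms the coefficients therefore have only linear growth $C_{m,R}(1+\|w\|_2)$, not a bound. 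So the $H^1$--$H^2$ heat estimate can be applied only once you know $\E\sup_{t\le T}\|u_m^R(t)\|_2^2<\infty$ for the solution constructed in $E$. Your ``once $u\in L^2$'' is therefore circular.

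The missing step is an a priori $L^2$ bound for the $E$-valued solution. The paper obtains it in three moves:
\begin{itemize}
\item For the Hilbert energies $A_j$ of the $L^p$ Picard iterates it proves $A_{j+1}(t)\le C\E\|u_0\|_2^2+C_{m,R,T}\int_0^t(1+A_j(s))\,\dd s$, hence $\sup_jA_j(h)<\infty$ on a contraction interval.
\item It transfers this bound to the $E$-limit by weak lower semicontinuity and Fatou.
\item Only then does it apply the $H^1$ heat estimate, and it identifies the resulting $H^1_\sigma$-c\`adl\`ag version with the $E$-valued solution by testing against a countable family in $\mathcal S_\sigma$.
\end{itemize}
The It\^o identities in (ii) and the finite expected suprema used in (iii) rest on this step.
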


\begin{proof}
\emph{Step 1:} Let $\mathcal B_{m,R}(w)$ denote the nonlinear drift in \eqref{eq:4.1-regularized-equation}.
For fixed $m,R$, the smoothing bounds of $S_m$ and the support of $\chi_R$ give
\begin{equation}
\label{eq:4.1-fixed-parameter}
\begin{aligned}
\|\mathcal B_{m,R}(w)\|_{H^1} &\le C_{m,R}\|w\|_2, \\
\|\chi_R(\|w\|_p)^2 S_m\Sigma(t,S_mw) \|_{\gamma(\mathcal U;H^1)}^2+\int_Z \|\chi_R(\|w\|_p)^2 S_m\mathcal G(t,S_mw,z) \|_{H^1}^2\,\mu(\dd z) &\le C_m(1+\|w\|_2^2).
\end{aligned}
\end{equation}
For the first bound, use $\|S_mw\|_\infty+\|\nabla S_mw\|_\infty \le C_{m,p}\|w\|_p$, the cubic taming growth, and $S_m:L^2\to H^1$.
The second follows from \eqref{eq:2-raw-L2-growth}.

The truncated maps are globally Lipschitz in the forcing spaces of \cref{Thm:heat-exis}, with constants depending on $m,R$.
Start the Picard iteration at $V^{(0)}(t)=e^{\nu t\Delta}S_m u_0$ and freeze all nonlinear coefficients at $V^{(j)}$ to define $V^{(j+1)}$.
The linear $L^p$ estimate gives contraction in expected supremum norm on a sufficiently short deterministic interval.
Its limit is adapted and $E$-c\`adl\`ag, and the coefficient convergence in all forcing norms identifies the equation and gives \eqref{eq:4.1-local-energy}.
Iteration on deterministic intervals constructs the unique global $E$-valued solution.

We verify Hilbert-space membership by the Picard argument in Lemma 4.2 of \cite{PodderKumar2026}.
On a contraction interval $[0,h]$, set
\begin{equation*}
A_j(t)=\E\left[ \sup_{s\le t}\|V^{(j)}(s)\|_2^2 +\int_0^t\|\nabla V^{(j)}(s)\|_2^2\,\dd s\right].
\end{equation*}
Applying the standard $L^2$ energy estimate for the linear stochastic heat equation in the Gelfand triple $H^1_\sigma \hookrightarrow L^2_\sigma \hookrightarrow H^{-1}_\sigma$, together with \eqref{eq:4.1-fixed-parameter}, yields
\begin{equation*}
A_{j+1}(t)\le C\E\|u_0\|_2^2 +C_{m,R,T}\int_0^t(1+A_j(s))\,\dd s,
\end{equation*}
and hence $\sup_j A_j(h)<\infty$.
Choose a deterministic subsequence converging almost surely uniformly in $E$.
Pathwise, a further subsequence realizing the lower limit of the Hilbert energies is bounded in $L^\infty(0,h;\mathcal H)\cap L^2(0,h;H^1_\sigma)$.
Its weak-* and weak limits in these respective spaces coincide with the $E$-limit by distributional convergence.
At every time the same distributional convergence gives the lower bound for the $\mathcal H$ norm.
Weak lower semicontinuity and Fatou's lemma therefore yield finite expected Hilbert energy for the limit.
Its $E$-left limits belong to $\mathcal H$ as well.
Separability and testing against a countable dense family give $\mathcal H$-adaptedness of the state and $\mathcal H$-predictability of its left limits.
The $L^2$ energy estimate of its coefficients then supplies an $\mathcal H$-c\`adl\`ag version.
Repeating on deterministic intervals proves the required $L^2$ membership.

The resulting process has finite expected $L^2$ supremum on every finite interval.
Thus \eqref{eq:4.1-fixed-parameter} places all forcing terms in the square-integrable $H^1$ heat-equation classes.
Since $S_m u_0\in L^2(\Omega;H^1_\sigma)$, the Hilbert heat estimate at $H^1_\sigma$ level, gives
\begin{equation}
\label{eq:4.1-preliminary-H1}
\E\left[ \sup_{t\le T}\|u_m^R(t)\|_{H^1}^2 +\int_0^T\|u_m^R(s)\|_{H^2}^2\,\dd s \right] \le C_{m,R,T}(1+\E\|u_0\|_2^2).
\end{equation}
It remains only to identify the resulting $H^1_\sigma$-valued version with the previously constructed $E$-valued solution.
Both $E$ and $H^1_\sigma$ embed continuously into $\mathcal S'(\R^3;\R^3)$.
For every $\phi\in\mathcal S_\sigma$, pairing the two heat representations with $\phi$ gives the same scalar identity, since they have the same initial datum, drift, Wiener integral, and compensated Poisson integral.
Choosing a countable separating family in $\mathcal S_\sigma$ and restricting first to rational times yields a common event of probability one on which the two versions agree in $\mathcal S'$ at every rational time.
Since both versions are c\`adl\`ag, and their embeddings into $\mathcal S'$ are continuous, this equality extends to every $t\in[0,T]$.
Thus the $H^1_\sigma$-c\`adl\`ag version is a version of the original $E$-valued solution, and \eqref{eq:4.1-preliminary-H1} holds for $u_m^R$ itself.
This completes the proof of (i).

\emph{Step 2:} Suppress the superscript $R$, and put $$J_\varepsilon:=e^{\varepsilon\Delta}, \qquad u_m^\varepsilon:=J_\varepsilon u_m , \qquad \varepsilon>0 .$$ Since $J_\varepsilon$ is a scalar self-adjoint contraction commuting with spatial derivatives, $S_m$, and $\mathcal P$, application of $J_\varepsilon$ to \eqref{eq:4.1-regularized-equation} yields an $H^1_\sigma$-valued semimartingale with spatially smoothed drift and noise coefficients.
By \eqref{eq:4.1-preliminary-H1},
\begin{equation}
\label{eq:4.1-mollifier}
\begin{aligned}
\E\sup_{t\le T} \|u_m^\varepsilon(t)-u_m(t)\|_{H^1}^2 &\longrightarrow0, \\
u_m^\varepsilon &\longrightarrow u_m \quad\text{in } L^2(\Omega\times(0,T);H^2_\sigma).
\end{aligned}
\end{equation}
The first convergence in \eqref{eq:4.1-mollifier} follows pathwise from uniform convergence of $J_\varepsilon\to I$ on the relatively compact range of each $H^1$-valued c\`adl\`ag path, and then in expectation by dominated convergence, since $$ \sup_{t\le T} \|(J_\varepsilon-I)u_m(t)\|_{H^1}^2 \le 4\sup_{t\le T}\|u_m(t)\|_{H^1}^2.
$$ The $H^2$ convergence follows similarly from the approximate-identity property and \eqref{eq:4.1-preliminary-H1}.
In particular, the same uniform convergence holds for the left limits.
Set $D_0=I$, $D_1=\nabla$, and, for $j=0,1$, define $$X_{m,j}^\varepsilon:=D_j u_m^\varepsilon, \qquad B_{m,j}^\varepsilon:=J_\varepsilon D_jB_m, \qquad K_{m,j}^\varepsilon:=J_\varepsilon D_jK_m .$$ Commutation of $J_\varepsilon$ with $D_j$ has been used.
For fixed $m$, the square-integrability established in Step 1 and the bounds in \eqref{eq:4.1-fixed-parameter} imply
\begin{equation*}
\begin{aligned}
\E\int_0^T \|B_{m,j}^\varepsilon(s)-D_jB_m(s)\|_ {\gamma(\mathcal U;L^2)}^2\,\dd s &\longrightarrow0, \\
\E\int_0^T\int_Z \|K_{m,j}^\varepsilon(s,z)-D_jK_m(s,z)\|_2^2 \,\mu(\dd z)\,\dd s &\longrightarrow0 .
\end{aligned}
\end{equation*}

Applying the Hilbert-space It\^o formula with jumps to $\|X_{m,j}^\varepsilon(t)\|_2^2$ gives, indistinguishably in $t\in[0,T]$,
\begin{equation}
\label{eq:4.1-mollified-Ito}
\begin{aligned}
\|X_{m,j}^\varepsilon(t)\|_2^2 + & 2\nu\int_0^t \|\nabla X_{m,j}^\varepsilon(s)\|_2^2 \dd s = \|J_\varepsilon D_jS_m u_0\|_2^2 +2\int_0^t \bigl( X_{m,j}^\varepsilon(s), J_\varepsilon D_j\mathcal B_m(s) \bigr)_2 \dd s \\
&+ \int_0^t \|B_{m,j}^\varepsilon(s)\|_ {\gamma(\mathcal U;L^2)}^2\,\dd s + \int_0^t\int_Z \|K_{m,j}^\varepsilon(s,z)\|_2^2 \,\mu(\dd z)\,\dd s+ M_{m,j}^{W,\varepsilon}(t) +M_{m,j}^{J,1,\varepsilon}(t) +M_{m,j}^{J,2,\varepsilon}(t),
\end{aligned}
\end{equation}
where $\mathcal B_m$ denotes the nonlinear deterministic drift in the equation for $u_m$, and
\begin{equation*}
\begin{aligned}
M_{m,j}^{W,\varepsilon}(t) &= 2\int_0^t \left\langle (B_{m,j}^\varepsilon(s))^* X_{m,j}^\varepsilon(s-),\dd W_s \right\rangle_{\mathcal U}, \\
M_{m,j}^{J,1,\varepsilon}(t) &= 2\int_0^t\int_Z \bigl( X_{m,j}^\varepsilon(s-), K_{m,j}^\varepsilon(s,z) \bigr)_2\, \widetilde N(\dd s,\dd z), \\
M_{m,j}^{J,2,\varepsilon}(t) &= \int_0^t\int_Z \|K_{m,j}^\varepsilon(s,z)\|_2^2\, \widetilde N(\dd s,\dd z).
\end{aligned}
\end{equation*}
For $j=1$, the viscous term in \eqref{eq:4.1-mollified-Ito} equals $2\nu\int_0^t\|\Delta u_m^\varepsilon\|_2^2\,\dd s$.
One have, $\|\nabla^2v\|_2=\|\Delta v\|_2$ on $\R^3$.
We next pass to the limit $\varepsilon\downarrow0$.
The initial and deterministic terms converge in $L^1$ by \eqref{eq:4.1-fixed-parameter} and \eqref{eq:4.1-mollifier}.
The quadratic-variation terms also converge.
One gets,
\begin{equation*}
\begin{aligned}
&\E\int_0^T \left| \|B_{m,j}^\varepsilon(s)\|_{\gamma(\mathcal U;L^2)}^2 -\|D_jB_m(s)\|_{\gamma(\mathcal U;L^2)}^2 \right|\dd s \\
&\quad\le \left( \E\int_0^T \bigl( \|B_{m,j}^\varepsilon(s)\|_{\gamma} +\|D_jB_m(s)\|_{\gamma} \bigr)^2\dd s \right)^{1/2} \left( \E\int_0^T \|B_{m,j}^\varepsilon(s)-D_jB_m(s)\|_{\gamma}^2 \,\dd s \right)^{1/2} \longrightarrow0 ,
\end{aligned}
\end{equation*}
and the Poisson compensator is treated identically.
For the Wiener martingale, the BDG inequality and the decomposition
\begin{equation*}
(B_{m,j}^\varepsilon)^*X_{m,j}^\varepsilon -(D_jB_m)^*D_ju_m= (B_{m,j}^\varepsilon)^* (X_{m,j}^\varepsilon-D_ju_m) + (B_{m,j}^\varepsilon-D_jB_m)^*D_ju_m
\end{equation*}
give
\begin{equation*}
\begin{aligned}
&\E\sup_{t\le T} |M_{m,j}^{W,\varepsilon}(t)-M_{m,j}^{W}(t)| \\
&\quad\le C\left( \E\sup_{t\le T} \|X_{m,j}^\varepsilon(t)-D_ju_m(t)\|_2^2 \right)^{1/2} \left( \E\int_0^T \|D_jB_m(s)\|_{\gamma(\mathcal U;L^2)}^2\,\dd s \right)^{1/2} \\
&\qquad+ C\left( \E\sup_{t\le T}\|D_ju_m(t)\|_2^2 \right)^{1/2} \left( \E\int_0^T \|B_{m,j}^\varepsilon(s)-D_jB_m(s)\|_{\gamma(\mathcal U;L^2)}^2 \,\dd s \right)^{1/2} \\
&\quad\longrightarrow0 .
\end{aligned}
\end{equation*}
Here the $L^2$ contractivity of $J_\varepsilon$ was used to replace $B_{m,j}^\varepsilon$ by $D_jB_m$ in the uniform bound.
For the linear jump martingale, Davis' inequality applied to the optional quadratic variation gives
\begin{equation*}
\begin{aligned}
&\E\sup_{t\le T} |M_{m,j}^{J,1,\varepsilon}(t)-M_{m,j}^{J,1}(t)| \\
&\quad\le C\left( \E\sup_{t\le T} \|X_{m,j}^\varepsilon(t)-D_ju_m(t)\|_2^2 \right)^{1/2} \left( \E\int_0^T\int_Z \|D_jK_m(s,z)\|_2^2 \,\mu(\dd z)\,\dd s \right)^{1/2} \\
&\qquad+ C\left( \E\sup_{t\le T}\|D_ju_m(t)\|_2^2 \right)^{1/2} \left( \E\int_0^T\int_Z \|K_{m,j}^\varepsilon(s,z)-D_jK_m(s,z)\|_2^2 \,\mu(\dd z)\,\dd s \right)^{1/2} \\
&\quad\longrightarrow0 .
\end{aligned}
\end{equation*}
Indeed, the expectation of the Poisson integral arising from the optional quadratic variation is evaluated by the compensator identity.
The quadratic jump term requires separate treatment.
Since $\|K_{m,j}^\varepsilon\|_2^2-\|D_jK_m\|_2^2$ has no fixed sign, write the compensated integral as the difference of its Poisson integral and its compensator.
Pathwise,
\begin{equation*}
\begin{aligned}
&\sup_{t\le T} \left| \int_0^t\int_Z \left( \|K_{m,j}^\varepsilon(s,z)\|_2^2 -\|D_jK_m(s,z)\|_2^2 \right)\widetilde N(\dd s,\dd z) \right| \\
&\quad\le \int_0^T\int_Z \left| \|K_{m,j}^\varepsilon(s,z)\|_2^2 -\|D_jK_m(s,z)\|_2^2 \right|N(\dd s,\dd z)+ \int_0^T\int_Z \left| \|K_{m,j}^\varepsilon(s,z)\|_2^2 -\|D_jK_m(s,z)\|_2^2 \right|\mu(\dd z)\,\dd s .
\end{aligned}
\end{equation*}
Taking expectations and using the compensator identity,
\begin{equation*}
\begin{aligned}
&\E\sup_{t\le T} |M_{m,j}^{J,2,\varepsilon}(t)-M_{m,j}^{J,2}(t)|\le 2\E\int_0^T\int_Z \left| \|K_{m,j}^\varepsilon(s,z)\|_2^2 -\|D_jK_m(s,z)\|_2^2 \right|\mu(\dd z)\,\dd s \\
&\quad\le 2 \left[ \E\int_0^T\int_Z \bigl( \|K_{m,j}^\varepsilon(s,z)\|_2 +\|D_jK_m(s,z)\|_2 \bigr)^2 \mu(\dd z)\,\dd s \right]^{1/2} \\
&\qquad\times \left[ \E\int_0^T\int_Z \|K_{m,j}^\varepsilon(s,z)-D_jK_m(s,z)\|_2^2 \,\mu(\dd z)\,\dd s \right]^{1/2} \longrightarrow0 .
\end{aligned}
\end{equation*}
We may therefore let $\varepsilon\downarrow0$ in \eqref{eq:4.1-mollified-Ito}.
Self-adjointness of $S_m$, commutation with derivatives and $\mathcal P$, and $\mathcal Pu_m=u_m$ yield
\begin{equation*}
\begin{aligned}
(u_m,S_m\mathcal P((v_m\cdot\nabla)v_m))_2 &=(v_m,(v_m\cdot\nabla)v_m)_2=0, \\
(u_m,S_m\mathcal P(g_N(|v_m|^2)v_m))_2 &=\mathcal T_0(v_m), \\
(-\Delta u_m,S_m\mathcal P((v_m\cdot\nabla)v_m))_2 &=-(\Delta v_m,(v_m\cdot\nabla)v_m)_2, \\
(-\Delta u_m,S_m\mathcal P(g_N(|v_m|^2)v_m))_2 &=\mathcal T_1(v_m).
\end{aligned}
\end{equation*}
The first identity is the usual incompressible cancellation, $(v_m,(v_m\cdot\nabla)v_m)_2 = \frac12\int_{\R^3} v_m\cdot\nabla|v_m|^2\,\dd x=0.$ The last identity follows by integration by parts:
\begin{equation*}
(-\Delta v, g_N(|v|^2)v)_2= \int_{\R^3} g_N(|v|^2)|\nabla v|^2\,\dd x + 2\sum_{\ell=1}^3 \int_{\R^3} g_N'(|v|^2) (v\cdot\partial_\ell v)^2\,\dd x =\mathcal T_1(v).
\end{equation*}
These integrations by parts are justified by \eqref{eq:4.1-preliminary-H1} and the fixed-$m$ spatial smoothing.
Consequently, \eqref{eq:4.1-L2-identity} and \eqref{eq:4.1-H1-identity} follow.
We next establish the coercive estimate.
For $v=v_m$, Cauchy--Schwarz and Young's inequality give
\begin{equation*}
2((v\cdot\nabla)v,\Delta v)_2 \le 2\int_{\R^3} |v||\nabla v||\Delta v|\,\dd x\le \nu\|\Delta v\|_2^2 +\frac1\nu \int_{\R^3}|v|^2|\nabla v|^2\,\dd x .
\end{equation*}
Since $g_N'\ge0$, $\mathcal T_1(v) \ge \int_{\R^3} g_N(|v|^2)|\nabla v|^2\,\dd x .$ Hence, by the definition of $c_{N,\nu}$,
\begin{equation*}
\begin{aligned}
2((v\cdot\nabla)v,\Delta v)_2 -2\mathcal T_1(v) &\le \nu\|\Delta v\|_2^2+ \int_{\R^3} \left( \frac{|v|^2}{\nu} -2g_N(|v|^2) \right)|\nabla v|^2\,\dd x \\
&\le \nu\|\Delta v\|_2^2 +c_{N,\nu}\|\nabla v\|_2^2 .
\end{aligned}
\end{equation*}
Since $v_m=S_mu_m$, $S_m$ is an $L^2$ contraction commuting with derivatives, and $0\le a_m\le1$, \eqref{eq:4.1-coercivity} follows.

It remains to record the martingale estimates used below.
Let $\eta\le T$ be a stopping time.
For the Wiener term, BDG and Young's inequality yield, for every $\delta>0$,
\begin{equation}
\label{eq:4.1-Wiener-martingale}
\begin{aligned}
\E\sup_{t\le\eta}|M_{m,j}^W(t)| &\le C\E \left( \int_0^\eta \|D_ju_m(s-)\|_2^2 \|D_jB_m(s)\|_{\gamma(\mathcal U;L^2)}^2 \,\dd s \right)^{1/2} \\
&\le \delta\, \E\sup_{t\le\eta}\|D_ju_m(t)\|_2^2 + C_\delta \E\int_0^\eta \|D_jB_m(s)\|_{\gamma(\mathcal U;L^2)}^2\,\dd s \\
&\le \delta\, \E\sup_{t\le\eta}\|D_ju_m(t)\|_2^2 + C_\delta \E\int_0^\eta q_{m,j}(s)\,\dd s .
\end{aligned}
\end{equation}

For the linear jump martingale, its optional quadratic variation is
\begin{equation}
\label{eq:4.1-linear-jump-QV}
[M_{m,j}^{J,1}]_t = 4\int_0^t\int_Z \left| (D_ju_m(s-),D_jK_m(s,z))_2 \right|^2 N(\dd s,\dd z).
\end{equation}
Davis' inequality therefore gives
\begin{equation}
\label{eq:4.1-linear-jump}
\begin{aligned}
\E\sup_{t\le\eta}|M_{m,j}^{J,1}(t)| &\le C\E \left[ \sup_{t\le\eta}\|D_ju_m(t)\|_2^2 \int_0^\eta\int_Z \|D_jK_m(s,z)\|_2^2\,N(\dd s,\dd z) \right]^{1/2} \\
&\le \delta\, \E\sup_{t\le\eta}\|D_ju_m(t)\|_2^2+ C_\delta \E\int_0^\eta\int_Z \|D_jK_m(s,z)\|_2^2\,N(\dd s,\dd z) \\
&= \delta\, \E\sup_{t\le\eta}\|D_ju_m(t)\|_2^2+ C_\delta \E\int_0^\eta\int_Z \|D_jK_m(s,z)\|_2^2\,\mu(\dd z)\,\dd s \\
&\le \delta\, \E\sup_{t\le\eta}\|D_ju_m(t)\|_2^2 + C_\delta \E\int_0^\eta q_{m,j}(s)\,\dd s .
\end{aligned}
\end{equation}
The compensator identity is used only after Young's inequality, so no higher jump moment is introduced.

Finally, put $$H_{m,j}(s,z):=\|D_jK_m(s,z)\|_2^2\ge0 .$$ Since $$ M_{m,j}^{J,2}(t) = \int_0^t\int_Z H_{m,j}(s,z)\,N(\dd s,\dd z) - \int_0^t\int_Z H_{m,j}(s,z)\,\mu(\dd z)\,\dd s , $$ one has pathwise
\begin{equation*}
\sup_{t\le\eta}|M_{m,j}^{J,2}(t)| \le \int_0^\eta\int_ZH_{m,j}(s,z)\,N(\dd s,\dd z) + \int_0^\eta\int_ZH_{m,j}(s,z)\,\mu(\dd z)\,\dd s .
\end{equation*}
Taking expectations and applying the compensation formula,
\begin{equation}
\label{eq:4.1-quadratic-jump}
\E\sup_{t\le\eta}|M_{m,j}^{J,2}(t)| \le 2\E\int_0^\eta\int_Z \|D_jK_m(s,z)\|_2^2\,\mu(\dd z)\,\dd s .
\end{equation}
Equations \eqref{eq:4.1-Wiener-martingale}-- \eqref{eq:4.1-quadratic-jump} give \eqref{eq:4.1-martingale-estimates}.
Stopping is implemented by multiplication of the stochastic integrands by the predictable indicator $\mathds1_{(0,\eta]}$, hence the possible jump at $\eta$ is retained.
All state-dependent stochastic integrands are evaluated at the predictable left limit.
This proves (ii)--(iii).

\emph{Step 3: } Fix $\rho,L,R$ as in (iv), and define the uncut coefficients
\begin{equation*}
\begin{aligned}
f_m(w)&=-S_m\mathcal P^{(1)}(S_mw\otimes S_mw), & h_m(w)&=-S_m\mathcal P(g_N(|S_mw|^2)S_mw), \\
\Sigma_m(s,w)&=S_m\Sigma(s,S_mw),& \mathcal G_m(s,w,z)&=S_m\mathcal G(s,S_mw,z).
\end{aligned}
\end{equation*}
Let $f(w)=-\mathcal P^{(1)}(w\otimes w)$ and $h(w)=-\mathcal P(g_N(|w|^2)w)$.
Choose $q_f,q_h$ as in \cref{lem:3.2-intrinsic-coefficients}.
For a predictable coefficient $J$, put
\begin{equation*}
\mathcal J_\rho(J) =\E\int_0^\rho\int_Z\|J(s,z)\|_p^p\,\mu(\dd z)\,\dd s +\E\left( \int_0^\rho\int_Z\|J(s,z)\|_p^2\,\mu(\dd z)\,\dd s \right)^{p/2}.
\end{equation*}
All stopped coefficients are extended by zero using $\mathds1_{(0,\rho]}$.
Set
\begin{equation}
\label{eq:4.1-consistency-error}
\begin{aligned}
\varepsilon_m :=&\E\|S_m u_0-u_0\|_p^p \\
&+\E\int_0^\rho\left[ \|f_m(u)-f(u)\|_{q_f}^p +\|h_m(u)-h(u)\|_{q_h}^p +\|\Sigma_m(s,u_-)-\Sigma(s,u_-)\|_{\gamma(\mathcal U;E)}^p \right]\dd s \\
&+\mathcal J_\rho\bigl( \mathcal G_m(\cdot,u_-,\cdot) -\mathcal G(\cdot,u_-,\cdot)\bigr).
\end{aligned}
\end{equation}
Then
\begin{equation}
\label{eq:4.1-vanishing}
\varepsilon_m\longrightarrow0.
\end{equation}
Indeed, $S_m\to I$ strongly in every Lebesgue space used in Lemma \ref{lem:3.2-intrinsic-coefficients}.
The taming and noise difference bounds give pointwise convergence of the coefficients for $\dd\P\otimes\dd s$-almost every point in $(0,\rho)$.
Lemma \ref{lem:3.2-intrinsic-coefficients}, contraction of $S_m$, and $\mathcal Q_u(\rho-)\le L$ provide integrable dominating functions for the drift and Wiener terms.
The $L^2(Z)$ and $L^p(Z)$ jump growth bounds provide dominating bounds depending only on $L,T$ for both Poisson terms.
Dominated convergence proves \eqref{eq:4.1-vanishing}.
Here $u_-=u$ almost everywhere in time and no $L^{3p}$-valued left limit is asserted.

\emph{Step 4:} Write $U_m=u_m^R$ and
\begin{equation*}
\begin{aligned}
\mathcal Q_{U_m}(t) &=\sup_{s\le t}\|U_m(s)\|_p^p +\int_0^t\|U_m(s)\|_{3p}^p\,\dd s, \\
\kappa_m&=\inf\{t\ge0:\mathcal Q_{U_m}(t)\ge2L\}, \qquad \theta_m=\rho\wedge\kappa_m.
\end{aligned}
\end{equation*}
These are stopping times, with $\inf\varnothing=\infty$.
Before $\theta_m$, and for the predictable states through $\theta_m$, the $L^p$ norms of $U_m$ and $u$ are bounded by $K=(2L)^{1/p}$ whenever the integration interval is nonempty.
Since $R>K$, the scalar cutoff is identically one there.

Let $d_m=U_m-u$.
For $x,y\in E\cap L^{3p}$ with $\|x\|_p\vee\|y\|_p\le K$, interpolation gives
\begin{equation*}
\begin{aligned}
\|f_m(x)-f_m(y)\|_{q_f} &\le C_K\|x-y\|_p^{\eta_f} \|x-y\|_{3p}^{1-\eta_f}, \\
\|h_m(x)-h_m(y)\|_{q_h} &\le C_K\|x-y\|_p^{\eta_h} \|x-y\|_{3p}^{1-\eta_h}, \\
\|\Sigma_m(s,x)-\Sigma_m(s,y)\|_{\gamma(\mathcal U;E)} &\le C_K\|x-y\|_p^{1/4} \|x-y\|_{3p}^{3/4}, \\
\left(\int_Z \|\mathcal G_m(s,x,z)-\mathcal G_m(s,y,z)\|_p^r \,\mu(\dd z)\right)^{1/r} &\le C_K\|x-y\|_p^{\eta_G} \|x-y\|_{3p}^{1-\eta_G}, \quad r=2,p,
\end{aligned}
\end{equation*}
where
\begin{equation*}
\eta_f=\frac{3p/q_f-4}{2},\qquad \eta_h=\frac{3p/q_h-7}{2},\qquad \eta_G=1-\frac{3\alpha}{2}.
\end{equation*}
Here $\alpha\in[0,2/3)$ is the jump Lipschitz exponent and all three exponents are positive.
For the first two bounds, apply H\"older with
\begin{equation*}
\frac1{\ell_f}=\frac1{q_f}-\frac1p,\qquad \frac1{\ell_h}=\frac1{q_h}-\frac2p, \qquad p<\ell_f,\ell_h<3p,
\end{equation*}
and interpolate the difference between $L^p$ and $L^{3p}$.
The last two bounds use the weighted noise Lipschitz assumptions, with difference exponents $2p$ and $p/(1-\alpha)$, respectively.
All constants are independent of $m$.

Subtract the equations through $\theta_m$, and split each coefficient into its state difference and the consistency error in \eqref{eq:4.1-consistency-error}.
Let $z_m$ be the linear heat solution on $[0,T]$ with initial datum $S_m u_0-u_0$ and these coefficient differences multiplied by $\mathds1_{(0,\theta_m]}$.
Lemma \ref{lem:3.2-intrinsic-coefficients} and the pre-exit bounds place these data in the forcing classes of the linear theorem.
Uniqueness for that theorem gives $z_m=d_m$ on $[0,\theta_m]$; after $\theta_m$, $z_m$ is the heat continuation used in Lemma \ref{lem:3.3-energy-recovery}.

For $I=[t_0,t_1]\subset[0,T]$, $t_1-t_0\le h\le1$, put
\begin{equation*}
X_I=\sup_{t_0\le t\le t_1}\|z_m(t)\|_p^p,\ Y_I=\int_{t_0}^{t_1}\|z_m(s)\|_{3p}^p\,\dd s.
\end{equation*}
For every interpolation exponent $0<\eta<1$,
\begin{equation*}
\begin{aligned}
\E\int_{t_0\wedge\theta_m}^{t_1\wedge\theta_m} \|d_m(s)\|_p^{p\eta} \|d_m(s)\|_{3p}^{p(1-\eta)}\,\dd s &\le h^\eta\E(X_I^\eta Y_I^{1-\eta}) \\
&\le \delta\E Y_I +C_{\delta,\eta}h\,\E X_I.
\end{aligned}
\end{equation*}
For $\eta=1$ the corresponding bound is $h\E X_I$.
For $r=2,p$, set $$ A_{m,r}(s) := \left( \int_Z \|\mathcal{G}_m(s, U_m, z) - \mathcal{G}_m(s, u, z)\|_p^r \,\mu(\dd z) \right)^{1/r}.
$$

The state-difference estimate gives $ A_{m,r}(s) \le C_K \|d_m(s)\|_p^{\eta_G} \|d_m(s)\|_{3p}^{1-\eta_G}.
$ Therefore $$ \E \int_{I \cap (0, \theta_m]} A_{m,p}(s)^p \,\dd s \le \delta \E Y_I + C_{\delta,K} h \E X_I.
$$

For the quadratic Poisson mode, $$
\begin{aligned}
\E \left( \int_{I \cap (0, \theta_m]} A_{m,2}(s)^2 \,\dd s \right)^{p/2}& \le h^{p/2-1} \E \int_{I \cap (0, \theta_m]} A_{m,2}(s)^p \,\dd s \\
&\le C_K h^{p/2-1} (\delta \E Y_I + C_{\delta,K} h \E X_I).
\end{aligned}
$$ Since $p > 3 > 2$, $ h^{p/2-1} \le 1, \ 0 < h \le 1.
$ Apply the linear heat estimate of \cref{Thm:heat-exis} to $z_m$ on $I$ and use $\|z_m\|_{3p}^p \le C_p\|\nabla(|z_m|^{p/2})\|_2^2.$ The Leray projection remains inside the forcing terms throughout this application of the linear estimate.
One gets, $$ \E X_I + c_1 \E Y_I \le C \E \|z_m(t_0)\|_p^p + C \varepsilon_{m,I} + C_K \delta\, \E Y_I + C_{\delta,K} h\, \E X_I.
$$ Now the order of choices matters.
First choose $\delta = \delta(K) > 0$ so that $ C_K \delta \le \frac{c_1}{2}.
$ Then choose $h = h(K) > 0$ so that $ C_{\delta,K} h \le \frac{1}{2}.
$ Hence $$ \E X_I + \E Y_I \le C_K \E \|z_m(t_0)\|_p^p + C_K \varepsilon_{m,I}.
$$ Since $K = (2L)^{1/p}$, one may write $h = h(L)$.
Now let, the partition

$$0 = t_0 < t_1 < \cdots < t_N = T, \qquad t_{j+1} - t_j \le h(L),$$

where $N \le 1 + \frac{T}{h(L)}.$ Let $ X_j = X_{[t_j,t_{j+1}]}, \ Y_j = Y_{[t_j,t_{j+1}]}.$ For the first interval, $z_m(0) = S_m u_0 - u_0,$ so $$\E X_0 + \E Y_0 \le C_L \E \|S_m u_0 - u_0\|_p^p + C_L \varepsilon_{m,0}.$$

For $j \ge 1$, $\|z_m(t_j)\|_p^p \le X_{j-1},$ and therefore $$\E X_j + \E Y_j \le C_L \E X_{j-1} + C_L \varepsilon_{m,j}.
$$

Induction over the finite number $N = N(L,T)$ of intervals yields

$$\sum_{j=0}^{N-1} (\E X_j + \E Y_j) \le C_{L,T} \left[ \E \|S_m u_0 - u_0\|_p^p + \sum_j \varepsilon_{m,j} \right].$$

Hence
\begin{equation}
\label{eq:4.1-stopped-comparison}
\E\left[ \sup_{t\le\theta_m}\|d_m(t)\|_p^p +\int_0^{\theta_m}\|d_m(s)\|_{3p}^p\,\dd s \right] \le C_{L,T}\varepsilon_m\longrightarrow0.
\end{equation}
The terminal value is included in this estimate.
On $\{\kappa_m<\rho\}$, right continuity gives $\mathcal Q_{U_m}(\kappa_m)\ge2L$, whereas $\mathcal Q_u(\kappa_m)\le L$.
Minkowski's inequality for the sum of the supremum and time-integral components yields
\begin{equation*}
\sup_{t\le\theta_m}\|d_m(t)\|_p^p +\int_0^{\theta_m}\|d_m(s)\|_{3p}^p\,\dd s \ge (2^{1/p}-1)^p L \quad\text{on }\{\kappa_m<\rho\}.
\end{equation*}
Thus \eqref{eq:4.1-stopped-comparison} implies $\P(\kappa_m<\rho)\to0$.
On $\{\kappa_m\ge\rho\}$, the comparison interval is $[0,\rho]$.
Markov's inequality now proves \eqref{eq:4.1-native-convergence}.
Choosing a deterministic subsequence with summable error probabilities gives the final assertion.
\end{proof}
We next show that finite initial kinetic energy is preserved by the maximal $L^p$ solution.
The resulting estimate controls the viscous and taming dissipation before the maximal lifetime and yields a strongly $L^2$-c\`adl\`ag representative of the same solution.
\begin{proposition}
[$L^2$ persistence of the maximal solution] \label{prop:4.2-L2-persistence} Retain the hypotheses of Lemma \ref{lem:4.1-regularization}, except for the gradient-noise assumption \eqref{eq:2-H1}, and let $(u,(\tau_n),\tau_{\max})$ be the maximal solution of \cref{Sec:3}.
In particular, assume
\begin{equation*}
u_0\in L^p(\Omega,\mathcal F_0;L^p_\sigma(\R^3)) \cap L^2(\Omega,\mathcal F_0;L^2_\sigma(\R^3)).
\end{equation*}
Set $\mathcal H=L^2_\sigma(\R^3)$ and retain $\mathcal T_0$ from \eqref{eq:4.1-energy-notation}.
Then $u$ has an adapted representative with strongly $\mathcal H$-c\`adl\`ag paths on $[0,\tau_{\max})$.
For every $n\ge1$ and $T<\infty$,
\begin{equation}
\label{eq:4.2-stopped-regularity}
u(\cdot\wedge\tau_n) \in L^2\bigl(\Omega;\mathbb D([0,T];\mathcal H)\bigr),\ \E\int_0^{T\wedge\tau_n} \|u(s)\|_{H^1}^2\,\dd s<\infty,
\end{equation}
where the path-space notation denotes a finite expected square of the supremum norm.
Moreover,
\begin{equation}
\label{eq:4.2-maximal-energy}
\E\Bigg[ \sup_{\substack{0\le t\le T \\
t<\tau_{\max}}}\|u(t)\|_2^2 +\nu\int_0^{T\wedge\tau_{\max}} \|\nabla u(s)\|_2^2\,\dd s +\int_0^{T\wedge\tau_{\max}} \mathcal T_0(u(s))\,\dd s \Bigg] \le C_T\left(1+\E\|u_0\|_2^2\right).
\end{equation}
The constant $C_T$ depends only on $T$, $\nu$, and $C_0$ in \eqref{eq:2-raw-L2-growth}.
It is independent of the intrinsic exit level, the announcing sequence, and all scalar and spatial regularization parameters.
Integrals ending at a finite $\tau_{\max}$ are increasing limits from below and no value at $\tau_{\max}$ is asserted.
\end{proposition}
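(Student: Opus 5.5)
The plan is to derive uniform $L^2$ energy bounds for the regularized solutions $u_m^R$ of \eqref{eq:4.1-regularized-equation}, transfer them to $u$ on intrinsically localized intervals via Lemma~\ref{lem:4.1-regularization}(iv), and then let the localization level tend to infinity. Throughout, $R>(2L)^{1/p}$ is kept fixed for each level $L$.

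\emph{Step 1: uniform energy estimate.} Fix $T$ and a stopping time $\eta\le T$. Take the stopped identity \eqref{eq:4.1-L2-identity}, drop nothing, and take the supremum over $t\le\eta$ and expectations. The taming term $2a_m\mathcal T_0(v_m)$ is nonnegative and stays on the left. Use $|S_mu_0|_2\le|u_0|_2$, the bound $q_{m,0}\le C_0(1+|u_m(s-)|_2^2)$ from \eqref{eq:4.1-coercivity}, and \eqref{eq:4.1-martingale-estimates} with $\delta=1/4$ to absorb the martingale terms. Gronwall's lemma, applied to $\E\sup_{t\le\eta\wedge r}$ with deterministic $r$ (finiteness holds by Lemma~\ref{lem:4.1-regularization}(i)), then gives
\begin{equation*}
\E\Big[\sup_{t\le T}\|u_m\|_2^2+2\nu\int_0^T\|\nabla u_m\|_2^2\,\dd s+2\int_0^T a_m\mathcal T_0(v_m)\,\dd s\Big]\le C_T(1+\E\|u_0\|_2^2),
\end{equation*}
where $C_T$ depends only on $T,\nu,C_0$ and is uniform in $m,R$.

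\emph{Step 2: transfer to $u$.} Let $\rho=\lambda_L\wedge\tau_n\wedge T$. By Theorem~\ref{thm:3.6-intrinsic-blowup} and Lemma~\ref{lem:3.1-strict-localization}, $\rho<\tau_{\max}$ and $\mathcal Q_u(\rho-)\le L$. Lemma~\ref{lem:4.1-regularization}(iv) then yields a subsequence with $\sup_{t\le\rho}\|u_m(t)-u(t)\|_p\to0$ almost surely, and in particular distributional convergence at every $t\le\rho$. Weak lower semicontinuity of $\|\cdot\|_2$, after testing against a countable dense set in $\mathcal S_\sigma$, gives $\|u(t)\|_2\le\liminf\|u_m(t)\|_2$ simultaneously for all $t$. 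Pathwise, a subsequence realizing the lower limit of the dissipation is bounded in $L^2(0,\rho;H^1)$; its weak limit is identified with $u$, so the gradient term is lower semicontinuous. For the taming term, on $(0,\rho]$ we have $a_m=1$ once $\|u_m\|_p<R$, which holds eventually, and $v_m=S_mu_m\to u$ in $L^p(0,\rho;L^{3p})$ along a further subsequence almost everywhere in $(s,x)$. Fatou's lemma then handles $\mathcal T_0\ge0$. Applying Fatou's lemma in $\omega$ gives the estimate \eqref{eq:4.2-maximal-energy} up to $\rho$, with a constant independent of $L,n$. Letting $L\to\infty$, so that $\lambda_L\uparrow\tau_{\max}$, and then $n\to\infty$, monotone convergence gives \eqref{eq:4.2-maximal-energy} with integrals understood as increasing limits. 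The stopped regularity \eqref{eq:4.2-stopped-regularity} follows in the same way, since $\tau_n<\tau_{\max}$ and the estimate up to $T\wedge\tau_n$ is finite.

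\emph{Step 3: $\mathcal H$-c\`adl\`ag representative.} On $[0,\tau_n\wedge T]$ we know $u\in L^2(\Omega;L^\infty(\mathcal H))\cap L^2(\Omega\times(0,T);H^1)$. Its drift lies in $L^1(0,T;H^{-1})$ plus an $L^{4/3}$-type taming term, and its noise coefficients are square integrable in $\mathcal H$ by \eqref{eq:2-H0}. Hence the stochastic integrals have $\mathcal H$-c\`adl\`ag versions, and the Gelfand-triple It\^o formula (Gy\"ongy--Krylov type, with jumps) yields an $\mathcal H$-c\`adl\`ag version. It agrees with the $E$-valued path in $\mathcal S'$ at rational times and hence at all times, exactly as in Lemma~\ref{lem:4.1-regularization}(i). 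Consistency across $n$ then patches these versions together on $[0,\tau_{\max})$.

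I expect Step 2 to be the main obstacle, specifically obtaining pathwise lower semicontinuity of the supremum norm and of the nonlinear taming functional from convergence only in $L^p\cap L^p_tL^{3p}$, together with identifying the cutoff $a_m=1$. The weak-in-$\mathcal H$ identification of the $H^1$ energy also needs care, and in Step 3 the taming drift must be placed in the dual of $H^1\cap L^4$, using that $\mathcal T_0$ is integrable.
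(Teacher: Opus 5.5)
Your proposal is correct. Steps~1--2 coincide with the paper's Steps~1, 2 and~4: the uniform regularized $L^2$ bound from \eqref{eq:4.1-L2-identity}, the transfer at $\rho=T\wedge\tau_n\wedge\lambda_L$ via Lemma~\ref{lem:4.1-regularization}(iv) with fixed $R>(2L)^{1/p}$, pathwise lower semicontinuity plus Fatou for $\mathcal T_0$, and then $L\to\infty$ and $n\to\infty$.

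The genuine difference is Step~3. The paper does not invoke a Gelfand-triple It\^o formula. It first shows that $u(\cdot\wedge\rho)$ is weakly $\mathcal H$-c\`adl\`ag. It then applies $J_\varepsilon=e^{\varepsilon\Delta}$, which maps $H^{-1}$ and $L^{4/3}$ into $\mathcal H$, so $J_\varepsilon u$ is an honest $\mathcal H$-valued semimartingale. The Hilbert-space It\^o formula for $\|J_\varepsilon u-J_\delta u\|_2^2$ shows that $(J_\varepsilon u)$ is Cauchy in expected $\mathcal H$-supremum norm. The limit is identified with $u$ through pointwise $\mathcal H$-membership.

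You outsource exactly this to a general theorem. That is shorter, but you need the right version. The taming drift is only in $L^{4/3}(0,T;L^{4/3})$, while $u$ is only in $L^2(0,T;H^1)$, so no single-exponent triple works. You therefore need an It\^o formula for processes in the intersection $H^1_\sigma\cap L^4$ with different time exponents, with Poisson jumps, on a stochastic interval (a Gy\"ongy--\v{S}i\v{s}ka type result).

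Your statement that the drift lies in $L^1(0,T;H^{-1})$ is also not what such a theorem needs. The pairing $\langle\mathcal P\nabla\cdot(u\otimes u),u\rangle$ must be integrable, so the viscous-plus-convective part must lie in $L^2(0,T;H^{-1})$. This requires $u\in L^4(0,T;L^4)$. That bound comes from the coercivity \eqref{eq:2-taming-coercive} via $\|u\|_4^4\le 2\nu\mathcal T_0(u)+\nu c_{N,\nu}\|u\|_2^2$, as in the paper's \eqref{eq:4.2-L4-integrability}. Integrability of $\mathcal T_0$ alone under \eqref{eq:2-taming-local} would not give it.

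With this made explicit, your Step~3 works and also delivers the energy identity for $\|u\|_2^2$ itself. The paper's mollification argument is self-contained and needs only the $\mathcal H$-valued It\^o formula.
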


\begin{proof}
\emph{Step 1:} Let $U_m=u_m^R$ be the solution of \eqref{eq:4.1-regularized-equation}, and retain the notation $v_m=S_mU_m$, $a_m$, $q_{m,0}$, and $M^W_{m,0}$, $M^{J,1}_{m,0}$, $M^{J,2}_{m,0}$ from Lemma \ref{lem:4.1-regularization}.
For $t\le T$, its martingale estimates give
\begin{equation}
\label{eq:4.2-martingale}
\begin{aligned}
\E\sup_{r\le t}|M^W_{m,0}(r)| +\E\sup_{r\le t}|M^{J,1}_{m,0}(r)| &\le \delta\E\sup_{r\le t}\|U_m(r)\|_2^2 +C_\delta\E\int_0^t q_{m,0}(s)\,\dd s, \\
\E\sup_{r\le t}|M^{J,2}_{m,0}(r)| \le 2\E\int_0^t\int_Z \|K_m(s,z)\|_2^2\,\mu(\dd z)\,\dd s, &\ \E\int_0^t q_{m,0}(s)\,\dd s \le C_0\int_0^t \left(1+\E\sup_{r\le s}\|U_m(r)\|_2^2\right)\dd s.
\end{aligned}
\end{equation}
The last jump estimate uses the nonnegative integrand $\|K_m\|_2^2$ and the compensator identity.
Apply \eqref{eq:4.1-L2-identity}, retain the nonnegative taming integral, and choose $\delta$ sufficiently small in \eqref{eq:4.2-martingale}.
Gronwall's inequality and $L^2$ contractivity of $S_m$ yield
\begin{equation}
\label{eq:4.2-regularized-energy}
\sup_{m\ge1,\,R\ge1}\E\left[ \sup_{t\le T}\|U_m(t)\|_2^2 +\nu\int_0^T\|\nabla U_m(s)\|_2^2\,\dd s +\int_0^T a_m(s)\mathcal T_0(v_m(s))\,\dd s \right] \le C_T\left(1+\E\|u_0\|_2^2\right).
\end{equation}
The integrability needed for this calculation is supplied by Lemma \ref{lem:4.1-regularization} at fixed $m,R$.

\emph{Step 2.} Fix $n\ge1$, $L\ge1$, and $T<\infty$, and set $$\rho=\rho_{n,L,T}:=T\wedge\tau_n\wedge\lambda_L,$$ where $\lambda_L$ is defined in \eqref{eq:3.6-intrinsic-exits}.
Lemma \ref{lem:3.1-strict-localization} and Theorem \ref{thm:3.6-intrinsic-blowup} give $ \rho<\tau_{\max},\ \mathcal Q_u(\rho-)\le L \ \P\text{-a.s.}$ Fix $R>(2L)^{1/p}$.
By Lemma \ref{lem:4.1-regularization}(iv), a deterministic subsequence, still indexed by $m$, satisfies
\begin{equation}
\label{eq:4.2-localized-Lp-convergence}
\sup_{t\le\rho}\|U_m(t)-u(t)\|_p^p +\int_0^\rho\|U_m(s)-u(s)\|_{3p}^p\,\dd s \longrightarrow0 \quad\P\text{-a.s.}
\end{equation}
Strong convergence and contractivity of $S_m$ also imply $v_m\to u$ in $L^p(0,\rho;L^{3p})$, pathwise.
Furthermore, since $\sup_{s<\rho}\|u(s)\|_p\le L^{1/p}<R$, \eqref{eq:4.2-localized-Lp-convergence} implies that $a_m(s)=1$ for every $s<\rho$ once $m$ is sufficiently large, pathwise.
Define
\begin{equation}
X_m:=\sup_{t\le\rho}\|U_m(t)\|_2^2 +\nu\int_0^\rho\|\nabla U_m(s)\|_2^2\,\dd s +\int_0^\rho a_m(s)\mathcal T_0(v_m(s))\,\dd s.
\end{equation}
By \eqref{eq:4.2-regularized-energy} and Fatou's lemma, $\liminf_mX_m<\infty$ almost surely.
For each such path with $\rho>0$, take a further subsequence realizing this lower limit.
It is bounded in $L^\infty(0,\rho;\mathcal H)\cap L^2(0,\rho;H^1_\sigma)$.
Its weak-* and weak limits in these respective spaces are identified with $u$ by \eqref{eq:4.2-localized-Lp-convergence}.
The same uniform $L^p$ convergence identifies the distributional limit at every time, including $\rho$, and hence gives the corresponding lower bound for the supremum of the $\mathcal H$ norm.
After further extraction, $v_m\to u$ almost everywhere on $(0,\rho)\times\R^3$.
Continuity and nonnegativity of $g_N$ therefore permit Fatou's lemma for the taming integral.
Consequently,
\begin{equation*}
\sup_{t\le\rho}\|u(t)\|_2^2 +\nu\int_0^\rho\|\nabla u(s)\|_2^2\,\dd s +\int_0^\rho\mathcal T_0(u(s))\,\dd s \le\liminf_mX_m
\end{equation*}
almost surely.
On $\{\rho=0\}$ this follows directly from $S_m u_0\to u_0$ in $\mathcal H$.
Taking expectations gives,
\begin{equation}
\label{eq:4.2-localized-energy}
\E\left[ \sup_{t\le\rho}\|u(t)\|_2^2 +\nu\int_0^\rho\|\nabla u(s)\|_2^2\,\dd s +\int_0^\rho\mathcal T_0(u(s))\,\dd s \right] \le C_T\left(1+\E\|u_0\|_2^2\right).
\end{equation}
In particular, the constant is independent of $n,L$.
The terminal post-jump value is included in the supremum.

\emph{Step 3:} Write $\widehat u(t)=u(t\wedge\rho)$ for $t\in[0,T]$.
The pointwise $\mathcal H$ membership established above, the pathwise $\mathcal H$ bound, and $L^p$ c\`adl\`ag regularity imply that $\widehat u$ is weakly c\`adl\`ag in $\mathcal H$.
Indeed, its scalar pairings with smooth compactly supported fields have the required limits, and the uniform $\mathcal H$ bound extends this assertion to every element of $\mathcal H$ by density.
The $L^p$ left limits are the resulting weak $\mathcal H$ left limits.
The pointwise $\mathcal H$ membership, the pathwise $\mathcal H$ bound, and the $L^p$-c\`adl\`ag regularity imply that $\widehat u$ is weakly c\`adl\`ag in $\mathcal H$.
Indeed, scalar pairings with smooth compactly supported fields have the required right and left limits, and the uniform $\mathcal H$ bound extends this assertion to all elements of $\mathcal H$ by density.
Moreover, if $t_k\uparrow t$, then $u(t_k)\to u(t-)$ in $L^p$ and the uniform $\mathcal H$ bound and uniqueness of distributional limits therefore show that $u(t_k)\rightharpoonup u(t-)$ in $\mathcal H$.
Thus the $L^p$ left limit is also the weak $\mathcal H$ left limit.
By separability, $\widehat u$ admits an $\mathcal H$-adapted version and $\widehat u_-$ an $\mathcal H$-valued predictable version.

Condition \eqref{eq:2-taming-local}, \eqref{eq:2-taming-coercive} gives, pointwise, $|v|^4\le2\nu g_N(|v|^2)|v|^2 +\nu c_{N,\nu}|v|^2.$ Together with \eqref{eq:4.2-localized-energy}, this yields
\begin{equation}
\label{eq:4.2-L4-integrability}
\E\int_0^\rho \left(\|u(s)\|_{H^1}^2+\|u(s)\|_4^4\right)\dd s <\infty.
\end{equation}
On $(0,\rho]$, define
\begin{equation*}
\begin{aligned}
\mathcal A_1(s) &=\nu\Delta u(s)-\mathcal P\nabla\cdot(u(s)\otimes u(s)), & \mathcal A_2(s) &=-\mathcal P(g_N(|u(s)|^2)u(s)), \\
b(s)&=\Sigma(s,u(s-)), & c(s,z)&=\mathcal G(s,u(s-),z),
\end{aligned}
\end{equation*}
and extend these coefficients by zero outside $(0,\rho]$.
The drift coefficients are defined up to $\dd\P\otimes\dd s$-null sets, The noise coefficients use their predictable versions.
The indicator $\mathds1_{(0,\rho]}$ is predictable, and
\begin{equation*}
\|\mathcal A_1(s)\|_{H^{-1}} \le C\left(\nu\|\nabla u(s)\|_2+\|u(s)\|_4^2\right), \qquad \|\mathcal A_2(s)\|_{4/3} \le C\|u(s)\|_4^3, \qquad 0<s<\rho.
\end{equation*}
Hence the zero extensions satisfy
\begin{equation*}
\begin{aligned}
\mathcal A_1&\in L^2(\Omega\times(0,T);H^{-1}_\sigma), & \mathcal A_2&\in L^{4/3}(\Omega\times(0,T);L^{4/3}_\sigma), \\
b&\in L^2(\Omega\times(0,T);\gamma(\mathcal U;\mathcal H)), & c&\in L^2(\Omega\times(0,T)\times Z;\mathcal H).
\end{aligned}
\end{equation*}
The last two assertions follow from \eqref{eq:2-raw-L2-growth} and \eqref{eq:4.2-localized-energy}.
Let $J_\varepsilon=e^{\varepsilon\Delta}$ and set $u^\varepsilon=J_\varepsilon\widehat u$.
Applying $J_\varepsilon$ to the stopped distributional equation gives the $\mathcal H$-valued semimartingale identity
\begin{equation}
\label{eq:4.2-mollified-equation}
u^\varepsilon(t) =J_\varepsilon u_0 +\int_0^t J_\varepsilon (\mathcal A_1(s)+\mathcal A_2(s))\,\dd s +\int_0^t J_\varepsilon b(s)\,\dd W_s +\int_0^t\int_Z J_\varepsilon c(s,z)\, \widetilde N(\dd s,\dd z).
\end{equation}
$J_\varepsilon:H^{-1}\to \mathcal H$ and $J_\varepsilon:L^{4/3}\to L^2$ are bounded, so the drift is pathwise integrable in $\mathcal H$.
The stochastic integrals are square-integrable $\mathcal H$-valued martingales.
Thus $u^\varepsilon$ has an adapted strongly $\mathcal H$-c\`adl\`ag version.

For $\varepsilon,\delta>0$, put $D_{\varepsilon,\delta}=J_\varepsilon-J_\delta$ and $w^{\varepsilon,\delta}=u^\varepsilon-u^\delta$.
It\^o's formula for $\|w^{\varepsilon,\delta}\|_2^2$, self-adjointness of the mollifiers, and the BDG, Davis, and compensator estimates used in \eqref{eq:4.2-martingale} give
\begin{equation}
\label{eq:4.2-Hilbert-Cauchy}
\begin{aligned}
\E\sup_{t\le T} \|w^{\varepsilon,\delta}(t)\|_2^2 \le&C\E\|D_{\varepsilon,\delta}u_0\|_2^2 +C\sum_{i=1}^2\E\int_0^\rho \left|\left\langle\mathcal A_i(s), D_{\varepsilon,\delta}^2u(s)\right\rangle\right|\dd s \\
&+C\E\int_0^T \left[\|D_{\varepsilon,\delta}b(s)\|_{\gamma(\mathcal U;H)}^2 +\int_Z\|D_{\varepsilon,\delta}c(s,z)\|_2^2\, \mu(\dd z)\right]\dd s .
\end{aligned}
\end{equation}
The two duality pairings are those of $H^{-1},H^1$ and $L^{4/3},L^4$, respectively.
The square on $D_{\varepsilon,\delta}$ denotes operator composition.
For the quadratic jump martingale in this calculation, positivity of $\|D_{\varepsilon,\delta}c\|_2^2$ again gives its expected supremum bound from the second moment alone.

The right-hand side of \eqref{eq:4.2-Hilbert-Cauchy} converges to zero as $\varepsilon,\delta\downarrow0$.
For the drift terms, use \eqref{eq:4.2-L4-integrability}, strong convergence of $J_\varepsilon$ in $H^1$ and $L^4$, and H\"older's inequality with exponents $(2,2)$ and $(4/3,4)$.
For the stochastic terms, use strong convergence in $H$, the ideal property of $\gamma$-radonifying operators, and dominated convergence in the displayed square-integrable coefficient spaces.
Consequently, $(u^\varepsilon)$ is Cauchy in expected $\mathcal H$-supremum norm.
A deterministic subsequence converges almost surely uniformly on $[0,T]$, and its limit is adapted and strongly $\mathcal H$-c\`adl\`ag.
Since $J_\varepsilon\widehat u(t)\to\widehat u(t)$ in $\mathcal H$ at every time on the common full probability set where $\widehat u(t)\in \mathcal H$ for all $t$, this limit is $\widehat u$ itself.
Thus
\begin{equation*}
u(\cdot\wedge\rho) \in L^2\bigl(\Omega;\mathbb D([0,T];\mathcal H)\bigr).
\end{equation*}
The convergence just proved is uniform in time in mean square, and hence also implies convergence in the Skorokhod $J_1$ topology.
The stochastic intervals in \eqref{eq:4.2-mollified-equation} include $\rho$, so this argument retains its possible Poisson jump.

\emph{Step 4: .} Fix $n$ and $T$, and let $L$ increase through the positive integers.
Since $T\wedge\tau_n<\tau_{\max}$ and $\mathcal Q_u(T\wedge\tau_n)<\infty$ almost surely,
\begin{equation*}
\rho_{n,L,T}=T\wedge\tau_n \quad\text{for all sufficiently large }L, \quad\text{pathwise}.
\end{equation*}
The strongly $\mathcal H$-c\`adl\`ag stopped representatives from Step 3 therefore give \eqref{eq:4.2-stopped-regularity} on a common full-probability set.
Monotone convergence in \eqref{eq:4.2-localized-energy} yields the same bound with $\rho$ replaced by $T\wedge\tau_n$, including that terminal value.

Finally, $\tau_n\uparrow\tau_{\max}$ with $\tau_n<\tau_{\max}$ almost surely.
The intervals $[0,T\wedge\tau_n]$ exhaust $\{t:0\le t\le T,\ t<\tau_{\max}\}$.
Their compatible strongly $\mathcal H$-c\`adl\`ag representatives define the asserted representative on $[0,\tau_{\max})$.
Another application of monotone convergence proves \eqref{eq:4.2-maximal-energy}.
\end{proof}
Under the gradient-noise hypothesis, the coercivity of the taming term also controls the differentiated equation.
For $H^1$ initial data, this yields $H^1$ persistence and $H^2$ dissipation needed for the weighted $L^p$ estimates in the next section.
\begin{theorem}
[$H^1$-persistence] \label{thm:4.3-H1-persistence} Retain the hypotheses of Proposition \ref{prop:4.2-L2-persistence} and assume the gradient-noise bound \eqref{eq:2-H1}.
Suppose, in addition, that
\begin{equation*}
u_0\in L^p(\Omega,\mathcal F_0;L^p_\sigma(\R^3)) \cap L^2(\Omega,\mathcal F_0;H^1_\sigma(\R^3)).
\end{equation*}
Then the maximal solution $u$ has an adapted representative with strongly $H^1_\sigma$-c\`adl\`ag paths on $[0,\tau_{\max})$.
For every $n\ge1$ and $T<\infty$,
\begin{equation}
\label{eq:4.3-stopped-regularity}
u(\cdot\wedge\tau_n) \in L^2\bigl(\Omega;\mathbb D([0,T];H^1_\sigma)\bigr), \quad \E\int_0^{T\wedge\tau_n} \|u(s)\|_{H^2}^2\,\dd s<\infty,
\end{equation}
where the path-space moment is understood in the expected-supremum sense.
Moreover,
\begin{equation}
\label{eq:4.3-maximal-energy}
\E\left[ \sup_{\substack{0\le t\le T \\
t<\tau_{\max}}} \|u(t)\|_{H^1}^2 +\nu\int_0^{T\wedge\tau_{\max}} \|u(s)\|_{H^2}^2\,\dd s \right] \le C_T\left(1+\E\|u_0\|_{H^1}^2\right).
\end{equation}
The constant $C_T$ depends only on $T,\nu,c_{N,\nu}$ and the constants $C_0,C_1$ in \eqref{eq:2-raw-L2-growth}--\eqref{eq:2-H1}.
It is independent of the intrinsic exit level, the announcing sequence, and all scalar and spatial regularization parameters.
An integral ending at a finite $\tau_{\max}$ denotes its increasing limit from below.
No state at $\tau_{\max}$ is asserted.
In particular, defined the accumulated energy functional as,
\begin{equation}
\label{eq:4.3-accumalted-regularity}
\Gamma(t):=\int_0^t \left(1+\|u(s)\|_{H^1}^{2p}+\|u(s)\|_\infty\right)\dd s, \qquad t<\tau_{\max},
\end{equation}
is adapted and locally absolutely continuous, and
\begin{equation}
\label{eq:4.3-regularity}
\Gamma\bigl((T\wedge\tau_{\max})-\bigr)<\infty \quad\P\text{-a.s.}
\end{equation}
\end{theorem}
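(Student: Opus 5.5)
The plan mirrors the proof of Proposition~\ref{prop:4.2-L2-persistence}, now at the $H^1$ level. It proceeds in four steps: a uniform estimate for the regularized system, transfer to $u$ by localization and lower semicontinuity, strong $H^1$-c\`adl\`ag regularity, and finally the bound on $\Gamma$.

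\emph{Step 1: uniform $H^1$ estimate for the regularized system.} For $U_m=u_m^R$ I add the identities \eqref{eq:4.1-L2-identity} and \eqref{eq:4.1-H1-identity}. The convective and taming contributions in \eqref{eq:4.1-H1-identity} are absorbed by the coercivity estimate \eqref{eq:4.1-coercivity}:
\[
2a_m((v_m\cdot\nabla)v_m,\Delta v_m)_2-2a_m\mathcal T_1(v_m)\le\nu\|\Delta U_m\|_2^2+c_{N,\nu}\|\nabla U_m\|_2^2 .
\]
This leaves $\nu\int\|\Delta U_m\|_2^2$ on the left. The noise corrections satisfy $q_{m,0}+q_{m,1}\le C(1+\|U_m(s-)\|_{H^1}^2)$ by \eqref{eq:4.1-coercivity}. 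The three martingales are handled by \eqref{eq:4.1-martingale-estimates} with $\delta$ small; the quadratic jump martingale needs only the compensator identity. Gronwall's inequality then gives
\[
\sup_{m,R}\E\Big[\sup_{t\le T}\|U_m(t)\|_{H^1}^2+\nu\int_0^T\|U_m(s)\|_{H^2}^2\,\dd s\Big]\le C_T(1+\E\|u_0\|_{H^1}^2),
\]
with constants depending only on $T,\nu,c_{N,\nu},C_0,C_1$. The required integrability at fixed $m,R$ comes from Lemma~\ref{lem:4.1-regularization}(i).

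\emph{Step 2: transfer to the maximal solution.} I localize at $\rho=T\wedge\tau_n\wedge\lambda_L$ and fix $R>(2L)^{1/p}$. Lemma~\ref{lem:4.1-regularization}(iv) gives almost sure convergence in the native norm along a deterministic subsequence. Fatou's lemma together with a pathwise further subsequence realizing the $\liminf$ gives boundedness in $L^\infty(0,\rho;H^1_\sigma)\cap L^2(0,\rho;H^2_\sigma)$. Distributional identification of the weak limits with $u$, including at each fixed time and at $\rho$, then yields
\[
\E\Big[\sup_{t\le\rho}\|u(t)\|_{H^1}^2+\nu\int_0^\rho\|u\|_{H^2}^2\Big]\le C_T(1+\E\|u_0\|_{H^1}^2),
\]
uniformly in $n$ and $L$. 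Letting $L\to\infty$ (using $\mathcal Q_u(T\wedge\tau_n)<\infty$) and then $n\to\infty$ by monotone convergence gives \eqref{eq:4.3-stopped-regularity} and \eqref{eq:4.3-maximal-energy}.

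\emph{Step 3: strong $H^1$-c\`adl\`ag regularity.} This is the main obstacle. Weak $H^1$ c\`adl\`ag regularity follows as in Step~3 of Proposition~\ref{prop:4.2-L2-persistence}, from the uniform $H^1$ bound and $L^p$ c\`adl\`ag paths. For strong regularity I would apply $\nabla J_\varepsilon$ to the stopped equation and run the Cauchy argument \eqref{eq:4.2-Hilbert-Cauchy} in $H^1$. The drift is now paired against $D_{\varepsilon,\delta}^2\Delta u$ in $L^2$. It therefore requires
\[
\mathcal A_1,\ \mathcal A_2\in L^2(\Omega\times(0,\rho);L^2),
\]
which holds since $\|(u\cdot\nabla)u\|_2\le\|u\|_\infty\|\nabla u\|_2\le C\|u\|_{H^1}^{3/2}\|u\|_{H^2}^{1/2}$ and $\|g_N(|u|^2)u\|_2\le C\|u\|_6^3\le C\|u\|_{H^1}^3$. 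These are not square integrable in expectation from the bounds above, so I would additionally localize by a stopping time bounding $\sup\|u\|_{H^1}$, which is possible since the supremum is finite almost surely, and then remove it pathwise. The noise terms are controlled by \eqref{eq:2-H1}, with dominated convergence in the square-integrable coefficient spaces. The mixed quadratic jump term is handled by positivity and the compensator, exactly as before.

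\emph{Step 4: finiteness of $\Gamma$.} Adaptedness and local absolute continuity of $\Gamma$ are immediate from the progressively measurable $H^1$ representative. For \eqref{eq:4.3-regularity}, on the full-measure event where
\[
\sup_{t<T\wedge\tau_{\max}}\|u\|_{H^1}<\infty\quad\text{and}\quad\int_0^{T\wedge\tau_{\max}}\|u\|_{H^2}^2<\infty,
\]
which holds by \eqref{eq:4.3-maximal-energy}, I would bound $\int\|u\|_{H^1}^{2p}\le T\sup\|u\|_{H^1}^{2p}$. The remaining term is estimated using $\|u\|_\infty\le C\|u\|_{H^2}$ and Cauchy--Schwarz in time, giving $\int\|u\|_\infty\le CT^{1/2}\bigl(\int\|u\|_{H^2}^2\bigr)^{1/2}$.
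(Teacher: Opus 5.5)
Your proposal is correct and follows essentially the paper's proof: the same uniform regularized $H^1$ estimate via the coercivity bound \eqref{eq:4.1-coercivity} and Gronwall, transfer at $\rho=T\wedge\tau_n\wedge\lambda_L$ through Lemma~\ref{lem:4.1-regularization}(iv) and weak lower semicontinuity, a mollified Cauchy argument in $H^1$ for strong c\`adl\`ag regularity, and a Sobolev bound for $\Gamma$. The differences are minor. The paper localizes the drift by the continuous adapted functional $\Lambda(t)=\int_0^{t\wedge\rho}\|A(s)\|_2^2\,\dd s$, whereas your $H^1$-norm hitting time on a merely weakly c\`adl\`ag path needs the d\'ebut theorem. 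The paper also bounds $\int\|u\|_\infty$ via Gagliardo--Nirenberg, where your cruder $H^2\hookrightarrow L^\infty$ estimate is equally valid.
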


\begin{proof}
\emph{Step 1:} Let $U_m=u_m^R$, $v_m=S_mU_m$, and retain the notation $a_m,B_m,K_m,q_{m,1},\mathcal T_1$ of Lemma \ref{lem:4.1-regularization}.
Self-adjointness of $S_m$, commutation with derivatives, and its $L^2$ contractivity give
\begin{equation}
\label{eq:4.3-tamed-coercivity}
\begin{aligned}
2a_m((v_m\cdot\nabla)v_m,\Delta v_m)_2 -2a_m\mathcal T_1(v_m) &\le \nu a_m\|\Delta v_m\|_2^2 +a_m\int_{\R^3} \left(\frac{|v_m|^2}{\nu}-2g_N(|v_m|^2)\right) |\nabla v_m|^2\,\dd x \\
&\quad\le \nu\|\Delta U_m\|_2^2 +c_{N,\nu}\|\nabla U_m\|_2^2.
\end{aligned}
\end{equation}
Here the term containing $g_N'$ was discarded using its nonnegativity, and $0\le a_m\le1$.
No idempotence of $S_m$ is used.
The Wiener BDG inequality, the Davis inequality, and the compensator identity yield, for $t\le T$ and $\delta>0$,
\begin{equation}
\label{eq:4.3-gradient-martingales}
\begin{aligned}
\E\sup_{r\le t}|M^W_{m,1}(r)| +\E\sup_{r\le t}|M^{J,1}_{m,1}(r)| &\le \delta\E\sup_{r\le t}\|\nabla U_m(r)\|_2^2 +C_\delta\E\int_0^t q_{m,1}(s)\,\dd s, \\
\E\sup_{r\le t}|M^{J,2}_{m,1}(r)| &\le 2\E\int_0^t\int_Z \|\nabla K_m(s,z)\|_2^2\,\mu(\dd z)\,\dd s, \\
q_{m,1}(s)&\le C_1(1+\|U_m(s-)\|_{H^1}^2).
\end{aligned}
\end{equation}
These are the estimates of Lemma \ref{lem:4.1-regularization}(iii).
The quadratic jump term is treated as the difference of its nonnegative uncompensated integral and compensator, so only second moments of the $H^1$-valued jump coefficient are used.
Apply \eqref{eq:4.1-H1-identity}, use \eqref{eq:4.3-tamed-coercivity}--\eqref{eq:4.3-gradient-martingales}, and absorb the supremum term with a fixed sufficiently small $\delta$.
With constants independent of $m,R$,
\begin{equation*}
\begin{aligned}
\E\left[ \sup_{r\le t}\|\nabla U_m(r)\|_2^2 +\nu\int_0^t\|\Delta U_m(s)\|_2^2\,\dd s\right] \le C\E\|\nabla u_0\|_2^2 +C\int_0^t\left( 1+\E\|U_m(s)\|_2^2 +\E\sup_{r\le s}\|\nabla U_m(r)\|_2^2 \right)\dd s.
\end{aligned}
\end{equation*}
The uniform $L^2$ estimate \eqref{eq:4.2-regularized-energy} and Gronwall's inequality therefore imply
\begin{equation}
\label{eq:4.3-regularized-energy}
\sup_{m\ge1,\,R\ge1}\E\left[ \sup_{t\le T}\|U_m(t)\|_{H^1}^2 +\nu\int_0^T\|U_m(s)\|_{H^2}^2\,\dd s\right] \le C_T\left(1+\E\|u_0\|_{H^1}^2\right).
\end{equation}
Here we used $\|v\|_{H^2}^2\le C(\|v\|_2^2+\|\Delta v\|_2^2)$ on $\R^3$.

\emph{Step 2:} Fix $n,L\ge1$ and $T<\infty$, and set $\rho:=T\wedge\tau_n\wedge\lambda_L.$ Then $\rho<\tau_{\max}$ and $\mathcal Q_u(\rho-)\le L$.
Fix $R>(2L)^{1/p}$.
By Lemma \ref{lem:4.1-regularization}(iv), a deterministic subsequence of $U_m$ converges almost surely to $u$ uniformly on $[0,\rho]$ in $L^p$.

For each path on which the lower limit of the energies in \eqref{eq:4.3-regularized-energy} is finite, take a further subsequence realizing that lower limit.
It is bounded in $L^\infty(0,\rho;H^1_\sigma)\cap L^2(0,\rho;H^2_\sigma)$.
Weak-star compactness in the first space and weak compactness in the second identify both limits with $u$, by convergence in distributions.
At every $t\in[0,\rho]$, the same uniform $H^1$ bound and the $L^p$ convergence identify every weak $H^1$ subsequential limit of $U_m(t)$ with $u(t)$.
Weak lower semicontinuity therefore controls the full supremum, including the terminal value.
Fatou's lemma gives
\begin{equation}
\label{eq:4.3-localized-energy}
\E\left[ \sup_{t\le\rho}\|u(t)\|_{H^1}^2 +\nu\int_0^\rho\|u(s)\|_{H^2}^2\,\dd s\right] \le C_T\left(1+\E\|u_0\|_{H^1}^2\right).
\end{equation}
On $\{\rho=0\}$ this follows directly from the initial datum.
In particular, every stopped state belongs to $H^1_\sigma$ on a common event of probability one.

\emph{Step 3:} By Proposition \ref{prop:4.2-L2-persistence}, $u(\cdot\wedge\rho)$ is strongly $L^2_\sigma$-c\`adl\`ag.
Together with its pathwise $H^1$ bound, this implies weak $H^1_\sigma$ right-continuity and weak $H^1_\sigma$ left limits.
Indeed, for smooth $\psi$, $(u,\psi)_{H^1}=(u,(I-\Delta)\psi)_2$, and density extends this conclusion to every $H^1$ test vector.
The weak $H^1$ left limits coincide with the $L^2$ left limits.
Separability consequently gives an $H^1$-valued progressively measurable version and predictable $H^1$-valued left limits.
For almost every $s\in(0,\rho)$, define the full drift
\begin{equation*}
A(s):=\nu\Delta u(s) -\mathcal P((u(s)\cdot\nabla)u(s)) -\mathcal P(g_N(|u(s)|^2)u(s)),
\end{equation*}
and extend it by zero outside this interval and at states outside $H^2$.
This gives a progressively measurable $L^2_\sigma$-valued process.
$H^2$ is a Borel subset of $H^1$, and the drift map is continuous from $H^2$ to $L^2$.
The Sobolev and Gagliardo--Nirenberg inequalities on $\R^3$ give, for $v\in H^2$,
\begin{equation}
\label{eq:4.3-spatial-interpolation}
\|v\|_6\le C\|\nabla v\|_2, \qquad \|v\|_\infty \le C\|\nabla v\|_2^{1/2}\|\Delta v\|_2^{1/2}.
\end{equation}
The $L^2$ boundedness of $\mathcal P$, cubic taming growth, and Young's inequality imply
\begin{equation}
\label{eq:4.3-drift-integrability}
\begin{aligned}
\|A(s)\|_2^2 &\le C\left( \|\Delta u(s)\|_2^2 +\|u(s)\|_\infty^2\|\nabla u(s)\|_2^2 +\|u(s)\|_6^6\right) \\
&\le C\left(\|\Delta u(s)\|_2^2 +\|u(s)\|_{H^1}^6\right).
\end{aligned}
\end{equation}
Thus $A\in L^2(0,\rho;L^2_\sigma)$ almost surely.
No expectation of the sixth power is required.
Set
\begin{equation*}
\Lambda(t):=\int_0^{t\wedge\rho}\|A(s)\|_2^2\,\dd s, \qquad \theta_K:=\inf\{t\ge0:\Lambda(t)\ge K\}\wedge\rho, \qquad K\ge1.
\end{equation*}
Since $\Lambda$ is continuous and adapted, $\theta_K$ is a stopping time.
Moreover,
\begin{equation}
\label{eq:4.3-stopped-drift-bound}
\E\int_0^{\theta_K}\|A(s)\|_2^2\,\dd s\le K, \qquad \theta_K=\rho\ \text{for all sufficiently large }K \quad\P\text{-a.s.}
\end{equation}

Write $u^K(t)=u(t\wedge\theta_K)$ and define $b_K,c_K$ on $(0,\theta_K]$ by $b_K(s)=\Sigma(s,u(s-))$ and $c_K(s,z)=\mathcal G(s,u(s-),z)$, with zero extension elsewhere.
The interval indicator is predictable.
The coefficient measurability assumptions, the predictable left limits, and \eqref{eq:2-raw-L2-growth}--\eqref{eq:2-H1} give
\begin{equation}
\label{eq:4.3-H1-noise}
\E\int_0^T\left[ \|b_K(s)\|_{\gamma(\mathcal U;H^1)}^2 +\int_Z\|c_K(s,z)\|_{H^1}^2\,\mu(\dd z) \right]\dd s<\infty.
\end{equation}
Here the Hilbert-space radonifying norm is the Hilbert--Schmidt norm.
In particular, the displayed $H^1$ norm of each coefficient includes its $L^2$ component.

Let $J_\varepsilon=e^{\varepsilon\Delta}$ and $u^{K,\varepsilon}=J_\varepsilon u^K$.
The stopped distributional equation yields the $H^1_\sigma$-valued semimartingale identity
\begin{equation*}
u^{K,\varepsilon}(t) = J_\varepsilon u_0 +\int_0^{t\wedge\theta_K}J_\varepsilon A(s)\ \dd s +\int_0^tJ_\varepsilon b_K(s)\ \dd W_s +\int_0^t\int_ZJ_\varepsilon c_K(s,z)\ \widetilde N(\dd s,\dd z).
\end{equation*}
$J_\varepsilon:L^2\to H^1$ is bounded, the drift satisfies \eqref{eq:4.3-stopped-drift-bound}, and the noise satisfies \eqref{eq:4.3-H1-noise}.

Put $D_{\varepsilon,\eta}=J_\varepsilon-J_\eta$ and $w=D_{\varepsilon,\eta}u^K$.
The Hilbert-space It\^o formula for $\|w\|_{H^1}^2$, followed by the same Wiener and jump estimates as in \eqref{eq:4.3-gradient-martingales}, gives
\begin{equation}
\label{eq:4.3-H1-Cauchy}
\begin{aligned}
\E\sup_{t\le T}\|w(t)\|_{H^1}^2 \le&C\E\|D_{\varepsilon,\eta}u_0\|_{H^1}^2 +C\E\int_0^{\theta_K} \left|(A(s),(I-\Delta)D_{\varepsilon,\eta}^2u(s))_2\right| \ \dd s \\
&+C\E\int_0^T \|D_{\varepsilon,\eta}b_K(s)\|_{\gamma(\mathcal U;H^1)}^2 \ \dd s +C\E\int_0^T\int_Z \|D_{\varepsilon,\eta}c_K(s,z)\|_{H^1}^2 \ \mu(\dd z)\ \dd s.
\end{aligned}
\end{equation}
Self-adjointness and commutation with derivatives identify the drift pairing as $(D_{\varepsilon,\eta}u,D_{\varepsilon,\eta}A)_{H^1} =(A,(I-\Delta)D_{\varepsilon,\eta}^2u)_2$.
The linear martingale terms are absorbed by Young's inequality.
The quadratic jump term is controlled by its $L^1$ compensator.

Every term on the right of \eqref{eq:4.3-H1-Cauchy} tends to zero as $\varepsilon,\eta\downarrow0$.
For the drift term, Cauchy--Schwarz gives the bound
\begin{equation*}
K^{1/2}\left( \E\int_0^{\theta_K} \|(I-\Delta)D_{\varepsilon,\eta}^2u(s)\|_2^2\,\dd s \right)^{1/2}\longrightarrow0
\end{equation*}
by \eqref{eq:4.3-localized-energy} and strong convergence of the uniformly bounded operators $J_\varepsilon$ on $H^2$.
The initial and noise terms converge by strong convergence on $H^1$, the radonifying ideal property, and dominated convergence using \eqref{eq:4.3-H1-noise}.

Hence $(u^{K,\varepsilon})$ is Cauchy in the expected square of the uniform $H^1$ norm.
A deterministic subsequence converges almost surely uniformly in $H^1$ to an adapted strongly c\`adl\`ag process.
This convergence is stronger than convergence in probability in the Skorokhod $J_1$ topology.
Since $J_\varepsilon u^K(t)\to u^K(t)$ in $H^1$ for every $t$ on the common event by Step 2, the limit is a version of $u^K$.
Taking integer $K\to\infty$ in \eqref{eq:4.3-stopped-drift-bound} proves strong $H^1$-c\`adl\`ag regularity of $u(\cdot\wedge\rho)$.
All stochastic integrations use $(0,\theta_K]$, so the terminal jump is retained throughout.

\emph{Step 4:} For fixed $n,T$, the quantity $\mathcal Q_u(T\wedge\tau_n)$ is finite almost surely.
Consequently, as integer $L\to\infty$, $\rho_{n,L,T}=T\wedge\tau_n$ eventually on each such path.
Thus the regularity and estimate obtained above hold on the closed interval $[0,T\wedge\tau_n]$, with the same constant.
This eventual equality also retains its terminal value if that value is attained by a jump.
The strict announcing sequence then exhausts $\{t:0\le t\le T,\ t<\tau_{\max}\}$.
Monotone convergence proves \eqref{eq:4.3-maximal-energy}, and a countable exhaustion in $n$ and integer $T$ yields the claimed adapted $H^1_\sigma$-c\`adl\`ag representative and \eqref{eq:4.3-stopped-regularity}.
Set $\zeta=T\wedge\tau_{\max}$ and
\begin{equation*}
M_T:=\sup_{\substack{0\le t\le T \\
t<\tau_{\max}}} \|u(t)\|_{H^1}^2, \qquad D_T:=\int_0^\zeta\|u(s)\|_{H^2}^2\,\dd s.
\end{equation*}
Both variables are finite almost surely.
By \eqref{eq:4.3-spatial-interpolation} and H\"older's inequality,
\begin{equation*}
\begin{aligned}
\int_0^\zeta\|u(s)\|_\infty\,\dd s &\le C T^{3/4}M_T^{1/4}D_T^{1/4}, & \Gamma(\zeta-) &\le T(1+M_T^p) +C T^{3/4}M_T^{1/4}D_T^{1/4}<\infty \quad\P\text{-a.s.}
\end{aligned}
\end{equation*}
The integrand in \eqref{eq:4.3-accumalted-regularity} has a progressively measurable representative and is locally integrable, which proves the remaining assertions about $\Gamma$.
\end{proof}
To treat finite-energy data without an initial $H^1$ assumption, we multiply the differentiated energy estimate by time.
The vanishing weight removes the initial gradient term and yields $H^1$-c\`adl\`ag regularity and $H^2$ dissipation on compact subintervals of $(0,\tau_{\max})$.
\begin{proposition}
[Time-weighted $H^1$ regularization] \label{prop:4.4-time-weighted-H1} Assume the hypotheses of Lemma \ref{lem:4.1-regularization}, including \eqref{eq:2-raw-L2-growth}, \eqref{eq:2-H1}, and \eqref{eq:2-taming-local}, \eqref{eq:2-taming-coercive}.
In particular, let
\begin{equation}
\label{eq:4.4-initial-data}
u_0\in L^p(\Omega,\mathcal F_0;L^p_\sigma(\R^3)) \cap L^2(\Omega,\mathcal F_0;L^2_\sigma(\R^3)), \qquad p>3,
\end{equation}
and let $(u,(\tau_n),\tau_{\max})$ be the maximal solution of \cref{Sec:3}.
Then $u$ has an adapted representative whose paths are strongly $H^1_\sigma$-c\`adl\`ag on every compact subinterval of $(0,\tau_{\max})$.
For every deterministic $T>0$,
\begin{equation}
\label{eq:4.4-time-weighted-energy}
\E\left[ \sup_{\substack{0<t\le T \\
t<\tau_{\max}}} t\|u(t)\|_{H^1}^2 +\nu\int_0^{T\wedge\tau_{\max}} s\|u(s)\|_{H^2}^2\,\dd s \right] \le C_T\left(1+\E\|u_0\|_2^2\right).
\end{equation}
The constant $C_T$ depends only on $T,\nu,c_{N,\nu},C_0,C_1$ and is independent of the announcing sequence, the intrinsic exit level, and all scalar and spatial regularization parameters.
An integral ending at a finite $\tau_{\max}$ is understood as its increasing limit from below.

Consequently, for $0<\varepsilon<T$,
\begin{equation}
\label{eq:4.4-positive-time-energy}
\E\left[ \sup_{\substack{\varepsilon\le t\le T \\
t<\tau_{\max}}} \|u(t)\|_{H^1}^2 +\nu\int_{(\varepsilon,T)\cap(0,\tau_{\max})} \|u(s)\|_{H^2}^2\,\dd s \right] \le \frac{C_T}{\varepsilon} \left(1+\E\|u_0\|_2^2\right),
\end{equation}
where the supremum over an empty set is zero.
In particular, on $\{T<\tau_{\max}\}$,
\begin{equation}
\label{eq:4.4-positive-time-paths}
u\in\mathbb D([\varepsilon,T]; L^p_\sigma\cap H^1_\sigma) \cap L^2(\varepsilon,T;H^2_\sigma) \quad\P\text{-a.s.},
\end{equation}
with the sum norm on $L^p_\sigma\cap H^1_\sigma$.
\end{proposition}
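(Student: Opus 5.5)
The argument follows the three steps used for Theorem~\ref{thm:4.3-H1-persistence}, with the differentiated identity \eqref{eq:4.1-H1-identity} multiplied by the weight $t$. We first work at the level of the regularized solutions $U_m=u_m^R$ of \eqref{eq:4.1-regularized-equation}. Lemma~\ref{lem:4.1-regularization}(i) gives $U_m\in L^2(\Omega;\mathbb D([0,T];H^1_\sigma))\cap L^2(\Omega;L^2(0,T;H^2_\sigma))$. Itô's product rule for $t\|\nabla U_m(t)\|_2^2$, with $t$ deterministic, continuous and of finite variation, introduces no extra bracket. It yields
\begin{equation*}
t\|\nabla U_m(t)\|_2^2+2\nu\int_0^t s\|\Delta U_m\|_2^2\,\dd s+2\int_0^t s\,a_m\mathcal T_1(v_m)\,\dd s
=\int_0^t\|\nabla U_m\|_2^2\,\dd s+2\int_0^t s\,a_m((v_m\cdot\nabla)v_m,\Delta v_m)_2\,\dd s+\int_0^t s\,q_{m,1}\,\dd s+\int_0^t s\,\dd\mathcal M_{m,1}(s).
\end{equation*}
The initial gradient term disappears because the weight vanishes at $s=0$. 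We then control the right-hand side:
\begin{itemize}
\item The coercivity estimate \eqref{eq:4.1-coercivity} absorbs half of the $H^2$ dissipation, at the cost of $c_{N,\nu}\int_0^t s\|\nabla U_m\|_2^2\,\dd s$.
\item The bound $q_{m,1}\le C_1(1+\|U_m(s-)\|_{H^1}^2)$ produces $\int_0^t s(1+\|U_m\|_{H^1}^2)\,\dd s\le T\int_0^t(1+\|U_m\|_{H^1}^2)\,\dd s$.
\item The weighted martingales are estimated exactly as in \eqref{eq:4.1-martingale-estimates}, with $s^{1/2}$ attached to each factor. The Wiener and linear jump terms are bounded by $\delta\,\E\sup_{r\le t} r\|\nabla U_m(r)\|_2^2+C_\delta\,\E\int_0^t s\,q_{m,1}\,\dd s$. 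The quadratic jump term is bounded through its compensator by $2T\,\E\int_0^t q_{m,1}\,\dd s$.
\end{itemize}
All unweighted integrals that remain involve only $\int_0^T\|U_m\|_{H^1}^2\,\dd s$. This quantity is bounded uniformly in $m,R$ by the $L^2$ estimate \eqref{eq:4.2-regularized-energy}, since its dissipation term controls $\int_0^T\|\nabla U_m\|_2^2$ and the supremum term controls $\int_0^T\|U_m\|_2^2$. Absorbing the $\delta$-term and applying Gronwall to $\E\sup_{r\le t} r\|\nabla U_m(r)\|_2^2$ gives
\begin{equation*}
\sup_{m,R}\E\left[\sup_{t\le T}t\|U_m(t)\|_{H^1}^2+\nu\int_0^T s\|U_m(s)\|_{H^2}^2\,\dd s\right]\le C_T\bigl(1+\E\|u_0\|_2^2\bigr).
\end{equation*}
The $L^2$ parts of the $H^1$ and $H^2$ norms are handled by \eqref{eq:4.2-regularized-energy} and $T$.

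Next we pass to the maximal solution. We fix $\rho=T\wedge\tau_n\wedge\lambda_L$ and $R>(2L)^{1/p}$, and use Lemma~\ref{lem:4.1-regularization}(iv) to extract a subsequence with $U_m\to u$ uniformly in $L^p$ on $[0,\rho]$, almost surely. Pathwise, along a subsequence realizing the lower limit of the weighted energies, the sequence is bounded in $L^2(\varepsilon,\rho;H^2)$ and, for each fixed $t>0$, $U_m(t)$ is bounded in $H^1$. Distributional identification and weak lower semicontinuity, as in Step~2 of Theorem~\ref{thm:4.3-H1-persistence}, then give the weighted bound for $u$ on $[0,\rho]$, including the terminal value. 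Fatou's lemma, followed by monotone convergence in $L$, $n$ and in $\tau_n\uparrow\tau_{\max}$ using Lemma~\ref{lem:3.1-strict-localization}, yields \eqref{eq:4.4-time-weighted-energy}. The bound \eqref{eq:4.4-positive-time-energy} is immediate from $t\ge\varepsilon$ on $[\varepsilon,T]$.

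For the strong $H^1$-càdlàg regularity on compacts of $(0,\tau_{\max})$ we use restart rather than a new mollification argument. We fix a rational $\varepsilon>0$. Since $\E\,\mathbf 1_{\{\varepsilon<\tau_{\max}\}}\varepsilon\|u(\varepsilon)\|_{H^1}^2<\infty$, the Step~3 argument of Theorem~\ref{thm:4.3-H1-persistence}, namely the $J_\varepsilon$-Cauchy estimate \eqref{eq:4.3-H1-Cauchy}, applies on the stochastic interval $[\varepsilon,\rho]$. The starting value $u(\varepsilon)$ lies in $H^1$ there, and the drift is localized through $\Lambda$. We localize on $\{\varepsilon<\rho\}\in\mathcal F_\varepsilon$ and on $\{\|u(\varepsilon)\|_{H^1}\le k\}$ so that the required moments exist. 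This gives strongly $H^1$-càdlàg paths on $[\varepsilon,\rho]$. Letting $\varepsilon\downarrow0$ through rationals and $k,L,n\to\infty$ produces a compatible representative on every compact subinterval. Together with the $L^p$-càdlàg property, this gives \eqref{eq:4.4-positive-time-paths}.

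The main obstacle is the identification in the second step near $t=0$: the weighted bound gives no $H^1$ control at the origin, so weak compactness must be applied on $[\varepsilon,\rho]$ for each fixed $\varepsilon$, and the supremum over $t\in(0,\rho]$ must be recovered by an increasing union. A further check is that weighting the compensated quadratic jump integral by $s$ preserves the nonnegativity needed for the compensator bound. It does, since $s\,\|\nabla K_m\|_2^2\ge0$.
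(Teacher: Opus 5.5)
Your proposal is correct and follows the paper's proof. It uses the time-weighted It\^o identity for $U_m$ with BDG, Davis and compensator bounds, closed by the uniform $L^2$ estimate \eqref{eq:4.2-regularized-energy}. It then passes to $u$ by pathwise weak lower semicontinuity along the subsequence of Lemma~\ref{lem:4.1-regularization}(iv), with monotone convergence in $L$ and $n$.

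The only variation is in the path-regularity step. The paper multiplies by a smooth cutoff $\chi$ vanishing near $t=0$, so that $V=\chi(t\wedge\rho)u(t\wedge\rho)$ starts from zero at the cost of the extra drift term $\chi' u$. You instead run the same mollified $H^1$ Cauchy estimate from the deterministic time $\varepsilon$, with the $\mathcal F_\varepsilon$-localized datum $u(\varepsilon)\in H^1$. Both versions are justified by the same weighted bounds.
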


\begin{proof}
\emph{Step 1: Uniform time-weighted estimates.} Let $U_m=u_m^R$ solve \eqref{eq:4.1-regularized-equation}, and use the notation of Lemma \ref{lem:4.1-regularization}.
Put
\begin{equation*}
X_m(t):=\|\nabla U_m(t)\|_2^2, \qquad Y_m(t):=\|\Delta U_m(t)\|_2^2.
\end{equation*}
At fixed $m,R$, the mollified It\^o argument in that lemma justifies \eqref{eq:4.1-H1-identity} with finite expected suprema.
Integration by parts against the deterministic function $t$, followed by \eqref{eq:4.1-coercivity}, gives
\begin{equation}
\label{eq:4.4-weighted-gradient}
tX_m(t)+\nu\int_0^t sY_m(s)\,\dd s \le\int_0^t X_m(s)\,\dd s +c_{N,\nu}\int_0^t sX_m(s)\,\dd s +\int_0^t s q_{m,1}(s)\,\dd s +\mathcal M_m(t),
\end{equation}
where $\mathcal M_m=\mathcal M_m^W+ \mathcal M_m^{J,1}+\mathcal M_m^{J,2}$ and
\begin{equation*}
\begin{aligned}
\mathcal M_m^W(t) &:=2\int_0^t s \left\langle(\nabla B_m(s))^*\nabla U_m(s-), \dd W_s\right\rangle_{\mathcal U}, \\
\mathcal M_m^{J,1}(t) &:=2\int_0^t\int_Z s (\nabla U_m(s-),\nabla K_m(s,z))_2 \,\widetilde N(\dd s,\dd z), \\
\mathcal M_m^{J,2}(t) &:=\int_0^t\int_Z s\|\nabla K_m(s,z)\|_2^2 \,\widetilde N(\dd s,\dd z).
\end{aligned}
\end{equation*}
The initial gradient term in \eqref{eq:4.4-weighted-gradient} is zero.

Set $Z_m(t):=\sup_{0\le r\le t}rX_m(r)$.
The Wiener BDG inequality and the Davis inequality, with Young's inequality and the compensator identity, yield for every $\delta>0$,
\begin{equation}
\label{eq:4.4-weighted-martingale}
\begin{aligned}
\E\sup_{r\le t}|\mathcal M_m^W(r)| +\E\sup_{r\le t}|\mathcal M_m^{J,1}(r)| &\le \delta\E Z_m(t) +C_\delta\E\int_0^t s q_{m,1}(s)\,\dd s, \\
\E\sup_{r\le t}|\mathcal M_m^{J,2}(r)| &\le 2\E\int_0^t\int_Z s\|\nabla K_m(s,z)\|_2^2\,\mu(\dd z)\,\dd s.
\end{aligned}
\end{equation}

Indeed, since $U_m$ is $H^1_\sigma$-c\`adl\`ag, for $0<s\le t$ one has $$sX_m(s-) = \lim_{r\uparrow s}rX_m(r) \le Z_m(t).$$ Consequently, the quadratic variation of the Wiener martingale satisfies $$ [\mathcal M_m^W]_t = 4\int_0^t s^2 \|(\nabla B_m(s))^*\nabla U_m(s-)\|_{\mathcal U}^2 \,\dd s \le 4Z_m(t)\int_0^t s\|\nabla B_m(s)\|_{\gamma(\mathcal U;L^2)}^2\,\dd s .
$$ Hence the BDG inequality and Young's inequality give $$\E\sup_{r\le t}|\mathcal M_m^W(r)| \le \delta\E Z_m(t) + C_\delta\E\int_0^t s\|\nabla B_m(s)\|_{\gamma(\mathcal U;L^2)}^2\,\dd s$$ Similarly, the optional quadratic variation of the linear jump martingale is $$ \ [\mathcal M_m^{J,1}]_t = 4\int_0^t\int_Z s^2 |(\nabla U_m(s-),\nabla K_m(s,z))_2|^2 \,N(\dd s,\dd z) \le 4Z_m(t)\int_0^t\int_Z s\|\nabla K_m(s,z)\|_2^2 \,N(\dd s,\dd z).
$$ Davis' inequality followed by Young's inequality and the compensator identity therefore yields $$ \E\sup_{r\le t}|\mathcal M_m^{J,1}(r)| \le \delta\E Z_m(t) + C_\delta\E\int_0^t\int_Z s\|\nabla K_m(s,z)\|_2^2 \,\mu(\dd z)\,\dd s .
$$ Combining the preceding two estimates with $\delta/2 $ iwth $\delta$ gives the first inequality in \eqref{eq:4.4-weighted-martingale}.
For the quadratic jump term, set $$H_m(s,z):=s\|\nabla K_m(s,z)\|_2^2\ge0.$$ Since $$\mathcal M_m^{J,2}(t) = \int_0^t\int_Z H_m(s,z)\,N(\dd s,\dd z) - \int_0^t\int_ZH_m(s,z)\,\mu(\dd z)\,\dd s,$$ one has pathwise $$\sup_{r\le t}|\mathcal M_m^{J,2}(r)| \le \int_0^t\int_ZH_m\,N + \int_0^t\int_ZH_m\,\mu(\dd z)\,\dd s .$$ Taking expectations and using the compensator identity proves the second inequality in \eqref{eq:4.4-weighted-martingale}.
In particular, only second moments of the $H^1$-valued jump coefficient are required.
These estimates may first be applied after the usual martingale localization.
Their right-hand sides are finite for fixed $m,R$, and Fatou's lemma removes this auxiliary localization.
Finally, since $U_m(s-)=U_m(s)$ for Lebesgue-a.e. $s$, the bound for $q_{m,1}$ in \eqref{eq:4.1-coercivity} gives $$
\begin{aligned}
\E\int_0^T s q_{m,1}(s)\,\dd s &\le C_1\int_0^T s\left( 1+\E\|U_m(s)\|_2^2+\E X_m(s) \right)\dd s \le C_T + C_T\E\sup_{s\le T}\|U_m(s)\|_2^2 + C_T\E\int_0^T X_m(s)\,\dd s \\
&\le C_T\left(1+\E\|u_0\|_2^2\right),
\end{aligned}
$$ where the last step follows from \eqref{eq:4.2-regularized-energy}.
The same estimate also controls $\E\int_0^T(1+c_{N,\nu}s)X_m(s)\,\dd s$.

Taking suprema in \eqref{eq:4.4-weighted-gradient}, using \eqref{eq:4.4-weighted-martingale}, and absorbing the term containing $\delta$ therefore gives
\begin{equation*}
\E\left[ Z_m(T)+\nu\int_0^T sY_m(s)\,\dd s \right] \le C_T\left(1+\E\|u_0\|_2^2\right).
\end{equation*}
The $L^2$ bound and $\|v\|_{H^2}^2\le C(\|v\|_2^2+\|\Delta v\|_2^2)$ imply
\begin{equation}
\label{eq:4.4-approximate}
\sup_{m\ge1,\,R\ge1}\E\left[ \sup_{0<t\le T}t\|U_m(t)\|_{H^1}^2 +\nu\int_0^T s\|U_m(s)\|_{H^2}^2\,\dd s \right] \le C_T\left(1+\E\|u_0\|_2^2\right).
\end{equation}

\emph{Step 2:} Fix $n,L\ge1$ and set $\rho:=T\wedge\tau_n\wedge\lambda_L,$ where $\lambda_L$ is defined by \eqref{eq:3.6-intrinsic-exits}.
Then $\rho<\tau_{\max}$ and $\mathcal Q_u(\rho-)\le L$ almost surely.
For fixed $R>(2L)^{1/p}$, Lemma \ref{lem:4.1-regularization}(iv) supplies a deterministic subsequence such that
\begin{equation}
\label{eq:4.4-native-convergence}
\sup_{t\le\rho}\|U_m(t)-u(t)\|_p\longrightarrow0 \quad\P\text{-a.s.}
\end{equation}
By \eqref{eq:4.4-approximate} and Fatou's lemma, the lower limit of the corresponding stopped weighted energies is finite almost surely.

Fix such a path with $\rho>0$ and take a further subsequence realizing this lower limit.
The functions $s^{1/2}U_m(s)$ are bounded in $L^\infty(0,\rho;H^1_\sigma)\cap L^2(0,\rho;H^2_\sigma)$.
Weak-star compactness in the first space and weak compactness in the second identify both limits with $s^{1/2}u(s)$, by \eqref{eq:4.4-native-convergence} and distributional convergence on positive time intervals.
For every $t\in(0,\rho]$, the same bound makes $U_m(t)$ bounded in $H^1$ and its distributional limit is $u(t)$.
Weak lower semicontinuity at each time and in $L^2(0,\rho;H^2)$ yields
\begin{equation*}
\sup_{0<t\le\rho}t\|u(t)\|_{H^1}^2 +\nu\int_0^\rho s\|u(s)\|_{H^2}^2\,\dd s \le\liminf_m\left[ \sup_{0<t\le\rho}t\|U_m(t)\|_{H^1}^2 +\nu\int_0^\rho s\|U_m(s)\|_{H^2}^2\,\dd s \right].
\end{equation*}
For $\rho=0$ both sides are defined to be zero.
The extended $H^1$ norm is lower semicontinuous on $L^2$.
Together with the stopped $L^2$ c\`adl\`ag representative from Proposition \ref{prop:4.2-L2-persistence}, this also gives measurability of the supremum.
Taking expectations proves
\begin{equation}
\label{eq:4.4-weighted-energy}
\E\left[ \sup_{0<t\le\rho}t\|u(t)\|_{H^1}^2 +\nu\int_0^\rho s\|u(s)\|_{H^2}^2\,\dd s \right] \le C_T\left(1+\E\|u_0\|_2^2\right).
\end{equation}
In particular, the terminal state at a positive $\rho$ belongs to $H^1$.

\emph{Step 3:} Fix $0<h<T$, choose $\chi\in C^\infty([0,T];[0,1])$ with $\chi=0$ on $[0,h/2]$ and $\chi=1$ on $[h,T]$, and define the adapted process
\begin{equation*}
V(t):=\chi(t\wedge\rho)u(t\wedge\rho), \qquad 0\le t\le T.
\end{equation*}
It is strongly $L^2$ c\`adl\`ag and $V(0)=0$.
Since $\chi(s)^2\le C_hs$, \eqref{eq:4.4-weighted-energy} gives
\begin{equation}
\label{eq:4.4-Hilbert-energy}
\E\sup_{t\le T}\|V(t)\|_{H^1}^2 +\E\int_0^\rho\|V(s)\|_{H^2}^2\,\dd s<\infty.
\end{equation}
Its $L^2$ c\`adl\`ag paths and pathwise $H^1$ bound imply weak $H^1$ right-continuity and weak $H^1$ left limits.
These left limits coincide with its $L^2$ left limits.
The Lusin--Souslin theorem applies to the continuous injection $H^1_\sigma\hookrightarrow L^2_\sigma$ and gives a Borel inverse on its image.
Hence $V$ is $H^1$-progressively measurable and $V_-$ is $H^1$-predictable.
The same assertions hold for $u$ on the positive-time intervals on which it is bounded in $H^1$.

For $v\in H^2_\sigma$, set
\begin{equation*}
A(v):=\nu\Delta v-\mathcal P((v\cdot\nabla)v) -\mathcal P(g_N(|v|^2)v).
\end{equation*}
The Sobolev and Gagliardo--Nirenberg inequalities give
\begin{equation*}
\begin{aligned}
\|v\|_6&\le C\|v\|_{H^1}, & \|v\|_\infty&\le C\|v\|_{H^1}^{1/2}\|v\|_{H^2}^{1/2}, & \|A(v)\|_2^2&\le C\left(\|v\|_{H^2}^2+\|v\|_{H^1}^6\right).
\end{aligned}
\end{equation*}
The last inequality uses the $L^2$ boundedness of $\mathcal P$ and cubic taming growth.
Define, for $h/2<s\le\rho$,
\begin{equation*}
\begin{aligned}
F(s)&:=\chi(s)A(u(s))+\chi'(s)u(s), & B(s)&:=\chi(s)\Sigma(s,u(s-)), & K(s,z)&:=\chi(s)\mathcal G(s,u(s-),z),
\end{aligned}
\end{equation*}
and set these coefficients to zero elsewhere.
The drift is defined up to $\dd\P\otimes\dd s$ null sets, assigning zero when the state is outside $H^2$.
The drift map $A:H^2_\sigma\to L^2_\sigma$ is continuous.
The preceding measurability and bounds show that $F$ is progressively measurable and
\begin{equation}
\label{eq:4.4-coefficient-integrability}
\begin{aligned}
\int_0^T\|F(s)\|_2^2\,\dd s&<\infty \quad\P\text{-a.s.}, \\
\E\int_0^T\left[ \|B(s)\|_{\gamma(\mathcal U;H^1)}^2 +\int_Z\|K(s,z)\|_{H^1}^2\,\mu(\dd z) \right]\dd s&<\infty.
\end{aligned}
\end{equation}
The second assertion follows from \eqref{eq:2-raw-L2-growth}--\eqref{eq:2-H1}.
The noise coefficients are predictable, respectively predictable $\otimes\,\mathcal Z$-measurable, because $\mathds1_{(h/2,\rho]}$ is predictable.
Multiplying the stopped equation by $\chi(t\wedge\rho)$ gives, in distributions,
\begin{equation}
\label{eq:4.4-cutoff-equation}
V(t)=\int_0^t F(s)\,\dd s +\int_0^t B(s)\,\dd W_s +\int_0^t\int_Z K(s,z)\,\widetilde N(\dd s,\dd z).
\end{equation}

Set
\begin{equation*}
\Lambda(t):=\int_0^t\|F(s)\|_2^2\,\dd s, \qquad \theta_j:=\inf\{t\in[0,T]:\Lambda(t)\ge j\}\wedge\rho,
\end{equation*}
with $\inf\varnothing=\infty$.
The process $\Lambda$ is continuous and adapted.
Thus $\theta_j$ is a stopping time, $\Lambda(\theta_j)\le j$, and $\theta_j=\rho$ for all sufficiently large $j$, almost surely.
Write $V^j(t)=V(t\wedge\theta_j)$.
For $J_\alpha=e^{\alpha\Delta}$, $J_\alpha V^j$ satisfies the $H^1_\sigma$-valued semimartingale equation obtained by smoothing \eqref{eq:4.4-cutoff-equation} and restricting all integrals to $(0,\theta_j]$.

Let $D_{\alpha,\beta}=J_\alpha-J_\beta$.
The Hilbert-space It\^o formula, self-adjointness and commutation of the mollifiers with derivatives, and the BDG, Davis, and compensator bounds give
\begin{equation*}
\begin{aligned}
\E\sup_{t\le T} \|D_{\alpha,\beta}V^j(t)\|_{H^1}^2 & \le C\E\int_0^{\theta_j} \left|(F(s),(I-\Delta)D_{\alpha,\beta}^2V(s))_2\right| \,\dd s \\
&\qquad+ C\E\int_0^{\theta_j} \|D_{\alpha,\beta}B(s)\|_{\gamma(\mathcal U;H^1)}^2 \,\dd s + C\E\int_0^{\theta_j}\int_Z \|D_{\alpha,\beta}K(s,z)\|_{H^1}^2 \,\mu(\dd z)\,\dd s.
\end{aligned}
\end{equation*}
Here $D_{\alpha,\beta}^2$ denotes operator composition.
The initial term vanishes since $V^j(0)=0$.
For the quadratic jump martingale, the nonnegative integrand $\|D_{\alpha,\beta}K\|_{H^1}^2$ is again controlled by its uncompensated integral and compensator.

The first term on the right is bounded by
\begin{equation*}
Cj^{1/2}\left( \E\int_0^\rho \|(I-\Delta)D_{\alpha,\beta}^2V(s)\|_2^2\,\dd s \right)^{1/2}\longrightarrow0
\end{equation*}
as $\alpha,\beta\downarrow0$, by \eqref{eq:4.4-Hilbert-energy} and strong convergence of the uniformly bounded mollifiers on $H^2$.
The remaining terms tend to zero by \eqref{eq:4.4-coefficient-integrability}, strong convergence on $H^1$, and the radonifying ideal property.

Thus $(J_\alpha V^j)_{\alpha>0}$ is Cauchy in the expected square of the uniform $H^1$ norm.
A deterministic subsequence converges almost surely uniformly in $H^1$ to an adapted strongly c\`adl\`ag process.
Strong convergence of the mollifiers on $L^2$ is uniform on the relatively compact range of each $L^2$ c\`adl\`ag path, so this limit is $V^j$.
The convergence therefore holds in the expected square of the uniform $H^1$ norm, and in particular in probability for the Skorokhod $J_1$ topology.
Letting integer $j\to\infty$ gives a strongly $H^1$-c\`adl\`ag version of $V$.
Since $V=u$ on $[h,\rho]$ whenever $h\le\rho$, this proves the required positive-time path regularity on the localized interval.
All stopped noise integrals include their upper endpoints, so terminal Poisson jumps are retained.

\emph{Step 4:} For fixed $n,T$, local regularity gives $\mathcal Q_u(T\wedge\tau_n)<\infty$ almost surely.
Consequently, $\rho=T\wedge\tau_n$ for all sufficiently large integer $L$, pathwise.
Monotone convergence in \eqref{eq:4.4-weighted-energy}, first as $L\to\infty$ and then as $n\to\infty$, proves \eqref{eq:4.4-time-weighted-energy}, using $\tau_n<\tau_{\max}$ and $\tau_n\uparrow\tau_{\max}$.
The representatives from Step 3 agree with the original $L^2$ c\`adl\`ag representative and hence agree on overlaps.
Taking a countable intersection over $n,L$, integer $T$, and positive rational $h<T$ gives the asserted strong $H^1$-c\`adl\`ag regularity on $(0,\tau_{\max})$.

Finally, since $t\ge\varepsilon$ on the time sets in \eqref{eq:4.4-positive-time-energy}, division by $\varepsilon$ gives \eqref{eq:4.4-positive-time-energy}.
By (ii) of \cref{thm:2-Paper-I}, the maximal solution is locally $L^p_\sigma$-c\`adl\`ag on $[0,\tau_{\max})$, while Step~3 gives strong $H^1_\sigma$-c\`adl\`ag regularity on compact subintervals of $(0,\tau_{\max})$.
On $\{T<\tau_{\max}\}$, the corresponding left limits on $[\varepsilon,T]$ agree as distributions and hence coincide.
Thus $u$ is c\`adl\`ag in $L^p_\sigma\cap H^1_\sigma$ endowed with the sum norm, and \eqref{eq:4.4-positive-time-paths} follows.
\end{proof}
\section{Pressure control and weighted $L^p$ estimates}
\label{Section 5} Under the $H^1$-persistence hypotheses, we now estimate the intrinsic $L^p$--$L^{3p}$ quantities governing continuation.
The argument combines pressure control with exponentially weighted stochastic estimates to obtain bounds uniform in the intrinsic exit level.

The pressure associated with the projected drift has already been fixed in \eqref{eq:2-pressure-convention}.
Since the native $L^p$ test field $|v|^{p-2}v$ is not divergence free, the pressure term does not disappear from the $L^p$ energy balance.
We therefore first establish an $L^3$ bound for the pressure and then estimate its pairing with the native $L^p$ test field.
For later use, we write the pressure from \eqref{eq:2-pressure-convention} as
\begin{equation}
\label{eq:5.1-pressure-definition}
\begin{aligned}
\pi(v) &= \pi_{\mathrm{conv}}(v) + \pi_{\mathrm{tame}}(v), \\
\pi_{\mathrm{conv}}(v) := \sum_{i,j=1}^3(-\Delta)^{-1} \partial_i\partial_j(v_iv_j), & \qquad \pi_{\mathrm{tame}}(v) := (-\Delta)^{-1}\nabla\cdot\bigl(g_N(|v|^2)v\bigr),
\end{aligned}
\end{equation}
where the operators are understood through their standard singular-integral and Riesz-potential realizations.
For $p>3$, set $$J_p(v) := |v|^{p-2}v,$$ and, whenever $v\in H^2_\sigma\cap L^p$, set
\begin{equation}
\label{eq:5.1-drift-definition}
\begin{aligned}
A(v) := \nu\Delta v - \mathcal P((v\cdot\nabla)v) - \mathcal P(g_N(|v|^2)v), \qquad \mathcal D_p(v) := \int_{\R^3}|v|^{p-2}|\nabla v|^2\,\dd x.
\end{aligned}
\end{equation}
As the pressure contribution remains, in the following lemma we estimate the pressures generated by convection and taming, absorbing part of their contribution into the viscous dissipation and controlling the remainder through the $H^1$ norm.
\begin{lemma}
[Native $L^p$ pressure estimate] \label{lem:5.1-pressure} Let $p>3$ and let $g_N$ satisfy \eqref{eq:2-taming-local}--\eqref{eq:2-taming-coercive}.
If $v\in H^1_\sigma(\R^3)$, then $\pi(v)\in L^3(\R^3)$ and
\begin{equation}
\label{eq:5.1-pressure-L3}
\|\pi(v)\|_3 \le C\|v\|_6^2 + C_{C_\nu}\|v\|_{9/2}^3 \le C_{C_\nu}\bigl(\|v\|_{H^1}^2+\|v\|_{H^1}^3\bigr).
\end{equation}
If, in addition, $v\in L^p$ and $|v|^{p/2}\in H^1$, then $\nabla\cdot J_p(v)\in L^{3/2}$ and the pairing
\begin{equation}
\label{eq:5.1-pressure-pairing}
\langle\nabla\pi(v),J_p(v)\rangle := -\int_{\R^3} \pi(v)\nabla\cdot J_p(v)\,\dd x
\end{equation}
is well defined.
For every $\varepsilon>0$,
\begin{equation}
\label{eq:5.1-pressure-native}
p\bigl|\langle\nabla\pi(v),J_p(v)\rangle\bigr| \le \varepsilon \|\nabla\bigl(|v|^{p/2}\bigr)\|_2^2 + C_{\varepsilon,p} \|\pi(v)\|_3^{2p/3}\|v\|_p^{p/3},
\end{equation}
and consequently
\begin{equation}
\label{eq:5.1-pressure-H1}
p\bigl|\langle\nabla\pi(v),J_p(v)\rangle\bigr| \le \varepsilon \|\nabla\bigl(|v|^{p/2}\bigr)\|_2^2 + C_{\varepsilon,p,C_\nu} \bigl(1+\|v\|_{H^1}^{2p}\bigr) \bigl(1+\|v\|_p^p\bigr).
\end{equation}
If $v\in H^2_\sigma\cap L^p$, then
\begin{equation}
\label{eq:5.1-native-drift}
p(A(v),J_p(v))_2 \le -\frac{p\nu}{2}\mathcal D_p(v) - p\int_{\R^3} g_N(|v|^2)|v|^p\,\dd x + C_{p,\nu,C_\nu} \bigl(1+\|v\|_{H^1}^{2p}\bigr) \bigl(1+\|v\|_p^p\bigr),
\end{equation}
and
\begin{equation}
\label{eq:5.1-weighted-dissipation}
\|v\|_{3p}^p \le C\|\nabla\bigl(|v|^{p/2}\bigr)\|_2^2, \qquad \|\nabla\bigl(|v|^{p/2}\bigr)\|_2^2 \le \frac{p^2}{4}\mathcal D_p(v).
\end{equation}
Under the hypotheses of Theorem~\ref{thm:4.3-H1-persistence}, $\pi(u)$ has adapted $L^3$-c\`adl\`ag paths on $[0,\tau_{\max})$, the preceding estimates hold for Lebesgue-a.e.
$t<\tau_{\max}$ on a common event of probability one, and
\begin{equation}
\label{eq:5.1-pressure-operator}
p\bigl|\langle\nabla\pi(u(t)),J_p(u(t))\rangle\bigr| \le \varepsilon \|\nabla\bigl(|u(t)|^{p/2}\bigr)\|_2^2 + C_{\varepsilon,p,C_\nu} \Gamma'(t) \bigl(1+\|u(t)\|_p^p\bigr)
\end{equation}
for almost every $t<\tau_{\max}$.
\end{lemma}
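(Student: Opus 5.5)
The plan is to treat the two pieces of the pressure separately with Calderón--Zygmund theory, and then pair with $J_p(v)$ by H\"older and Young. For the $L^3$ bound I will use that $(-\Delta)^{-1}\partial_i\partial_j=-R_iR_j$ is bounded on $L^3$. This gives $\|\pi_{\mathrm{conv}}(v)\|_3\le C\|v\otimes v\|_3\le C\|v\|_6^2$. For the taming part, $(-\Delta)^{-1}\nabla\cdot$ is a Riesz potential of order $-1$. By Hardy--Littlewood--Sobolev it maps $L^{9/7}\to L^3$, since $7/9-1/3=4/9$ is not $1/3$. So I correct the exponents: HLS with $1/q-1/3=1/3$ gives $q=3/2$, hence $\|\pi_{\mathrm{tame}}\|_3\le C\|g_N(|v|^2)v\|_{3/2}\le CC_\nu\|v\|_{9/2}^3$. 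The bound \eqref{eq:5.1-pressure-L3} then follows from $H^1\hookrightarrow L^6\cap L^{9/2}$.

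For the pairing I write $\nabla\cdot J_p(v)=|v|^{p-2}\nabla\cdot v+(p-2)|v|^{p-4}v\cdot(\nabla v\,v)$. With $\nabla\cdot v=0$ this gives $|\nabla\cdot J_p(v)|\le C|v|^{p/2-1}|\nabla(|v|^{p/2})|$, up to constants; this must first be verified on smooth truncations. H\"older with exponents $3$, $2$, $6$ gives
\[
|\langle\nabla\pi,J_p\rangle|\le C\|\pi\|_3\,\|\nabla(|v|^{p/2})\|_2\,\||v|^{p/2-1}\|_6 .
\]
Here $\||v|^{p/2-1}\|_6=\|v\|_{3p-6}^{p/2-1}$, which I interpolate between $\|v\|_p$ and $\|v\|_{3p}^p\le C\|\nabla |v|^{p/2}\|_2^2$. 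Young's inequality then absorbs all gradient powers into $\varepsilon\|\nabla|v|^{p/2}\|_2^2$. Scaling invariance fixes the remaining exponents as $\|\pi\|_3^{2p/3}\|v\|_p^{p/3}$, which is \eqref{eq:5.1-pressure-native}. Inserting \eqref{eq:5.1-pressure-L3}, the quantity $\|\pi\|_3^{2p/3}$ is at most $C(1+\|v\|_{H^1}^{2p})$, and $\|v\|_p^{p/3}\le 1+\|v\|_p^p$, which yields \eqref{eq:5.1-pressure-H1}. Finiteness of the pairing, and $\nabla\cdot J_p\in L^{3/2}$, come from the same H\"older estimate.

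For \eqref{eq:5.1-native-drift} I use \eqref{eq:2-pressure-convention} to write $A(v)=\nu\Delta v-(v\cdot\nabla)v-g_N(|v|^2)v-\nabla\pi$, and treat the four terms in turn.
\begin{itemize}
\item Viscous term: integration by parts gives $-\nu\mathcal D_p(v)-\nu(p-2)\int|v|^{p-2}|\nabla|v||^2\le-\nu\mathcal D_p$.
\item Convection: it vanishes since $\int(v\cdot\nabla)v\cdot J_p=\frac1p\int v\cdot\nabla|v|^p=0$.
\item Taming: it is exactly $-\int g_N|v|^p$.
\item Pressure: \eqref{eq:5.1-pressure-H1} with $\varepsilon$ small, together with \eqref{eq:5.1-weighted-dissipation}, puts it inside $\frac{p\nu}{2}\mathcal D_p$.
\end{itemize}
The inequalities in \eqref{eq:5.1-weighted-dissipation} are \eqref{eq:2-native-Sobolev} and the pointwise identity $|\nabla|v|^{p/2}|^2=\frac{p^2}{4}|v|^{p-2}|\nabla|v||^2\le\frac{p^2}{4}|v|^{p-2}|\nabla v|^2$.

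The main obstacle is justifying these integrations by parts for $v\in H^2\cap L^p$ without decay of $|v|^{p-2}$ at infinity. I would first prove all the identities for $v\in C^\infty_{c,\sigma}$, or with $|v|^{p-2}$ replaced by a bounded truncation $\min(|v|,k)^{p-2}$ times a spatial cutoff. I then pass to the limit by monotone convergence in the dissipation and by dominated convergence elsewhere.

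For the last claim, Theorem~\ref{thm:4.3-H1-persistence} gives $u$ strongly $H^1$-c\`adl\`ag. Since $v\mapsto\pi(v)$ is continuous $H^1\to L^3$ (it is a bounded quadratic plus cubic map), $\pi(u)$ is adapted and $L^3$-c\`adl\`ag. For a.e.\ $t$ we have $u(t)\in H^2$ and $|u|^{p/2}\in H^1$, by \eqref{eq:4.3-stopped-regularity} and the local energy class. Then \eqref{eq:5.1-pressure-operator} is \eqref{eq:5.1-pressure-H1} together with $1+\|u\|_{H^1}^{2p}\le\Gamma'(t)$.
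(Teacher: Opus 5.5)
Your proposal is correct and follows the paper's proof essentially step for step. It uses Calder\'on--Zygmund bounds for $\pi_{\mathrm{conv}}$ and Hardy--Littlewood--Sobolev ($L^{3/2}\to L^3$) for $\pi_{\mathrm{tame}}$, the truncation-justified divergence formula for $J_p(v)$, H\"older with $\|v\|_{3(p-2)}$ interpolated between $L^p$ and $L^{3p}$ followed by Young with exponents $2p/(2p-3)$ and $2p/3$, then convective cancellation, the viscous identity and absorption under spatial cutoffs, and finally continuity of $\pi:H^1_\sigma\to L^3$. The only blemishes are cosmetic: the aside about $L^{9/7}$ should simply be deleted, and with the paper's convention $(-\Delta)^{-1}\partial_i\partial_j=R_iR_j$ (the sign is irrelevant). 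Also, continuity of the taming part of $\pi$ comes from the local Lipschitz bound \eqref{eq:2-taming-local} together with $H^1\hookrightarrow L^{9/2}$, not from $g_N$ being a cubic polynomial.
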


\begin{proof}
\emph{Step 1} Let $\mathcal R_j=\partial_j(-\Delta)^{-1/2}$ and $I_1=(-\Delta)^{-1/2}$.
Since $v\in H^1$, Sobolev embedding gives $v\in L^6\cap L^{9/2}$.
The operators $\mathcal R_i\mathcal R_j$ are bounded on $L^3$, whereas the Hardy--Littlewood--Sobolev inequality gives $I_1:L^{3/2}(\R^3)\to L^3(\R^3)$.
Thus \eqref{eq:5.1-pressure-definition}, equivalently
\begin{equation*}
\pi_{\mathrm{conv}} =\sum_{i,j=1}^3\mathcal R_i\mathcal R_j(v_i v_j), \qquad \pi_{\mathrm{tame}} =I_1\sum_{j=1}^3\mathcal R_j(g_N(|v|^2)v_j),
\end{equation*}
defines $L^3$ functions satisfying
\begin{equation*}
\begin{aligned}
\|\pi_{\mathrm{conv}}\|_3&\le C\|v\|_6^2, \\
\|\pi_{\mathrm{tame}}\|_3 &\le C\|g_N(|v|^2)v\|_{3/2} \le C C_\nu\|v\|_{9/2}^3.
\end{aligned}
\end{equation*}
Interpolation gives $\|v\|_{9/2}\le\|v\|_2^{1/6}\|v\|_6^{5/6}$, which proves \eqref{eq:5.1-pressure-L3}.
The multiplier identities show that this decomposition realizes the pressure convention \eqref{eq:2-pressure-convention}; uniqueness in $L^3(\R^3)$ follows since an $L^3$ harmonic function on $\R^3$ vanishes.

Set $F(v)=(v\cdot\nabla)v+g_N(|v|^2)v$.
Both summands belong to $L^{3/2}$, and $\nabla\cdot(v\otimes v)=(v\cdot\nabla)v$ in distributions.
The Leray multiplier therefore gives
\begin{equation}
\label{eq:5.1-Leray-pressure}
\mathcal PF(v)=F(v)+\nabla\pi(v), \qquad A(v)=\nu\Delta v-F(v)-\nabla\pi(v).
\end{equation}
The second identity is used when $v\in H^2_\sigma\cap L^p$.

\emph{Step 2:} Write $f=|v|^{p/2}\in H^1$ and $q=3(p-2)$.
Sobolev embedding and interpolation yield
\begin{equation}
\label{eq:5.1-native-interpolation}
\|v\|_{3p}^p=\|f\|_6^2\le C\|\nabla f\|_2^2, \qquad \|v\|_q^{(p-2)/2} \le\|v\|_p^{1/2}\|v\|_{3p}^{(p-3)/2}.
\end{equation}
Indeed, $p<q<3p$ and
\begin{equation*}
\frac1{3(p-2)} =\frac1{p-2}\frac1p +\frac{p-3}{p-2}\frac1{3p}.
\end{equation*}

To justify the divergence formula without assuming $\mathcal D_p(v)<\infty$, let $J_{p,M}(v)=(|v|\wedge M)^{p-2}v$.
The truncated Sobolev chain rule and $\nabla\cdot v=0$ give
\begin{equation}
\nabla\cdot J_{p,M}(v) =\frac{2(p-2)}p\, \mathds 1_{\{0<|v|<M\}} |v|^{p/2-2}v\cdot\nabla f,
\end{equation}
with value zero on $\{v=0\}$.
Since $3(p-1)/2\in(p,3p)$, $J_{p,M}(v)\to J_p(v)$ in $L^{3/2}$.
Moreover,
\begin{equation}
\left\||v|^{(p-2)/2}|\nabla f|\right\|_{3/2} \le\|v\|_q^{(p-2)/2}\|\nabla f\|_2<\infty.
\end{equation}
Dominated convergence in $L^{3/2}$ identifies the weak divergence of $J_p(v)$ and gives
\begin{equation}
\label{eq:5.1-divergence-identity}
\nabla\cdot J_p(v) =\frac{2(p-2)}p|v|^{p/2-2}v\cdot\nabla f \quad\text{in }L^{3/2},
\end{equation}
again with value zero on $\{v=0\}$.
Thus \eqref{eq:5.1-pressure-pairing} is finite.
Spatial cutoffs followed by mollification approximate $J_p(v)$ and its divergence in $L^{3/2}$.
So this definition is the corresponding continuous extension of distributional pairing.

By H\"older's inequality and \eqref{eq:5.1-native-interpolation}--\eqref{eq:5.1-divergence-identity},
\begin{equation*}
\begin{aligned}
p\bigl|\langle\nabla\pi(v),J_p(v)\rangle\bigr| &\le C_p\|\pi(v)\|_3 \|v\|_q^{(p-2)/2}\|\nabla f\|_2 \\
&\le C_p\|\pi(v)\|_3\|v\|_p^{1/2} \bigl(\|\nabla f\|_2^2\bigr)^{(2p-3)/(2p)}.
\end{aligned}
\end{equation*}
Young's inequality with conjugate exponents $2p/(2p-3)$ and $2p/3$ proves \eqref{eq:5.1-pressure-native}.
Finally, \eqref{eq:5.1-pressure-L3} implies
\begin{equation*}
\|\pi(v)\|_3^{2p/3} \le C_{p,C_\nu} \left(\|v\|_{H^1}^{4p/3}+\|v\|_{H^1}^{2p}\right) \le C_{p,C_\nu}(1+\|v\|_{H^1}^{2p}),
\end{equation*}
and $\|v\|_p^{p/3}\le1+\|v\|_p^p$ gives \eqref{eq:5.1-pressure-H1}.

\emph{Step 3:} Suppose $v\in H^2_\sigma\cap L^p$.
Since $H^2(\R^3)\hookrightarrow L^\infty$,
\begin{equation*}
J_p(v)\in H^1\cap L^{3/2}, \qquad F(v),A(v),\nabla\pi(v)\in L^2, \qquad \mathcal D_p(v)<\infty.
\end{equation*}
Here $\|g_N(|v|^2)v\|_2\le C_\nu\|v\|_6^3$ and $\|(v\cdot\nabla)v\|_2\le\|v\|_\infty\|\nabla v\|_2$.

Choose $\chi\in C_c^\infty(\R^3)$ equal to one on $B_1$, and set $\chi_\ell(x)=\chi(x/\ell)$.
Integration by parts with $\chi_\ell J_p(v)$ is justified by spatial mollification.
The pressure boundary term satisfies
\begin{equation*}
\left|\int_{\R^3} \pi(v)J_p(v)\cdot\nabla\chi_\ell\,\dd x\right| \le\frac C\ell\|\pi(v)\|_3\|J_p(v)\|_{3/2} \longrightarrow0.
\end{equation*}
Consequently, \eqref{eq:5.1-pressure-pairing} agrees with $(\nabla\pi(v),J_p(v))_2$.
Similarly, $|v|^p v\in L^1$ and $\nabla\cdot v=0$ imply
\begin{equation*}
\begin{aligned}
\int_{\R^3}J_p(v)\cdot(v\cdot\nabla)v\,\dd x &=\lim_{\ell\to\infty} \left(-\frac1p\int_{\R^3} |v|^p v\cdot\nabla\chi_\ell\,\dd x\right)=0.
\end{aligned}
\end{equation*}

The Sobolev chain rule gives
\begin{equation}
\label{eq:5.1-viscous-identity}
\begin{aligned}
p\nu(\Delta v,J_p(v))_2 &=-p\nu\mathcal D_p(v) -\frac{4\nu(p-2)}p \left\|\nabla\bigl(|v|^{p/2}\bigr)\right\|_2^2, \\
\left\|\nabla\bigl(|v|^{p/2}\bigr)\right\|_2^2 &\le\frac{p^2}{4}\mathcal D_p(v).
\end{aligned}
\end{equation}
Combining the unprojected convective cancellation with \eqref{eq:5.1-Leray-pressure} and \eqref{eq:5.1-viscous-identity}, we obtain
\begin{equation*}
p(A(v),J_p(v))_2 =-p\nu\mathcal D_p(v) -\frac{4\nu(p-2)}p \left\|\nabla\bigl(|v|^{p/2}\bigr)\right\|_2^2 -p\int_{\R^3}g_N(|v|^2)|v|^p\,\dd x -p\langle\nabla\pi(v),J_p(v)\rangle.
\end{equation*}
Apply \eqref{eq:5.1-pressure-H1} with $\varepsilon=2\nu/p$ and use \eqref{eq:5.1-viscous-identity} to absorb its dissipation term into $p\nu\mathcal D_p(v)/2$.
This proves \eqref{eq:5.1-native-drift}.
\eqref{eq:5.1-weighted-dissipation} follows from \eqref{eq:5.1-native-interpolation} and \eqref{eq:5.1-viscous-identity}.

The maps $v\mapsto v_i v_j$ from $H^1$ to $L^3$ and $v\mapsto g_N(|v|^2)v$ from $H^1$ to $L^{3/2}$ are continuous.
For the latter, use $H^1\hookrightarrow L^{9/2}$ and continuity of the Nemytskii map under the cubic growth bound.
The preceding operator estimates therefore show that $v\mapsto\pi(v)$ is continuous from $H^1_\sigma$ to $L^3$.
Theorem \ref{thm:4.3-H1-persistence} consequently gives the adapted $L^3$-c\`adl\`ag version of $\pi(u)$.
A countable exhaustion by the stopped intervals of that theorem gives, on a common event of probability one, $u(t)\in H^2_\sigma\cap L^p$ for almost every $t<\tau_{\max}$.
All energy inequalities therefore apply at these times.
Finally,
\begin{equation*}
1+\|u(t)\|_{H^1}^{2p}\le\Gamma'(t) \quad\text{for almost every }t<\tau_{\max},
\end{equation*}
which proves \eqref{eq:5.1-pressure-operator}.
\end{proof}
The next lemma supplies the stochastic estimates needed for the exponentially weighted $L^p$ energy argument.
It controls the jump remainder using only the prescribed second and $p$th moments and establishes convergence of the Wiener and Poisson terms under spatial mollification.

The coefficient estimates needed below have already been recorded in \eqref{eq:2-P1}--\eqref{eq:2-P2}.
In particular, no additional growth assumption on the Wiener or jump coefficients is imposed in this section.
We only collect the consequences of these estimates for the native $L^p$ energy functional and for the exponentially weighted stochastic terms that will enter the stopped $L^p$ estimate.
Set
\begin{equation}
\label{eq:5.2-notation}
\Phi(v) := \|v\|_p^p ,
\end{equation}
and, for $v,h\in L^p$ and $B\in\gamma_p$, write
\begin{equation}
\label{eq:5.2-corrections}
\begin{aligned}
\mathcal R_p(v,h) &:= \Phi(v+h)-\Phi(v)-\Phi'(v)[h],& \mathcal C_p(v,B) &:= \frac12\sum_j\Phi''(v)[Be_j,Be_j],
\end{aligned}
\end{equation}
where $(e_j)$ is an orthonormal basis of $\mathcal U$; the well-definedness and basis independence of $\mathcal C_p$ are part of the lemma below.

For the stochastic estimates, fix $n\ge1$, $T>0$, $K>0$, and a stopping time $\rho$ satisfying
\begin{equation}
\label{eq:5.2-localization}
0\le\rho\le T\wedge\tau_n, \qquad \Gamma(\rho)\le K \quad\P\text{-a.s.}
\end{equation}
For $c\ge0$, set
\begin{equation}
\label{eq:5.2-weight}
U(t) := u(t\wedge\rho), \qquad \vartheta(t) := e^{-c\Gamma(t\wedge\rho)}, \qquad Y_\rho := \sup_{t\le\rho}\vartheta(t)\Phi(u(t)).
\end{equation}
On $(0,\rho]$, let
\begin{equation*}
B_s := \Sigma(s,U(s-)), \qquad H_s(z) := \mathcal G(s,U(s-),z),
\end{equation*}
and extend these processes by zero outside $(0,\rho]$.
Define
\begin{equation}
\label{eq:5.2-martingales}
\begin{aligned}
M^W(t) &:= \int_0^t\vartheta(s) \left\langle B_s^*\Phi'(U(s-)),\dd W_s\right\rangle_{\mathcal U}, \\
M^{J,1}(t) &:= \int_0^t\int_Z \vartheta(s)\Phi'(U(s-))[H_s(z)] \, \widetilde N(\dd s,\dd z), \\
M^{J,2}(t) &:= \int_0^t\int_Z \vartheta(s)\mathcal R_p(U(s-),H_s(z)) \, \widetilde N(\dd s,\dd z),
\end{aligned}
\end{equation}
and
\begin{equation}
\label{eq:5.2-compensators}
\begin{aligned}
Q^W(t) &:= \int_0^t\vartheta(s) \mathcal C_p(U(s-),B_s)\, \dd s,& Q^J(t) &:= \int_0^t\int_Z \vartheta(s)\mathcal R_p(U(s-),H_s(z)) \, \mu(\dd z)\, \dd s.
\end{aligned}
\end{equation}
Finally, for the spatial regularization used below, let $S_m = \mathrm{P}_{\le m}$ be the smoothing operators of \eqref{eq:2-smoothing} and set
\begin{equation}
\label{eq:5.2-mollification}
U_m := S_mU,\qquad B_{m,s} := S_mB_s,\qquad H_{m,s}(z) := S_mH_s(z).
\end{equation}
The weight $\vartheta$ is not mollified.

\begin{lemma}
[Noise and jump remainder estimates] \label{lem:5.2-noise-jumps} Let $p>3$ and assume the hypotheses of Theorem~\ref{thm:4.3-H1-persistence}.

\emph{(i) Pointwise estimates.} The correction $\mathcal C_p$ in \eqref{eq:5.2-corrections} is well defined, nonnegative, and independent of the chosen orthonormal basis.
Moreover,
\begin{equation}
\label{eq:5.2-Taylor-bound}
0\le\mathcal R_p(v,h) \le C_p\bigl( \|v\|_p^{p-2}\|h\|_p^2+\|h\|_p^p \bigr), \qquad v,h\in L^p.
\end{equation}
For $v\in L^p_\sigma\cap L^\infty$,
\begin{equation}
\label{eq:5.2-Wiener-pointwise}
\begin{aligned}
\mathcal C_p(v,\Sigma(t,v)) &\le C(1+\|v\|_\infty)(1+\Phi(v)),& \|\Sigma(t,v)^*\Phi'(v)\|_{\mathcal U}^2 &\le C\Phi(v)(1+\|v\|_\infty)(1+\Phi(v)),
\end{aligned}
\end{equation}
while, for every $v\in L^p_\sigma$,
\begin{equation}
\label{eq:5.2-jump-pointwise}
\begin{aligned}
\int_Z\mathcal R_p(v,\mathcal G(t,v,z))\, \mu(\dd z) &\le C(1+\Phi(v)), & \int_Z \bigl|\Phi'(v)[\mathcal G(t,v,z)]\bigr|^2\, \mu(\dd z) &\le C\Phi(v)(1+\Phi(v)).
\end{aligned}
\end{equation}

\emph{(ii) Weighted stochastic estimates.} Under \eqref{eq:5.2-localization}, the processes in \eqref{eq:5.2-martingales} are well defined adapted c\`adl\`ag martingales with integrable suprema, where $M^{J,2}$ is understood as its integrable uncompensated integral minus the compensator.
For every $\delta>0$,
\begin{equation}
\label{eq:5.2-weighted-martingale-bounds}
\begin{aligned}
\E\sup_{t\le T}|M^W(t)| &\le \delta\E Y_\rho +C_\delta\E\int_0^\rho \vartheta(s)(1+\Phi(u(s-)))\, \dd\Gamma(s), \\
\E\sup_{t\le T}|M^{J,1}(t)| &\le \delta\E Y_\rho +C_\delta\E\int_0^\rho \vartheta(s)(1+\Phi(u(s-)))\, \dd s, \\
\E\sup_{t\le T}|M^{J,2}(t)| &\le 2\E Q^J(T) \le C\E\int_0^\rho \vartheta(s)(1+\Phi(u(s-)))\, \dd s,
\end{aligned}
\end{equation}
and
\begin{equation}
\label{eq:5.2-weighted-correction-bounds}
\begin{aligned}
\E Q^W(T) &\le C\E\int_0^\rho \vartheta(s)(1+\Phi(u(s-)))\, \dd\Gamma(s),& \E Q^J(T) &\le C\E\int_0^\rho \vartheta(s)(1+\Phi(u(s-)))\, \dd s.
\end{aligned}
\end{equation}

\emph{(iii) Compatibility with spatial mollification.} Define $M_m^W,M_m^{J,1},M_m^{J,2},Q_m^W,Q_m^J$ from \eqref{eq:5.2-martingales}--\eqref{eq:5.2-compensators} by replacing $(U,B,H)$ with $(U_m,B_m,H_m)$.
The estimates in \emph{(ii)} hold with constants uniform in $m$, and
\begin{equation}
\label{eq:5.2-mollified-convergence}
\begin{aligned}
\E\sup_{t\le T}\|U_m(t)-U(t)\|_p^p &\longrightarrow0, \\
\E\sup_{t\le T} \left( |M_m^W(t)-M^W(t)| +\sum_{j=1}^2|M_m^{J,j}(t)-M^{J,j}(t)| \right) &\longrightarrow0, \\
\E\sup_{t\le T} \left( |Q_m^W(t)-Q^W(t)| +|Q_m^J(t)-Q^J(t)| \right) &\longrightarrow0.
\end{aligned}
\end{equation}
The constants above depend only on the structural coefficient constants, on $p$, and, where applicable, on $\delta$, and are independent of the localization and regularization parameters.
\end{lemma}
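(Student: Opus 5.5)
We argue in three layers matching (i)--(iii), using only the $L^p$ calculus of $\Phi(v)=\|v\|_p^p$, the coefficient bounds \eqref{eq:2-P1}--\eqref{eq:2-P2}, and the localization \eqref{eq:5.2-localization}.

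For (i), we start from the explicit derivatives $\Phi'(v)[h]=p\int J_p(v)\cdot h$ and $\Phi''(v)[h,h]=p\int|v|^{p-2}|h|^2+p(p-2)\int|v|^{p-4}(v\cdot h)^2$. The second form is nonnegative and bounded by $p(p-1)\int|v|^{p-2}|h|^2$. Summing over an orthonormal basis and using the $\gamma$-Fubini identification $\gamma_p\simeq\mathbb L^p$ shows that $\mathcal C_p(v,B)$ equals $\frac12\int$ of a pointwise trace of the Hilbert--Schmidt operator $B(x)\in\gamma(\mathcal U;\R^3)$. This gives basis independence, nonnegativity, and the bound $\mathcal C_p(v,B)\le C_p\|v\|_p^{p-2}\|B\|_{\gamma_p}^2$ via H\"older with exponents $p/(p-2),p/2$. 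The Taylor bound \eqref{eq:5.2-Taylor-bound} follows from convexity, which gives $\mathcal R_p\ge0$, together with the scalar inequality $||a+b|^p-|a|^p-p|a|^{p-2}a\cdot b|\le C_p(|a|^{p-2}|b|^2+|b|^p)$ integrated in $x$ and H\"older.

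For the Wiener bound in \eqref{eq:5.2-Wiener-pointwise}, we avoid \eqref{eq:2-P1} in its $L^{3p/2}$ form. Instead we use the weighted Lipschitz bound of \eqref{eq:2-W-local} at $w=0$ pointwise: $\|\sigma(t,v)\|_{\mathbb L^p}\le C(1+\||v|^{3/2}\|_p)\le C(1+\|v\|_\infty^{1/2}\|v\|_p)$. Squaring and multiplying by $\|v\|_p^{p-2}$ gives $C(1+\|v\|_\infty)(1+\Phi(v))$ after Young. For the martingale integrand we use $\|B^*\Phi'(v)\|_{\mathcal U}\le p\|J_p(v)\|_{p'}\|B\|_{\gamma_p}=p\|v\|_p^{p-1}\|B\|_{\gamma_p}$, so its square is $\le C\Phi(v)\|v\|_p^{p-2}\|B\|^2$, which is the second estimate. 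The jump estimates \eqref{eq:5.2-jump-pointwise} are immediate from \eqref{eq:5.2-Taylor-bound}, \eqref{eq:2-P2} with $r\in\{2,p\}$, and Young.

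For (ii), on $(0,\rho]$ we have $\|u(s)\|_\infty\le\Gamma'(s)$ and $1\le\Gamma'$, so $\mathcal C_p(U(s-),B_s)\,\dd s\le C(1+\Phi(u(s-)))\,\dd\Gamma(s)$. Since $u(s-)=u(s)$ for a.e. $s$ and $\Gamma(\rho)\le K$ with $\sup_{t\le\tau_n\wedge T}\Phi$ integrable, all right-hand sides are finite. This yields \eqref{eq:5.2-weighted-correction-bounds}. For $M^W$ we apply BDG: the quadratic variation is $\int\vartheta^2\|B^*\Phi'\|^2\le\sup_{s\le\rho}\vartheta(s)\Phi(u(s-))\cdot\int\vartheta C(1+\Phi)\,\dd\Gamma$. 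Then $\sup\vartheta\Phi(u(s-))\le Y_\rho$, since $\vartheta$ is continuous and nonincreasing, and Young gives the $\delta$-form. For $M^{J,1}$ we use Davis with the optional quadratic variation (a Poisson integral), factor out $Y_\rho$ before Young, and then take the compensator. For $M^{J,2}$ the nonnegativity of $\mathcal R_p$ and the compensator identity give the bound $2\E Q^J$. Integrability of the suprema, and hence the true-martingale property, follows from these bounds after a preliminary localization removed by Fatou.

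For (iii), \eqref{eq:5.2-mollified-convergence} for $U_m$ follows as in Step 5 of Theorem~\ref{thm:native-serrin-continuation}: strong convergence of $S_m$ is uniform on the relatively compact range of the c\`adl\`ag path, and we conclude by dominated convergence. The uniform estimates follow because $S_m$ is a contraction on $L^p$, $L^\infty$ and $\gamma_p$ (ideal property). The main obstacle is the convergence of the stochastic and correction terms. For $M^W_m-M^W$ we decompose the integrand as in Lemma~\ref{lem:4.1-regularization}, splitting $\Phi'(U_m)-\Phi'(U)$ from $B_m-B$. Then $\|J_p(a)-J_p(b)\|_{p'}\le C(\|a\|_p+\|b\|_p)^{p-2}\|a-b\|_p$, BDG, and dominated convergence with dominant $C\Phi(1+\Gamma')$ on $(0,\rho]$, integrable by the $\Gamma\le K$ localization, give convergence. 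For $M^{J,1}_m$ the same argument uses Davis and the compensator. For the remainder terms $Q^J_m$, $M^{J,2}_m$, we use the continuity of $(v,h)\mapsto\mathcal R_p(v,h)$ on $L^p\times L^p$, pointwise convergence of $H_m\to H$ in $L^p$, and the majorant from \eqref{eq:5.2-Taylor-bound}. The majorant converges because $\|H_m\|_p\le\|H\|_p$ and is integrable against $\mu\otimes\dd s$ by \eqref{eq:2-P2}, so generalized dominated convergence applies. The uncompensated part is then handled by the compensator identity applied to $|\mathcal R_p(U_m,H_m)-\mathcal R_p(U,H)|$. For $Q^W_m$ we use continuity of $\mathcal C_p$ in $(v,B)$ and the same domination.
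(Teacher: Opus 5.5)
Your proposal follows the paper's proof of this lemma essentially step for step:
\begin{itemize}
\item the explicit $\Phi',\Phi''$ with the $\gamma$-Fubini/H\"older trace bound;
\item the interpolation $\||v|^{3/2}\|_p\le\|v\|_\infty^{1/2}\|v\|_p$, which is the paper's use of \eqref{eq:2-P1} together with $\|v\|_{3p/2}^3\le\|v\|_p^2\|v\|_\infty$, so nothing is actually avoided;
\item factoring $\vartheta(s)\Phi(U(s-))\le Y_\rho$ out of the brackets before Young's inequality and compensation;
\item the $L^1$ compensator bound for the nonnegative remainder;
\item the compact-range argument for $U_m\to U$.
\end{itemize}
Proving \eqref{eq:5.2-Taylor-bound} from the scalar inequality rather than the integral Taylor formula is an inessential variation.

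The step that does not close as written is the convergence of the linear martingales in (iii), which you correctly flag as the main obstacle. The integrand of $[M^W_m-M^W]$ is of size $\vartheta^2\|B_s\|_{\gamma_p}^2\|U(s-)\|_p^{2p-2}\lesssim\vartheta^2\Phi(U(s-))\bigl(1+\Phi(U(s-))\bigr)\Gamma'(s)$. This is quadratic in $\Phi$, not the $C\Phi(1+\Gamma')$ you name. The localization only provides $\E\sup_{t\le T}\Phi(U(t))<\infty$, so the true majorant is not in $L^1(\dd\P\otimes\dd s)$. For the same reason, two natural shortcuts also fail:
\begin{itemize}
\item copying the decomposition of Lemma~\ref{lem:4.1-regularization} with Cauchy--Schwarz in $\Omega$ would need $\E\sup_t\|U(t)\|_p^{2p-2}$;
\item compensating $[M^{J,1}_m-M^{J,1}]_T$ directly would need $\E\int_0^\rho\Phi(u(s))^2\,\dd s$.
\end{itemize}

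The repair is the device you already used in (ii). Set $V_W:=\int_0^\rho\vartheta\|U(s-)\|_p^{p-2}\|B_s\|_{\gamma_p}^2\,\dd s$ and $V_J:=\int_0^\rho\int_Z\vartheta\|U(s-)\|_p^{p-2}\|H_s(z)\|_p^2\,N(\dd s,\dd z)$. Contractivity of $S_m$ then gives, pathwise, $[M^W_m-M^W]_T\le C_pY_\rho V_W$ and $[M^{J,1}_m-M^{J,1}]_T\le C_pY_\rho V_J$.

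The brackets tend to zero almost surely by dominated convergence in $\dd s$, respectively against the counting measure $N(\dd s,\dd z)$. This is legitimate because $S_m\to I$ strongly, so the integrands converge at every $(s,z)$ and not merely almost everywhere.

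Then apply dominated convergence to the square roots, using $\E(Y_\rho V_W)^{1/2}+\E(Y_\rho V_J)^{1/2}\le\E Y_\rho+\tfrac12\E(V_W+V_J)<\infty$, before invoking BDG and Davis. With this change your argument coincides with the paper's.
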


\begin{proof}
\emph{Step 1:} The functional $\Phi$ belongs to $C^2(L^p;\R)$, with
\begin{equation}
\label{eq:5.2-derivatives}
\begin{aligned}
\Phi'(v)[h]&=p\int_{\R^3}|v|^{p-2}v\cdot h\,\dd x, \\
\Phi''(v)[h,k] &=p\int_{\R^3}|v|^{p-2}h\cdot k\,\dd x +p(p-2)\int_{\R^3} |v|^{p-4}(v\cdot h)(v\cdot k)\,\dd x.
\end{aligned}
\end{equation}
The second integrand is defined as zero on $\{v=0\}$.
H\"older's inequality gives
\begin{equation}
\label{eq:5.2-derivative-bounds}
\begin{aligned}
\|\Phi'(v)\|_{(L^p)^*}&=p\|v\|_p^{p-1}, & 0\le\Phi''(v)[h,h]&\le p(p-1)\|v\|_p^{p-2}\|h\|_p^2.
\end{aligned}
\end{equation}
Convexity and the integral Taylor formula
\begin{equation*}
\mathcal R_p(v,h) =\int_0^1(1-r)\Phi''(v+rh)[h,h]\,\dd r
\end{equation*}
prove \eqref{eq:5.2-Taylor-bound}.

The $\gamma$-Fubini identification \cite{MR2330977,MR3617205}, followed by Tonelli's and H\"older's inequalities, yields
\begin{equation}
\label{eq:5.2-trace-bound}
\begin{aligned}
0\le\mathcal C_p(v,B) &\le C_p\|v\|_p^{p-2}\|B\|_{\gamma_p}^2, & \|B^*\Phi'(v)\|_{\mathcal U}^2 &\le p^2\|v\|_p^{2p-2}\|B\|_{\gamma_p}^2.
\end{aligned}
\end{equation}
In particular, the trace series converges and is basis independent, as is also immediate from the pointwise Hilbert--Schmidt representation.
By \eqref{eq:2-P1}, for $v\in L^p_\sigma\cap L^\infty$, $ \|\Sigma(t,v)\|_{\gamma_p}^2 \le C\bigl(1+\|v\|_{3p/2}^3\bigr).
$ Interpolation between $L^p$ and $L^\infty$ gives $\|v\|_{3p/2}^3 \le \|v\|_p^2\|v\|_\infty,$ and hence $ \|\Sigma(t,v)\|_{\gamma_p}^2 \le C\bigl(1+\|v\|_p^2\|v\|_\infty\bigr).
$ Combining this with \eqref{eq:5.2-trace-bound} proves \eqref{eq:5.2-Wiener-pointwise}.

For the jump terms, \eqref{eq:5.2-Taylor-bound} and \eqref{eq:2-P2} with $r=2,p$ give
\begin{equation*}
\int_Z \mathcal R_p(v,\mathcal G(t,v,z))\,\mu(\dd z) \le C\Big[ \|v\|_p^{p-2}(1+\|v\|_p^2) +1+\|v\|_p^p \Big]\le C(1+\Phi(v)).
\end{equation*}
Likewise, by \eqref{eq:5.2-derivative-bounds} and \eqref{eq:2-P2} with $r=2$,
\begin{equation*}
\int_Z \bigl|\Phi'(v)[\mathcal G(t,v,z)]\bigr|^2 \,\mu(\dd z) \le C\|v\|_p^{2p-2}(1+\|v\|_p^2)\le C\Phi(v)(1+\Phi(v)).
\end{equation*}
This proves \eqref{eq:5.2-jump-pointwise}.

\emph{Step 2:} The stopped process $U$ is adapted and $L^p$ c\`adl\`ag, with
\begin{equation}
\label{eq:5.2-local-moment}
\E\sup_{t\le T}\|U(t)\|_p^p<\infty
\end{equation}
by the local-energy class of (ii) of \cref{thm:2-Paper-I} and $\rho\le\tau_n$.
The process $U_-$ and the indicator $\mathds 1_{(0,\rho]}$ are predictable.
The weight $\vartheta$ is continuous, adapted, and bounded by one, hence predictable.
Thus all integrands in \eqref{eq:5.2-martingales}--\eqref{eq:5.2-compensators} have the required predictable, or predictable-product, measurability.

Theorem \ref{thm:4.3-H1-persistence} gives $u\in L^2(0,\rho;H^2)$ almost surely.
Since $u=u_-$ almost everywhere in time, the Wiener estimates apply for almost every $s\le\rho$, and
\begin{equation}
\label{eq:5.2-control}
1+\|u(s-)\|_\infty\le\Gamma'(s) \quad\text{for almost every }s\le\rho.
\end{equation}
In particular, \eqref{eq:5.2-localization} gives
\begin{equation}
\int_0^\rho\|B_s\|_{\gamma_p}^2\,\dd s \le C\left(T+K\sup_{t\le T}\|U(t)\|_p^2\right).
\end{equation}
Together with the jump growth bounds and \eqref{eq:5.2-local-moment}, this proves
\begin{equation}
\label{eq:5.2-integrand-spaces}
\begin{aligned}
\E\left(\int_0^T\|B_s\|_{\gamma_p}^2\,\dd s\right)^{p/2} &<\infty, \\
\E\left(\int_0^T\int_Z \|H_s(z)\|_p^2\,\mu(\dd z)\,\dd s\right)^{p/2} +\E\int_0^T\int_Z\|H_s(z)\|_p^p\,\mu(\dd z)\,\dd s &<\infty.
\end{aligned}
\end{equation}
These are admissible Wiener and Poisson integrand spaces \cite{MR2330977,MR3265175}.
The scalar linear integrands in \eqref{eq:5.2-martingales} are locally square integrable: their squared compensators are finite almost surely by \eqref{eq:5.2-integrand-spaces} and the pathwise boundedness of $U$.
The remainder integrand is nonnegative and integrable with respect to $\P\otimes\dd s\otimes\mu$, by \eqref{eq:5.2-jump-pointwise}.
It therefore defines the stated difference of an uncompensated integral and its compensator.

The right-hand sides in (ii) are finite, since
\begin{equation*}
\begin{aligned}
\int_0^\rho\vartheta(s)(1+\Phi(u(s-)))\,\dd\Gamma(s) &\le K\left(1+\sup_{t\le T}\Phi(U(t))\right), & \int_0^\rho\vartheta(s)(1+\Phi(u(s-)))\,\dd s &\le T\left(1+\sup_{t\le T}\Phi(U(t))\right).
\end{aligned}
\end{equation*}

\emph{Step 3:} Continuity of $\vartheta$ and the left limits of $U$ imply
\begin{equation*}
\vartheta(s)\Phi(U(s-))\le Y_\rho, \qquad 0<s\le\rho.
\end{equation*}
Define the nonnegative random variables
\begin{equation*}
\begin{aligned}
V_W&:=\int_0^\rho\vartheta(s) \|U(s-)\|_p^{p-2}\|B_s\|_{\gamma_p}^2\,\dd s, & V_J&:=\int_0^\rho\int_Z\vartheta(s) \|U(s-)\|_p^{p-2}\|H_s(z)\|_p^2\,N(\dd s,\dd z).
\end{aligned}
\end{equation*}
The growth bounds and the compensator identity \cite{MR1464694} give
\begin{equation}
\label{eq:5.2-majorant-moments}
\begin{aligned}
\E V_W &\le C\E\int_0^\rho \vartheta(s)(1+\Phi(u(s-)))\,\dd\Gamma(s), & \E V_J &\le C\E\int_0^\rho \vartheta(s)(1+\Phi(u(s-)))\,\dd s.
\end{aligned}
\end{equation}
In particular, both variables are integrable.
Their role is determined by the quadratic variations:
\begin{equation*}
[M^W]_T\le C_pY_\rho V_W, \qquad [M^{J,1}]_T\le C_pY_\rho V_J.
\end{equation*}
For the jump martingale the bracket is the integral of the squared scalar integrand against $N$.
The Wiener BDG and Davis inequalities, followed by Young's inequality, yield
\begin{equation*}
\begin{aligned}
\E\sup_{t\le T}|M^W(t)| &\le C_p\E(Y_\rho V_W)^{1/2} \le\delta\E Y_\rho+C_\delta\E V_W, \\
\E\sup_{t\le T}|M^{J,1}(t)| &\le C_p\E(Y_\rho V_J)^{1/2} \le\delta\E Y_\rho+C_\delta\E V_J.
\end{aligned}
\end{equation*}
These estimates are first applied after scalar martingale localization and then passed to the limit by Fatou's lemma.
They prove the first two bounds in \eqref{eq:5.2-weighted-martingale-bounds} and give martingales with integrable suprema.

For $r(s,z):=\vartheta(s)\mathcal R_p(U(s-),H_s(z))\ge0$,
\begin{equation*}
\sup_{t\le T}|M^{J,2}(t)| \le\int_0^T\int_Z r(s,z)\,N(\dd s,\dd z) +\int_0^T\int_Z r(s,z)\,\mu(\dd z)\,\dd s.
\end{equation*}
Taking expectations proves $\E\sup_{t\le T}|M^{J,2}(t)|\le2\E Q^J(T)$.
The remaining assertions in \eqref{eq:5.2-weighted-martingale-bounds}--\eqref{eq:5.2-weighted-correction-bounds} follow from the pointwise estimates and \eqref{eq:5.2-control}.
The support $(0,\rho]$ preserves a possible terminal jump, and every coefficient is evaluated at the predictable state $U(s-)$.

\emph{Step 4:} By \eqref{eq:2-smoothing-properties} and since the range of every $L^p$ c\`adl\`ag path on $[0,T]$ is relatively compact , so
\begin{equation*}
\sup_{t\le T}\|S_mU(t)-U(t)\|_p\longrightarrow0 \quad\P\text{-a.s.}
\end{equation*}
Dominated convergence using \eqref{eq:5.2-local-moment} proves the first limit in \eqref{eq:5.2-mollified-convergence}.

Moreover, $ U_m(s-) = S_mU(s-), $ and, by \eqref{eq:2-smoothing-properties} and \eqref{eq:2-gamma-notation}, $ B_{m,s} \longrightarrow B_s \quad\text{in } \gamma_p, \ H_{m,s}(z) \longrightarrow H_s(z) \ \text{in } L^p.
$ Using \eqref{eq:2-smoothing-properties} together with \eqref{eq:5.2-trace-bound} and \eqref{eq:5.2-Taylor-bound},
\begin{equation*}
\begin{aligned}
\mathcal C_p(U_m(s-),B_{m,s}) &\le C_p\|U(s-)\|_p^{p-2}\|B_s\|_{\gamma_p}^2, \\
\mathcal R_p(U_m(s-),H_{m,s}(z)) &\le C_p\left( \|U(s-)\|_p^{p-2}\|H_s(z)\|_p^2 +\|H_s(z)\|_p^p \right).
\end{aligned}
\end{equation*}

These majorants are integrable after multiplication by $\vartheta$, by Steps 2--3.
The maps $(v,B)\mapsto\mathcal C_p(v,B)$ and $(v,h)\mapsto\mathcal R_p(v,h)$ are continuous; for the first map use continuity of $\Phi''$ and the Gaussian representation of the trace.
Dominated convergence therefore gives
\begin{equation*}
\begin{aligned}
\E\int_0^T\vartheta(s) \bigl|\mathcal C_p(U_m(s-),B_{m,s}) -\mathcal C_p(U(s-),B_s)\bigr|\,\dd s&\longrightarrow0, \\
\E\int_0^T\int_Z\vartheta(s) \bigl|\mathcal R_p(U_m(s-),H_{m,s}(z)) -\mathcal R_p(U(s-),H_s(z))\bigr| \,\mu(\dd z)\,\dd s&\longrightarrow0.
\end{aligned}
\end{equation*}
This proves the convergence of both compensators.
The same $L^1$ compensator estimate for a signed integrand proves the asserted convergence of $M_m^{J,2}$.

For the two linear martingales, set
\begin{equation*}
\begin{aligned}
d_m^W(s) &:=\vartheta(s)\Big[ B_{m,s}^*\Phi'(U_m(s-)) -B_s^*\Phi'(U(s-)) \Big], \\
d_m^J(s,z) &:=\vartheta(s)\Big[ \Phi'(U_m(s-))[H_{m,s}(z)] -\Phi'(U(s-))[H_s(z)] \Big].
\end{aligned}
\end{equation*}
The preceding strong convergences and continuity of $\Phi'$ imply $d_m^W(s)\to0$ for $\dd\P\otimes\dd s$-almost every point and $d_m^J(s,z)\to0$ for $\dd\P\otimes\dd s\otimes\mu(\dd z)$-almost every point.
Moreover,
\begin{equation*}
\begin{aligned}
\ [M_m^W-M^W]_T &=\int_0^\rho\|d_m^W(s)\|_{\mathcal U}^2\,\dd s, & [M_m^{J,1}-M^{J,1}]_T &=\int_0^\rho\int_Z|d_m^J(s,z)|^2\,N(\dd s,\dd z).
\end{aligned}
\end{equation*}
By \eqref{eq:2-smoothing-properties} and \eqref{eq:5.2-derivative-bounds}, the corresponding integrands admit the same majorants as in Step~3.
Consequently, $$ [M_m^W-M^W]_T\le C_pY_\rho V_W, \qquad [M_m^{J,1}-M^{J,1}]_T\le C_pY_\rho V_J.
$$ Dominated convergence with respect to $\dd s$ and $N(\dd s,\dd z)$ therefore yields convergence of both quadratic variations to zero almost surely.
Furthermore, $$ \E(Y_\rho V_W)^{1/2} +\E(Y_\rho V_J)^{1/2}<\infty $$ by \eqref{eq:5.2-local-moment}, \eqref{eq:5.2-majorant-moments}, and Cauchy--Schwarz.
Hence dominated convergence, followed by the Wiener BDG and Davis inequalities, gives $$ \E\sup_{t\le T}|M_m^W(t)-M^W(t)| + \E\sup_{t\le T}|M_m^{J,1}(t)-M^{J,1}(t)| \longrightarrow0.
$$ proving the remaining limits in \eqref{eq:5.2-mollified-convergence}.The same majorants yield the uniformity in $m$, completing the proof.
\end{proof}

The preceding estimates are now combined with an exponential weight determined by $\Gamma$.
At each fixed integral level, the resulting intrinsic energy bound is uniform in the exit level and yields the probability estimate needed for the global continuation argument.

\begin{proposition}
[Uniform stopped native $L^p$ estimate] \label{prop:5.3-stopped-Lp} Let $p>3$ and retain the hypotheses of Theorem \ref{thm:4.3-H1-persistence} and Lemma \ref{lem:5.2-noise-jumps}.
Let $\Gamma$ be defined by \eqref{eq:4.3-accumalted-regularity}, and let $\lambda_L$ be the intrinsic exits of Theorem \ref{thm:3.6-intrinsic-blowup}.
For $K,T>0$, $L\ge1$, and $n\in\mathbb N$, set
\begin{equation*}
\begin{aligned}
\eta_K :=\inf\{t\in[0,\tau_{\max}):&\Gamma(t)\ge K\} \wedge\tau_{\max}, \\
\rho_{n,L,K,T}:=T\wedge\tau_n\wedge\lambda_L\wedge\eta_K, \quad & \rho_{L,K,T}:=T\wedge\lambda_L\wedge\eta_K,
\end{aligned}
\end{equation*}
with $\inf\varnothing=\infty$.
Then $\eta_K$ is a stopping time, $\eta_K\uparrow\tau_{\max}$ almost surely, and $\rho_{L,K,T}<\tau_{\max}$ almost surely.

Write $\Phi(v)=\|v\|_p^p$, retain $\mathcal D_p$ from \eqref{eq:5.1-drift-definition}, and define
\begin{equation}
\label{eq:5.3-taming-dissipation}
\mathcal T_p(v):=\int_{\R^3} g_N(|v|^2)|v|^p\,\dd x.
\end{equation}
there exists deterministic constant $c\ge1$ and $C<\infty$, depending only on $p,\nu$ and the coefficient constants, such that the following holds for either $\rho=\rho_{n,L,K,T}$ or $\rho=\rho_{L,K,T}$.
With $\vartheta(t)=e^{-c\Gamma(t\wedge\rho)}$,
\begin{equation}
\label{eq:5.3-weighted-estimate}
\begin{aligned}
\E\bigg[ &\sup_{0\le t\le\rho}\vartheta(t)\Phi(u(t)) +\frac{p\nu}{2}\int_0^\rho \vartheta(s)\mathcal D_p(u(s))\,\dd s \\
&+p\int_0^\rho\vartheta(s)\mathcal T_p(u(s))\,\dd s +\int_0^\rho\vartheta(s)\Phi(u(s))\,\dd\Gamma(s) \bigg] \le C\left(1+\E\|u_0\|_p^p\right).
\end{aligned}
\end{equation}
In particular,
\begin{equation}
\label{eq:5.3-unweighted-estimate}
\E\bigg[ \mathcal Q_u(\rho) +\int_0^\rho \left\|\nabla\bigl(|u(s)|^{p/2}\bigr)\right\|_2^2\,\dd s +\int_0^\rho\mathcal T_p(u(s))\,\dd s \bigg] \le Ce^{cK}\left(1+\E\|u_0\|_p^p\right),
\end{equation}
and
\begin{equation}
\label{eq:5.3-exit-probability}
\P(\lambda_L\le T\wedge\eta_K) \le\frac{Ce^{cK}}{L} \left(1+\E\|u_0\|_p^p\right).
\end{equation}
The constants $c,C$ are independent of $n,L,K,T$ and all auxiliary scalar and spatial regularization parameters.
All suprema include their terminal values.
\end{proposition}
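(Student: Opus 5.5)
The plan mirrors the proof of Theorem~\ref{thm:native-serrin-continuation}, with the Serrin weight $\mathcal S_p$ replaced by $\Gamma$ and the $L^p$ drift estimate \eqref{eq:3.7-drift-energy} replaced by the pressure bound \eqref{eq:5.1-native-drift}.

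\emph{Stopping-time assertions.} First, $\eta_K$ is a stopping time by the argument used for the Serrin localization. The process $\Gamma$ is continuous and adapted on $[0,\tau_{\max})$. I would approximate it by the stopped functionals $\Gamma(\cdot\wedge\tau_n)$ and take the increasing limit of their entrance times. Theorem~\ref{thm:4.3-H1-persistence} gives $\Gamma(t)<\infty$ for $t<\tau_{\max}$, hence $\eta_K\uparrow\tau_{\max}$. Since $\lambda_L<\tau_{\max}$ on $\{\tau_{\max}<\infty\}$ by Theorem~\ref{thm:3.6-intrinsic-blowup}, and $T<\infty$, we get $\rho_{L,K,T}<\tau_{\max}$.

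\emph{Weighted estimate for $\rho_{n,L,K,T}$.} Fix $\rho=\rho_{n,L,K,T}$. It satisfies \eqref{eq:5.2-localization}, and $\mathcal Q_u(\rho-)\le L$. Set $U_m=S_mU$ and write the $L^p$-valued semimartingale equation for $U_m$. Lemma~\ref{lem:3.2-intrinsic-coefficients} and the smoothing bounds make the drift $L^p$-integrable. Apply It\^o's formula to $\Phi(U_m)$, then use the product rule with the continuous, finite-variation weight $\vartheta$, which contributes $-c\,\vartheta\,\Phi(U_m)\,\dd\Gamma$. The drift pairing $p(S_mA(u),J_p(U_m))_2$ does not have the exact structure of \eqref{eq:5.1-native-drift}, since the smoothing sits between the nonlinearity and the test field. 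There are two ways to handle it. One is to apply \eqref{eq:5.1-native-drift} directly at the limit, justified by the fact that $u(s)\in H^2_\sigma\cap L^p$ for a.e.\ $s$. The other, which I prefer, is to pass $m\to\infty$ in the drift integral first, using $u\in L^2(0,\rho;H^2)$ together with $J_p(U_m)\to J_p(u)$. After that limit, \eqref{eq:5.1-native-drift} supplies $-\tfrac{p\nu}{2}\mathcal D_p-p\mathcal T_p$ plus $C(1+\|u\|_{H^1}^{2p})(1+\Phi(u))\le C\,\Gamma'(1+\Phi)$.

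\emph{Noise terms and choice of $c$.} The It\^o corrections and martingales are controlled by Lemma~\ref{lem:5.2-noise-jumps}(ii)--(iii): everything is bounded by $\delta\,\E Y_\rho$ plus $C_\delta\,\E\int_0^\rho\vartheta(1+\Phi)\,\dd\Gamma$. Since $\dd s\le \dd\Gamma$, the $\dd s$ terms are of the same form. Lemma~\ref{lem:5.2-noise-jumps}(iii) allows passage $m\to\infty$ in the stochastic parts, and Fatou together with weak lower semicontinuity handles $\mathcal D_p$ and $\mathcal T_p$. I would work with the limit identity rather than the regularized one wherever possible. Finiteness of every term at fixed $K$ follows from $\Gamma(\rho)\le K$ and the local moment \eqref{eq:5.2-local-moment}. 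I then take $\delta=1/2$ and $c>2C_\delta+1$, absorb all terms on the left, and use $\vartheta\le1$ to obtain \eqref{eq:5.3-weighted-estimate} with constants independent of $n$. For $\rho_{L,K,T}$, let $n\to\infty$: since $\tau_n\uparrow\tau_{\max}>\rho_{L,K,T}$, eventually $\rho_{n,L,K,T}=\rho_{L,K,T}$ pathwise, including the terminal jump, and Fatou's lemma transfers the bound.

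\emph{Consequences.} Since $\vartheta\ge e^{-cK}$ on $[0,\rho]$, and using $\|v\|_{3p}^p\le C\|\nabla|v|^{p/2}\|_2^2\le C\mathcal D_p$ from \eqref{eq:5.1-weighted-dissipation}, we obtain \eqref{eq:5.3-unweighted-estimate}. For \eqref{eq:5.3-exit-probability}, note that on $\{\lambda_L\le T\wedge\eta_K\}$ with $\lambda_L<\tau_{\max}$ we have $\rho=\lambda_L$ and $\mathcal Q_u(\rho)\ge L$ by \eqref{eq:3.6-exit-levels}. The remaining case $\lambda_L=\tau_{\max}\le T$ is null by \eqref{eq:3.6-exit-lifetime}. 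Markov's inequality then gives the bound.

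\emph{Main obstacle.} I expect the hardest step to be the rigorous drift limit: passing $m\to\infty$ in $\int\vartheta\,(S_mA(u),J_p(U_m))$, including the pressure pairing. Fixed $m$ only offers smoothing, and uniform domination in $m$ needs $\|A(u)\|_2\,\|J_p(U_m)\|_2$. Here $\|J_p(v)\|_2=\|v\|_{2p-2}^{p-1}$ is bounded by $\|v\|_p$ and $\|v\|_\infty\lesssim\|v\|_{H^2}$. If needed, I would add a further stopping to make $\int_0^\rho\|u\|_{H^2}^2$ bounded, and remove it at the end by monotone convergence.
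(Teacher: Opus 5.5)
Your architecture is the paper's: the stopping-time facts, the weight $e^{-c\Gamma}$ with the product rule, the pressure bound \eqref{eq:5.1-native-drift}, Lemma~\ref{lem:5.2-noise-jumps} for the noise, absorption by large $c$, $n\to\infty$, and Markov for \eqref{eq:5.3-exit-probability}. But the step you flag as the main obstacle is not closed by your proposed fix. It is exactly where the paper needs an extra idea.

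\textbf{The gap: the drift limit for $p>6$.} To pass $m\to\infty$ in $\int_0^\rho\vartheta\,p(S_mA(u),J_p(S_mu))_2\,\dd s$, the limiting drift integral must exist. The same is needed to write an It\^o identity for $\Phi(u)$ at all, as in your option (a). So you need $\int_0^\rho|(A(u),J_p(u))_2|\,\dd s<\infty$, and hence $\int_0^\rho\mathcal D_p(u)\,\dd s<\infty$, before the estimate is proved.

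Your majorant does not give this in general. $\|A(u)\|_2\|u\|_\infty^{(p-2)/2}\|u\|_p^{p/2}$ contains the factor $\|u\|_{H^2}^{(p+2)/4}$, after using $\|A(u)\|_2\lesssim\|u\|_{H^2}+\|u\|_{H^1}^3$ and $\|u\|_\infty\lesssim\|u\|_{H^1}^{1/2}\|u\|_{H^2}^{1/2}$. This factor is integrable from $u\in L^2(0,\rho;H^2)$ only when $p\le6$. With the cruder $\|u\|_\infty\lesssim\|u\|_{H^2}$ that you wrote, it is integrable only when $p\le4$. An extra stopping that bounds $\int_0^\rho\|u\|_{H^2}^2$ does not change this exponent count.

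The same obstruction appears in $\mathcal D_p(u)\le\|u\|_\infty^{p-2}\|\nabla u\|_2^2$. This needs $\|u\|_\infty\in L^{p-2}_t$, but the $H^1$--$H^2$ regularity only gives $L^4_t$, and $\Gamma$ only gives $L^1_t$. The local-energy class does not help either: it controls $\|\nabla|u|^{p/2}\|_2^2\le\frac{p^2}{4}\mathcal D_p(u)$, which is the wrong direction.

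So for $p>6$ your argument presupposes the integrability of $\mathcal D_p$ that the estimate is meant to produce. That is precisely the range where this proposition is needed, since Remark~\ref{rem:6.1-low-p-direct} handles $3<p\le6$ directly.

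\textbf{The paper's fix: truncate the functional first.} The paper works with $\Phi_\ell(v)=\int\phi_\ell(v)\,\dd x$, built from $\beta_\ell(r)=r^{p-2}(1+r^2/\ell^2)^{-(p-2)/2}\le\ell^{p-2}$.
\begin{itemize}
\item Then $J_\ell(v)=\beta_\ell(|v|)v$ is globally Lipschitz on $L^2$. At fixed $\ell$, the limit $m\to\infty$ in the drift follows from $\sup_t\|S_mJ_\ell(S_mU(t))-J_\ell(U(t))\|_2\to0$ and $\int_0^\rho\|A(u)\|_2\,\dd s<\infty$ alone.
\item The pressure decomposition gives $p(A(v),J_\ell(v))_2=-\mathcal V_\ell(v)-p\langle\nabla\pi(v),J_\ell(v)\rangle$ with $\mathcal V_\ell\ge0$.
\item The pressure pairing is dominated uniformly in $\ell$ by $\varepsilon\|\nabla|u|^{p/2}\|_2^2+C_\varepsilon\Gamma'(1+\Phi(u))$. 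This is integrable because $\rho\le\tau_n$ and $\Gamma(\rho)\le K$.
\item Taking expectations bounds $\sup_\ell\E\int_0^\rho\vartheta\mathcal V_\ell\,\dd s$. Monotone convergence $\mathcal V_\ell\uparrow\mathcal V_p$ then yields integrability of $\mathcal D_p$ and the untruncated identity as outputs.
\end{itemize}
Some such truncation, or an equivalent device, is the missing ingredient in your proof.

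\textbf{A smaller point on $K$-independence.} The term $C\int_0^\rho\vartheta\,\dd\Gamma$ comes from the constant $1$ in $\Gamma'(1+\Phi)$ and cannot be absorbed into the left side. It must be bounded by $(1-e^{-c\Gamma(\rho)})/c\le1/c$. Bounding it via $\vartheta\le1$ only gives $K$, which would make $C$ in \eqref{eq:5.3-weighted-estimate} depend on $K$. That $K$-independence is used in the proof of Theorem~\ref{thm:6.1-global-continuation}.
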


\begin{proof}
\emph{Step 1:} Extend the integrand defining $\Gamma$ by zero outside $[0,\tau_{\max})$ and denote its time integral by $\overline\Gamma$.
By \eqref{eq:4.3-regularity}, this defines an adapted, continuous, increasing process, finite on every bounded time interval.
Moreover,
\begin{equation*}
\eta_K=\inf\{t\ge0:\overline\Gamma(t)\ge K\} \wedge\tau_{\max}.
\end{equation*}
Thus $\eta_K$ is a stopping time.
Monotonicity and local finiteness of $\Gamma$ give $\eta_K\uparrow\tau_{\max}$.
Theorem \ref{thm:3.6-intrinsic-blowup} gives $\lambda_L<\tau_{\max}$ on $\{\tau_{\max}<\infty\}$.
On $\{\tau_{\max}=\infty\}$, the bound $\rho_{L,K,T}\le T$ gives the asserted strict inequality.

Fix $n,L,K,T$ and initially write $\rho=\rho_{n,L,K,T}$.
Then $\Gamma(\rho)\le K$.
Set $U(t)=u(t\wedge\rho)$ and use the weight $\vartheta$ and stopped noise integrands $B,H$ of Lemma \ref{lem:5.2-noise-jumps}.
The value of $c\ge1$ will be fixed in Step 4.
The weight is continuous and predictable, and $\mathds 1_{(0,\rho]}$ is predictable.
The local-energy class and Theorem \ref{thm:4.3-H1-persistence} give
\begin{equation}
\label{eq:5.3-preliminary-integrability}
\begin{aligned}
\E\bigg[ \sup_{t\le T}\Phi(U(t)) +\int_0^\rho \left\|\nabla\bigl(|u(s)|^{p/2}\bigr)\right\|_2^2\,\dd s \bigg]&<\infty, \\
U\in\mathbb D([0,T];H^1_\sigma), \qquad u\in L^2(0,\rho;H^2_\sigma) &\quad\P\text{-a.s.}
\end{aligned}
\end{equation}
In particular,
\begin{equation}
\label{eq:5.3-integrability}
\E\int_0^\rho \vartheta(s)(1+\Phi(u(s)))\,\dd\Gamma(s) \le K\left(1+\E\sup_{t\le T}\Phi(U(t))\right) <\infty.
\end{equation}
These preliminary finiteness statements need not be uniform in $n$.

Let $A$ be the drift operator in \eqref{eq:5.1-drift-definition} and write
\begin{equation}
\label{eq:5.3-drift}
A_u(s):=A(u(s))=\nu\Delta u(s) -\mathcal P((u(s)\cdot\nabla)u(s)) -\mathcal P(g_N(|u(s)|^2)u(s)),\qquad s<\rho.
\end{equation}
The estimate \eqref{eq:4.3-drift-integrability} yields $A_u\in L^2(0,\rho;L^2_\sigma)$ almost surely.

\emph{Step 2:} For $\ell\ge1$ and $r\ge0$, define
\begin{equation*}
\begin{aligned}
\beta_\ell(r) &:=r^{p-2}(1+r^2/\ell^2)^{-(p-2)/2}, & \phi_\ell(x)&:=p\int_0^{|x|}\beta_\ell(r)r\,\dd r, \\
\Phi_\ell(v) &:=\int_{\R^3}\phi_\ell(v(x))\,\dd x, & J_\ell(v)&:=\beta_\ell(|v|)v.
\end{aligned}
\end{equation*}
The functional $\Phi_\ell$ belongs to $C^2(L^p;\R)$, $\Phi_\ell'(v)=pJ_\ell(v)$, and
\begin{equation}
\label{eq:5.3-truncation-bounds}
\begin{aligned}
0\le\phi_\ell(x)&\le |x|^p, &|\nabla\phi_\ell(x)|&\le p|x|^{p-1}, \\
0\le D^2\phi_\ell(x)[h,h] &\le p(p-1)|x|^{p-2}|h|^2, &\|D^2\phi_\ell(x)\|&\le C_p\ell^{p-2}.
\end{aligned}
\end{equation}
Furthermore,
\begin{equation}
\label{eq:5.3-truncation-derivative}
\beta_\ell'(r) =(p-2)r^{p-3}(1+r^2/\ell^2)^{-p/2}.
\end{equation}
Both $\beta_\ell(r)$ and $\beta_\ell'(r)$ increase to $r^{p-2}$ and $(p-2)r^{p-3}$, respectively.

Let $S_m={\mathrm P}_{\le m}$ be defined in \eqref{eq:2-smoothing}.
The stopped weak equation gives
\begin{equation*}
S_mU(t)=S_mu_0+\int_0^{t\wedge\rho}S_mA_u(s)\,\dd s +\int_0^t S_mB_s\,\dd W_s +\int_0^t\int_Z S_mH_s(z)\,\widetilde N(\dd s,\dd z).
\end{equation*}
For fixed $m$, the drift belongs to $L^1(0,T;L^p)$ almost surely, since $S_m:L^2\to L^p$ is bounded.
The noise integrability follows from Lemma \ref{lem:5.2-noise-jumps}.
The $L^p$ It\^o formula with jumps used in Theorem 3.1 of \cite{PodderKumar2026} therefore applies to $\Phi_\ell(S_mU)$ after localization.

For fixed $\ell$, the map $J_\ell:L^2\to L^2$ is globally Lipschitz.
The relatively compact ranges of the c\`adl\`ag paths $U$ in $L^2$ and $L^p$, strong convergence of $S_m$, and its uniform boundedness give
\begin{equation}
\begin{aligned}
\sup_{t\le T}\|S_mU(t)-U(t)\|_2&\longrightarrow0, & \sup_{t\le T} \|S_mJ_\ell(S_mU(t))-J_\ell(U(t))\|_2&\longrightarrow0 \quad\P\text{-a.s.}
\end{aligned}
\end{equation}
Self-adjointness of $S_m$ consequently yields
\begin{equation}
\label{eq:5.3-drift-limit}
\begin{aligned}
&\sup_{t\le T}\left| \int_0^{t\wedge\rho}\vartheta(s) \left[\Phi_\ell'(S_mu(s))[S_mA_u(s)] -p(J_\ell(u(s)),A_u(s))_2\right]\dd s\right| \\
&\qquad \qquad\le p\sup_{t\le T}\|S_mJ_\ell(S_mU(t))-J_\ell(U(t))\|_2 \int_0^\rho\|A_u(s)\|_2\,\dd s \longrightarrow0 \quad\P\text{-a.s.}
\end{aligned}
\end{equation}

The derivative bounds \eqref{eq:5.3-truncation-bounds} give the Taylor, trace, and linear-integrand bounds of Lemma \ref{lem:5.2-noise-jumps}, with constants independent of $\ell$.
Thus its mollification argument applies to $\Phi_\ell$.
The three martingales converge in expected supremum norm, and the two correction processes converge in the same norm.
The corresponding Wiener and Poisson integrals are those of \cite{MR2330977,MR3265175}.
Also,
\begin{equation}
\E\sup_{t\le T}\|S_mU(t)-U(t)\|_p^p \longrightarrow0.
\end{equation}
Consequently all limiting stochastic terms are associated with the existing adapted $L^p$-c\`adl\`ag process $U$ and its predictable left-limit process $U-$.
For $v\in H^2_\sigma\cap L^p$, write $r=|v|$ and set
\begin{equation*}
\mathcal V_\ell(v):=p\nu\int_{\R^3} \left[\beta_\ell(r)|\nabla v|^2 +r\beta_\ell'(r)|\nabla r|^2\right]\dd x +p\int_{\R^3}g_N(r^2)\beta_\ell(r)r^2\,\dd x.
\end{equation*}
The pressure decomposition of Lemma \ref{lem:5.1-pressure} and integration by parts give
\begin{equation*}
p(A(v),J_\ell(v))_2 =-\mathcal V_\ell(v) -p\langle\nabla\pi(v),J_\ell(v)\rangle.
\end{equation*}
Here $pJ_\ell(v)\cdot(v\cdot\nabla)v =v\cdot\nabla\phi_\ell(v)$ has zero integral.
The integrations by parts follow by spatial cutoffs and $\phi_\ell(v)v\in L^1$, $J_\ell(v)\in H^1\cap L^{3/2}$, and $\pi(v)\in L^3$.

Multiply the mollified identity by the continuous finite variation process $\vartheta$ and let $m\to\infty$.
Writing $M_\ell=M_\ell^W+M_\ell^{J,1}+M_\ell^{J,2}$ and $Q_\ell^W,Q_\ell^J$ for the processes of Lemma \ref{lem:5.2-noise-jumps} with $\Phi$ replaced by $\Phi_\ell$, we obtain, indistinguishably for $t\le T$,
\begin{equation}
\label{eq:5.3-truncated-Ito}
\begin{aligned}
&\vartheta(t)\Phi_\ell(U(t)) +\int_0^{t\wedge\rho}\vartheta(s)\mathcal V_\ell(u(s))\,\dd s +c\int_0^{t\wedge\rho}\vartheta(s)\Phi_\ell(u(s))\,\dd\Gamma(s) \\
&\quad=\Phi_\ell(u_0) -p\int_0^{t\wedge\rho}\vartheta(s) \langle\nabla\pi(u(s)),J_\ell(u(s))\rangle\,\dd s +Q_\ell^W(t)+Q_\ell^J(t)+M_\ell(t).
\end{aligned}
\end{equation}

\emph{Step 3:} Put $f=|u|^{p/2}$.
The Sobolev chain rule and \eqref{eq:5.3-truncation-derivative} give, almost everywhere in space and time,
\begin{equation*}
\begin{aligned}
\nabla\cdot J_\ell(u) &=\beta_\ell'(|u|)u\cdot\nabla|u|, & |\nabla\cdot J_\ell(u)| &\le\frac{2(p-2)}p|u|^{(p-2)/2}|\nabla f|.
\end{aligned}
\end{equation*}
The interpolation argument of Lemma \ref{lem:5.1-pressure} therefore gives, uniformly in $\ell$,
\begin{equation}
\label{eq:5.3-truncated-pressure}
p|\langle\nabla\pi(u),J_\ell(u)\rangle| \le\varepsilon\|\nabla f\|_2^2 +C_\varepsilon\Gamma'(s)(1+\Phi(u)).
\end{equation}
For almost every $s$, dominated convergence in $L^{3/2}$ also gives $\nabla\cdot J_\ell(u(s))\to\nabla\cdot J_p(u(s))$.
Equations \eqref{eq:5.3-preliminary-integrability} and \eqref{eq:5.3-integrability}, with \eqref{eq:5.3-truncated-pressure}, consequently imply
\begin{equation*}
\E\int_0^\rho\vartheta(s) \left|\langle\nabla\pi(u(s)), J_\ell(u(s))-J_p(u(s))\rangle\right|\dd s \longrightarrow0.
\end{equation*}

The functions $\Phi_\ell$ increase pointwise to $\Phi$.
Dini's theorem on the compact closure of each $L^p$ path range and \eqref{eq:5.3-preliminary-integrability} give
\begin{equation*}
\E\sup_{t\le T} |\Phi_\ell(U(t))-\Phi(U(t))|\longrightarrow0.
\end{equation*}
The integrals converge in expected supremum norm by \eqref{eq:5.3-integrability}.
The derivatives and Taylor remainders converge pointwise, with the uniform majorants in \eqref{eq:5.3-truncation-bounds}.
The trace and remainder arguments of Lemma \ref{lem:5.2-noise-jumps} therefore give
\begin{equation*}
\E\sup_{t\le T} \left(|M_\ell(t)-M(t)| +|Q_\ell^W(t)-Q^W(t)|+|Q_\ell^J(t)-Q^J(t)|\right) \longrightarrow0.
\end{equation*}
For the linear martingales, use dominated convergence of their quadratic variations and the Wiener BDG and Davis inequalities.
Their square roots have the integrable majorants established in that lemma.
For the remainder martingale, use its $L^1$ compensator bound, without squaring the remainder.

Finally, the nonnegative dissipations increase pointwise to
\begin{equation*}
\mathcal V_p(v) :=p\nu\mathcal D_p(v) +\frac{4\nu(p-2)}p \left\|\nabla\bigl(|v|^{p/2}\bigr)\right\|_2^2 +p\mathcal T_p(v).
\end{equation*}
Taking expectations in \eqref{eq:5.3-truncated-Ito} at $T$ and using the preceding integrable bounds shows that
\begin{equation*}
\sup_{\ell\ge1}\E\int_0^\rho \vartheta(s)\mathcal V_\ell(u(s))\,\dd s<\infty.
\end{equation*}
Monotone convergence identifies the limiting dissipation and proves its integrability.
Passing to the limit in \eqref{eq:5.3-truncated-Ito} gives the identity for $\Phi$, with dissipation $\mathcal V_p$, pressure test $J_p$, martingale $M$, and corrections $Q^W,Q^J$.
The convergence of the increasing dissipation integrals is uniform in time, since their difference is bounded by the difference of their terminal integrals.
Thus the limiting identity holds indistinguishably on $[0,T]$, including the possible jump at $\rho$.

\emph{Step 4: } Set $a=p\nu/2$ and
\begin{equation*}
\begin{aligned}
Y&:=\sup_{t\le\rho}\vartheta(t)\Phi(u(t)), &I&:=\int_0^\rho\vartheta(s)\,\dd\Gamma(s), &D&:=\int_0^\rho\vartheta(s)\mathcal D_p(u(s))\,\dd s, \\
Z&:=\int_0^\rho\vartheta(s)\Phi(u(s))\,\dd\Gamma(s), &\mathcal T&:=\int_0^\rho\vartheta(s)\mathcal T_p(u(s))\,\dd s, &M^*&:=\sup_{t\le T}|M(t)|.
\end{aligned}
\end{equation*}
All these variables are integrable.
Apply \eqref{eq:5.1-native-drift} to the limiting identity and bound the two correction terms by Lemma \ref{lem:5.2-noise-jumps}.
Since $\Gamma'\ge1$, this gives, for a deterministic $C_0$ which is independent of the stopping parameters,
\begin{equation}
\label{eq:5.3-weighted-energy}
\begin{aligned}
\vartheta(t)\Phi(U(t)) +&a\int_0^{t\wedge\rho}\vartheta(s)\mathcal D_p(u(s))\,\dd s +p\int_0^{t\wedge\rho}\vartheta(s)\mathcal T_p(u(s))\,\dd s +(c-C_0)\int_0^{t\wedge\rho} \vartheta(s)\Phi(u(s))\,\dd\Gamma(s) \\
&\quad\le\Phi(u_0) +C_0\int_0^{t\wedge\rho}\vartheta(s)\,\dd\Gamma(s)+M(t).
\end{aligned}
\end{equation}
The pressure term has been retained and estimated through Lemma \ref{lem:5.1-pressure}.
No cancellation with the Leray projection is used.
Moreover,
\begin{equation*}
I=\frac{1-e^{-c\Gamma(\rho)}}c\le\frac1c.
\end{equation*}
For $c\ge C_0$, taking the supremum and separately evaluating \eqref{eq:5.3-weighted-energy} at $T$ yields
\begin{equation}
\label{eq:5.3-absorption-input}
\E\left[Y+aD+p\mathcal T+(c-C_0)Z\right] \le2\E\Phi(u_0)+\frac{2C_0}{c} +2\E M^*.
\end{equation}
Lemma \ref{lem:5.2-noise-jumps}, together with $\dd s\le\dd\Gamma(s)$, gives for every $\delta>0$
\begin{equation*}
\E M^* \le\delta\E Y+C_\delta\E(I+Z) \le\delta\E Y+C_\delta\left(c^{-1}+\E Z\right).
\end{equation*}
Here $u=u_-$ almost everywhere for both $\dd s$ and $\dd\Gamma(s)$, because $\Gamma$ is absolutely continuous.
Choose $\delta=1/4$ and then fix $c\ge\max\{1,C_0+2C_\delta+1\}$.
Absorption in \eqref{eq:5.3-absorption-input} proves \eqref{eq:5.3-weighted-estimate}, with constants independent of $n,L,K,T$.

\emph{Step 5:} For fixed $L,K,T$, the strict inequality $\rho_{L,K,T}<\tau_{\max}$ implies
\begin{equation*}
\rho_{n,L,K,T}=\rho_{L,K,T} \quad\text{for all sufficiently large }n, \quad\P\text{-a.s.}
\end{equation*}
Thus the stopped intervals increase to the desired closed interval, including its terminal state.
Monotone convergence proves \eqref{eq:5.3-weighted-estimate} for $\rho=\rho_{L,K,T}$.

For either stopping time, $\Gamma(s)\le K$ on $[0,\rho]$, so $\vartheta(s)\ge e^{-cK}$.
The chain rule and Sobolev inequality \eqref{eq:5.1-weighted-dissipation} give
\begin{equation*}
\|u(s)\|_{3p}^p \le C\left\|\nabla\bigl(|u(s)|^{p/2}\bigr)\right\|_2^2 \le C_p\mathcal D_p(u(s))
\end{equation*}
for almost every $s<\rho$.
Consequently, \eqref{eq:5.3-weighted-estimate} implies \eqref{eq:5.3-unweighted-estimate}.
On $\{\lambda_L\le T\wedge\eta_K\}$, $\rho_{L,K,T}=\lambda_L<\tau_{\max}$ and $\mathcal Q_u(\lambda_L)\ge L$ by \eqref{eq:3.6-exit-levels}.
Markov's inequality proves \eqref{eq:5.3-exit-probability}.
Only this inequality at the exit level is used, so jump overshoots, including the case $\lambda_L=0$, are retained.
\end{proof}
\section{Global well-posedness and moment bounds}
\label{Section 6} This section completes the passage from the maximal local solution to global well-posedness.
We first exclude explosion for $H^1$ data, extend the conclusion to $L^p\cap L^2$ data, and then establish unweighted moment bounds and continuous dependence under their stated assumptions.

We now combine the intrinsic continuation criterion of Theorem~\ref{thm:3.6-intrinsic-blowup} with the uniform stopped $L^p$ estimate of Proposition~\ref{prop:5.3-stopped-Lp}.
Set
\begin{equation}
\label{eq:6.1-intersection-space}
X_p:=L^p_\sigma(\R^3)\cap H^1_\sigma(\R^3), \qquad \|v\|_{X_p}:=\|v\|_p+\|v\|_{H^1},
\end{equation}
and retain the taming dissipation $\mathcal T_p$ from \eqref{eq:5.3-taming-dissipation}.
We also continue to write $$ \Gamma(t) = \int_0^t \left( 1+\|u(s)\|_{H^1}^{2p} +\|u(s)\|_\infty \right)\,\dd s $$ for the accumulated regularity functional introduced in \eqref{eq:4.3-accumalted-regularity}.
Under the hypotheses below, Theorem~\ref{thm:4.3-H1-persistence} yields $$ \Gamma((T\wedge\tau_{\max})-)<\infty \qquad\P\text{-a.s.} $$ for every finite $T$.

The stopped $L^p$ estimate controls the probability that the intrinsic quantity reaches a large level while the accumulated regularity functional $\Gamma$ remains bounded.
Since $\Gamma$ stays finite up to every finite maximal lifetime, this estimate, combined with the intrinsic blow-up alternative, excludes finite-time explosion and yields global continuation.

\begin{theorem}
[Non-explosion and global continuation] \label{thm:6.1-global-continuation} Let $p>3$ and assume the hypotheses of Proposition~\ref{prop:5.3-stopped-Lp}; in particular, $$ u_0\in L^p(\Omega,\mathcal F_0;L^p_\sigma) \cap L^2(\Omega,\mathcal F_0;H^1_\sigma).
$$ Let $(u,(\tau_n),\tau_{\max})$ be the maximal solution constructed in Theorem~\ref{thm:2-Paper-I} (ii).
Then
\begin{equation}
\label{eq:6.1-nonexplosion}
\P(\tau_{\max}=\infty)=1.
\end{equation}
Consequently, $u$ is the unique global strong solution of \eqref{eq:1_Main} on the prescribed stochastic basis in the local-energy uniqueness class of Paper~I.
On a common event of probability one, for every $T<\infty$,
\begin{equation}
\label{eq:6.1-global-path-regularity}
\begin{aligned}
u&\in \mathbb D([0,T];X_p) \cap L^2(0,T;H^2_\sigma) \cap L^p(0,T;L^{3p}_\sigma), \\
|u|^{p/2}&\in L^2(0,T;H^1), \qquad \int_0^T\mathcal T_p(u(s))\,\dd s<\infty.
\end{aligned}
\end{equation}
Moreover,
\begin{equation}
\label{eq:6.1-global-H1-estimate}
\E\left[ \sup_{0\le t\le T}\|u(t)\|_{H^1}^2 + \nu\int_0^T\|u(s)\|_{H^2}^2\,\dd s \right] \le C_T\left(1+\E\|u_0\|_{H^1}^2\right),
\end{equation}
and, for the constants $c,C$ of Proposition~\ref{prop:5.3-stopped-Lp},
\begin{equation}
\label{eq:6.1-global-weighted-estimate}
\begin{aligned}
\E\bigg[ &\sup_{0\le t\le T} e^{-c\Gamma(t)}\|u(t)\|_p^p +\frac{p\nu}{2} \int_0^T e^{-c\Gamma(s)} \mathcal D_p(u(s))\,\dd s \\
&\quad +p\int_0^T e^{-c\Gamma(s)} \mathcal T_p(u(s))\,\dd s +\int_0^T e^{-c\Gamma(s)} \|u(s)\|_p^p\,\dd\Gamma(s) \bigg] \le C\left(1+\E\|u_0\|_p^p\right).
\end{aligned}
\end{equation}
Here $C$ and $c$ are independent of $T$ and of all localization and regularization parameters.
\end{theorem}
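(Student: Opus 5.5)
The plan is to combine the exit-probability bound \eqref{eq:5.3-exit-probability} with the finiteness of $\Gamma$ up to any finite lifetime (Theorem~\ref{thm:4.3-H1-persistence}) and the intrinsic blow-up alternative of Theorem~\ref{thm:3.6-intrinsic-blowup}.

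\emph{Non-explosion.} Fix integers $T,K\ge1$ and consider the event
\[
A_{K,T}:=\bigl\{\tau_{\max}\le T,\ \Gamma(\tau_{\max}-)<K\bigr\}.
\]
On $A_{K,T}$ the continuous functional $\Gamma$ never reaches $K$ before $\tau_{\max}$, so $\eta_K=\tau_{\max}$. By \eqref{eq:3.6-exit-lifetime}, $\lambda_L<\tau_{\max}$ almost surely on $\{\tau_{\max}<\infty\}$, hence $\lambda_L<\tau_{\max}=\eta_K\le T$ and thus $\lambda_L\le T\wedge\eta_K$ on $A_{K,T}$. Proposition~\ref{prop:5.3-stopped-Lp} therefore gives
\[
\P(A_{K,T})\le \frac{Ce^{cK}}{L}\bigl(1+\E\|u_0\|_p^p\bigr)
\]
for every $L\ge1$. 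Letting $L\to\infty$ yields $\P(A_{K,T})=0$. By \eqref{eq:4.3-regularity}, $\Gamma((T\wedge\tau_{\max})-)<\infty$ almost surely, so $\{\tau_{\max}\le T\}\subset\bigcup_K A_{K,T}$ up to a null set. A countable union over $K,T$ then gives \eqref{eq:6.1-nonexplosion}. Uniqueness in the local-energy class follows directly from maximality in Theorem~\ref{thm:2-Paper-I}(ii), because $\tau_n\uparrow\infty$ furnishes the admissible closed representatives.

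\emph{Estimates.} Since $\tau_{\max}=\infty$, \eqref{eq:4.3-maximal-energy} is exactly \eqref{eq:6.1-global-H1-estimate}, with no terminal-value issue because $T<\tau_{\max}$. For \eqref{eq:6.1-global-weighted-estimate}, I would apply \eqref{eq:5.3-weighted-estimate} with $\rho=\rho_{L,K,T}$. As $L,K\to\infty$ we have $\lambda_L\wedge\eta_K\uparrow\infty$, so $\rho_{L,K,T}=T$ eventually, pathwise, because $\mathcal Q_u(T)<\infty$ and $\Gamma(T)<\infty$. The subtle point is that the weight $\vartheta=e^{-c\Gamma(t\wedge\rho)}$ depends on $\rho$, but on $[0,\rho]$ it coincides with $e^{-c\Gamma(t)}$. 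Hence each integrand increases to the global one, monotone convergence applies to the dissipative integrals, and Fatou's lemma applies to the supremum term. This gives \eqref{eq:6.1-global-weighted-estimate} with constants independent of $T$.

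\emph{Path regularity.} Path regularity in \eqref{eq:6.1-global-path-regularity} is read off pathwise. The $L^p$ and $H^1$ c\`adl\`ag regularity come from Theorem~\ref{thm:2-Paper-I}(ii) and Theorem~\ref{thm:4.3-H1-persistence}, and the left limits agree as distributions, which gives c\`adl\`ag paths in $X_p$. The bound $u\in L^2(0,T;H^2_\sigma)$ follows from \eqref{eq:4.3-maximal-energy}. Finiteness of $\int_0^T\mathcal D_p$ and $\int_0^T\mathcal T_p$ follows from the weighted estimate together with $e^{-c\Gamma(s)}\ge e^{-c\Gamma(T)}>0$ pathwise. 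The bounds \eqref{eq:5.1-weighted-dissipation} then give $|u|^{p/2}\in L^2(0,T;H^1)$ and $u\in L^p(0,T;L^{3p})$.

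The main obstacle is measure-theoretic. One must verify that $A_{K,T}$ is measurable and that the exhaustion is valid, which uses the announcing sequence to define $\Gamma(\tau_{\max}-)$. One must also use $\lambda_L<\tau_{\max}$ rather than $\le$, because this is what lets $\{\lambda_L\le T\wedge\eta_K\}$ capture the event; on $\{\tau_{\max}=T\}$ one must take care that $\eta_K\le T$ holds.
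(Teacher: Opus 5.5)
Your proposal is correct and follows essentially the same route as the paper. The non-explosion step uses the same events $A_{K,T}$, the strict inequality $\lambda_L<\tau_{\max}$, and the exit bound of Proposition~\ref{prop:5.3-stopped-Lp}; the estimates are then obtained by letting the localizations increase to $T$, using that the weight equals $e^{-c\Gamma}$ on $[0,\rho]$ regardless of the localization. The differences are cosmetic: you use Fatou for the supremum term, whereas the paper notes it is nondecreasing and uses monotone convergence. You also get uniqueness from the maximality clause of Theorem~\ref{thm:2-Paper-I}(ii), whereas the paper compares localizations pairwise via (i).
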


\begin{proof}
\emph{Step 1:} Fix $T>0$ and write
\begin{equation*}
\Gamma_T^-:=\lim_{t\uparrow T\wedge\tau_{\max}}\Gamma(t).
\end{equation*}
If $\overline\Gamma$ denotes the time integral of the integrand defining $\Gamma$ extended by zero after $\tau_{\max}$, as in Proposition \ref{prop:5.3-stopped-Lp}, then $\Gamma_T^-=\overline\Gamma(T)$.
It is therefore $\mathcal F_T$-measurable and is finite almost surely by \eqref{eq:4.3-regularity}.
For an integer $K\ge1$, set
\begin{equation*}
A_{K,T}:=\{\tau_{\max}\le T,\ \Gamma_T^-<K\}.
\end{equation*}
On $A_{K,T}$, the the accumulated regularity functional does not reach $K$ before $\tau_{\max}$, so $\eta_K=\tau_{\max}$.
By \eqref{eq:3.6-exit-lifetime}, every integer $L\ge1$ satisfies $\lambda_L<\tau_{\max}$ on this event, outside a common null set.
Hence

$$A_{K,T}\subseteq\{\lambda_L\le T\wedge\eta_K\} \quad\text{up to a null set}$$.

Proposition \ref{prop:5.3-stopped-Lp} gives $$\P(A_{K,T}) \le\frac{Ce^{cK}}{L} \left(1+\E\|u_0\|_p^p\right).$$

Letting integer $L\to\infty$ with $K,T$ fixed yields $\P(A_{K,T})=0$.
Since $\Gamma_T^-<\infty$ almost surely,
\begin{equation*}
\{\tau_{\max}\le T\} =\bigcup_{K=1}^{\infty}A_{K,T} \quad\text{up to a null set}.
\end{equation*}
Thus $\P(\tau_{\max}\le T)=0$.
Taking $T$ through the positive integers proves \eqref{eq:6.1-nonexplosion}.

\emph{Step 2:} By Theorem~\ref{thm:2-Paper-I} (ii) , each $\tau_n$ admits a closed stopped representative $U_n$, the representatives agree on their closed overlaps, and $$\tau_n\uparrow\tau_{\max}\qquad\P\text{-a.s.}$$ Theorem~\ref{thm:4.3-H1-persistence} provides the compatible $H^1_\sigma$-c\`adl\`ag realization of the same maximal flow.

On the event $\{\tau_{\max}=\infty\}$, for every finite $T$ one has $T<\tau_n$ for all sufficiently large $n$, pathwise.
Hence the compatible stopped representatives determine an adapted process on $[0,\infty)$ with c\`adl\`ag paths in both $L^p_\sigma$ and $H^1_\sigma$.
The two realizations agree as distributions on their common intervals.
Their right and left limits therefore coincide as distributions, and hence belong to $X_p=L^p_\sigma\cap H^1_\sigma.$ Since $X_p$ carries the sum norm \eqref{eq:6.1-intersection-space} , the resulting process is $X_p$-c\`adl\`ag on every finite interval.
Modifying it on the common exceptional null set by a fixed path yields an adapted global representative.

For every finite $T$, choose $n$ such that $T<\tau_n$.
Then the stopped identity \eqref{eq:2-stopped-equation}, applied to $U_n$, coincides with the original equation on $[0,T]$.
Thus the global representative solves \eqref{eq:1_Main} on every finite horizon.
The Wiener and compensated Poisson terms are precisely the localized integrals appearing in \eqref{eq:2-stopped-equation}.
Their integrands are evaluated at predictable left limits, and the stopping convention \eqref{eq:2-predictable-intervals} retains the upper endpoint of each Poisson integral.
Moreover, $\mathcal E_p(U_n;0,\tau_n)<\infty$ by Theorem~\ref{thm:2-Paper-I} (ii), so the global representative belongs locally to the uniqueness class of Definition~\ref{def:2-local-solution}.

Let $v$ be another global solution in this class and let $(V_m,\sigma_m)$ be an exhausting sequence of its admissible closed localizations.
By Theorem~\ref{thm:2-Paper-I} (i), $$U_n=V_m \qquad\text{on }[0,\tau_n\wedge\sigma_m]$$ almost surely for every $n,m$.
Taking a countable intersection of these full-probability events and then letting $n,m\to\infty$ proves that $u$ and $v$ are indistinguishable on $[0,\infty)$.

\emph{Step 3:} Equation \eqref{eq:6.1-global-H1-estimate} follows directly from \eqref{eq:4.3-maximal-energy} and \eqref{eq:6.1-nonexplosion}.
Fix $T<\infty$.
On the event of non-explosion, choose $n$ such that $T<\tau_n$.
By Theorem~\ref{thm:2-Paper-I} (ii), $ \mathcal E_p(U_n;0,\tau_n)<\infty, $ and $u=U_n$ on $[0,T]$.
Hence, by the definition \eqref{eq:2-local-energy} and \eqref{eq:2-local-intrinsic-energy}, $$ \mathcal Q_u(T) = \sup_{0\le t\le T}\|u(t)\|_p^p +\int_0^T\|u(s)\|_{3p}^p\,\dd s <\infty \qquad\P\text{-a.s.} $$ Furthermore, \eqref{eq:4.3-regularity} and continuity of the accumulated regularity functional $\Gamma$ give $\Gamma(T)<\infty \ \P\text{-a.s.} $ For $m\in\mathbb N$, set $\rho_m:=T\wedge\lambda_m\wedge\eta_m $.
The sequence $(\rho_m)$ is nondecreasing, and
\begin{equation}
\label{eq:6.1-eventual-equality}
\rho_m=T \qquad\text{for all sufficiently large }m, \quad\P\text{-a.s.}
\end{equation}
Indeed, it suffices to take $$ m>\max\{\mathcal Q_u(T),\Gamma(T),1\}.
$$ On $[0,\rho_m]$ the weight in \eqref{eq:5.3-weighted-estimate} is the same function $e^{-c\Gamma(s)}$, independently of $m$.
Since $\rho_m$ is nondecreasing, every nonnegative term on the left-hand side of \eqref{eq:5.3-weighted-estimate} is nondecreasing in $m$.
Monotone convergence together with \eqref{eq:6.1-eventual-equality} therefore yields \eqref{eq:6.1-global-weighted-estimate}, including the terminal value at $T$.
Since $\Gamma(T)<\infty$ almost surely, $$ e^{-c\Gamma(s)}\ge e^{-c\Gamma(T)}>0, \qquad 0\le s\le T.
$$ The random variable on the left-hand side of \eqref{eq:6.1-global-weighted-estimate} is finite almost surely, hence $$ \sup_{t\le T}\|u(t)\|_p^p +\int_0^T\mathcal D_p(u(s))\,\dd s +\int_0^T\mathcal T_p(u(s))\,\dd s <\infty \qquad\P\text{-a.s.} $$ The chain rule and \eqref{eq:5.1-weighted-dissipation} then give $$ u\in L^p(0,T;L^{3p}_\sigma), \qquad |u|^{p/2}\in L^2(0,T;H^1), $$ while the $H^1$- and $H^2$-parts of \eqref{eq:6.1-global-path-regularity} follow from Theorem~\ref{thm:4.3-H1-persistence}.
Taking a countable intersection over integer $T$ makes all these assertions simultaneous on every finite horizon.

\end{proof}

\begin{remark}
[Direct continuation for $3<p\le6$] \label{rem:6.1-low-p-direct} When $3<p\le6$, the non-explosion conclusion of Theorem~\ref{thm:6.1-global-continuation} follows directly from the intrinsic blow-up criterion of Theorem~\ref{thm:3.6-intrinsic-blowup} and the $H^1$-persistence estimate of Theorem~\ref{thm:4.3-H1-persistence}, without using the weighted native $L^p$ estimate of Proposition~\ref{prop:5.3-stopped-Lp}.
Indeed, fix $T>0$ and set $$ \zeta:=T\wedge\tau_{\max},\qquad M_T:=\sup_{\substack{0\le t\le T\\ t<\tau_{\max}}} \|u(t)\|_{H^1}^2,\qquad D_T:=\int_0^\zeta\|u(s)\|_{H^2}^2\,\dd s .
$$ By Theorem~\ref{thm:4.3-H1-persistence}, $$ M_T+D_T<\infty \qquad\P\text{-a.s.} $$ For $v\in H^2(\R^3)$ and $3<p\le6$, Sobolev embedding gives $ \|v\|_p\le C_p\|v\|_{H^1}, $ while interpolation between $L^6$ and $L^\infty$, together with the Gagliardo--Nirenberg estimate $$ \|v\|_\infty \le C\|v\|_{H^1}^{1/2}\|v\|_{H^2}^{1/2}, $$ yields $$ \|v\|_{3p}^p \le \|v\|_6^2\|v\|_\infty^{p-2} \le C_p \|v\|_{H^1}^{(p+2)/2} \|v\|_{H^2}^{(p-2)/2}.
$$ Since $(p-2)/2\le2$, H\"older's inequality in time therefore gives
\begin{equation}
\label{eq:6.1-low-p-bound}
\begin{aligned}
\lim_{t\uparrow\zeta}\mathcal Q_u(t) &\le C_p M_T^{p/2} + C_p M_T^{(p+2)/4} \int_0^\zeta \|u(s)\|_{H^2}^{(p-2)/2}\,\dd s \\
&\le C_p M_T^{p/2} + C_p T^{(6-p)/4} M_T^{(p+2)/4} D_T^{(p-2)/4} <\infty \qquad\P\text{-a.s.}
\end{aligned}
\end{equation}
On the event $\{\tau_{\max}\le T\}$ one has $\zeta=\tau_{\max}$, and \eqref{eq:6.1-low-p-bound} contradicts the intrinsic blow-up alternative of Theorem~\ref{thm:3.6-intrinsic-blowup}.
Hence $$ \P(\tau_{\max}\le T)=0 .
$$ Since $T>0$ is arbitrary, $\tau_{\max}=\infty$ almost surely.

The restriction $p\le6$ is precisely the range in which the $H^1$--$H^2$ persistence estimates alone control the intrinsic quantity $\mathcal Q_u$.
One has $H^1(\R^3)\hookrightarrow L^p(\R^3)$ and $(p-2)/2\le2$.
For $p>6$, both of these features fail, and the weighted native $L^p$ argument of Proposition~\ref{prop:5.3-stopped-Lp} provides the continuation mechanism for the full range $p>3$.
\end{remark}

Positive-time regularization allows the preceding global theorem to be applied after a strictly positive stopping time.
By localizing the restart datum and invoking maximality, we obtain global continuation for $L^p\cap L^2$ data without an initial $H^1$ assumption.

\begin{theorem}
[Global continuation for $L^p\cap L^2$ initial data] \label{thm:6.2-finite-energy-global} Let $p>3$.
Retain the coefficient assumptions, taming condition, and joint compatibility hypotheses of Theorem \ref{thm:6.1-global-continuation}, but replace its initial-data assumption by
\begin{equation*}
u_0\in L^p(\Omega,\mathcal F_0;L^p_\sigma(\R^3)) \cap L^2(\Omega,\mathcal F_0;L^2_\sigma(\R^3)).
\end{equation*}
In particular, both noise bounds \eqref{eq:2-raw-L2-growth} and \eqref{eq:2-H1} remain in force.
Let $(u,(\tau_n),\tau_{\max})$ be the maximal construction of \cref{thm:2-Paper-I} (ii).
Then
\begin{equation}
\label{eq:6.2-finite-energy-nonexplosion}
\P(\tau_{\max}=\infty)=1.
\end{equation}
Thus $u$ is the unique global strong solution of \eqref{eq:1_Main} on the prescribed stochastic basis, with the prescribed driving noises, in the local-energy uniqueness class.

All intersections below carry their sum norms.
On a common event of probability one, for every $0<\varepsilon<T<\infty$,
\begin{equation}
\label{eq:6.2-finite-energy-paths}
\begin{aligned}
u&\in\mathbb D([0,T];L^p_\sigma\cap L^2_\sigma), & u&\in L^2(0,T;H^1_\sigma) \cap L^p(0,T;L^{3p}_\sigma), \\
|u|^{p/2}&\in L^2(0,T;H^1), & u&\in\mathbb D([\varepsilon,T];L^p_\sigma\cap H^1_\sigma) \cap L^2(\varepsilon,T;H^2_\sigma).
\end{aligned}
\end{equation}
Moreover, with $\mathcal T_0$ as in \eqref{eq:4.1-energy-notation},
\begin{equation}
\label{eq:6.2-L2-energy}
\E\bigg[ \sup_{0\le t\le T}\|u(t)\|_2^2 +\nu\int_0^T\|\nabla u(s)\|_2^2\,\dd s +\int_0^T\mathcal T_0(u(s))\,\dd s \bigg] \le C_T^{(0)} \left(1+\E\|u_0\|_2^2\right),
\end{equation}
and
\begin{equation}
\label{eq:6.2-weighted-H1}
\E\left[ \sup_{0<t\le T}t\|u(t)\|_{H^1}^2 +\nu\int_0^T s\|u(s)\|_{H^2}^2\,\dd s \right] \le C_T^{(1)}\left(1+\E\|u_0\|_2^2\right).
\end{equation}
Here $C_T^{(0)}$ depends only on $T,\nu,C_0$, and $C_T^{(1)}$ only on $T,\nu,c_{N,\nu},C_0,C_1$.
Both constants are independent of the announcing sequence and all localization and regularization parameters.
\end{theorem}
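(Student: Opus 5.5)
The plan is to reduce to Theorem~\ref{thm:6.1-global-continuation} by restarting at a small positive time, where Proposition~\ref{prop:4.4-time-weighted-H1} supplies $H^1$ regularity, and then to invoke maximality. Fix $T>0$. First I show that, for each $\varepsilon\in(0,1)$, the event $\{\tau_{\max}\le\varepsilon\}$ has small probability; afterwards I restart at a positive time.

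\emph{Step 1: a restart time with $H^1$ data.} Choose $\varepsilon>0$ and the localizing levels $n,L$. Set $\rho:=\varepsilon\wedge\lambda_L\wedge\tau_n$. Then $\rho<\tau_{\max}$ almost surely and $\rho$ is bounded. Define
\[
\xi:=\mathbf 1_{A}\,u(\rho),\qquad A:=\{\rho=\varepsilon,\ \|u(\varepsilon)\|_{H^1}\le M\}.
\]
By Proposition~\ref{prop:4.4-time-weighted-H1} and the Chebyshev inequality, $\P(\{\varepsilon<\tau_{\max}\}\setminus A)$ is small once $M$ and $L,n$ are large. Moreover, $\E\|u(\varepsilon)\|_{H^1}^2\mathbf 1_{\{\varepsilon<\tau_{\max}\}}\le C_T\varepsilon^{-1}(1+\E\|u_0\|_2^2)$. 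Hence $\xi\in L^p(\Omega,\mathcal F_\rho;E)\cap L^2(\Omega,\mathcal F_\rho;H^1_\sigma)$, where the $L^p$ moment comes from the closed local-energy bound through $\tau_n$.

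\emph{Step 2: global solution from $\xi$ and pasting.} Apply Theorem~\ref{thm:6.1-global-continuation} to the shifted basis of Theorem~\ref{thm:2-Paper-I}(iii), which starts at $\rho$ with datum $\xi$. All structural constants, including $c_{N,\nu}$, $C_0$ and $C_1$, are preserved under the time translation. This gives a global solution $\widehat v$ on $[\rho,\infty)$ with finite local energy on bounded intervals. On $A$, paste $\widehat v$ to $u$ on $[0,\rho]$ exactly as in Proposition~\ref{prop:3.5-localized-continuation}: the Poisson intervals are $(0,\rho]$ and $(\rho,\cdot]$, and the indicator $\mathbf 1_A\mathbf 1_{(\rho,\theta]}$ is predictable by \eqref{eq:2-predictable-intervals}. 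Stopping the pasted process at the bounded times $\rho+k$, restricted to $A$, yields admissible closed solutions. Some care is needed because Definition~\ref{def:2-local-solution} requires a single positive lifetime on the whole of $\Omega$. I would use the lifetime $\theta_k:=\rho+k\mathbf 1_A+\vartheta\mathbf 1_{A^c}$, where $\vartheta$ is a local restart lifetime from $u(\rho)$ off $A$, supplied by Theorem~\ref{thm:2-Paper-I}(iii)--(iv). Then maximality gives $\tau_{\max}\ge\rho+k$ on $A$ for every $k$, so $\tau_{\max}=\infty$ on $A$.

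\emph{Step 3: conclusion.} This yields
\[
\P(\tau_{\max}<\infty)\le\P(\tau_{\max}\le\varepsilon)+\P(\{\varepsilon<\tau_{\max}\}\setminus A).
\]
Letting $M,L,n\to\infty$ with $\varepsilon$ fixed, and then $\varepsilon\downarrow0$ (note $\P(\tau_{\max}\le\varepsilon)\to0$ since $\tau_{\max}>0$), proves \eqref{eq:6.2-finite-energy-nonexplosion}. Uniqueness follows as in Step~2 of Theorem~\ref{thm:6.1-global-continuation}. The bounds \eqref{eq:6.2-L2-energy} and \eqref{eq:6.2-weighted-H1} follow from Proposition~\ref{prop:4.2-L2-persistence} and Proposition~\ref{prop:4.4-time-weighted-H1} once $\tau_{\max}=\infty$. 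For the path regularity \eqref{eq:6.2-finite-energy-paths}:
\begin{itemize}
\item membership in $\mathbb D([0,T];L^p\cap L^2)$ follows from the $\mathcal H$-c\`adl\`ag representative of Proposition~\ref{prop:4.2-L2-persistence} together with the $L^p$ representative, whose limits agree as distributions;
\item $\mathcal Q_u(T)<\infty$ holds because $T<\tau_n$ eventually;
\item the positive-time statement is \eqref{eq:4.4-positive-time-paths}.
\end{itemize}

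The main obstacle is Step~2's measurability and admissibility bookkeeping. The restart is performed only on $A\in\mathcal F_\rho$, and Theorem~\ref{thm:6.1-global-continuation} is applied on a shifted basis with an $\mathcal F_\rho$-measurable datum. The pasted object must be shown to be an attainable lifetime in $\mathscr T$ with the required $p$th-moment energy bound. If this proves awkward, I would instead set $\xi=0$ off $A$ (zero is admissible $H^1$ data), build the global branch on all of $\Omega$, and then compare with $u$ only on $A$ via the localized uniqueness of Theorem~\ref{thm:2-Paper-I}(i).
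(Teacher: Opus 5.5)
Your architecture is the paper's: restart at a positive stopping time where Proposition~\ref{prop:4.4-time-weighted-H1} supplies $H^1$ data, localize the datum on an $\mathcal F_\rho$-event, apply Theorem~\ref{thm:6.1-global-continuation} on the shifted basis, paste, and invoke maximality. The gap is in Step~2, where you stop the pasted process at the deterministic horizons $\rho+k$ and call the result admissible. Definition~\ref{def:2-local-solution} requires $\mathcal E_p(w;0,\rho+k)<\infty$. That is a finite \emph{expected} $L^p$ local energy of the shifted global branch on the deterministic interval $[0,k]$.

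Theorem~\ref{thm:6.1-global-continuation} does not provide this. It gives pathwise finiteness of $\mathcal Q$, the $H^1$ moment bound, and only the $e^{-c\Gamma}$-weighted $L^p$ bound \eqref{eq:6.1-global-weighted-estimate}. The paper explicitly notes that unweighted $p$th moments on deterministic intervals need the extra hypotheses of Proposition~\ref{prop:6.3-unweighted-intrinsic-moments}. Moreover, $H^1$ control does not reach $L^p$ when $p>6$. So your ``finite local energy on bounded intervals'' holds only pathwise, and $\rho+k\in\mathscr T$ is unjustified.

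The repair is to use the closed localizing times $\sigma_j\uparrow\infty$ that the global branch has as a member of the local-energy class. With lifetime $(\rho+\sigma_j)\mathbf 1_A+(\rho+\vartheta)\mathbf 1_{A^c}$ the energy is finite. Maximality then gives $\tau_{\max}\ge\rho+\sigma_j$ on $A$, and $\sigma_j\uparrow\infty$ finishes the argument. This is exactly the paper's $\eta_{k,j}=\rho+\sigma_{k,j}$.

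Your fallback, comparing the branch with $u$ on $A$ by uniqueness, does not by itself move $\tau_{\max}$. Agreement holds only on common lifetimes. You would additionally need the pasting argument, or the blow-up alternative of Theorem~\ref{thm:3.6-intrinsic-blowup} applied to the pathwise finite $\mathcal Q$ of the branch on $A$.

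Once repaired, your route differs from the paper's only in bookkeeping.
\begin{itemize}
\item The paper restarts at $\rho=\tau_1$ and localizes on $A_k=\{\tau_1\ge k^{-1}\}$. This gives $\E\|\mathbf 1_{A_k}u(\tau_1)\|_{H^1}^2\le k\,\E[\rho\|u(\rho)\|_{H^1}^2]\le Ck(1+\E\|u_0\|_2^2)$ by \eqref{eq:4.4-time-weighted-energy}.
\item Because $\tau_1>0$, the lifetime off $A_k$ can simply be $\rho$, with no auxiliary restart.
\item Because $A_k\uparrow\Omega$, $\tau_{\max}=\infty$ follows directly, without your limits in $M,L,n$ and then $\varepsilon$.
\end{itemize}
In your choice $\rho=\varepsilon\wedge\lambda_L\wedge\tau_n$, the factor $\lambda_L$ is superfluous, since Proposition~\ref{prop:4.4-time-weighted-H1} already holds up to $\tau_{\max}$. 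It is also what can make $\rho$ vanish, because $\lambda_L=0$ when $\|u_0\|_p^p\ge L$; that is what forces your extra restart $\vartheta$ off $A$. With $\rho=\varepsilon\wedge\tau_n>0$ this restart is unnecessary. Your derivation of \eqref{eq:6.2-finite-energy-paths}--\eqref{eq:6.2-weighted-H1} after non-explosion matches the paper's.
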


\begin{proof}
\emph{Step 1:} For a process $v$ with closed lifetime $\theta$, write
\begin{equation*}
\mathcal E_p(v;0,\theta) :=\E\left[ \sup_{0\le s\le\theta}\|v(s)\|_p^p +\int_0^\theta \left\|\nabla\bigl(|v(s)|^{p/2}\bigr)\right\|_2^2\,\dd s \right].
\end{equation*}
Set $\rho=\tau_1$ and let $U=u(\cdot\wedge\rho)$ be its closed stopped representative.
\cref{thm:2-Paper-I}(ii), and Lemma \ref{lem:3.1-strict-localization} give
\begin{equation*}
\begin{aligned}
0<\rho\le1,\qquad \rho<\tau_{\max} &\quad\P\text{-a.s.}, & \mathcal E_p(U;0,\rho)&<\infty.
\end{aligned}
\end{equation*}
Proposition \ref{prop:4.4-time-weighted-H1} therefore applies at $\rho$.
For $k\ge1$, define
\begin{equation*}
A_k:=\{\rho\ge k^{-1}\},\qquad \xi_k:=
\begin{cases}
U(\rho),&\text{on }A_k, \\
0,&\text{on }A_k^c.
\end{cases}
\end{equation*}
Then $A_k\in\mathcal F_\rho$ and $A_k\uparrow\Omega$ up to a null set.
Optional evaluation gives $\mathcal F_\rho$-measurability of $U(\rho)$ in $L^p_\sigma$, and positive-time regularization gives $U(\rho)\in H^1_\sigma$ almost surely.
The continuous injection of the separable Banach space $L^p_\sigma\cap H^1_\sigma$ into $L^p_\sigma$ has a Borel inverse on its image.
Consequently, $\xi_k$ is $\mathcal F_\rho$-measurable with values in this intersection.
Furthermore, \eqref{eq:4.4-time-weighted-energy} with $T=1$ yields
\begin{equation*}
\begin{aligned}
\E\|\xi_k\|_p^p &\le\mathcal E_p(U;0,\rho)<\infty,& \E\|\xi_k\|_{H^1}^2 &\le k\E\bigl[\rho\|u(\rho)\|_{H^1}^2\bigr] \le Ck\left(1+\E\|u_0\|_2^2\right).
\end{aligned}
\end{equation*}

\emph{Step 2:} For $s\ge0$ and $B\in\mathcal Z$ with $\mu(B)<\infty$, set
\begin{equation*}
\begin{aligned}
\mathcal F_s^\rho&:=\mathcal F_{\rho+s}, &W_s^\rho&:=W_{\rho+s}-W_\rho, & N^\rho((0,s]\times B)&:=N((\rho,\rho+s]\times B).
\end{aligned}
\end{equation*}
By the bounded stopping-time translation in \cref{thm:2-Paper-I}(iii), this stochastic basis satisfies the usual conditions and the required joint compatibility.
The shifted coefficients
\begin{equation*}
\begin{aligned}
\Sigma^\rho(\omega,s,v) &:=\Sigma(\omega,\rho(\omega)+s,v), & \mathcal G^\rho(\omega,s,v,z) &:=\mathcal G(\omega,\rho(\omega)+s,v,z)
\end{aligned}
\end{equation*}
retain their predictable measurability and all structural bounds with the same constants.
Thus Theorem \ref{thm:6.1-global-continuation} applies to the shifted equation with initial datum $\xi_k$.
Let $v_k$ be its global solution.
Its maximal construction provides increasing $(\mathcal F_s^\rho)$-stopping times $(\sigma_{k,j})_{j\ge1}$ such that
\begin{equation*}
0<\sigma_{k,j}\le j,\qquad \sigma_{k,j}\uparrow\infty\quad\P\text{-a.s.}, \qquad \mathcal E_p(v_k;0,\sigma_{k,j})<\infty.
\end{equation*}
Define
\begin{equation*}
\widehat v_k(t):=v_k(t-\rho),\quad t\ge\rho, \qquad \eta_{k,j}:=\rho+\sigma_{k,j}.
\end{equation*}
The same translation result shows that $\eta_{k,j}$ is an $(\mathcal F_t)$-stopping time and that the stopped branch $\widehat v_k(\cdot\wedge\eta_{k,j})$ on $[\rho,\infty)$ has an adapted c\`adl\`ag extension by zero before $\rho$.
It solves the original equation on $(\rho,\eta_{k,j}]$ and starts from $\xi_k$ at $\rho$.

\emph{Step 3: } Fix $k,j$ and set
\begin{equation*}
\zeta_{k,j}:=
\begin{cases}
\eta_{k,j},&\text{on }A_k, \\
\rho,&\text{on }A_k^c.
\end{cases}
\end{equation*}
Since $A_k\in\mathcal F_\rho$ and $\rho\le\eta_{k,j}$,
\begin{equation*}
\{\zeta_{k,j}\le t\} =\bigl(A_k\cap\{\eta_{k,j}\le t\}\bigr) \cup\bigl(A_k^c\cap\{\rho\le t\}\bigr) \in\mathcal F_t.
\end{equation*}
Hence $\zeta_{k,j}$ is a positive stopping time bounded by $j+1$.
Define directly
\begin{equation*}
w_{k,j}(t):=
\begin{cases}
U(t),&0\le t\le\rho, \\
\widehat v_k(t\wedge\eta_{k,j}), &t>\rho,\quad\omega\in A_k, \\
U(\rho),&t>\rho,\quad\omega\in A_k^c.
\end{cases}
\end{equation*}
For deterministic $t$, the branch selection after $\rho$ is $\mathcal F_t$-measurable because $A_k\cap\{\rho<t\}\in\mathcal F_t$.
The stopped-branch adaptedness established above therefore makes $w_{k,j}$ adapted.
On $A_k$, $\widehat v_k(\rho)=\xi_k=U(\rho)$.
On $A_k^c$ the process is constant after $\rho$.
Thus $w_{k,j}$ is c\`adl\`ag and stopped at $\zeta_{k,j}$, with predictable left limits.

The process $\mathds 1_{A_k}\mathds 1_{(\rho,\infty)}$ is adapted and left-continuous.
In particular, both terms on the left of
\begin{equation}
\label{eq:6.2-predictable-partition}
\mathds 1_{(0,\rho]} +\mathds 1_{A_k}\mathds 1_{(\rho,\eta_{k,j}]} =\mathds 1_{(0,\zeta_{k,j}]}
\end{equation}
are predictable.
Multiply the translated stopped weak formulation for $\widehat v_k$ by $\mathds 1_{A_k}$ and add the stopped formulation for $U$.
On their respective integration intervals, the states and predictable left limits agree with those of $w_{k,j}$.
The stochastic integral restriction identities and \eqref{eq:6.2-predictable-partition} give the original stopped equation through $\zeta_{k,j}$, exactly as in \cref{thm:2-Paper-I} (iv).
The Poisson interval $(0,\rho]$ contains the possible jump at $\rho$, while the restarted interval $(\rho,\eta_{k,j}]$ excludes it and retains its own terminal jump.
Moreover,
\begin{equation*}
\mathcal E_p(w_{k,j};0,\zeta_{k,j}) \le\mathcal E_p(U;0,\rho) +\mathcal E_p(v_k;0,\sigma_{k,j})<\infty.
\end{equation*}
Thus $\zeta_{k,j}$ is an attainable lifetime for the original initial datum and driving noises in precisely the class defining $\tau_{\max}$.
\cref{thm:2-Paper-I}(ii), implies
\begin{equation*}
\zeta_{k,j}\le\tau_{\max}\quad\P\text{-a.s.}
\end{equation*}
Taking a common full-probability event for all $k,j$, on $A_k$ we have
\begin{equation*}
\tau_{\max}\ge\rho+\sigma_{k,j} \longrightarrow\infty\qquad (j\to\infty).
\end{equation*}
Since $A_k\uparrow\Omega$ almost surely, this proves \eqref{eq:6.2-finite-energy-nonexplosion}.

\emph{Step 4: Global regularity and uniqueness.} By Theorem~\ref{thm:2-Paper-I}\textup{(ii)}, the maximal construction admits compatible closed stopped representatives $U_n$ on $[0,\tau_n]$, with $$\mathcal E_p(U_n;0,\tau_n)<\infty, \qquad \tau_n\uparrow\tau_{\max}.$$ Since $\tau_{\max}=\infty$ almost surely, $\tau_n\uparrow\infty$ on a common event of probability one.
Propositions \ref{prop:4.2-L2-persistence} and \ref{prop:4.4-time-weighted-H1} provide, respectively, the $L^2_\sigma$ realization up to time zero and the $H^1_\sigma$ realization on every interval bounded away from zero, for this same maximal flow.

Consequently, for every finite $T$ one has $T<\tau_n$ for all sufficiently large $n$, pathwise, and the compatible representatives determine a single adapted process on $[0,\infty)$.
From \cref{subsec:2-hypotheses}, their right and left limits coincide as distributions.
Since intersections carry the sum norm, the resulting paths are c\`adl\`ag in $L^p_\sigma\cap L^2_\sigma$ on $[0,T]$ and in $L^p_\sigma\cap H^1_\sigma$ on every $[\varepsilon,T]$, $0<\varepsilon<T$.

For each finite $T$, choose $n$ such that $T<\tau_n$.
Then \eqref{eq:2-stopped-equation}, applied to $U_n$, coincides with the original equation on $[0,T]$.
Hence the resulting process solves \eqref{eq:1_Main} globally.
Its stochastic integrands are evaluated at the predictable left-limit process, as specified in \cref{subsec:2-stochastic} The stopping convention \eqref{eq:2-predictable-intervals} retains the terminal point of every Poisson integral.

Moreover, Theorem~\ref{thm:2-Paper-I} (ii) and \eqref{eq:2-local-intrinsic-energy} yield, on every finite horizon, $$\sup_{t\le T}\|u(t)\|_p^p +\int_0^T \left\|\nabla\bigl(|u(s)|^{p/2}\bigr)\right\|_2^2\,\dd s +\int_0^T\|u(s)\|_{3p}^p\,\dd s <\infty \qquad\P\text{-a.s.}$$ Thus the native $L^p$ assertions in \eqref{eq:6.2-finite-energy-paths} hold.
Equations \eqref{eq:4.2-maximal-energy} and \eqref{eq:4.4-time-weighted-energy}, together with $\tau_{\max}=\infty$, give \eqref{eq:6.2-L2-energy} and \eqref{eq:6.2-weighted-H1}.
In particular, $$\E\left[ \sup_{\varepsilon\le t\le T}\|u(t)\|_{H^1}^2 +\nu\int_\varepsilon^T\|u(s)\|_{H^2}^2\,\dd s \right] \le \frac{C_T^{(1)}}{\varepsilon} \left(1+\E\|u_0\|_2^2\right).$$ Taking a countable intersection over integer terminal times and reciprocal-integer positive lower endpoints gives a common full-probability event on which all the asserted regularity properties hold.

Finally, let $v$ be another global solution in the uniqueness class of Definition~\ref{def:2-local-solution}, and let $(V_m,\sigma_m)$ be an increasing family of admissible closed localizations exhausting its lifetime.
By Theorem~\ref{thm:2-Paper-I}\textup{(i)}, $$U_n=V_m \qquad\text{on }[0,\tau_n\wedge\sigma_m]$$ almost surely for every $n,m$.
Taking a countable intersection of these full-probability events and then letting $n,m\to\infty$ proves that $u$ and $v$ are indistinguishable on $[0,\infty)$.
\end{proof}

The weighted global estimate does not itself give finite unweighted $p$th moments.
The global estimate obtained above is naturally weighted by the accumulated regularity functional $\Gamma$.
This raises the question whether, under stronger moment assumptions on the initial datum and the jump coefficient, one can recover an unweighted intrinsic $L^p$ moment estimate without requiring any exponential integrability of $\Gamma$.
The next proposition gives such a refinement in the range $3<p\le6$.

We obtain these for $3<p\le6$ from a higher $H^1$ energy estimate, assuming the corresponding $p$th moments of the initial datum and the $H^1$-valued jump coefficient.
\begin{proposition}
[Unweighted intrinsic moments] \label{prop:6.3-unweighted-intrinsic-moments} Let $3<p\le6$ and retain the structural hypotheses of Theorem \ref{thm:6.1-global-continuation}.
Assume additionally that
\begin{equation}
\label{eq:6.3-p-moment}
\begin{aligned}
u_0&\in L^p(\Omega,\mathcal F_0;H^1_\sigma), & \int_Z\|\mathcal G(t,v,z)\|_{H^1}^{p}\,\mu(\dd z) &\le K_p\bigl(1+\|v\|_{H^1}^{p}\bigr), \qquad v\in H^1_\sigma,
\end{aligned}
\end{equation}
uniformly in the suppressed $(\omega,t)$-variables.
Let $u$ be the global solution supplied by Theorem \ref{thm:6.1-global-continuation}.
Then, for every $T<\infty$,
\begin{equation}
\label{eq:6.3-higher-H1}
\begin{aligned}
\E\biggl[ \sup_{0\le t\le T}\|u(t)\|_{H^1}^{p} &+\nu\int_0^T \bigl(1+\|u(s)\|_{H^1}^2\bigr)^{(p-2)/2} \|u(s)\|_{H^2}^2\,\dd s\biggr] \le C_T\bigl(1+\E\|u_0\|_{H^1}^{p}\bigr).
\end{aligned}
\end{equation}
In particular, with
\begin{equation*}
\begin{aligned}
\mathcal Q_u(T) &:=\sup_{0\le t\le T}\|u(t)\|_p^p +\int_0^T\|u(s)\|_{3p}^p\,\dd s, & \mathcal D_p(v) &:=\int_{\R^3}|v|^{p-2}|\nabla v|^2\,\dd x,
\end{aligned}
\end{equation*}
one has
\begin{equation}
\label{eq:6.3-unweighted}
\E\left[ \mathcal Q_u(T)+\int_0^T\mathcal D_p(u(s))\,\dd s \right] \le C_T\bigl(1+\E\|u_0\|_{H^1}^{p}\bigr).
\end{equation}
The constants depend only on $p,T,\nu,c_{N,\nu}$, and $K_p$.
They are independent of all stopping levels and regularization parameters.
No exponential moment of $\Gamma$ is assumed.
\end{proposition}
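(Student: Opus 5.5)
The plan is to derive \eqref{eq:6.3-higher-H1} first, and then obtain \eqref{eq:6.3-unweighted} from it by Sobolev embedding, following the computation of Remark~\ref{rem:6.1-low-p-direct} but in expectation.

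\textbf{Higher $H^1$ moment on the regularized system.} We work with the regularized solutions $U_m=u_m^R$ of \eqref{eq:4.1-regularized-equation}. Let
\[
E_m(t):=1+\|U_m(t)\|_{H^1}^2 .
\]
Adding \eqref{eq:4.1-L2-identity} and \eqref{eq:4.1-H1-identity} gives an It\^o identity for $E_m$ whose drift is bounded, via \eqref{eq:4.1-coercivity}, by $-\nu\|U_m\|_{H^2}^2+C(E_m+q_{m,0}+q_{m,1})\le -\nu\|U_m\|_{H^2}^2+CE_m$. We then apply It\^o's formula to $\Psi(E_m)=E_m^{p/2}$.
\begin{itemize}
\item The drift contributes $\tfrac p2E_m^{(p-2)/2}\bigl(-\nu\|U_m\|_{H^2}^2+CE_m\bigr)$. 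This produces the weighted dissipation in \eqref{eq:6.3-higher-H1} and a term $CE_m^{p/2}$.
\item The Wiener second-order term is at most $C E_m^{(p-4)/2}\|U_m\|_{H^1}^2\|B_m\|_{\gamma(\mathcal U;H^1)}^2\le CE_m^{p/2}$, by \eqref{eq:2-H0} and \eqref{eq:2-H1}.
\item The jump remainder $\Psi(E+\Delta)-\Psi(E)-\Psi'(E)\Delta$ has jump increment $\Delta=2(U_m,K_m)_{H^1}+\|K_m\|_{H^1}^2$. It is bounded by $C\bigl(E^{(p-2)/2}|\Delta|^2\,\mathbf 1_{\{|\Delta|\le E\}}+|\Delta|^{p/2}\bigr)$ up to constants. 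After integration in $\mu$ this is at most $C\bigl(E^{p/2}+\int_Z\|K_m\|_{H^1}^p\,\mu(\dd z)\bigr)$. The second moment uses \eqref{eq:2-H1}; the $p$th moment uses the new hypothesis \eqref{eq:6.3-p-moment}, which is exactly where it enters.
\end{itemize}

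\textbf{Martingales, Gronwall, and passage to $u$.} The linear martingales are handled with BDG and Davis as in Lemma~\ref{lem:4.1-regularization}(iii). For example,
\[
[M]_T\le C\sup_{t\le T} E_m^{p}\int_0^T\bigl(1+q_{m,1}\bigr)\,\dd s ,
\]
so $\E\sup|M|\le\delta\,\E\sup E_m^{p/2}+C_\delta\,\E\int E_m^{p/2}$. The jump bracket is compensated only after applying Young's inequality, which again needs only the $p$th moment. The remainder martingale is controlled by its compensator. Gronwall then gives a bound uniform in $m$ and $R$. We transfer it to $u$ by the localization at $\rho=T\wedge\tau_n\wedge\lambda_L$, the convergence in Lemma~\ref{lem:4.1-regularization}(iv), weak lower semicontinuity, and Fatou's lemma, exactly as in Theorem~\ref{thm:4.3-H1-persistence}. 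Monotone convergence, together with $\tau_{\max}=\infty$ from Theorem~\ref{thm:6.1-global-continuation}, then gives \eqref{eq:6.3-higher-H1}.

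\textbf{Deducing \eqref{eq:6.3-unweighted}.} For $p\le6$ we have $\|u\|_p^p\le C\|u\|_{H^1}^p$. As in \eqref{eq:6.1-low-p-bound},
\[
\|u\|_{3p}^p\le C\|u\|_{H^1}^{(p+2)/2}\|u\|_{H^2}^{(p-2)/2}.
\]
Since $(p-2)/2\le2$, Young's inequality bounds this by $C\bigl(E^{p/2}+E^{(p-2)/2}\|u\|_{H^2}^2\bigr)$. Both terms are controlled in expectation by \eqref{eq:6.3-higher-H1}.

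For $\mathcal D_p$ we use $\|u\|_\infty\le C\|u\|_{H^2}$ and $\|u\|_{H^1}^{1/2}\|u\|_{H^2}^{1/2}$-interpolation:
\[
\mathcal D_p(u)\le\|u\|_\infty^{p-2}\|\nabla u\|_2^2\le CE^{(p+2)/4}\|u\|_{H^2}^{(p-2)/2}\le C\bigl(E^{p/2}+E^{(p-2)/2}\|u\|_{H^2}^2\bigr).
\]
The exponents match: $(p-2)/2\le 2$ again permits Young's inequality with weight $E^{(p-2)/2}$. Checking this precisely is routine.

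\textbf{Main obstacle.} The delicate step is the jump Taylor remainder for $E^{p/2}$. It must be bounded using only second and $p$th $H^1$ moments, without a $p$th moment of $E_m(s-)$ itself appearing to a higher power. It also has to be justified at fixed $m$ despite the finiteness of $p$th moments of the regularized process. I would first localize by stopping when $E_m$ exceeds a level, then remove the localization by Fatou's lemma.
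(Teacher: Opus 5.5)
Your proof is correct, but it takes a different route from the paper for the main estimate \eqref{eq:6.3-higher-H1}. The deduction of \eqref{eq:6.3-unweighted} is essentially the paper's Step~3. The paper absorbs the interpolation bound using $\|v\|_{H^1}\le\|v\|_{H^2}$ instead of Young's inequality, with the same outcome.

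\textbf{Where the routes differ.} The paper does not go back to the regularized system $u_m^R$. It applies It\^o's formula to $\Phi(v)=(1+\|v\|_{H^1}^2)^{p/2}$ directly on the global solution $u$. By Theorems~\ref{thm:4.3-H1-persistence} and~\ref{thm:6.1-global-continuation}, $u$ is already $H^1_\sigma$-c\`adl\`ag with $u\in L^2(0,T;H^2_\sigma)$ almost surely.
\begin{itemize}
\item The formula is justified by mollifying with $J_\varepsilon$ up to stopping times $\theta_k$. These cap both $Y(u(s-))$ and $\Lambda(t)=\int_0^t(\|A(u)\|_2^2+\|u\|_{H^2}^2)\,\dd s$.
\item The jump remainder is bounded in the vector variable through $0\le D^2\Phi(v)[h,h]\le C_pY(v)^{(p-2)/2}\|h\|_{H^1}^2$.
\item Gronwall's inequality and monotone convergence then finish the argument, with no limit transfer.
\end{itemize}

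\textbf{What your route buys and what it costs.} You get the It\^o formula almost for free: it is the one-dimensional formula for $\Psi(E_m)$ applied to the real semimartingale identities of Lemma~\ref{lem:4.1-regularization}(ii). You pay at the transfer step. Unlike the two functionals in Theorem~\ref{thm:4.3-H1-persistence}, the weighted dissipation $\int_0^\rho(1+\|U_m\|_{H^1}^2)^{(p-2)/2}\|U_m\|_{H^2}^2\,\dd s$ is not convex in $U_m$. You also cannot obtain strong convergence of the weight, because $L^p$ convergence on $\R^3$ gives no control of the $L^2$ part. It is still lower semicontinuous, but by a separate argument:
\begin{itemize}
\item The pathwise bound gives $U_m(s)\rightharpoonup u(s)$ in $H^1$ for every $s$, so the weights satisfy $\liminf_m w_m(s)\ge w(s)$.
\item Replace $w_m$ by the fixed bounded weight $\min\{\inf_{k\ge j}w_k,M\}$.
\item Use weak lower semicontinuity in $L^2(0,\rho;H^2)$ against that weight, then let $j,M\to\infty$.
\end{itemize}
So ``exactly as in Theorem~\ref{thm:4.3-H1-persistence}'' needs this extra step.

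\textbf{Jump remainder (a slip to fix).} The second-order bound for the scalar remainder should carry $E^{(p-4)/2}|\Delta|^2$, not $E^{(p-2)/2}|\Delta|^2$. With the latter, integrating in $\mu$ produces $E^{(p+2)/2}$. The cleanest fix is to observe that
\[
\Psi'(E)\Delta-D\Phi(U)[K]=\frac p2E^{(p-2)/2}\|K\|_{H^1}^2\ge0 .
\]
Hence $0\le R_\Psi\le R_\Phi\le C_p\bigl(E^{(p-2)/2}\|K\|_{H^1}^2+\|K\|_{H^1}^p\bigr)$, where the last bound comes from the integral Taylor formula.

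\textbf{Bracket bound (another slip).} The display should read $[M]_T\le C\sup_tE_m^{p/2}\int_0^TE_m^{(p-2)/2}(q_{m,0}+q_{m,1})\,\dd s$, not $\sup E_m^{p}\int(1+q_{m,1})$. The corrected form is what actually yields your stated $\delta\,\E\sup E_m^{p/2}+C_\delta\E\int E_m^{p/2}$. Also, the linear jump martingale needs only the second $H^1$ moment; hypothesis \eqref{eq:6.3-p-moment} enters only through the remainder.
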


\begin{proof}
The embedding $H^1(\R^3)\hookrightarrow L^p(\R^3)$ and \eqref{eq:6.3-p-moment} imply the initial-data hypotheses of Theorem \ref{thm:6.1-global-continuation}.
We use its adapted, strongly $H^1_\sigma$-c\`adl\`ag representative.
Set
\begin{equation*}
\label{eq:6.3-Lyapunov-function}
Y(v):=1+\|v\|_{H^1}^2, \qquad \Phi(v):=Y(v)^{p/2}, \qquad Y(t):=Y(u(t)).
\end{equation*}

\emph{Step 1:} Write
\begin{equation*}
A(v):=\nu\Delta v-\mathcal P((v\cdot\nabla)v) -\mathcal P(g_N(|v|^2)v).
\end{equation*}
For $v\in H^2_\sigma$, the $L^2$ cancellation, the self-adjointness of $\mathcal P$, and the coercivity calculation in Theorem \ref{thm:4.3-H1-persistence} give
\begin{equation}
\label{eq:6.3-H1-coercivity}
\begin{aligned}
2(A(v),(I-\Delta)v)_2 &\le -2\nu\|\nabla v\|_2^2-\nu\|\Delta v\|_2^2 +c_{N,\nu}\|\nabla v\|_2^2 \le -\kappa_\nu\|v\|_{H^2}^2 +C_{N,\nu}\|v\|_{H^1}^2,
\end{aligned}
\end{equation}
where $\kappa_\nu>0$.
The Leray projection is removed only against the divergence-free vectors $v$ and $-\Delta v$, no cancellation against $|v|^{p-2}v$ is used.

The function $\Phi$ is convex and twice continuously Fr\'echet differentiable on $H^1_\sigma$.
Its derivatives and Taylor remainder satisfy
\begin{equation}
\label{eq:6.3-Lyapunov}
\begin{aligned}
D\Phi(v)[h] &=pY(v)^{(p-2)/2}(v,h)_{H^1}, \qquad 0\le D^2\Phi(v)[h,h] \le C_pY(v)^{(p-2)/2}\|h\|_{H^1}^2, \\
0\le R_\Phi(v,h) &:=\Phi(v+h)-\Phi(v)-D\Phi(v)[h] \le C_p\left( Y(v)^{(p-2)/2}\|h\|_{H^1}^2+\|h\|_{H^1}^{p} \right).
\end{aligned}
\end{equation}
Consequently, \eqref{eq:2-H0}--\eqref{eq:2-H1} and \eqref{eq:6.3-p-moment} imply
\begin{equation}
\label{eq:6.3-correction-bounds}
\begin{aligned}
\|\Sigma(t,v)\|_{\gamma(\mathcal U;H^1)}^2 +\int_Z\|\mathcal G(t,v,z)\|_{H^1}^2\,\mu(\dd z) &\le C Y(v), \\
\frac12\operatorname{Tr}_{\Sigma(t,v)}D^2\Phi(v) +\int_Z R_\Phi(v,\mathcal G(t,v,z))\,\mu(\dd z) &\le C\Phi(v).
\end{aligned}
\end{equation}
Here $\operatorname{Tr}_B D^2\Phi(v) =\sum_jD^2\Phi(v)[Be_j,Be_j]$ for any orthonormal basis $(e_j)$ of $\mathcal U$.
Fix $T<\infty$.
The regularity already established in Theorem \ref{thm:4.3-H1-persistence} and \eqref{eq:4.3-drift-integrability} give
\begin{equation*}
\Lambda(T):=\int_0^T \left(\|A(u(s))\|_2^2+\|u(s)\|_{H^2}^2\right)\dd s <\infty \quad\P\text{-a.s.}
\end{equation*}
The integrands are assigned the value zero on the $\dd s\otimes\P$-null set where $u(s)\notin H^2$.
The resulting $\Lambda$ is continuous and adapted.
Define
\begin{equation*}
\theta_k:=T\wedge\inf\{t\ge0:Y(t)>k\} \wedge\inf\{t\ge0:\Lambda(t)\ge k\}, \qquad k\ge2.
\end{equation*}
These are stopping times, $\theta_k\uparrow T$, and $\theta_k=T$ eventually almost surely.
On $(0,\theta_k]$, $Y(s-)\le k$, and $\Lambda(\theta_k)\le k$.
All stochastic integrands below are multiplied by the predictable indicator $\mathds 1_{(0,\theta_k]}$.

For completeness, the It\^o formula used below is justified on the existing solution.
Let $J_\varepsilon=e^{\varepsilon\Delta}$ and apply the Hilbert-space It\^o formula to $\Phi(J_\varepsilon u(\cdot\wedge\theta_k))$.
The mollified drift is $H^1$-valued, and its pairing equals
\begin{equation*}
D\Phi(J_\varepsilon u)[J_\varepsilon A(u)] =pY(J_\varepsilon u)^{(p-2)/2} (A(u),(I-\Delta)J_\varepsilon^2u)_2.
\end{equation*}
It converges in $L^1(\Omega\times(0,\theta_k))$ to the corresponding unmollified pairing, by the bounds defining $\theta_k$, strong convergence on $H^2$, and Cauchy--Schwarz.
The Wiener and linear Poisson integrands converge in their square-integrable coefficient spaces.
The Taylor remainders converge in $L^1(\Omega\times(0,\theta_k)\times Z)$ by \eqref{eq:6.3-Lyapunov} and \eqref{eq:6.3-p-moment}.
Their compensated integrals therefore converge in the expected uniform norm.
The same bounds justify convergence of both compensators.
Finally, strong convergence of $J_\varepsilon$ is uniform on the relatively compact range of each stopped $H^1$-c\`adl\`ag path.
Thus
\begin{equation*}
\sup_{t\le T} \|J_\varepsilon u(t\wedge\theta_k) -u(t\wedge\theta_k)\|_{H^1}\longrightarrow0 \quad\P\text{-a.s.}
\end{equation*}
This is convergence in the uniform path topology.
Writing $\Sigma_s=\Sigma(s,u(s-))$ and $G_s(z)=\mathcal G(s,u(s-),z)$, the limiting identity is
\begin{equation}
\label{eq:6.3-localized-Ito}
\begin{aligned}
\Phi(u(t\wedge\theta_k)) =&\Phi(u_0) +p\int_0^{t\wedge\theta_k}Y(s)^{(p-2)/2} (A(u(s)),(I-\Delta)u(s))_2\,\dd s+\frac12\int_0^{t\wedge\theta_k} \operatorname{Tr}_{\Sigma_s}D^2\Phi(u(s-))\,\dd s \\
&+\int_0^{t\wedge\theta_k}\int_Z R_\Phi(u(s-),G_s(z))\,\mu(\dd z)\,\dd s +M^W_k(t)+M^{J,1}_k(t)+M^{J,2}_k(t),
\end{aligned}
\end{equation}
where
\begin{equation}
\label{eq:6.3-moment-martingales}
\begin{aligned}
M^W_k(t) :=\int_0^{t\wedge\theta_k} \bigl(D\Phi(u(s-))\circ\Sigma_s\bigr)\,\dd W_s,& \qquad M^{J,1}_k(t) :=\int_0^{t\wedge\theta_k}\int_Z D\Phi(u(s-))[G_s(z)]\,\widetilde N(\dd s,\dd z), \\
\qquad\qquad\qquad M^{J,2}_k(t) &:=\int_0^{t\wedge\theta_k}\int_Z R_\Phi(u(s-),G_s(z))\,\widetilde N(\dd s,\dd z).
\end{aligned}
\end{equation}
The preceding localization gives integrable absolute drift and correction terms, square-integrable linear martingales, and an integrable supremum of the remainder martingale.
In particular, $\E\sup_{t\le T}\Phi(u(t\wedge\theta_k))<\infty$.
No upper bound on the post-jump value $Y(\theta_k)$ is used.

\emph{Step 2:} Set $S_k(t):=\sup_{r\le t}\Phi(u(r\wedge\theta_k))$.
The Wiener BDG inequality and the Davis inequality for the optional quadratic variation of $M^{J,1}_k$, followed by Young's inequality and the compensator identity, yield
\begin{equation}
\label{eq:6.3-martingale-moment-bounds}
\begin{aligned}
&\E\sup_{r\le t}|M^W_k(r)| +\E\sup_{r\le t}|M^{J,1}_k(r)| \\
&\quad\le\varepsilon\E S_k(t) +C_{\varepsilon,p}\E\int_0^{t\wedge\theta_k} Y(s-)^{(p-2)/2} \left(\|\Sigma_s\|_{\gamma(\mathcal U;H^1)}^2 +\int_Z\|G_s(z)\|_{H^1}^2\,\mu(\dd z)\right)\dd s \\
&\quad\le\varepsilon\E S_k(t) +C_{\varepsilon,p}\int_0^t\E S_k(s)\,\dd s, \\
&\E\sup_{r\le t}|M^{J,2}_k(r)| \le2\E\int_0^{t\wedge\theta_k}\int_Z R_\Phi(u(s-),G_s(z))\,\mu(\dd z)\,\dd s \le C\int_0^t\E S_k(s)\,\dd s.
\end{aligned}
\end{equation}
For the first two terms the factorization $Y^{p-1}=Y^{p/2}Y^{(p-2)/2}$ extracts $S_k(t)$ from the quadratic variation.
For the remainder term, nonnegativity in \eqref{eq:6.3-Lyapunov} gives the displayed $L^1$ estimate directly.
Only second and $p$th moments of the $H^1$-valued jump coefficient are required.

Combining \eqref{eq:6.3-H1-coercivity}, \eqref{eq:6.3-correction-bounds}, and \eqref{eq:6.3-localized-Ito}--\eqref{eq:6.3-martingale-moment-bounds}, and absorbing a fixed sufficiently small $\varepsilon$, gives
\begin{equation}
\label{eq:6.3-unweighted-Gronwall}
\begin{aligned}
\E S_k(t) +c_{p,\nu}\E\int_0^{t\wedge\theta_k} Y(s)^{(p-2)/2}\|u(s)\|_{H^2}^2\,\dd s \le C\E\Phi(u_0) +C\int_0^t\E S_k(s)\,\dd s.
\end{aligned}
\end{equation}
Gronwall's inequality yields a bound independent of $k$.
Monotone convergence as $k\to\infty$ proves \eqref{eq:6.3-higher-H1}.

\emph{Step 3:} Let $\vartheta_p=\frac12-\frac1p$.
Sobolev embedding and interpolation between $H^1$ and $H^2$ give
\begin{equation}
\label{eq:6.3-Sobolev-interpolation}
\begin{aligned}
\|v\|_p&\le C_p\|v\|_{H^1}, \\
\|v\|_{3p}^p &\le C_p\|v\|_{H^{1+\vartheta_p}}^p \le C_p\|v\|_{H^1}^{(p+2)/2} \|v\|_{H^2}^{(p-2)/2}, \qquad v\in H^2.
\end{aligned}
\end{equation}
Moreover, the Gagliardo--Nirenberg inequality $\|v\|_\infty\le C\|v\|_{H^1}^{1/2}\|v\|_{H^2}^{1/2}$ implies
\begin{equation*}
\mathcal D_p(v) \le\|v\|_\infty^{p-2}\|\nabla v\|_2^2 \le C_p\|v\|_{H^1}^{(p+2)/2} \|v\|_{H^2}^{(p-2)/2}.
\end{equation*}
Since $p\le6$ and $\|v\|_{H^1}\le\|v\|_{H^2}$,
\begin{equation*}
\|v\|_{3p}^p+\mathcal D_p(v) \le C_p\|v\|_{H^1}^{p-2}\|v\|_{H^2}^2 \le C_pY(v)^{(p-2)/2}\|v\|_{H^2}^2.
\end{equation*}
Integrating this inequality and applying \eqref{eq:6.3-higher-H1} and \eqref{eq:6.3-Sobolev-interpolation} proves \eqref{eq:6.3-unweighted}.
\end{proof}
To complete global well-posedness, it remains to control the dependence on the initial datum.
We first establish a quantitative difference estimate up to a common $L^p$ exit time and then remove this stopping time to obtain convergence in probability on every finite interval.
For the remainder of the section, set $$ \mathcal A := L^p(\Omega,\mathcal F_0;E) \cap L^2(\Omega,\mathcal F_0; \mathcal H), $$ where $E=L^p_\sigma$ and $\mathcal H=L^2_\sigma$ are as in \eqref{eq:2-solenoidal-spaces}.
For a finite stopping time $\theta$ and a process $w$, write $$ \|w\|_{\mathcal X_\theta}^p := \sup_{0\le s\le\theta}\|w(s)\|_p^p + \int_0^\theta\|w(s)\|_{3p}^p\,\dd s.
$$ Suprema over stopped intervals always include the terminal value.

\begin{theorem}
[Global continuous dependence in the native $L^p$ class] \label{thm:6.4-global-continuous-dependence} Let $p>3$ and retain the structural and stochastic hypotheses of Theorem~\ref{thm:6.2-finite-energy-global}.
Fix the stochastic basis, the coefficient maps, and the driving noises.
\begin{enumerate}
\item[(i)] Let $u$ and $v$ be the global solutions corresponding to $\xi,\zeta\in\mathcal A$.
For $T>0$ and $R\ge1$, set $$ M_u(t):=\sup_{0\le s\le t}\|u(s)\|_p, \qquad M_v(t):=\sup_{0\le s\le t}\|v(s)\|_p, $$ and $$ \theta_R := T\wedge \inf\{t\ge0:M_u(t)\vee M_v(t)\ge R\}, \qquad \inf\varnothing:=\infty .
$$ Then there exists a deterministic constant $C_{R,T}$, depending only on $R,T$ and the fixed structural parameters, such that, for every $A\in\mathcal F_0$,
\begin{equation}
\label{eq:6.4-stopped-stability}
\begin{aligned}
\E\bigg[\mathds1_A\bigg( &\|u-v\|_{\mathcal X_{\theta_R}}^p +\int_0^{\theta_R} \left\|\nabla\bigl(|u(s)-v(s)|^{p/2}\bigr)\right\|_2^2\,\dd s \bigg)\bigg]\le C_{R,T}\, \E\bigl[\mathds1_A\|\xi-\zeta\|_p^p\bigr].
\end{aligned}
\end{equation}

\item[(ii)] Let $u_0,u_0^{(n)}\in\mathcal A$, and let $u,u^{(n)}$ be the corresponding global solutions.
If $$ \|u_0^{(n)}-u_0\|_p\longrightarrow0 \qquad\text{in probability}, $$ then, for every $T<\infty$,
\begin{equation}
\label{eq:6.4-global-continuity}
\sup_{0\le t\le T} \|u^{(n)}(t)-u(t)\|_p^p + \int_0^T \|u^{(n)}(s)-u(s)\|_{3p}^p\,\dd s \longrightarrow0 \qquad\text{in probability}.
\end{equation}
\end{enumerate}

The constant in \eqref{eq:6.4-stopped-stability} is independent of the initial data, the closed localizing sequences, and all scalar and spatial regularization parameters.
In particular, \eqref{eq:6.4-global-continuity} holds whenever $u_0^{(n)}\to u_0$ in $L^p(\Omega;E)$.
No convergence in $L^2(\Omega;\mathcal H)$ is required, although each datum belongs to $\mathcal A$.
\end{theorem}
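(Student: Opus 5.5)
For part (i) I would treat the difference $d:=u-v$ as the solution of a linear stochastic heat equation and repeat the short-interval absorption argument from Step~4 of the proof of Lemma~\ref{lem:4.1-regularization}(iv). On $(0,\theta_R]$ the forcing is the coefficient difference
\begin{equation*}
\bigl(f(u)-f(v),\,h(u)-h(v),\,\Sigma(\cdot,u_-)-\Sigma(\cdot,v_-),\,\mathcal G(\cdot,u_-,\cdot)-\mathcal G(\cdot,v_-,\cdot)\bigr),
\end{equation*}
multiplied by the predictable indicator $\mathds1_{(0,\theta_R]}$ and by $\mathds1_A$. Since $A\in\mathcal F_0$, this indicator is predictable and may be brought inside every integral, so the datum becomes $\mathds1_A(\xi-\zeta)$. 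As in Lemma~\ref{lem:3.3-energy-recovery}, I extend the stopped difference after $\theta_R$ by the heat semigroup; uniqueness in Theorem~\ref{Thm:heat-exis} identifies this extension with $\mathds1_A d$ on $[0,\theta_R]$. The terminal jump at $\theta_R$ is kept, because every coefficient is evaluated at left limits, and these satisfy $\|u(s-)\|_p,\|v(s-)\|_p\le R$ on $(0,\theta_R]$.

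The interpolation bounds from that step then give, with $K=R$ and constants independent of the solutions,
\begin{equation*}
\|f(u)-f(v)\|_{q_f}\le C_R\|d\|_p^{\eta_f}\|d\|_{3p}^{1-\eta_f},
\end{equation*}
and the analogous bounds for $h$ (exponent $\eta_h$), for the Wiener term (exponents $1/4$ and $3/4$), and for both Poisson moments (exponent $\eta_G$). I apply \eqref{ineq:3_eng-est} on a deterministic interval $I$ of length $h\le1$ and use Young's inequality in the form
\begin{equation*}
h^\eta\,\E\bigl(X_I^\eta Y_I^{1-\eta}\bigr)\le \delta\,\E Y_I+C_\delta h\,\E X_I .
\end{equation*}
Choosing first $\delta=\delta(R)$ and then $h=h(R)$ absorbs the right-hand side. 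Iterating over the $N\le 1+T/h$ subintervals, and controlling $Y_I$ by the dissipation through \eqref{eq:2-native-Sobolev}, yields \eqref{eq:6.4-stopped-stability}.

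The step I expect to be the main obstacle is integrability. The linear estimate has to be applied with finite right-hand sides, but the solutions are only known to lie in the local-energy class along their localizing sequences. I would therefore first stop additionally at $\lambda^u_L\wedge\lambda^v_L\wedge\tau_n\wedge\sigma_m$, where Lemma~\ref{lem:3.2-intrinsic-coefficients} makes all forcing norms integrable. The estimate obtained there has constants depending only on $R$ and $T$, since the absorption uses only the $L^p$ bound $R$ and never $L$. Letting $L,n,m\to\infty$, using $\tau_{\max}=\infty$ and Fatou's lemma, removes these auxiliary stops.

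For part (ii) I fix $\varepsilon,\gamma>0$ and split the event. Because $u$ is globally $L^p$-c\`adl\`ag, I can choose $R$ with $\P\bigl(M_u(T)\ge R/2\bigr)<\gamma$. Let $A_n:=\{\|u_0^{(n)}-u_0\|_p\le\kappa\}\in\mathcal F_0$. Part (i) on $A_n$ together with Markov's inequality bounds the probability that $\|u^{(n)}-u\|_{\mathcal X_{\theta_R^{(n)}}}^p>\kappa'$ by $C_{R,T}\kappa^p/\kappa'$. On the complement of that event, if $M_u(T)<R/2$ and $\kappa'$ is small relative to $R$, then $M_{u^{(n)}}$ cannot reach $R$ before $T$ by the triangle inequality, so $\theta_R^{(n)}=T$. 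One point needs care here: if the exit of $u^{(n)}$ occurs by a jump, the value at $\theta_R^{(n)}$ is still included in the $\mathcal X$-norm, so the triangle inequality at the exit time is legitimate. Finally, $\P(A_n^c)\to0$ by the assumed convergence in probability, and choosing $\kappa$ small relative to $\gamma$ and $\kappa'$ gives \eqref{eq:6.4-global-continuity}. No $L^2$ convergence enters, because (i) uses only the $L^p$ norm of the data.
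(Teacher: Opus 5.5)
Your proposal is correct and follows essentially the same route as the paper. For (i), both arguments apply the linear heat estimate to the $\mathds1_A$-localized, heat-extended difference, use the $R$-bound on left limits to interpolate, absorb the $L^{3p}$ terms into the dissipation, and remove the auxiliary localization by Fatou. For (ii), your argument is the paper's exit-time and Markov argument on $\{\|u_0^{(n)}-u_0\|_p\le\kappa\}$. The differences are minor. The paper absorbs via pointwise Young ($\le\varepsilon\|w\|_{3p}^p+C_{\varepsilon,R}\|w\|_p^p$) plus Gronwall on $[0,t]$ rather than your subinterval partition. It also localizes only by the closed localizing sequences, since the $R$-bound together with finite local energy already makes the forcing integrable, so your extra $\lambda_L$ stops are harmless but unnecessary.
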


\begin{proof}
\emph{Step 1:} Set $w=u-v$ and $\mathcal T(a)=g_N(|a|^2)a$.
The running suprema defining $\theta_R$ are adapted and right-continuous, so $\theta_R$ is a stopping time.
On its nonempty integration interval,
\begin{equation*}
\|u(s)\|_p\vee\|v(s)\|_p<R\quad(0\le s<\theta_R),\qquad \|u(s-)\|_p\vee\|v(s-)\|_p\le R\quad(0<s\le\theta_R).
\end{equation*}
The post-jump values at $\theta_R$ need not satisfy this bound.
The difference equation is
\begin{equation*}
\dd w=(\nu\Delta w+\nabla\cdot f_w+h_w)\,\dd s +b_w\,\dd W_s +\int_Z j_w(s,z)\,\widetilde N(\dd s,\dd z), \qquad w(0)=\xi-\zeta,
\end{equation*}
where
\begin{align*}
f_w(s)&=-\mathcal P^{(1)} \bigl(u(s)\otimes u(s)-v(s)\otimes v(s)\bigr),& h_w(s)&=-\mathcal P\bigl(\mathcal T(u(s))-\mathcal T(v(s))\bigr), \\
b_w(s)&=\Sigma(s,u(s-))-\Sigma(s,v(s-)),& j_w(s,z)&=\mathcal G(s,u(s-),z)-\mathcal G(s,v(s-),z).
\end{align*}
All stochastic coefficients are predictable.

Choose $q_f$ and $q_h$ as defined in \eqref{eq:2_q_fq_h_choice} and define
\begin{equation*}
\frac1{\ell_f}=\frac1{q_f}-\frac1p, \qquad \frac1{\ell_h}=\frac1{q_h}-\frac2p.
\end{equation*}
These choices are possible for every $p>3$, and $p<\ell_f,\ell_h<3p$.
The Leray multiplier bounds, H\"older's inequality, and the taming difference estimate give
\begin{equation*}
\|f_w\|_{q_f}\le C_R\|w\|_{\ell_f},\qquad \|h_w\|_{q_h}\le C_R\|w\|_{\ell_h}.
\end{equation*}
Using the weighted Lipschitz hypotheses for the Wiener and jump coefficients, followed by interpolation, we obtain, for almost every time before $\theta_R$,
\begin{align*}
\|f_w\|_{q_f} &\le C_R\|w\|_p^{\beta_f}\|w\|_{3p}^{1-\beta_f},& \|h_w\|_{q_h} &\le C_R\|w\|_p^{\beta_h}\|w\|_{3p}^{1-\beta_h}, \\
\|b_w\|_{\gamma(\mathcal U;E)} &\le C_R\|w\|_p^{1/4}\|w\|_{3p}^{3/4},& \left(\int_Z\|j_w(s,z)\|_p^r\,\mu(\dd z)\right)^{1/r} &\le C_R\|w\|_p^{\beta_G}\|w\|_{3p}^{1-\beta_G}, \qquad r\in\{2,p\},
\end{align*}
where
\begin{equation*}
\beta_f=\frac{3p/q_f-4}{2}\in(0,1),\qquad \beta_h=\frac{3p/q_h-7}{2}\in(0,1),\qquad \beta_G=1-\frac{3\alpha}{2}\in(0,1].
\end{equation*}
For the last two estimates, H\"older's inequality uses the difference norms $L^{2p}$ and $L^{p/(1-\alpha)}$, respectively.
When $\alpha=0$, the last bound is simply $C\|w\|_p$.
The identities $u_-=u$ and $v_-=v$ hold almost everywhere in time; no $L^{3p}$-valued left limits are needed.

Consequently, for every $\varepsilon>0$, Young's inequality gives
\begin{equation*}
\begin{aligned}
&\|f_w\|_{q_f}^p+\|h_w\|_{q_h}^p +\|b_w\|_{\gamma(\mathcal U;E)}^p +\int_Z\|j_w(s,z)\|_p^p\,\mu(\dd z) +\left(\int_Z\|j_w(s,z)\|_p^2\,\mu(\dd z)\right)^{p/2} \le\varepsilon\|w(s)\|_{3p}^p +C_{\varepsilon,R}\|w(s)\|_p^p.
\end{aligned}
\end{equation*}
The quadratic Poisson mode is controlled by
\begin{equation*}
\left(\int_0^{t\wedge\rho}\int_Z\|j_w(s,z)\|_p^2 \,\mu(\dd z)\,\dd s\right)^{p/2} \le T^{p/2-1}\int_0^{t\wedge\rho} \left(\int_Z\|j_w(s,z)\|_p^2\,\mu(\dd z)\right)^{p/2} \,\dd s
\end{equation*}
whenever $t\le T$ and $\rho\le\theta_R$.
To justify expectations, let $(\tau_m^u)$ and $(\tau_m^v)$ be the closed localizing sequences of the two global solutions and set
\begin{equation*}
\rho_m=\theta_R\wedge\tau_m^u\wedge\tau_m^v.
\end{equation*}
On $[0,\rho_m]$, both solutions have finite expected local energy.
The preceding bounds therefore place all coefficient differences in the forcing classes of Theorem \ref{Thm:heat-exis}.
Apply that theorem to the linear equation with initial value $\mathds1_A(\xi-\zeta)$ and forcing coefficients $\mathds1_A\mathds1_{(0,\rho_m]}(f_w,h_w,b_w,j_w)$.
Its solution agrees with $\mathds1_Aw$ through $\rho_m$ and evolves by the heat semigroup afterwards.
Thus the stopped estimate is obtained from a heat extension, with every terminal jump included.
The factor $\mathds1_A$ is admissible because $A\in\mathcal F_0$.

For $0\le t\le T$, put
\begin{equation*}
\Phi_m(t)=\E\left[\mathds1_A \sup_{0\le s\le t\wedge\rho_m}\|w(s)\|_p^p\right], \qquad D_m(t)=\E\left[\mathds1_A \int_0^{t\wedge\rho_m} \left\|\nabla\bigl(|w(s)|^{p/2}\bigr)\right\|_2^2\,\dd s\right].
\end{equation*}
The linear estimate and the preceding coefficient bounds imply
\begin{equation*}
\Phi_m(t)+D_m(t) \le C_T\E[\mathds1_A\|\xi-\zeta\|_p^p] +C_T\varepsilon\E\left[\mathds1_A \int_0^{t\wedge\rho_m}\|w(s)\|_{3p}^p\,\dd s\right] +C_{\varepsilon,R,T}\int_0^t\Phi_m(s)\,\dd s.
\end{equation*}
Here $C_T$ absorbs the factor $1+T^{p/2-1}$ and is independent of $m,A,\xi,\zeta$.
The Sobolev inequality
\begin{equation*}
\|w\|_{3p}^p \le C_p\left\|\nabla\bigl(|w|^{p/2}\bigr)\right\|_2^2
\end{equation*}
allows absorption of the second term after choosing $\varepsilon>0$ sufficiently small.
Gronwall's inequality then gives
\begin{equation*}
\E\left[\mathds1_A\left( \|w\|_{\mathcal X_{\rho_m}}^p +\int_0^{\rho_m} \left\|\nabla\bigl(|w(s)|^{p/2}\bigr)\right\|_2^2\,\dd s \right)\right] \le C_{R,T}\E[\mathds1_A\|\xi-\zeta\|_p^p].
\end{equation*}
Globality implies $\tau_m^u,\tau_m^v\uparrow\infty$ almost surely.
Consequently, $\rho_m=\theta_R$ for all sufficiently large $m$, pathwise.
Fatou's lemma proves \eqref{eq:6.4-stopped-stability}.

\emph{Step 2:} Fix $T>0$ and write $w_n=u^{(n)}-u$.
Let $\theta_R^{(n)}$ be the stopping time in part \textup{(i)} for the pair $(u^{(n)},u)$.
For $\delta>0$, set
\begin{equation*}
A_{n,\delta}=\{\|u_0^{(n)}-u_0\|_p\le\delta\}\in\mathcal F_0.
\end{equation*}
By \eqref{eq:6.4-stopped-stability},
\begin{equation*}
\E\left[\mathds1_{A_{n,\delta}} \|w_n\|_{\mathcal X_{\theta_R^{(n)}}}^p\right] \le C_{R,T}\delta^p.
\end{equation*}
On the event $\{M_u(T)<R/2,\ \theta_R^{(n)}<T\}$, the exit is caused by $u^{(n)}$.
Right-continuity of the running supremum gives $M_{u^{(n)}}(\theta_R^{(n)})\ge R$, and hence
\begin{equation*}
\sup_{0\le s\le\theta_R^{(n)}}\|w_n(s)\|_p\ge R/2.
\end{equation*}
This remains valid when the exit occurs by a jump, because the supremum includes the value at $\theta_R^{(n)}$.
If $\theta_R^{(n)}=T$, the stopped norm is the full norm.
Therefore, for every $\varepsilon>0$, Markov's inequality yields
\begin{equation*}
\P\bigl(\|w_n\|_{\mathcal X_T}>\varepsilon\bigr) \le\P\bigl(M_u(T)\ge R/2\bigr) +\P\bigl(\|u_0^{(n)}-u_0\|_p>\delta\bigr)+C_{R,T}\delta^p \left((2/R)^p+\varepsilon^{-p}\right).
\end{equation*}
First let $n\to\infty$, then $\delta\downarrow0$, with $R$ fixed.
Since $u$ has global $E$-valued c\`adl\`ag paths, $M_u(T)<\infty$ almost surely.
Letting $R\to\infty$ proves \eqref{eq:6.4-global-continuity}.
Convergence in $L^p(\Omega;E)$ implies convergence in probability of the initial data, which proves the final assertion.
\end{proof}

Together, Theorems \ref{thm:6.2-finite-energy-global} and \ref{thm:6.4-global-continuous-dependence} give global existence, pathwise uniqueness, and continuous dependence in probability in the native $L^p$ path and space--time norms for initial data in $\mathcal A$.
In particular, the solution map is continuous from $\mathcal A$, equipped with its intersection norm, to these processes equipped with convergence in probability on every finite time interval.
The uniform path convergence in \eqref{eq:6.4-global-continuity} also implies convergence in probability in the Skorokhod $J_1$ topology.
No unstopped expected $p$th-power stability estimate is asserted.
\begin{remark}
[Almost surely finite initial data] The pathwise global-existence conclusion of Theorem \ref{thm:6.2-finite-energy-global} extends to every strongly $\mathcal F_0$-measurable datum with values in $E\cap\mathcal H$, without moment assumptions on its law.
Here the local-energy uniqueness class is understood after localization on increasing $\mathcal F_0$-measurable events.

Indeed, set $ A_m:=\{\|u_0\|_p+\|u_0\|_2\le m\}, \qquad u_{0,m}:=\mathds1_{A_m}u_0, \qquad m\ge1.
$ Each $u_{0,m}$ belongs to $\mathcal A$.
Let $u_m$ be the global solution supplied by Theorem \ref{thm:6.2-finite-energy-global}.
For $\ell\ge m$, the initial values agree on $A_m$.
Applying \eqref{eq:6.4-stopped-stability} with $A=A_m$ and then exhausting the common exit times and finite horizons gives $u_\ell=u_m$ on $A_m$, indistinguishably.
Since $A_m\uparrow\Omega$ almost surely, the prescription $u=u_m$ on $A_m$ defines an adapted global solution.
Predictability and the equation follow by restriction to these $\mathcal F_0$-measurable events.
On each $A_m$, this solution has the closed local-energy localizations of $u_m$; event-localized pathwise uniqueness therefore gives uniqueness in the stated localized class.
All pathwise regularity conclusions of Theorem \ref{thm:6.2-finite-energy-global} are inherited from the branches.
The estimates in expectation retain their original moment hypotheses.
\end{remark}
\section*{CRediT authorship contribution statement}
Bikram Podder : Conceptualization, Formal analysis, Investigation, Methodology, Writing- original draft.

Surendra Kumar : Supervision, Validation, Writing- review and editing.
\section*{ Declaration of competing interest }
The authors declare that they have no known competing financial interest or personal relationship that could have appeared to influence the work reported in this paper.
\section*{ Data availability}
No data has been used to prepare this manuscript.
\section*{Acknowledgment}
The first author would like to express sincere gratitude to International Center for Theoretical Sciences (ICTS).
This research was supported in part by ICTS for participating in the program - A School on Gradient Flows 2026 (code: ICTS/SGFLOWS2026/08)

\bibliographystyle{elsarticle-num} \bibliography{References}
\end{document}